\newenvironment{subproof}[1][\proofname]{%
	\begin{proof}[#1]%
	}{%
	\end{proof}%
}
\newtheorem{thm}{Theorem}[section]
\newtheorem{lem}[thm]{Lemma}
\newtheorem{cor}[thm]{Corollary}
\newtheorem{prop}[thm]{Proposition}
\newtheorem{ques}[thm]{Question} %removed[thmi]
\newtheorem{thmi}{Theorem}
\newtheorem{cori}[thmi]{Corollary}
\newtheorem{examplei}[thmi]{Example}
\theoremstyle{definition}
\newtheorem{defni}[thmi]{Definition}
\newtheorem{questioni}[thmi]{Question}
\newtheorem{openQ}{Question}
\newtheorem{openP}[openQ]{Problem}
\theoremstyle{definition}
\newtheorem{defn}[thm]{Definition}
\newtheorem{rem}[thm]{Remark}
\theoremstyle{definition}
\newtheorem{exmp}[thm]{Example}
\newtheorem{claim}{Claim}
\newtheorem{claim*}{Claim}
\DeclareMathOperator{\image}{im}
\DeclareMathOperator{\Out}{Out}
\DeclareMathOperator{\diam}{diam}
\DeclareMathOperator{\CAT}{CAT}
\newcommand{\Rmnum}[1]{\mathbf{{\expandafter\@slowromancap\romannumeral #1@}}}
\newcommand{\tsh}[1]{\left\{\kern-.7ex\left\{#1\right\}\kern-.7ex\right\}}
\newcommand{\Tsh}[2]{\tsh{#2}_{#1}}
\newcommand{\ignore}[2]{\Tsh{#2}{#1}}
\newcommand*\mc[1]{\mathcal{#1}}
\newcommand{\cay}{\operatorname{Cay}}
\newcommand{\R}{\mathbb{R}}
\newcommand{\Z}{\mathbb{Z}}
\newcommand*\cone[1]{\widehat{#1}}
\newcommand\decomposition{\sigma_0 \ast \alpha_1 \ast \sigma_1 \ast \cdots \ast \alpha_n \ast \sigma_n}
\newcommand{\deep}{\operatorname{deep}}
\newcommand{\rel}{\operatorname{Relevant}}
\newcommand{\ball}{\operatorname{Ball}}
\newcommand*\notinfinal[1]{}
\title[Morse local-to-global]{The local-to-global property for Morse quasi-geodesics}
\author{Jacob Russell}
\address{CUNY Graduate Center, 365 Fifth Avenue, New York, NY 10016, USA}
\email{jrussellmadonia@gradcenter.cuny.edu}
\author{Davide Spriano}
\address{ETH Z\"urich, Raemistrasse 101, 8092 Z\"urich, Switzerland}
\email{spriano@maths.ox.ac.uk}
\author{Hung Cong Tran}
\address{University of Oklahoma, Norman, OK 73019-3103, USA}
\email{Hung.C.Tran-1@ou.edu}
\date{\today}
\begin{document}
\maketitle

\begin{abstract}
We show the mapping class group, $\CAT(0)$ groups, the fundamental groups of closed 3--manifolds, and certain relatively hyperbolic groups have a local-to-global property for Morse quasi-geodesics. This allows us to generalize combination theorems of Gitik for quasiconvex subgroups of hyperbolic groups to the stable subgroups of these groups. In the case of the mapping class group, this gives combination theorems for convex cocompact subgroups. We show a number of additional consequences of this local-to-global property, including a Cartan--Hadamard type theorem for detecting hyperbolicity locally and discreteness of translation length of conjugacy classes of Morse elements with a fixed gauge. To prove the relatively hyperbolic case, we develop a theory of deep points for local quasi-geodesics in relatively hyperbolic spaces, extending work of Hruska.
\end{abstract}

\tableofcontents

\section{Introduction}
We provide a new approach for studying groups and spaces with features of non-positive curvature by  defining a local-to-global property for Morse quasi-geodesics.
We show  a variety of results from hyperbolic spaces effortlessly  extend to any space satisfying this Morse local-to-global property. We prove examples of such spaces include  the mapping class group and Teichm\"uller space of finite type, orientable surfaces, the fundamental groups of closed $3$--manifolds, and all $\CAT(0)$ spaces.

\subsection{The local-to-global property for Morse quasi-geodesics}

    Morse quasi-geodesics generalize a key property of quasi-geodesics in hyperbolic spaces. Given a function $M \colon [1,\infty)\times [0,\infty) \to [0,\infty)$, a quasi-geodesic $\gamma$ is \emph{$M$--Morse} if every $(\lambda,\epsilon)$--quasi-geodesic with endpoints on $\gamma$ is contained in the $M(\lambda,\epsilon)$--neighborhood of $\gamma$. We call the function $M$ the \emph{Morse gauge} of $\gamma$.\footnote{To simplify proofs, we use a more technical, but equivalent, definition of Morse quasi-geodesics; see Section \ref{subsec:Morse_geodesics}.}
    
    The study of Morse quasi-geodesics arose from trying to understand the ``hyperbolic directions" in a non-hyperbolic space \cite{Charney_Sultan_CAT(0)} and has since received immense interest in the literature (see \cite{ABD,HamenstaedtHensel_Stability_Outer_Space,ACGH,DMS_divergence,OOS_Lacunary_hyperbolic_groups} for a sampling). Numerous results from hyperbolic spaces have fruitful generalizations to spaces containing infinite Morse quasi-geodesics, particularly with respect to the study of stable subgroups  \cite{AMST,Cordes_Hume_boundary,ADT} and the quasi-isometric classification of spaces \cite{MC1, CCM_quasi-mobius}. Our new contribution to the study of Morse quasi-geodesics is the introduction of a local-to-global property for Morse quasi-geodesics.
    
    Intuitively, a local Morse quasi-geodesic can be thought of as a path where every subpath of a specified length is uniformly a Morse quasi-geodesic. Since quasi-geodesics need not be continuous, we make this idea rigorous by measuring the length in the domain of the map.

\begin{defni}[Local Morse quasi-geodesic]
    Let $L\geq 0$, $\lambda\geq 1$, $\epsilon \geq 0$, and $M$ be a Morse gauge. A map $\gamma \colon [a,b] \to X$ is an \emph{$(L;M;\lambda,\epsilon)$--local Morse  quasi-geodesic} if for any $[s,t] \subseteq [a,b]$ with $|s-t| \leq L$, the restriction of $\gamma$ to $[s,t]$ is an $M$--Morse, $(\lambda,\epsilon)$--quasi-geodesic.
\end{defni}

 A local-to-global property for Morse quasi-geodesics should give conditions for a local Morse quasi-geodesic to be a global Morse quasi-geodesic. However, every finite quasi-geodesic will be Morse for some Morse gauge, implying every $L$--local quasi-geodesic will be a local Morse quasi-geodesic for a Morse gauge depending on $L$. Thus, a meaningful local-to-global property requires the number $L$ to depend on the Morse gauge.
\begin{defni}[Morse local-to-global property]\label{intro_defn:local_to_global}
    A metric space $X$ has the \emph{Morse local-to-global} property if for every Morse gauge $M$ and constants $\lambda \geq 1$, $\epsilon \geq 0$, there exists  $L \geq 0$ so that every $(L;M;\lambda,\epsilon)$--local Morse quasi-geodesic is a global $M'$--Morse, $(\lambda',\epsilon')$--quasi-geodesic for some $M'$, $\lambda'$, and $\epsilon'$ depending only on $M$, $\lambda$, and $\epsilon$. A finitely generated group has the \emph{Morse local-to-global} property if any Cayley graph with respect to a finite generating set has the Morse local-to-global property.
\end{defni}

Since Morse quasi-geodesics are invariant under quasi-isometry, the Morse local-to-global property is a quasi-isometry invariant. In particular, the Cayley graph of a finitely generated group will have the Morse local-to-global property regardless of the choice of finite generating set.

The sizable literature  generalizing the behavior of quasi-geodesics in hyperbolic spaces to Morse quasi-geodesics in all metric spaces may lead one to suspect that all metric spaces would have the Morse local-to-global property. However, the results of our study show that this conjecture is false even among finitely generated groups. 

\begin{examplei}
There exist finitely generated groups whose Cayley graph contain bi-infinite Morse geodesics, but do not have the Morse local-to-global property. See Example \ref{ex:non-examples} for specific groups. 
\end{examplei}

The results of this paper therefore come in two flavors. We show  a wide variety of groups and spaces do have the local-to-global property for Morse quasi-geodesics, and we demonstrate a range of results of consequences for these space.

\subsection{Examples of Morse local-to-global Spaces}

The most significant work of this paper lies in proving the collection of groups and spaces with the Morse local-to-global property is very broad.

\begin{thmi}[Morse local-to-global groups and spaces]\label{intro_thm:examples_of_local_to_global}
	The following groups and spaces have the Morse local-to-global property.
	\begin{itemize}
	    \item Any $\CAT(0)$ space.
		\item The mapping class group of an orientable, finite type surface.
		\item The Teichm\"uller space of an orientable, finite type surface with either the Weil--Petersson or Teichm\"uller metric.
		\item Any graph product of hyperbolic groups.
		\item All finitely generated virtually solvable groups.
		\item Any finitely generated group with an infinite order central element.
		\item The fundamental group of any closed $3$--manifold.	
	\end{itemize}
\end{thmi}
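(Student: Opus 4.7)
The proof naturally splits into cases since the seven classes are structurally quite different, but a guiding principle applies throughout: one wants to reduce the Morse local-to-global condition to a genuine local-to-global statement about quasi-geodesics in some auxiliary hyperbolic space, and then invoke the classical local-to-global theorem for hyperbolic spaces. My plan is to organize the proof into three bands of argument, and treat the seven examples within them.

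\emph{First band: ``hyperbolic-like'' spaces.} For $\CAT(0)$ spaces, the mapping class group, Teichm\"uller space, and graph products of hyperbolic groups, the idea is to exploit a projection to a hyperbolic model together with a good characterization of Morse quasi-geodesics. In $\CAT(0)$ I would use the characterization that a quasi-geodesic is $M$--Morse if and only if it has a uniformly contracting closest-point projection, and show that being locally $M$--Morse implies having locally contracting projections at the scale $L$; bootstrap this local contraction to a global one using convexity of the metric and a pigeonhole argument on overlaps. For the mapping class group, Teichm\"uller space, and graph products of hyperbolic groups one has hierarchically hyperbolic structure, so Morse quasi-geodesics are exactly those whose subsurface projections (or projections to the link complexes) are uniformly bounded on a cofinite set of domains and unparameterized quasi-geodesics in the top-level hyperbolic factor. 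Local boundedness of projections on long enough subpaths forces global boundedness, and then the usual local-to-global for the curve graph (or the top-level hyperbolic space) closes the loop.

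\emph{Second band: degenerate cases.} For finitely generated virtually solvable groups and groups with an infinite order central element, the strategy is to show that every Morse quasi-geodesic has bounded length in terms of its Morse gauge, which makes the local-to-global statement hold vacuously once $L$ exceeds that bound. In a group $G$ containing an infinite order central element $z$, any bi-infinite Morse quasi-geodesic would be forced, via the centralizer action, to lie in a flat direction, contradicting Morseness; a quantified version shows the length of any $M$--Morse quasi-geodesic is bounded by a function of $M$ alone. For virtually solvable groups, the same conclusion is available since their asymptotic cones have no cut points outside the virtually cyclic case, so only finite-length Morse quasi-geodesics exist with uniform gauge.

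\emph{Third band: fundamental groups of closed $3$--manifolds.} This case is handled by reducing to the pieces of the sphere-disk and JSJ decompositions. Each piece is either hyperbolic, Seifert fibered (central element case of the second band), Sol (virtually solvable), or $\CAT(0)$; moreover, the full $3$--manifold group is either virtually a graph manifold group (treated via a mix of central-element and $\CAT(0)$ arguments after passing to a finite cover) or is hyperbolic relative to graph manifold subgroups. The assembly step therefore requires a Morse local-to-global theorem for groups hyperbolic relative to Morse local-to-global peripherals, which is exactly where the new theory of deep points advertised in the abstract comes in: one extends Hruska's analysis of deep points from honest quasi-geodesics to $L$--local Morse quasi-geodesics, shows that the portions between successive deep intervals are locally Morse in the peripheral cosets, applies the peripheral local-to-global property to straighten them, and uses the hyperbolic local-to-global theorem in the coned-off space to conclude.

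\emph{Main obstacle.} The serious difficulty is the relatively hyperbolic step used for $3$--manifold groups: one must control how an $L$--local Morse quasi-geodesic can fellow-travel the cosets of peripheral subgroups without being a global quasi-geodesic into them, which forces a careful quantitative theory of deep points for local objects rather than global ones. Secondarily, verifying the hierarchically hyperbolic cases demands that the hierarchy-theoretic Morse characterization behave well at the local scale, so that local bounds on subsurface projections propagate globally; this is a technical but essentially bookkeeping task once the correct HHS characterization of Morseness is in hand.
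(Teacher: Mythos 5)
Your first two bands and your sketch of the relatively hyperbolic combination step track the paper's actual proof closely: $\CAT(0)$ spaces via the Charney--Sultan contracting characterization together with convexity of the metric; the mapping class group, Teichm\"uller space and graph products of hyperbolic groups via the Abbott--Behrstock--Durham projection to a hyperbolic space that detects Morseness (the paper packages this as ``Morse detectability'' and runs the hyperbolic local-to-global theorem through that projection); and the solvable/central-element cases by showing these groups are ``Morse limited,'' which the paper deduces from Dru\c{t}u--Sapir unconstrictedness plus a limiting (Arzel\`a--Ascoli) argument rather than your centralizer sketch, but the conclusion and the vacuous verification of the definition are the same.

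The genuine gap is in your third band. The paper never needs the JSJ decomposition: it uses only the prime decomposition $\pi_1(M)\cong \pi_1(M_1)\ast\cdots\ast\pi_1(M_n)$, quotes Behrstock--Hagen--Sisto to say each factor is either virtually solvable (Nil, Sol) or a hierarchically hyperbolic space with the bounded domain dichotomy, and then applies the relative hyperbolicity theorem to the free product. Your route instead requires the Morse local-to-global property for graph manifold groups, either as the whole group or as peripherals of a mixed manifold, and your proposed treatment --- ``a mix of central-element and CAT(0) arguments after passing to a finite cover'' --- does not work there: a closed graph manifold that is not Seifert fibered has fundamental group with trivial center (nontrivial center forces Seifert), these groups are not virtually solvable, they are not Morse limited (non-geometric graph manifold groups contain rank-one, hence Morse, elements, so no degenerate argument can apply), and there are closed graph manifolds admitting no nonpositively curved metric even virtually (Leeb's examples together with Liu's virtual specialness criterion), so passing to a finite cover does not land you in the $\CAT(0)$ case. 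To repair your reduction you would need an additional input, for example the hierarchically hyperbolic structure on all non-Nil/Sol prime $3$--manifold groups (which is exactly what the paper uses), or a quasi-isometry from an arbitrary closed graph manifold group to a nonpositively curved ``flip'' graph manifold group in the style of Kapovich--Leeb combined with quasi-isometry invariance of the Morse local-to-global property.
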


The proof that CAT$(0)$ spaces have the Morse local-to-global property relies on the convexity of the distance function and a characterization of Morse geodesics in CAT$(0)$ spaces due to Charney and Sultan \cite{Charney_Sultan_CAT(0)}. For the mapping class group, Teichm\"uller space, and graph products of hyperbolic groups, we prove a more general result for the class of hierarchically hyperbolic spaces satisfying a minor technical condition that encompasses these examples. The proof for hierarchically hyperbolic spaces rests upon a characterization of Morse quasi-geodesics in these space due to Abbott, Behrstock, and Durham \cite{ABD}.  Virtually solvable groups and groups with infinite order central elements are examples of spaces where the Morse geodesics have uniformly bounded length. We call these spaces \emph{Morse limited} and prove they all trivially satisfy Definition \ref{intro_defn:local_to_global} by taking $L$ sufficiently larger than the bound on the lengths of the Morse geodesics. These ``trivial" examples are essential to proving that the fundamental group of any closed $3$--manifold has the Morse local-to-global property. The fundamental group of a $3$--manifold decomposes into a free product of groups that are either virtually solvable or hierarchically hyperbolic spaces. The proof for all closed $3$--manifold thus follow from the above plus the next theorem about relatively hyperbolic groups.

\begin{thmi}[Hyperbolic relative to Morse local-to-global]\label{intro_thm:relative_hyperbolicity}
	Let $G$ be a group hyperbolic relative to the subgroups $H_1,\dots,H_n$. If each $H_i$ has the Morse local-to-global property, then $G$ has the Morse local-to-global property.
\end{thmi}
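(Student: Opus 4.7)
The plan is to reduce the global behavior of a local Morse quasi-geodesic $\gamma$ in $G$ to the behavior of (i) its shadow in the coned-off (hyperbolic) Cayley graph $\hat{G}$ and (ii) the subpaths of $\gamma$ that penetrate deeply into individual peripheral cosets. Since $\hat{G}$ is hyperbolic and hence enjoys the classical local-to-global property for quasi-geodesics, while each $H_i$ enjoys the Morse local-to-global property by hypothesis, both pieces should behave well enough to assemble a global Morse quasi-geodesic in $G$.

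The first step is to extend Hruska's theory of \emph{deep points} from global quasi-geodesics to local ones. Given a Morse gauge $M$ and constants $(\lambda,\epsilon)$, I would choose a depth threshold $D$ (depending on $M$, $\lambda$, $\epsilon$, and the relative hyperbolicity constants of $G$) so that any $(L;M;\lambda,\epsilon)$-local Morse quasi-geodesic with $L$ large enough admits a decomposition of its domain into alternating \emph{shallow} subintervals (whose image projects to a uniform local quasi-geodesic in $\hat{G}$) and \emph{deep} subintervals (each staying within distance $D$ of a single peripheral coset $gH_i$, entering and leaving deeply). The key technical content is showing that such a decomposition exists and is well-defined without assuming global quasi-geodesic behavior of $\gamma$; this is the heart of the ``deep points for local quasi-geodesics'' theory the introduction advertises as extending Hruska.

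Next, I would exploit both halves of the decomposition. For each deep subinterval with image near $gH_i$, the gate projection onto $gH_i$ should yield, after controlled perturbation near the endpoints, an $(L_i;M;\lambda,\epsilon)$-local Morse quasi-geodesic in a conjugate of $H_i$, where $L_i$ is chosen from the Morse local-to-global hypothesis applied in $H_i$ with parameters inflated by the relative hyperbolicity bounded coset penetration constants. By assumption this promotes to a global $M'$-Morse $(\lambda',\epsilon')$-quasi-geodesic inside $gH_i$. Simultaneously, the shallow portions concatenate with the cone-points over the deep portions to form a uniform $L$-local quasi-geodesic in $\hat{G}$, which, by Gromov's local-to-global theorem in the hyperbolic space $\hat{G}$, is a global quasi-geodesic. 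Choosing $L$ larger than the maximum of the $L_i$, the hyperbolic local-to-global threshold in $\hat{G}$, and the scale needed for the deep point decomposition completes the selection of $L$.

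Finally, I would assemble these two outputs. Standard structure theorems for relatively hyperbolic groups, together with the characterization of Morse quasi-geodesics in such groups as those whose peripheral penetrations are themselves Morse with controlled gauge, let one reconstruct $\gamma$ as a global $M''$-Morse $(\lambda'',\epsilon'')$-quasi-geodesic in $G$ from its shadow in $\hat{G}$ and its global Morse behavior inside each visited peripheral coset, with constants depending only on $M$, $\lambda$, $\epsilon$, and the relative hyperbolicity data. The main obstacle is the first step: Hruska's original arguments lean heavily on global quasi-geodesic behavior, and adapting them to local input requires a careful bootstrap ensuring that the thresholds controlling entry and exit of peripheral neighborhoods can be detected using only subpaths of length $\leq L$.
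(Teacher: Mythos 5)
Your overall architecture matches the paper's: a deep-point decomposition of the local Morse quasi-geodesic into peripheral excursions and pieces with bounded peripheral projections, the Morse local-to-global hypothesis applied inside the peripherals, and the hyperbolicity of the coned-off space applied to the rest. You also correctly identify the main technical hurdle (defining deep points using only windows of length at most $L$). However, your assembly step has a genuine gap. First, the claim that the shallow portions together with cone-point shortcuts over the deep portions form a uniform $L$--local quasi-geodesic in $\cone{X}$ is not justified and is doubtful as stated: the deep excursions have parametrized length at least some threshold $B$ in $G$ but collapse to bounded length in $\cone{X}$, so the lower quasi-geodesic bound fails badly in the original parametrization, and after reparametrizing there is no a priori control relating the new local scale to $L$ --- in particular, consecutive relevant peripherals need not have distant cone points, so a single local window in the reparametrized domain can straddle several collapsed excursions. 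The paper only runs this kind of argument on the bounded-projection pieces $\sigma_i$ (where a careful reparametrization argument, Proposition \ref{prop:bounded_projections_imply_quasi-geodesic} and Lemma \ref{lem:unparametrized_concatentation}, is needed even there), never on the full path.

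Second, the final reconstruction leans on a ``characterization of Morse quasi-geodesics in relatively hyperbolic groups as those whose peripheral penetrations are Morse with controlled gauge,'' which is not an off-the-shelf tool and in any case does not by itself show that the concatenation of the globally Morse pieces is a global quasi-geodesic: nothing so far rules out backtracking, i.e.\ that a comparison quasi-geodesic between the endpoints of $\gamma$ actually visits the deep excursions in order. The paper closes this gap with a substantial additional step (Section \ref{subsec:ordering_of_peripherals}): a Behrstock-type linear ordering of the relevant peripherals along $\gamma$, showing that the projections of the endpoints of $\gamma$ to each relevant peripheral $P_i$ coarsely agree with the entry and exit points of the excursion $\alpha_i$ and are separated by a distance growing linearly in $B$. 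Only then does the bounded quasi-geodesic image lemma force every comparison quasi-geodesic to pass near the endpoints of each $\alpha_i$, after which the Morse property of the individual pieces yields fellow-traveling, and Lemma \ref{lem:local_close_to_global} upgrades $\gamma$ to a global quasi-geodesic. You would need to supply this ordering argument (or a substitute) for your proof to close.
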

The proof of Theorem \ref{intro_thm:relative_hyperbolicity} is, somewhat surprisingly, the most involved of this paper. Our proof develops a theory of ``deep points" for local quasi-geodesics in a relatively hyperbolic space analogous to the deep points for geodesics introduced by Hruska \cite{Hruska10}. The deep points partition a local quasi-geodesic into pieces that alternatingly avoid and pass through the peripheral subsets of the relatively hyperbolic space and maybe of independent interest in the study of relative hyperbolicity.

\begin{thmi}[Local quasi-geodesics in relatively hyperbolic spaces]\label{intro_thm:deep_point_decomposition}
    Let $X$ be hyperbolic relative to a collection $\mc{P}$ of peripheral subsets. For each $\lambda \geq 1$ and $\epsilon \geq 0$, there exists $L >0$ so the following holds. For every $(L;\lambda,\epsilon)$--local quasi-geodesic  $\gamma$ in $X$,  there exist peripheral subsets $\{P_1,\dots,P_n\}$ and a decomposition $\gamma = \decomposition$ so that each $\alpha_i$ is contained in a uniform neighborhood of $P_i$ and each $\sigma_i$  uniformly quasi-isometrically embeds in the coned-off space $\cone{X}$.  This implies each $\sigma_i$ is uniformly a Morse quasi-geodesic in $X$.
\end{thmi}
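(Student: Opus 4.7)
The plan is to adapt Hruska's deep-point decomposition of geodesics \cite{Hruska10} to the local quasi-geodesic setting, and then exploit the hyperbolicity of the coned-off space $\cone{X}$ to identify the shallow pieces $\sigma_i$ as quasi-isometric embeddings. Throughout, I would fix $\lambda \geq 1$ and $\epsilon \geq 0$ and work with a hierarchy of constants $L \gg R \gg D$, where $D$ is the bounded coset penetration constant for $(\lambda,\epsilon)$-quasi-geodesics in $X$ and $R$ is a ``deepness'' threshold to be chosen.

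First, for $P \in \mathcal{P}$, call $t$ an \emph{$R$-deep time for $P$} if the ball of radius $R$ about $\gamma(t)$ lies in the $D$-neighborhood of $P$. The key technical step is to show that, once $L$ is taken sufficiently large in terms of $R$, $\lambda$, $\epsilon$, and the relative hyperbolicity data, for each fixed $P$ the set of $R$-deep times is a single interval (up to bounded perturbation), and that $R$-deep times for distinct peripherals are uniformly separated. In Hruska's global setting this is immediate from bounded coset penetration. In the local setting I would argue by contradiction: any ``deep--shallow--deep'' pattern with respect to the same $P$ occurring within a window of length at most $L$ gives, upon restriction of $\gamma$ to that window, a genuine $(\lambda,\epsilon)$-quasi-geodesic whose endpoints are deep in $P$, and bounded penetration forces the intermediate point to remain near $P$, contradicting shallowness.

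With the interval structure in hand, I would define $\alpha_i$ to be the maximal sub-arcs of $\gamma$ consisting of $R$-deep times for a common peripheral $P_i$, and $\sigma_i$ to be the complementary sub-arcs; by construction each $\alpha_i$ lies in a uniform neighborhood of $P_i$. Each $\sigma_i$ is then a local quasi-geodesic of $X$ whose points are uniformly shallow with respect to every peripheral. Standard facts about relatively hyperbolic spaces (along the lines of Drutu--Sapir or Sisto's work on electrification) then imply that shallow local quasi-geodesics in $X$ descend to local quasi-geodesics in $\cone{X}$ with controlled constants. Since $\cone{X}$ is hyperbolic, the hyperbolic local-to-global property for quasi-geodesics promotes each image to a global quasi-geodesic of $\cone{X}$ after enlarging $L$ if necessary, which combined with shallowness in $X$ upgrades each $\sigma_i$ to a uniform quasi-isometric embedding into $\cone{X}$. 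The Morseness of the $\sigma_i$ in $X$ then follows from the general principle that a subset of a relatively hyperbolic space that quasi-isometrically embeds into the coned-off space is uniformly Morse, with gauge depending only on the embedding constants and the relative hyperbolicity data.

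The main obstacle is the first step: realizing the constant hierarchy $L \gg R \gg D$ so that the local windows are simultaneously large enough for $\gamma$ to behave as a genuine $(\lambda,\epsilon)$-quasi-geodesic (allowing bounded penetration to be applied) and small enough that shallowness propagates consistently across overlapping windows. Verifying these constraints can be simultaneously met, and that the resulting deep-point decomposition is well-defined up to controlled perturbation, is where the bulk of the technical work will lie, as one cannot simply invoke Hruska's results off the shelf.
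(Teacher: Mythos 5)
Your overall strategy is the one the paper follows: a Hruska-style deep/shallow decomposition adapted to local quasi-geodesics, with the shallow pieces shown to have uniformly bounded projections to all peripherals and then promoted to quasi-isometric embeddings of $\cone{X}$ via the local-to-global property for quasi-geodesics in the hyperbolic space $\cone{X}$ (plus the distance formula), and Morseness deduced from bounded projections. Two points in your "key technical step" need repair, however. First, as literally written your definition of an $R$-deep time — that the metric ball of radius $R$ about $\gamma(t)$ lies in $\mc{N}_D(P)$ — is either vacuous (if $D<R$) or satisfied by every point of $\mc{N}_{D-R}(P)$ (if $D\geq R$); you want a condition on the path over a parameter window, e.g.\ that $\gamma$ sends a window of definite radius about $t$ into $\mc{N}_D(P)$, or (as the paper does) that there are witnesses on both sides of $t$, at parameter distance between a threshold $\theta$ and $L/4$, landing near $P$. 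Second, the claim that the set of $P$-deep times is a single interval is false for a local quasi-geodesic: nothing prevents $\gamma$ from going deep into $P$, making an excursion of parametrized length greater than $L$, and returning to $P$, and your contradiction argument only rules out deep--shallow--deep patterns inside a single window of length $L$. What is true, and all that the decomposition needs, is that the components of the deep set for each $P$ are intervals separated by a definite fraction of $L$ (and that deep sets for distinct peripherals are disjoint, by isolation of peripherals); repeated peripherals among the $P_i$ are permitted by the statement. Finally, note that "shallow" does not immediately give bounded projections: you need a short argument (bounded geodesic image plus quasiconvexity of peripherals) showing that a local subsegment of a $\sigma_i$ with large projection to some $P$ would force a deep sub-window, before the electrification step applies.
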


\subsection{Consequences of the  Morse local-to-global property}
 Once the difficult task of showing a space satisfies the Morse local-to-global property is completed, many arguments from hyperbolic spaces can be seamlessly generalized.
 
 Our first examples of this are extensions of Gitik's combination theorems for quasiconvex subgroups of hyperbolic groups to stable subgroups of Morse local-to-global groups. A streamlined version of the combination theorems is the following (see Section \ref{sec:combination_theorems} for complete statements).

\begin{thmi}[Combinations of stable subgroups]\label{intro_thm:combination_theorems}
	 Let $G$ be a finitely generated group with the Morse local-to-global property and $P$, $Q$ be  stable subgroups of $G$.
	 For each  finite generating set of $G$, there exists $C>0$ (depending only on the stability parameters of $P$ and $Q$) so that if $P\cap Q$ contains all elements of  $P \cup Q$ whose length in $G$ is at most $C$, then the subgroup $\langle P,Q \rangle$ is stable in $G$ and isomorphic to $P \ast_{P\cap Q} Q$. Further, if $P$ is malnormal in $G$, then the same conclusion holds if we only require $P\cap Q$ to contain all elements of $P$  whose length in $G$ is at most $C$.
\end{thmi}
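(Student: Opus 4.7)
The plan is to show that every reduced word in the candidate amalgamated product $P \ast_{P \cap Q} Q$ corresponds to a path in $G$---a concatenation of geodesics from $P$ and $Q$---which is a local Morse quasi-geodesic, and then to apply the Morse local-to-global property of $G$ to promote this to a global Morse quasi-geodesic. Positive length of such paths for every non-trivial reduced word yields injectivity of the natural map $P \ast_{P \cap Q} Q \to \langle P, Q \rangle$, and the uniform Morse/quasi-geodesic constants yield stability of $\langle P, Q \rangle$ in $G$, delivering both conclusions of the theorem in one stroke.

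To fix parameters, I would use stability of $P$ and $Q$ to pick a common Morse gauge $N$ and constants $(\lambda_0, \epsilon_0)$ such that every geodesic in $P$ or $Q$, viewed in $G$ via the inclusion, is an $N$--Morse $(\lambda_0,\epsilon_0)$--quasi-geodesic. Then I would apply the Morse local-to-global property (Definition \ref{intro_defn:local_to_global}) to obtain a threshold $L$ such that every $(L;N;\lambda_0,\epsilon_0)$--local Morse quasi-geodesic in $G$ is a global $M'$--Morse $(\lambda',\epsilon')$--quasi-geodesic, with $M',\lambda',\epsilon'$ depending only on the stability parameters. Finally, I would take $C$ larger than $2L$, plus corrections absorbing the quasi-isometry constants of $P \hookrightarrow G$ and $Q \hookrightarrow G$ and the contraction constants of the closest-point projections onto the stable images.

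Given $g \in \langle P, Q \rangle$, write it in reduced normal form $g = x_1 \cdots x_n$ with letters alternating between $P \setminus (P \cap Q)$ and $Q \setminus (P \cap Q)$; the hypothesis on $C$ forces $|x_i|_G > C$ for each $i$. Build a path $\gamma$ in $G$ by concatenating geodesics for the $x_i$ in their respective subgroups, so that the $i$-th piece $\gamma_i$ lies in a left translate of $P$ or of $Q$. The principal obstacle, and the step requiring the most technical care, is to verify that $\gamma$ is a local Morse quasi-geodesic. Because every $\gamma_i$ has length exceeding $C > 2L$, any subpath of $\gamma$ of length at most $L$ is contained in the union of two consecutive pieces. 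A subpath inside a single $\gamma_i$ is $N$--Morse and $(\lambda_0,\epsilon_0)$--quasi-geodesic by stability. For a subpath crossing a single transition $\gamma_i \to \gamma_{i+1}$, I would bound the backtracking at the junction by using the strongly contracting closest-point projection onto the translate of $P$ (or $Q$) carrying $\gamma_i$: the hypothesis that $P \cap Q$ contains all short elements of $P \cup Q$ forces the projection of the adjacent coset to have diameter bounded in terms of $C$, which yields a uniform quasi-geodesic constant. Morseness of the crossing subpath then follows from a standard projection estimate for concatenations of Morse quasi-geodesics with bounded mutual projection.

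Once local Morse quasi-geodesicity is established, the Morse local-to-global property gives that $\gamma$ is a global $M'$--Morse $(\lambda',\epsilon')$--quasi-geodesic. In particular $\gamma$ has positive length whenever the reduced word is non-trivial, proving $\langle P, Q \rangle \cong P \ast_{P \cap Q} Q$, and the uniform constants imply this subgroup quasi-isometrically embeds in $G$ with uniformly Morse image, hence is stable. For the malnormal case, the same strategy works after replacing the length hypothesis on elements of $Q$ by the consequences of malnormality: since $P \cap gQg^{-1}$ is finite for $g \notin P$, distinct translates of $P$ appearing along $\gamma$ have uniformly bounded pairwise projection, so even when the intermediate $Q$-pieces are short one can still control the junction estimates between consecutive long $P$-pieces, and the argument goes through requiring only that $P \cap Q$ contain the short elements of $P$.
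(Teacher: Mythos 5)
Your overall architecture matches the paper's: build a concatenation of Morse geodesics labelled by an alternating word, show it is a local Morse quasi-geodesic, invoke the Morse local-to-global property to get a uniform global Morse quasi-geodesic, and deduce both injectivity of $P \ast_{P\cap Q} Q \to \langle P,Q\rangle$ and stability from the uniform constants. However, the step you yourself flag as the crux --- verifying the local quasi-geodesic condition at the junctions --- has two genuine gaps. First, you take an arbitrary reduced normal form $g = x_1\cdots x_n$. Normal-form letters in an amalgam are only defined up to multiplication by elements of $I = P\cap Q$, and with a bad choice the concatenated path need not be a local quasi-geodesic at all: replacing $p_i$ by $p_i a$ and $q_i$ by $a^{-1}q_i$ for a long $a \in I$ keeps the word reduced but forces the path to travel out along $a$ and straight back along $a^{-1}$. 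The paper (following Gitik) must, and does, choose each interior $p_i$ to be a shortest representative of its double coset $I p_i I$ (and $p_1$, $p_m$ shortest in their one-sided cosets); this minimality is an essential input to the junction estimate, not a convenience.

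Second, your mechanism for bounding backtracking at a junction appeals to ``the strongly contracting closest-point projection'' onto the stable coset carrying $\gamma_i$. In a general Morse local-to-global group this is not available: stable subsets are Morse, hence only weakly (sublinearly) contracting in general metric spaces, and whether they enjoy stronger contraction in Morse local-to-global spaces is explicitly posed as an open question in this paper. The paper instead runs Gitik's original argument verbatim at the junctions, whose inputs are: slimness of triangles (resp.\ quadrilaterals, in the malnormal case) with Morse sides, the fact that each piece lies in the $\mu$--neighborhood of a coset of $P$ or $Q$, the finiteness constant $A$ counting elements of length less than $2\mu+\delta$ (which enters the additive constant $\epsilon = 4A\mu+\delta$), and the minimality of the $p_i$; the hypothesis that $I$ contains all elements of length less than $C$ is used only to make each non-degenerate piece longer than $L$ and to guarantee at the end that the quasi-geodesic has parametrized length exceeding $kc$, hence nontrivial endpoints. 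Your malnormal case also misuses the hypothesis: malnormality of $P$ gives $P \cap gPg^{-1} = \{e\}$ for $g \notin P$, not finiteness of $P \cap gQg^{-1}$, and the actual argument there must handle subpaths crossing three consecutive pieces (two short $Q$--segments are allowed), which is why the paper works with $(6,\epsilon)$ constants, quadrilateral slimness, and a counting argument in the quotient of the $P$--action rather than the projection bound you sketch.
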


Stable subgroups are a strong generalization of quasiconvex subgroups that requires every pair of elements to be joined by a uniform Morse geodesic that stays uniformly close to the subgroup.  Stable subgroups were introduced by  Durham and Taylor to study the convex cocompact subgroups of the mapping class group \cite{Durham_Taylor_Stability}, but have sense been studied in a variety of non-hyperbolic groups \cite{HamenstaedtHensel_Stability_Outer_Space,ABD,Tran2017}. Theorem \ref{intro_thm:combination_theorems} is particularly interesting in the mapping class group, where it produces combination theorems for the convex cocompact subgroups by the work of Durham and Taylor (see Section \ref{subsec:intro_MCG} for details).  Similar combination theorems have been proved by Dey, Kapovich, and Leeb for Anosov subgroups of semi-simple Lie groups \cite{Dey_Kapovich_Leeb_combination} and by  Mart{\'i}nez-Pedroza and Sisto  for relatively quasiconvex subgroups of relatively hyperbolic groups \cite{Martinez_Sisto_combination}.

In Section \ref{sec:combination_theorems}, we record several corollaries to Theorem \ref{intro_thm:combination_theorems}, including the fact that all infinite normal subgroups of a Morse local-to-global group contain a Morse element.

\begin{cori}[Normal subgroups of Morse local-to-global groups]\label{intro_cor:normal_subgroups}
	Let $G$ be a finitely generated group with the Morse local-to-global property. If $G$ contains a Morse element and $N$ is an infinite normal subgroup of $G$, then $N$ contains a Morse element of $G$.
\end{cori}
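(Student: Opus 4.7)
The plan is to produce a Morse element inside $N$ by forming a commutator of a fixed Morse element $g \in G$ with a suitably chosen element of $N$, then invoking Theorem~\ref{intro_thm:combination_theorems} to certify that the commutator is itself Morse. For any $n \in N$, set $h = ngn^{-1}$; as conjugation is an isometry of the Cayley graph, $h$ is Morse with the same gauge as $g$. For each $k \in \Z$, the commutator
\[
[g^k,n] = g^k n g^{-k} n^{-1} = g^k h^{-k}
\]
lies in $N$, since $g^k n g^{-k} \in N$ by normality and $n^{-1} \in N$. The aim is therefore to choose $n \in N$ and $k$ so that $g^k h^{-k}$ is a Morse element of $G$.

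I would split into two cases according to whether $N$ is contained in the elementary closure $E(g) := \{x \in G : d_{\mathrm{Haus}}(x\langle g\rangle, \langle g\rangle) < \infty\}$. First, suppose there exists $n \in N$ with $n \notin E(g)$, so that $g$ and $h$ have independent quasi-axes. Set $P = \langle g^k \rangle$ and $Q = \langle h^k \rangle$; both are stable cyclic subgroups of $G$ generated by Morse elements of uniform gauge. For $k$ large enough, $P \cap Q$ is trivial (the two cyclic subgroups have non-parallel axes, because $n \notin E(g)$) and every nontrivial element of $P \cup Q$ has word length exceeding the constant $C$ provided by Theorem~\ref{intro_thm:combination_theorems}. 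The combination theorem then yields that $\langle P, Q\rangle$ is stable in $G$ and isomorphic to the free product $P \ast Q$. The element $g^k h^{-k} \in N$ is a nontrivial infinite-order element of this stable free product, so its cyclic orbit in $G$ is a Morse quasi-geodesic, making $g^k h^{-k}$ a Morse element of $G$ inside $N$.

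In the remaining case $N \subseteq E(g)$, any infinite-order element $n \in N$ is automatically Morse in $G$: its cyclic orbit lies at bounded Hausdorff distance from the Morse quasi-line $\langle g\rangle$, and so is itself a Morse quasi-geodesic. To guarantee such an element exists, I would show $E(g)$ is virtually cyclic by analyzing the action of $E(g)$ on $\langle g\rangle$, which preserves this Morse quasi-line up to bounded Hausdorff distance and hence descends to an action on $\R$; the Morse local-to-global property should force the kernel of this action to be finite. Then $N$ being infinite inside a virtually cyclic group ensures it contains an element of infinite order. This structural claim on $E(g)$ in an arbitrary Morse local-to-global group is the main obstacle: it is standard in the hyperbolic or $\CAT(0)$ settings but requires careful use of the local-to-global hypothesis in full generality, whereas the first case is a comparatively clean application of Theorem~\ref{intro_thm:combination_theorems} once the dichotomy is in place.
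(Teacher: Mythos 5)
Your core mechanism is the same as the paper's: the element you produce, $[g^k,n]=g^k\,(ng^{-k}n^{-1})$, is exactly the element $h^{-n}(gh^ng^{-1})$ the paper uses (up to relabeling), certified Morse by exhibiting it as a non-trivial element of a stable free product obtained from Theorem~\ref{thm:stable_subgroup_combination}. However, your case analysis has a genuine gap. Your Case 2 rests on the unproven claim that the elementary closure $E(g)$ is virtually cyclic in an arbitrary Morse local-to-global group, and you acknowledge you do not know how to establish it. The paper sidesteps this entirely by running a different dichotomy: either some positive power of the Morse element already lies in $N$ (and then that power is the desired Morse element of $N$, with nothing more to prove), or no positive power does, in which case infinitely many distinct cosets of $H=\langle h\rangle$ have representatives in $N$, and the finite-width statement of Theorem~\ref{thm:finite_width} directly produces $g\in N$ with $H\cap gHg^{-1}=\{e\}$. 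The finite-width/commensurator theorem is precisely the tool that replaces your analysis of $E(g)$; without invoking it, your "non-parallel axes implies trivial intersection" step in Case 1 is also unjustified, since two Morse cyclic subgroups could a priori intersect non-trivially even when the conjugating element moves the quasi-axis.

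A second, more technical problem: you set $P=\langle g^k\rangle$ and $Q=\langle h^k\rangle$ and then let $k\to\infty$, but the constant $C$ in Theorem~\ref{thm:stable_subgroup_combination} depends on the stability parameters $(M,\mu)$ of $P$ and $Q$, and the parameter $\mu$ for $\langle g^k\rangle$ grows with $k$ (a geodesic between consecutive points $g^{jk}$ and $g^{(j+1)k}$ is only close to $\langle g^k\rangle$ up to a constant comparable to $|g^k|$). This threatens a circularity: enlarging $k$ to beat $C$ also enlarges $C$. The full statement of Theorem~\ref{thm:stable_subgroup_combination} is designed to avoid exactly this — one fixes the ambient stable subgroups $P=\langle g\rangle$ and $Q=n\langle g\rangle n^{-1}$ (whose parameters, and hence $C$, are fixed once and for all) and then takes $P_1=\langle g^k\rangle$ and $Q_1=\langle h^k\rangle$ as the subgroups to which the hypothesis on short elements is applied. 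With that adjustment, and with the dichotomy replaced by the paper's, your argument goes through.
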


An element $g \in G$ is  \emph{$M$--Morse} if $\langle g \rangle$ is undistorted and every pair of elements of $\langle g \rangle$ is joined by a $M$--Morse geodesic in the Cayley graph of $G$. Prominent examples of Morse elements include pseudo-Anosov elements of the mapping class group and rank-1 elements of CAT$(0)$ groups.  
In these cases,  Corollary \ref{intro_cor:normal_subgroups} can be obtained from Ivanov's omnibus subgroup theorem \cite{ivanov_subgroups} or a result of Arzhantseva, Cashen, and Tao for groups acting on spaces with strongly contracting elements \cite[Proposition 3.1]{ACT_Growth_tight}. Corollary \ref{intro_cor:normal_subgroups} provides a proof entirely in terms of the Cayley of the group in these cases, and extends these results to all hierarchically hyperbolic groups and closed $3$--manifold groups containing Morse elements.

Theorem \ref{intro_thm:combination_theorems} also allows us to produce a trichotomy that guarantees Morse local-to-global groups that are not Morse limited or virtually cyclic cannot be amenable. 

\begin{cori}[Trichotomy for Morse local-to-global groups]\label{intro_cor:trichotomy}
    A finitely generated group $G$ with the Morse local-to-global property satisfies exactly one of the following.
	\begin{enumerate}
		\item $G$ is Morse limited.
		\item $G$ is virtually infinite cyclic.
		\item $G$ contains an infinite index stable free subgroup of rank $2$.
	\end{enumerate}
\end{cori}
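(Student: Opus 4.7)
The plan is to show the three cases are mutually exclusive and exhaustive. Mutual exclusivity is immediate: a Morse limited group contains no bi-infinite Morse quasi-geodesic and so cannot be virtually infinite cyclic or contain a stable free subgroup of rank $2$; and a virtually infinite cyclic group contains no free subgroup of rank $2$. So I may assume $G$ is neither Morse limited nor virtually infinite cyclic, and I will produce an infinite index stable free subgroup of rank $2$.

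First I produce a Morse element. Because $G$ is not Morse limited, there is a Morse gauge $M$ and a sequence of $M$--Morse geodesic segments in the Cayley graph whose lengths tend to infinity. Translating each segment so that its midpoint coincides with the identity vertex and using local compactness of the Cayley graph to extract a subsequential limit (diagonalizing over balls of increasing radius) yields a bi-infinite quasi-geodesic; the Morse local-to-global property guarantees that this limit is $M'$--Morse for some $M'$ depending only on $M$. A standard pigeonhole argument applied to initial segments of this bi-infinite geodesic then produces an element $g \in G$ whose cyclic subgroup $\langle g\rangle$ fellow-travels the geodesic, so $g$ is a Morse element of $G$.

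Now I invoke Theorem \ref{intro_thm:combination_theorems}. The commensurator $E(g)$ of $\langle g\rangle$ in $G$ is virtually cyclic (a standard consequence of $g$ being Morse), so since $G$ is not virtually cyclic, $E(g)$ is a proper subgroup of infinite index. Choose $h \in G \setminus E(g)$; then $\langle hgh^{-1}\rangle$ is not commensurable to $\langle g\rangle$. Set $P = \langle g^N\rangle$ and $Q = \langle hg^N h^{-1}\rangle$. Both are stable infinite cyclic subgroups whose stability gauges can be taken independent of $N$. Let $C$ be the constant provided by Theorem \ref{intro_thm:combination_theorems} for these gauges. For $N$ large enough, every nontrivial element of $P \cup Q$ has word length greater than $C$ in $G$, so the intersection hypothesis holds vacuously, and Theorem \ref{intro_thm:combination_theorems} yields $\langle P, Q \rangle \cong P \ast Q \cong F_2$ as a stable subgroup of $G$. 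If this subgroup happened to have finite index, then $G$ would be virtually $F_2$; in that case I replace it by $\langle g^{2N}, hg^{2N}h^{-1}\rangle$, which is a proper free subgroup of rank $2$ (hence of infinite index in $F_2$ by the Euler characteristic formula for subgroups of free groups) and remains stable as a finitely generated subgroup of the stable subgroup $\langle P, Q\rangle$.

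The main obstacle is the extraction of the Morse element $g$: translating the purely metric hypothesis (existence of arbitrarily long Morse segments with a common gauge) into the group-theoretic conclusion (existence of a Morse element) requires both local compactness of the Cayley graph and the Morse local-to-global property, which together propagate the Morse condition through the limit and ensure that the resulting bi-infinite geodesic is the axis of an actual group element.
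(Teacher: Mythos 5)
Your proposal follows essentially the same route as the paper: the paper's Theorem \ref{thm:trivial_Morse_element_dichotomy} (via Lemma \ref{lem:long_Morse_segments_imply_Morse_ray} and Proposition \ref{prop:Morse_ray_imlies_Morse_element}) extracts a Morse element exactly as you describe, Theorem \ref{thm:finite_width} supplies a conjugate cyclic subgroup meeting $\langle g\rangle$ trivially, and the combination theorem with $I=\{e\}$ and large powers yields the stable free subgroup of rank $2$. Your extra step securing the infinite-index clause (passing to a proper rank-$2$ subgroup if the first one happened to have finite index) is a welcome addition that the paper's body proof leaves implicit.

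One imprecision worth fixing: the subgroups $\langle g^N\rangle$ are \emph{not} $(M,\mu)$--stable with $\mu$ independent of $N$ --- a geodesic from $e$ to $g^N$ strays a distance comparable to $N|g|$ from the set $\langle g^N\rangle$ --- so, as literally written, your appeal to the combination theorem is circular: the constant $C$ would depend on $N$, while $N$ must be chosen larger than $C$. This is precisely why Theorem \ref{thm:stable_subgroup_combination} is phrased for subgroups $P_1\leq P$ and $Q_1\leq Q$ with $C$ depending only on the stability parameters of the ambient $P$ and $Q$: apply it with $P=\langle g\rangle$, $Q=\langle hgh^{-1}\rangle$, $P_1=\langle g^N\rangle$, $Q_1=\langle hg^Nh^{-1}\rangle$, and the circularity disappears. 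A smaller attribution point: the bi-infinite limit geodesic is $M$--Morse by the Arzel\`{a}--Ascoli/diagonal argument alone (Lemma \ref{lem:long_Morse_segments_imply_Morse_ray}); the Morse local-to-global property enters afterwards, to show that the periodic path labeled by the repeated subword produced by pigeonhole is a \emph{global} Morse quasi-geodesic, which is what makes that word a Morse element (Proposition \ref{prop:Morse_ray_imlies_Morse_element}).
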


Our last corollary is an generalization of a theorem of Arzhantseva to stable subgroups of torsion-free, Morse local-to-global groups \cite[Theorem 1]{Arzhantseva_quasiconvex_subgroups}.
\begin{cori}
Let $G$ be a torsion free, Morse local-to-global group. If $Q$ is a non-trivial, infinite index stable subgroup of $G$, then there is an infinite order element $h$ such that $\langle Q,h\rangle \cong Q \ast \langle h \rangle$ and $\langle Q,h\rangle$ is stable in $G$.
\end{cori}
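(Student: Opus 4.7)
The plan is to apply the malnormal case of the combination theorem (Theorem~\ref{intro_thm:combination_theorems}) with $P=\langle h\rangle$ for a carefully chosen infinite order element $h\in G$. First, observe that the hypotheses supply many Morse elements of $G$: torsion freeness together with $Q$ being non-trivial and stable implies $Q$ contains an infinite order element, and stability makes such elements Morse in $G$, so $G$ is not Morse limited. Moreover $G$ is not virtually cyclic, since the only non-trivial subgroup of $\mathbb{Z}$ has finite index. The trichotomy of Corollary~\ref{intro_cor:trichotomy} then provides an infinite-index stable free subgroup $F\cong F_2$ in $G$, inside which the construction of $h$ will take place.

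Next I construct $h$. Pick an infinite order $g\in F$; by stability of $F$ it is a Morse element of $G$, with Morse gauge depending only on the stability parameters of $F$. Results from earlier in the paper on discreteness of Morse translation lengths imply that the elementary closure $E(g)=\{x\in G : x\langle g\rangle x^{-1}\cap\langle g\rangle \text{ is infinite}\}$ is virtually cyclic, and torsion freeness of $G$ forces it to be infinite cyclic. Let $h$ be a generator of $E(g)$; then $\langle h\rangle$ is malnormal in $G$. If $\langle h\rangle\cap Q\neq\{1\}$, replace $g$ by a different primitive element of $F$, using the infinite index of $Q$ and the abundance of Morse elements inside the stable rank-$2$ free group $F$ to find one whose elementary closure in $G$ meets $Q$ only at the identity, while retaining the uniform Morse gauge (and hence uniform stability data for $\langle h\rangle$).

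With $\langle h\rangle$ malnormal, disjoint from $Q$ except at the identity, and of uniformly controlled stability data, the constant $C$ produced by the malnormal version of Theorem~\ref{intro_thm:combination_theorems} is determined and bounded a priori. By choosing the original $g$ sufficiently long inside $F$ (possible because $F$ contains Morse elements of arbitrary $G$-length), and using that $\langle h\rangle$ is undistorted in $G$, we arrange $|h^k|_G>C$ for every $k\neq 0$. The short-element condition of the malnormal combination theorem is then vacuously satisfied: the only element of $\langle h\rangle$ of $G$-length at most $C$ is the identity, which lies in $\langle h\rangle\cap Q$. Theorem~\ref{intro_thm:combination_theorems} therefore applies and yields $\langle Q,h\rangle\cong Q\ast_{\{1\}}\langle h\rangle=Q\ast\langle h\rangle$, stable in $G$, as required.

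The chief technical obstacle is securing malnormality of $\langle h\rangle$ together with the length bound $|h|_G>C$. The naive move of inflating $h$ by passing to a power $h^N$ destroys malnormality, since $h$ itself would conjugate $\langle h^N\rangle$ to itself. The construction must therefore work at the level of elementary closures, selecting the underlying Morse element $g\in F$ so that the whole cyclic group $E(g)$ is long in $G$. Ensuring $\langle h\rangle\cap Q=\{1\}$ is similarly delicate and requires exploiting the infinite index of $Q$ together with the flexibility provided by the stable rank-$2$ free subgroup $F$; this is where the generic abundance of Morse elements in $F$ whose elementary closures avoid $Q$ is essential.
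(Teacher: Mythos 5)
There is a genuine gap, in fact two, and they occur exactly at the two points your last paragraph flags as ``delicate'' without resolving them. First, the trivial-intersection step: you produce a stable free subgroup $F$ via the trichotomy and hope to find a primitive $g\in F$ with $E(g)\cap Q=\{1\}$ by ``abundance of Morse elements in $F$,'' but $F$ is produced with no reference to $Q$ and may well lie inside $Q$ (for instance $Q$ itself can be a stable rank-$2$ free subgroup of infinite index, and nothing in the trichotomy prevents $F\leq Q$); in that case $E(g)\cap Q\supseteq\langle g\rangle\neq\{1\}$ for \emph{every} $g\in F$, so no amount of re-choosing primitives inside the fixed $F$ can work. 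The missing idea is to choose the cyclic subgroup relative to $Q$: the paper uses the finite-width statement of Theorem \ref{thm:finite_width} to find $g$ with $Q\cap gQg^{-1}=\{e\}$, takes $k\in Q$ nontrivial, and works with $\langle gkg^{-1}\rangle\leq gQg^{-1}$, which meets $Q$ (and, using torsion-freeness, whose commensurator meets $Q$) only in the identity.

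Second, the length condition. You want the malnormal group itself to be $\langle h\rangle$ and then to beat the constant $C$ by choosing $g$ long, asserting that $C$ is ``bounded a priori'' because the stability data of $\langle h\rangle$ are uniform. Neither half of this is justified: the stability constant $\mu$ of $\langle g\rangle$ (hence of $E(g)$) grows with the length of $g$ inside $F$, so $C=C(M,\mu,S)$ is not uniform over your candidate elements; and even granting a bound on $C$, the generator $h$ of $E(g)$ is a root of $g$ and can be short no matter how long $g$ is (e.g.\ $g$ primitive in $F$ but a high proper power in $G$), so $|h^k|_G>C$ for all $k\neq 0$ cannot be arranged merely by lengthening $g$. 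The paper's proof of Corollary \ref{cor: extending_stable_subgroups} sidesteps both problems by invoking the full statement of Theorem \ref{thm:stable_subgroup_combination}.(\ref{item:combination_thrm_2}), which allows a subgroup $P_1\leq P$: one fixes $P$ to be the commensurator of $\langle gkg^{-1}\rangle$ (stable since the cyclic group has finite index in it by Theorem \ref{thm:finite_width}, and malnormal by the commensurator argument you essentially reproduce), so the constant $C$ is fixed once and for all, and then takes $P_1=\langle gk^ng^{-1}\rangle$ with $n$ large; malnormality is only needed for $P$, not $P_1$, so passing to powers is harmless, and undistortion makes all nontrivial elements of $P_1$ longer than the now-fixed $C$. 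Your proposal, working only from the streamlined introductory statement where the short-element condition is imposed on $P$ itself, has no substitute for this mechanism. (A minor point: the virtual cyclicity of $E(g)$ follows from the commensurator statement in Theorem \ref{thm:finite_width}, not from discreteness of translation lengths.)
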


Beyond our combination theorem, we generalize an assertion of Gromov \cite{Gromov_Hyp_Groups} that was proved by Delzant \cite{Delzant} about the set of translation lengths for elements of a hyperbolic group.  We say a conjugacy class is $M$--Morse if all elements with the shortest word length in the conjugacy class are $M$--Morse.

\begin{thmi}[Translation length of Morse elements]\label{intro_thm:translation_lengths}
	Let $G$ be a finitely generated group with the Morse local-to-global property. For each Morse gauge $M$, the set of translation lengths of $M$--Morse conjugacy classes of $G$ is a discrete set of rational numbers.
\end{thmi}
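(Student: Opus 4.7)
The plan is to mirror Delzant's proof for hyperbolic groups, using the Morse local-to-global property in place of the hyperbolicity-based construction of a quasi-axis. Write $\tau(g) = \lim_{n \to \infty} d(1, g^n)/n$ for the translation length.

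\textbf{Quasi-axis construction.} Fix an $M$-Morse conjugacy class with shortest representative $g_0$. Apply the Morse local-to-global property with input Morse gauge $M$ and quasi-geodesic parameters $(1, 0)$ to obtain a local scale $L$ depending only on $M$. First suppose $|g_0| \geq L$. Form the bi-infinite path $\alpha \colon \mathbb{R} \to G$ by concatenating the translated geodesics $g_0^k \cdot [1, g_0]$ for $k \in \mathbb{Z}$, parametrized so that $\alpha(k|g_0|) = g_0^k$ and $g_0 \cdot \alpha(t) = \alpha(t + |g_0|)$. Every sub-arc of $\alpha$ of length at most $L$ lies in a single geodesic piece (since $L \leq |g_0|$) and is therefore an $M$-Morse geodesic. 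Thus $\alpha$ is an $(L; M; 1, 0)$-local Morse quasi-geodesic, and the Morse local-to-global property promotes $\alpha$ to a global $M'$-Morse $(\lambda', \epsilon')$-quasi-geodesic with $M'$, $\lambda'$, and $\epsilon'$ depending only on $M$.

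\textbf{Discreteness.} The quasi-geodesic lower bound on $\alpha$ gives
\[
d(1, g_0^n) \;=\; d(\alpha(0), \alpha(n|g_0|)) \;\geq\; \frac{n|g_0|}{\lambda'} - \epsilon'.
\]
Dividing by $n$ and letting $n \to \infty$ yields $\tau(g_0) \geq |g_0|/\lambda'$, so $|g_0| \leq \lambda' \tau(g_0)$. When $|g_0| < L$, this bound on $|g_0|$ is immediate. In either case, $|g_0| \leq \max\{L, \lambda' \tau(g_0)\}$, so for any $T > 0$ there are only finitely many $M$-Morse conjugacy classes with $\tau \leq T$; discreteness follows.

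\textbf{Rationality.} For rationality, I adapt Delzant's power-taking argument. The aim is to find $N$ depending only on $M$ such that the integer sequence $f(k) = d(1, g_0^{Nk})$ has eventually constant first difference, say $f(k+1) - f(k) = A \in \mathbb{Z}$ for all large $k$; then $\tau(g_0^N) = A$ and consequently $\tau(g_0) = A/N \in \mathbb{Q}$. The first differences lie in a finite set of integers (bounded in absolute value by $d(1, g_0^N)$), and one shows they stabilize by combining the $g_0$-equivariance of the quasi-axis $\alpha$ with the Morse rigidity provided by the local-to-global property: the local combinatorial data near $g_0^{Nk}$ that determines $f(k+1) - f(k)$ lies in a finite, $g_0$-translation-compatible set that must eventually be periodic in $k$. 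The main difficulty is precisely this rationality step, since the Morse local-to-global property provides only quasi-geodesic (not isometric) information about $\alpha$; the rational denominator must be read off from the integer-valuedness of the word metric and the $g_0$-equivariance of $\alpha$ via a Delzant-style normal-form argument.
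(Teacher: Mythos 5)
Your overall strategy is the same as the paper's (build a quasi-axis for the shortest representative, promote it via the local-to-global property, then run Delzant's argument), but there are two genuine gaps, one of which is the heart of the matter.

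The first gap is in the verification that $\alpha$ is an $(L;M;1,0)$--local Morse quasi-geodesic. You assert that every sub-arc of parametrized length at most $L$ lies in a single translate $g_0^k\cdot[1,g_0]$ because $L\leq |g_0|$. That is false: a sub-arc of arbitrarily short length can straddle a concatenation point $k|g_0|$ and hence meet two consecutive translates. Such a sub-arc is a concatenation of two geodesics, and a priori it can backtrack, so it need not be a $(1,0)$--quasi-geodesic (nor a quasi-geodesic with any uniform constants) without further input. The needed input is precisely the minimality of $|g_0|$ in its conjugacy class, which you state but never use. This is what the paper's Lemma~\ref{lem:uniform_embedding_morse} does: for a sub-arc crossing a concatenation point it uses slimness of triangles with $M$--Morse sides (Lemma~\ref{lem:Morse_polygons}) to write $g_0=v_2g_0'v_1$ with $|g_0|=|v_2|+|g_0'|+|v_1|$, and the inequality $|g_0|\leq |v_2^{-1}g_0v_2|$ forces $|v_1|+|v_2|\leq 2\delta+2$ where $\delta=4M(3,0)$; the upshot is only that the sub-arc is a $(1,4\delta+4)$--quasi-geodesic, after which Lemma~\ref{lem:concatenation_of_morse_is_quasi_geodesic} upgrades the local quasi-geodesic to a local \emph{Morse} quasi-geodesic. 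Consequently the local-to-global property must be invoked with parameters $(1,4\delta+4)$, not $(1,0)$; with your version, $L$ cannot be chosen and the construction collapses. (Your discreteness deduction from the quasi-axis bound is fine once this is repaired, and your treatment of the case $|g_0|<L$ is also fine for discreteness.)

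The second gap is that the rationality step is not a proof: you explicitly flag it as the main difficulty and only gesture at a periodicity/stabilization argument. The paper closes this by citing \cite[Lemma 9]{MR_Morse_Boundary}: once the quasi-axis is a uniform $(N;\lambda,\epsilon)$--Morse quasi-geodesic, every bi-infinite geodesic with the same endpoints in the Morse boundary lies in a uniform $R$--neighborhood of it, with $R$ depending only on $M$, $G$, and $S$. This is the substitute for $\delta$--thinness that allows one to follow the proof of \cite[Theorem III.H.3.17]{Bridson_Haefliger} verbatim; some such uniform fellow-traveling statement is indispensable for the pigeonhole argument, and your sketch does not supply it.
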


 We also establish a Cartan--Hadamard style theorem for Morse local-to-global spaces that allows for hyperbolicity to be checked locally.

\begin{thmi}[Local condition for hyperbolicity]\label{intro_thm:Cartan-Hadamard}
	Let $X$ be a geodesic metric space with the Morse local-to-global property. For each $\delta\geq 0$, there exists $R\geq 0$ so that if every triangle in $X$ with vertices contained in a ball of radius $R$ is $\delta$--slim, then $X$ is a hyperbolic metric space.
\end{thmi}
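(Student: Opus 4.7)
The strategy is to show that every geodesic of $X$ is $M'$--Morse for a common gauge $M'$ depending only on $\delta$, and then to deduce hyperbolicity from the classical fact that a geodesic metric space is Gromov-hyperbolic if and only if its geodesics are uniformly Morse. The local hypothesis supplies the Morse data on short scales, and the Morse local-to-global property is precisely the tool that bootstraps this to a global statement.

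First, let $M_\delta$ denote the Morse gauge output by the usual Morse lemma in $\delta$--hyperbolic spaces, so that in any genuinely $\delta$--hyperbolic space a $(\lambda,\epsilon)$--quasi-geodesic with endpoints on a geodesic $\alpha$ lies in the $M_\delta(\lambda,\epsilon)$--neighborhood of $\alpha$. Apply the Morse local-to-global property with input Morse gauge $M_\delta$ and local constants $(1,0)$ to extract a length $L = L(\delta) > 0$ together with output constants $M'$, $\lambda'$, $\epsilon'$ depending only on $\delta$. Now choose $R$ sufficiently large (depending on $L$ and $M_\delta$) so that, for every geodesic segment $\sigma \subset X$ of length at most $L$ and every quasi-geodesic $\eta$ that must be tested in order to verify the $M_\delta$--Morse condition for $\sigma$ under the equivalent definition of Morseness in Section~\ref{subsec:Morse_geodesics}, the union $\sigma \cup \eta$ lies in some ball of radius $R$. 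Inside such a ball the hypothesis provides $\delta$--slim triangles, so the standard Morse lemma applies locally and forces $\eta$ into the $M_\delta(\lambda,\epsilon)$--neighborhood of $\sigma$. Consequently every subsegment of length at most $L$ of any geodesic in $X$ is $M_\delta$--Morse, so every geodesic of $X$ is an $(L; M_\delta; 1, 0)$--local Morse quasi-geodesic. The Morse local-to-global property then promotes each such geodesic to a globally $M'$--Morse quasi-geodesic, and the uniform Morseness of all geodesics in a geodesic metric space implies that $X$ is hyperbolic.

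The principal technical obstacle is the choice of $R$: a priori, a $(\lambda,\epsilon)$--quasi-geodesic with endpoints on a short geodesic segment can wander arbitrarily far from that segment, so one cannot naively contain $\sigma \cup \eta$ in a single ball of uniform radius $R$. The remedy is to pass to the equivalent definition of Morseness flagged in the paper's footnote, under which the relevant comparison curves have size controlled by the Morse gauge and the length of the test segment, allowing $R$ to be chosen as a function of $\delta$ alone. A secondary, classical ingredient is the equivalence between global uniform Morseness of all geodesics and Gromov-hyperbolicity in a geodesic metric space, which follows from the standard Morse-stability arguments.
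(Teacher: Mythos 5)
Your overall strategy coincides with the paper's proof of Theorem~\ref{thm:cartan-hadamard}: show every geodesic is an $(L;M;1,0)$--local Morse quasi-geodesic using the local slimness hypothesis, promote it to a globally $N$--Morse geodesic via the local-to-global property, and conclude hyperbolicity from slimness of Morse triangles (Lemma~\ref{lem:Morse_polygons}). You have also correctly isolated the one real difficulty. But your resolution of that difficulty does not work, and this is a genuine gap.

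The obstacle is that to certify a geodesic subsegment $\sigma$ of length at most $L$ as $M$--Morse, one must test it against $(\lambda,\epsilon)$--quasi-geodesics for \emph{all} $\lambda\geq 1$, $\epsilon\geq 0$, and such a test curve with endpoints on $\sigma$ can have diameter on the order of $\lambda^{2}L+\lambda^{2}\epsilon+\epsilon$, which is unbounded as $(\lambda,\epsilon)$ varies. No single radius $R$ depending only on $\delta$ can contain all of these curves, so the local hypothesis cannot be invoked for all of them. Your proposed remedy --- passing to the ``equivalent definition'' of Morseness from Section~\ref{subsec:Morse_geodesics} --- does not help here: that equivalence (Definition~\ref{defn:Morse_quasi-geodesic} versus Lemma~\ref{lem:Morse_easy_proof}) only trades a neighborhood condition for a Hausdorff-distance condition; under either formulation the comparison curves still range over all quasi-geodesic constants, and their size is not ``controlled by the Morse gauge and the length of the test segment.'' The paper's actual fix is different and essential: first inflate the input gauge so that $M(\lambda,\epsilon)\geq \lambda^{2}(\lambda+2\epsilon)+\epsilon+1$, then split into cases. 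If $\lambda+\epsilon\geq L$, the test curve has diameter at most $\lambda^{2}(\lambda+2\epsilon)+\epsilon<M(\lambda,\epsilon)$, so the Morse condition for $\sigma$ holds \emph{vacuously} with no appeal to local hyperbolicity at all. If $\lambda+\epsilon<L$, the test curve has diameter at most $2L^{3}+L$, so with $R=8L^{3}+4L+1$ everything sits in $\ball_{R/4}(\alpha(a))$ and the local version of the Morse lemma (Lemma~\ref{lem:mimic}) applies. Note also the order of quantifiers this forces: $L$ is produced from $\Phi(M,1,0)$ \emph{after} $M$ has been fixed (so $M$ cannot depend on $L$), and only then is $R$ chosen as a polynomial in $L$. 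Without the gauge-inflation trick, your argument has no way to handle test curves whose constants are large relative to $L$, and the claim that $R$ can be chosen ``as a function of $\delta$ alone'' so as to contain all test curves is false as stated.
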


Theorem \ref{intro_thm:Cartan-Hadamard} is similar to Gromov's local condition for hyperbolicity in simply connected spaces \cite[Theorem 4.1.A]{Gromov_Hyp_Groups}.  While our result does not apply to all simply connected spaces, the proof of Theorem \ref{intro_thm:Cartan-Hadamard}  is considerably shorter than proofs of Gromov's condition appearing in the literature (see \cite{Coulon_Cartan-Hadamard,Bowditch_Note_on_Hyperbolicity,Papasoglu_algorithm}) while still capturing the important cases of $\CAT(0)$ spaces and the universal covers of closed $3$--manifolds.

\subsection{Convex cocompact subgroups}\label{subsec:intro_MCG}

Farb and Mosher originally introduced convex cocompact subgroups of the mapping class group by analogy with convex cocompact subgroups of Kleinian groups \cite{Farb_Mosher_convex_cocompact}. These subgroups have since attracted substantial attention as they play a critical role in the theory of extensions of hyperbolic groups and the geometry of surface bundles \cite{Hamenstadt_extensions_of_surface_groups,Kent_Leininger_convex_cocompactness}.  A theorem of Durham and Taylor established that the stable subgroups of the mapping class group are precisely the convex cocompact subgroups \cite{Durham_Taylor_Stability}. Using this equivalence, Theorem \ref{intro_thm:combination_theorems} produces a combination theorem for convex cocompact subgroups of the mapping class group.

\begin{thmi}\label{intro_thm:MCG_combination_theorem}
	 Let $\mathrm{MCG}(S)$ be the mapping class group of a orientable surface $S$ of finite type. Suppose $P$ and $Q$ are convex cocompact subgroups of $\mathrm{MCG}(S)$. There exists $C>0$ so that if $P\cap Q$ contains all elements of  $P \cup Q$ whose word length in $\mathrm{MCG}(S)$ is at most $C$, then the subgroup $\langle P,Q \rangle$ is convex cocompact and isomorphic to $P \ast_{P\cap Q} Q$.  Further, if $P$ is malnormal, then the same conclusion holds if we only require $P\cap Q$ to contain all elements of $P$  whose length in $\mathrm{MCG}(S)$ is at most~$C$.
\end{thmi}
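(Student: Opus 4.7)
The plan is to assemble three ingredients that are already in place: Theorem \ref{intro_thm:examples_of_local_to_global} tells us that $\mathrm{MCG}(S)$ has the Morse local-to-global property, Theorem \ref{intro_thm:combination_theorems} is the combination theorem for stable subgroups of any Morse local-to-global group, and the Durham--Taylor theorem identifies the stable subgroups of $\mathrm{MCG}(S)$ with its convex cocompact subgroups. Once these three inputs are available, Theorem \ref{intro_thm:MCG_combination_theorem} should be nearly a direct specialization.

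First I would invoke Durham--Taylor to rewrite the hypothesis in the stable-subgroup language: since $P$ and $Q$ are convex cocompact in $\mathrm{MCG}(S)$, they are stable subgroups, and in particular they come equipped with stability parameters. These parameters are the data that Theorem \ref{intro_thm:combination_theorems} needs in order to produce the constant $C$. I would fix a finite generating set of $\mathrm{MCG}(S)$ and let $C$ be the constant supplied by Theorem \ref{intro_thm:combination_theorems} applied to this pair of stable subgroups in the Morse local-to-global group $\mathrm{MCG}(S)$.

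Next, under the standing hypothesis that $P\cap Q$ contains every element of $P\cup Q$ of word length at most $C$ in $\mathrm{MCG}(S)$, Theorem \ref{intro_thm:combination_theorems} applies verbatim and yields that $\langle P,Q\rangle\cong P\ast_{P\cap Q} Q$ and that $\langle P,Q\rangle$ is stable in $\mathrm{MCG}(S)$. Applying Durham--Taylor in the reverse direction upgrades stability to convex cocompactness, giving the first statement of the theorem. The malnormal case is handled identically: Theorem \ref{intro_thm:combination_theorems} explicitly offers the stronger conclusion requiring only elements of $P$ (rather than $P\cup Q$) of length at most $C$ to lie in $P\cap Q$, and the same Durham--Taylor translation delivers the convex cocompact conclusion.

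I do not anticipate a genuine obstacle: the technical content is entirely absorbed by Theorem \ref{intro_thm:examples_of_local_to_global}, Theorem \ref{intro_thm:combination_theorems}, and the Durham--Taylor equivalence. The only small point to verify is that the constant $C$ produced by Theorem \ref{intro_thm:combination_theorems} genuinely depends only on the stability parameters of $P$ and $Q$ (and the choice of generating set), which is exactly what that theorem asserts, so the statement of Theorem \ref{intro_thm:MCG_combination_theorem} is consistent with what the general machinery provides.
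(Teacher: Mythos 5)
Your proposal is correct and is exactly the paper's argument: $\mathrm{MCG}(S)$ is Morse local-to-global (via hierarchical hyperbolicity), the stable-subgroup combination theorem supplies the constant $C$ and the conclusions $\langle P,Q\rangle\cong P\ast_{P\cap Q}Q$ with $\langle P,Q\rangle$ stable, and the Durham--Taylor equivalence translates between stable and convex cocompact in both directions. No gap to report.
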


A long standing open question of Farb and Mosher makes combination theorems for convex cocompact subgroups particularly interesting

\begin{questioni}[{\cite[Question 1.7]{Farb_Mosher_convex_cocompact}}]\label{intro_ques:surface_subgroup}
Does there exist a convex cocompact subgroup of the mapping class group that is not free or virtually free? 
\end{questioni}

Since examples of virtually free convex cocompact subgroups are abundant, a strategy to answer Question \ref{intro_ques:surface_subgroup} could involve using Theorem \ref{intro_thm:MCG_combination_theorem} to create new examples using an amalgamated free product. This remains beyond our current reach in the mapping class group, but we produce new examples of one-ended stable subgroups of $\CAT(0)$ groups using our combination theorem in Example \ref{ex:example_of_combination_theorem}.

\subsection{Outline of the paper}
Section \ref{sec: background} consists of preliminary results on Morse quasi-geodesics and the definition of the Morse local-to-global property. 
Section \ref{sec: consequences of Mltg} is dedicated to consequences of the Morse local-to-global property. 
Sections \ref{sec:examples} and \ref{sec:relativelyhyperbolic} contain proofs that the Morse local-to-global property is exhibited by a large number of spaces. Section \ref{sec:examples} contains the cases of $\CAT(0)$ spaces, hierarchically hyperbolic spaces, and virtually solvable groups, while Section \ref{sec:relativelyhyperbolic} contains the cases of relatively hyperbolic spaces and $3$--manifold groups. 

Below, we report open questions that arose from our study of the Morse local-to-global property. This outlines a rich new direction for research in groups with features of non-positive curvature.

\subsection{Open questions and further directions}

The examples we give of groups without the Morse local-to-global property are all infinitely presented. Our first question, inspired by Ruth Charney, asks how ``nice" such a group could be.

\begin{openQ}
Does there exist an example of a finitely presented group that is not Morse local-to-global? Does there exist an example of a group with quadratic Dehn function that is not Morse local-to-global? Does there exist an example of a bi-automatic group that is not Morse local-to-global?
\end{openQ}

We have shown that several different notions of non-positive curvature ($\CAT(0)$, hierarchically hyperbolic, relatively hyperbolic) imply the Morse local-to-global property. It is natural to wonder if other groups with features of non-positive curvature are also Morse local-to-global.

\begin{openQ}\label{question:Out(F_n) and Free by cycli}
Do free-by-cyclic groups or the outer automorphism group of a free group have the Morse local-to-global property?
\end{openQ}

The Morse local-to-global property is closed under both free products and direct products. This inspires the question of what other combination of groups is the property closed under.

\begin{openQ}
Do graph products of Morse local-to-global groups have the Morse local-to-global property?
\end{openQ}

\begin{openQ}
When does a graph of Morse local-to-global groups have the Morse local-to-global property?
\end{openQ}

Several of the properties of Morse local-to-global groups that we establish are reminiscent of properties of acylindrically hyperbolic groups. Further, every known example of a Morse local-to-global group that is not Morse limited or virtually cyclic is acylindrically hyperbolic. We ask if this is always the case.

\begin{openQ}\label{question:acylindrically_hyperbolic}
If $G$ is a Morse local-to-global group that is not Morse limited or virtually cyclic, is $G$ acylindrically hyperbolic?
\end{openQ}

Note, the converse of Question \ref{question:acylindrically_hyperbolic} is false. Counterexamples can be found by taking the free product of two copies of either of the groups we show to not have the Morse local-to-global property in Example \ref{ex:non-examples}. The resulting group is acylindrically hyperbolic, but not Morse local-to-global.

One consequence of a positive answer to Question \ref{question:acylindrically_hyperbolic} would be that all Morse local-to-global groups that are not Morse limited or virtually cyclic would have property $P_{naive}$ \cite{Abbott_Dahmani_Pnaive}. That is, for any collection of group elements $g_1,\dots,g_n$, there would exist an element $h$ so that $\langle g_i, h\rangle \cong \langle g_i \rangle \ast \langle h \rangle$ for each $1\leq i \leq n$. Property $P_{naive}$ is a strong version of the ping-pong lemma and very close to several of the results in this paper, making it an attractive property to study directly in these groups.

\begin{openQ}
If $G$ is a Morse local-to-global group that is not Morse limited or virtually cyclic, does $G$ have property $P_{naive}$?
\end{openQ}

A classical application of the local-to-global property of quasi-geodesic in hyperbolic groups is Cannon's proof that the geodesics of a hyperbolic group form a regular language \cite{Cannon_regular_language}. In \cite{Eike_Zal_regular_language}, the local nature of Morse geodesics in $\CAT(0)$ spaces is used to show that the $M$--Morse geodesics of a $\CAT(0)$ group also form a regular language. Extending these result to all Morse local-to-global groups would produce new results for both the mapping class group and $3$--manifold groups.

\begin{openQ}\label{question:regular_language}
Do the $M$--Morse geodesics of a Morse local-to-global group form a regular language?\footnote{Regular languages of Morse geodesic in Morse local-to-global groups have since been constructed in  \cite{CRSZ_languages}.}
\end{openQ}

In CAT$(0)$ spaces and hierarchically hyperbolic spaces, Morse quasi-geodesics have equivalent formulations in terms of contracting and divergence properties \cite{Charney_Sultan_CAT(0),ABD,RST_convexity}. The authors of \cite{ACGH} showed these characterizations do not hold in general metric spaces, but provided characterization of Morse quasi-geodesics using much weaker contracting and divergence properties. We ask if Morse quasi-geodesics in Morse local-to-global spaces have stronger contracting and divergence properties as in the CAT$(0)$ and hierarchically hyperbolic cases. A positive resolution of this question may assist in answering Questions \ref{question:acylindrically_hyperbolic} and \ref{question:regular_language}.

\begin{openQ}
Do Morse quasi-geodesic in Morse local-to-global spaces have stronger contracting or divergence properties than general Morse quasi-geodesics?
\end{openQ}

In section \ref{sec:trivial_examples}, we show that if a space has any asymptotic cone with no cut-points, then it is Morse limited. It is unknown if the converse is true.

\begin{openQ}\label{question:morse_limited_to_unconstriced}
If $G$ is Morse limited, does  $G$ have an asymptotic cone with no cut-points?
\end{openQ}

A positive answer to Question \ref{question:morse_limited_to_unconstriced} would upgrade Corollary \ref{intro_cor:trichotomy} to say every Morse local-to-global group either contains a Morse element or has an asymptotic cone with no cut-points. This would give a positive answer to \cite[Question 6.10]{BD_short_conjugators} that asks if every $\CAT(0)$ group with a cut-point in every asymptotic cone must contain a Morse quasi-geodesic.

Our final two questions seek to improve on results in is paper. The first asks to make our stable subgroup combination theorem effective in specific examples.
\begin{openP}
 Effectivize the stable subgroup combination theorems (Theorem \ref{thm:stable_subgroup_combination}) for the mapping class group and/or CAT(0) groups.
\end{openP}

In the case of the mapping class group, Bowditch shows a much stronger version of Theorem \ref{intro_thm:translation_lengths}, namely that the set of translation lengths of all Morse elements, independent of Morse gauge, is a discrete set of rational numbers \cite[Corollary 1.5]{Bowditch_tight_geodesics}. We ask if the same improvement can be made for all Morse local-to-global groups.

\begin{openQ}\label{question:translation_length}
 If $G$ is Morse local-to-global group, is the set of translation lengths of all Morse elements of $G$ a discrete set of rational numbers?
\end{openQ}

\noindent \textbf{Acknowledgments:}
The authors are grateful to Sam Taylor for insightful conversations and answering many questions. They also thank Emily Stark for helping to find references for Example~\ref{ex:example_of_combination_theorem} and Thomas Ng for comments on a draft of this paper. The first two authors thank their advisors, Jason Behrstock and Alessandro Sisto, for their support and guidance during this project. The first and third author thank the organizers of the 2019 Tech Topology conference where some of the work on this project was completed. The first author would also like to thank the FIM Institute for Mathematical Research at ETH Z\"urich for their hospitality during the conference ``Groups, spaces, and geometries" where much of the work on this project was completed. Lastly, the authors thank the anonymous referee for their helpful comments.

\section{Background and preliminaries}\label{sec: background}
\subsection{Groups and Cayley graphs}
The majority of the metric spaces we are interested in will be the Cayley graphs of finitely generated groups.

\begin{defn}
If $G$ is a group generated by a set $S$, then the \emph{Cayley graph of $G$ with respect to $S$} is the graph with all elements of $G$ as vertices and where $g,h \in G$ are joined by an edge if $g^{-1}h \in S \cup S^{-1}$. We denote the Cayley graph of $G$ with respect to $S$ by $\cay(G,S)$.
\end{defn}

Given a fixed generating set $S$ for $G$, we use $|g|$ to denote the minimum number of elements of $S \cup S^{-1}$ needed to write $g$, i.e., the word length of $g$ with respect to $S$. The word length of $g$ with respect to $S$ is equal to the distance in $\cay(G,S)$ from the identity $e$ to $g$. Every path is $\cay(G,S)$ is labeled by some word $w$ in the set $S \cup S^{-1}$. If a path labeled with the word $w$ starts at $e$ and ends at $g$, then the word $w$ represents $g$. Throughout this article, we will always implicitly consider a finitely generated group as a metric space by equipping it with a word metric with respect to some finite generating set.

\subsection{Morse quasi-geodesics}\label{subsec:Morse_geodesics}
In this section, $(X,d)$ will denote a metric space and $I$ will denote a closed, but possibly unbounded, interval of $\R$. Throughout this article, if $\gamma \colon I \to X$ is a map, we will abuse notation by using $\gamma$ to refer the image of $\gamma$ in $X$. The main objects of this paper are quasi-geodesics with a stability property called Morse.

    \begin{defn}[Quasi-geodesics] 
    A map $\gamma \colon I\to X$ is a \emph{$(\lambda,\epsilon)$--quasi-geodesic} if $\lambda \geq 1$, $\epsilon \geq 0$ and for each $ s,t \in I$
    \[\frac{1}{\lambda}|s-t|-\epsilon\leq d\bigl(\gamma(s),\gamma(t)\bigr)\leq \lambda |s-t|+\epsilon.\]
    We say $\gamma$ is a \emph{finite} quasi-geodesic if $I$ is a compact interval and an \emph{infinite} quasi-geodesic if $I$ is not compact. A \emph{subsegment} of a quasi-geodesic $\gamma \colon I \to X$ is a restriction of $\gamma$ to a closed, connected subset of $I$. The \emph{parametrized length} of a quasi-geodesic is the length of the domain of the quasi-geodesic.
    \end{defn}

    \begin{defn}[Morse quasi-geodesic]\label{defn:Morse_quasi-geodesic}
    Let $M \colon [1,\infty)\times [0,\infty) \to [0,\infty)$ be a  function. The quasi-geodesic $\gamma \colon I \to X$ is an \emph{$M$--Morse quasi-geodesic} if for all $s<t$ in $I$, if $\alpha$ is a $(\lambda,\epsilon)$--quasi-geodesic with endpoints $\gamma(s)$ and $\gamma(t)$, then the Hausdorff distance between $\alpha$ and $\gamma \vert_{[s,t]}$ is bounded by $M(\lambda,\epsilon)$. If $\gamma$ is an $M$--Morse $(\lambda,\epsilon)$--quasi-geodesic, we say $\gamma$ is an $(M;\lambda,\epsilon)$--Morse quasi-geodesic.
    \end{defn}

    Those familiar with the literature will note that Definition \ref{defn:Morse_quasi-geodesic} is stronger than the usual definition of a Morse quasi-geodesic. The next lemma shows that, up to a modification of the Morse gauge, Definition \ref{defn:Morse_quasi-geodesic} is equivalent to the usual definition of Morse quasi-geodesic. This allows us to simplify proofs by utilizing the stronger definition of Morse when working with Morse quasi-geodesics, but only demonstrating the weaker condition in Lemma \ref{lem:Morse_easy_proof} when proving a quasi-geodesic is Morse.

    \begin{lem}[Verification of Morse]\label{lem:Morse_easy_proof}
    Let $\gamma \colon I \to X$ be a $(\lambda,\epsilon)$--quasi-geodesic. Suppose there exists a function $N \colon [1,\infty) \times [0,\infty) \to [0,\infty)$ such that for all $s,t \in I$, if $\alpha$ is a $(\lambda', \epsilon')$--quasi-geodesic with endpoints $ \gamma(s)$ and $\gamma(t)$, then $\alpha$ is contained in the $N(\lambda',\epsilon')$--neighborhood of $\gamma$. Then, $\gamma$ is $M$--Morse where $M$ depends on only $N$, $\lambda$, and $\epsilon$.
    \end{lem}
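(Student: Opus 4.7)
The plan is to prove both containments required for the Hausdorff bound separately, since the hypothesis only provides one direction of closeness (namely $\alpha \subseteq N_{N(\lambda',\epsilon')}(\gamma)$ for the \emph{whole} image of $\gamma$, not the subsegment). Fix endpoints $\gamma(s),\gamma(t)$ on $\gamma$ and let $\alpha \colon [0,T] \to X$ be a $(\lambda',\epsilon')$--quasi-geodesic between them.

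First I would show the one-sided containment $\alpha \subseteq N_{M_1}\bigl(\gamma\vert_{[s,t]}\bigr)$. The key input is that the parametrized length $T$ is controlled: combining the lower bound $T/\lambda' - \epsilon' \leq d(\gamma(s),\gamma(t))$ with the upper bound $d(\gamma(s),\gamma(t)) \leq \lambda|s-t|+\epsilon$ forces $T$ to be bounded by a function of $|s-t|$ and the various constants. Consequently every $p = \alpha(u)$ lies within distance $\lambda'T + \epsilon'$ of $\gamma(s)$ (and of $\gamma(t)$). If $p$ is $N$--close to some $\gamma(r)$ with $r$ outside $[s,t]$, the quasi-geodesic lower bound applied to $\gamma$ yields $|r-s| \leq \lambda\bigl(\lambda'T + \epsilon' + \epsilon + N\bigr)$ and likewise for $|r-t|$. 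Thus $r$ is forced to lie in a bounded window around $[s,t]$, which lets me replace $\gamma(r)$ by $\gamma(s)$ or $\gamma(t)$ at the cost of a controlled constant $M_1$ depending only on $N$, $\lambda$, $\epsilon$, $\lambda'$, and $\epsilon'$.

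Next I would establish the reverse containment $\gamma\vert_{[s,t]} \subseteq N_{M_2}(\alpha)$ by a projection/intermediate-value argument. Using the previous step, define $\pi \colon \alpha \to \gamma\vert_{[s,t]}$ sending each point to a closest point of $\gamma\vert_{[s,t]}$; this moves points by at most $M_1$. For $u_1,u_2 \in [0,T]$ with $|u_1-u_2|\leq 1$, the quasi-geodesic bound on $\alpha$ gives $d(\alpha(u_1),\alpha(u_2)) \leq \lambda' + \epsilon'$, so $d\bigl(\pi(\alpha(u_1)),\pi(\alpha(u_2))\bigr) \leq 2M_1 + \lambda' + \epsilon'$. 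Writing $\pi(\alpha(u_i)) = \gamma(r_i)$, the lower quasi-geodesic bound on $\gamma$ converts this spatial bound into a parameter bound $|r_1 - r_2| \leq C := \lambda(2M_1 + \lambda' + \epsilon' + \epsilon)$. Sampling $u$ at integer values produces a sequence of parameters $r_0 = s, r_1, \ldots, r_n = t$ in $[s,t]$ with consecutive jumps bounded by $C$. A standard intermediate-value argument (if some $r^*\in[s,t]$ were farther than $C$ from every $r_i$, the sequence could not transition from the set below $r^*-C$ to the set above $r^*+C$) guarantees every $r^* \in [s,t]$ lies within $C$ of some $r_i$, hence $\gamma(r^*)$ lies within $\lambda C + \epsilon + M_1$ of $\alpha$.

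Combining both containments with $M(\lambda',\epsilon') := \max(M_1, \lambda C + \epsilon + M_1)$ gives the desired Morse gauge, with dependence only on $N$, $\lambda$, and $\epsilon$ as required. The main technical obstacle is the first step: since the hypothesis allows $\alpha$ to be near points of $\gamma$ with arbitrary parameter, one must exploit the global quasi-geodesic structure of \emph{both} $\gamma$ and $\alpha$ simultaneously to force the relevant parameters back inside (a bounded neighborhood of) $[s,t]$. Once this ``parameter confinement'' is obtained, the reverse containment is a routine projection argument.
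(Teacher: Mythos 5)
Your second containment ($\gamma\vert_{[s,t]} \subseteq \mc{N}_{M_2}(\alpha)$, via the coarse intermediate-value argument) is correct, but the first step has a genuine gap, and it is exactly the step you flag as the main obstacle. The window you derive for the parameter $r$ is $|r-s| \leq \lambda(\lambda'T + \epsilon' + \epsilon + N)$, and $T$ is only bounded above by $\lambda'\bigl(\lambda|s-t| + \epsilon + \epsilon'\bigr)$; so the window, and hence the cost of replacing $\gamma(r)$ by $\gamma(s)$ or $\gamma(t)$, grows linearly in $|s-t|$. That is not a Morse gauge: $M_1$ must be independent of the endpoints. Triangle inequalities together with the upper quasi-geodesic bounds and the length bound on $T$ genuinely cannot do better -- already for $X = \reals$ and $\gamma$ the identity, this style of estimate only yields an overshoot bound of order $\tfrac{1}{2}(\lambda'^2-1)|s-t|$ for a $(\lambda',\epsilon')$--quasi-geodesic past its endpoint, whereas the true bound is of order $\lambda'^2\epsilon'$.

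The missing ingredient is the \emph{lower} quasi-geodesic inequality for $\alpha$ applied to two parameters straddling an excursion. Choose $g(u) \in I$ with $d\bigl(\alpha(u),\gamma(g(u))\bigr) \leq N$ and $g(0)=s$, $g(T)=t$; the same coarse-continuity estimate you use in step two gives $|g(u_1)-g(u_2)| \leq D := \lambda(2N+\lambda'+\epsilon'+\epsilon)$ for $|u_1-u_2|\leq 1$. If some $g(u_0) > t+2D$, a discrete intermediate-value argument on the samples below and above $u_0$ produces $u_1 < u_0 < u_2$ with $g(u_1),g(u_2) \in (t-D,\,t]$; then $d\bigl(\alpha(u_1),\alpha(u_2)\bigr) \leq 2N+\lambda D+\epsilon$, and the lower bound for $\alpha$ forces $|u_1-u_2| \leq \lambda'(2N+\lambda D+\epsilon+\epsilon')$, so the entire excursion -- in particular $\alpha(u_0)$ -- stays within a uniform distance of $\gamma(t)$. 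With this repair your overall architecture works and recovers, by hand, the content of the two results the paper simply cites (\cite[Lemma 2.1]{MC1} and \cite[Lemma 5]{MR_Morse_Boundary}): your step two is essentially the second of these, but step one needs the excursion argument rather than parameter confinement via the length of $\alpha$.
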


    \begin{proof}
    This follows directly from \cite[Lemma 2.1]{MC1} and \cite[Lemma 5]{MR_Morse_Boundary}.
    \end{proof}

    \begin{rem}
        When applying previous results from the literature, if $\gamma$ is $M$--Morse in the sense of Definition \ref{defn:Morse_quasi-geodesic}, then $\gamma$ will also be $M$--Morse, with the same Morse gauge, in the more traditional weaker definition. In particular, statements proved with the weaker definition of Morse can be applied to the stronger definition with no modification.
    \end{rem}
    
    The trademark of Morse quasi-geodesics is that they mimic the behavior of quasi-geodesics in a hyperbolic space. In particular, triangles and rectangles with Morse sides will be slim and every Morse quasi-geodesic will be close to a Morse geodesic.

    \begin{defn}
    A geodesic triangle (quadrilateral) is $\delta$--\emph{slim}, if each side is contained in the $\delta$--neighborhood of the other two (three) sides.
    \end{defn}

    \begin{lem}[Morse triangles and quadrilateral; {\cite[Lemmas 2.2, 2.3]{MC1}}]
    \label{lem:Morse_polygons}
    Let $X$ be a geodesic space and $M$ be a Morse gauge.
    \begin{itemize}
        \item If $T$ is a geodesic triangle in $X$ where two of the three sides are $M$--Morse, then $T$ is $4M(3,0)$--slim and there exists a Morse gauge $M'=M'(M)$ such that all three sides of $T$ are $M'$--Morse.
        \item If $R$ is a geodesic quadrilateral in $X$ where three of the four sides are $M$--Morse, then $R$ is $8M(3,0)$--slim and there exists a Morse gauge $M'=M'(M)$ such that all four sides of $R$ are $M'$--Morse.
    \end{itemize}
    
    \end{lem}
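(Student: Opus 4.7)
My plan is to prove the triangle case first and then reduce the quadrilateral case to it via a diagonal. For the triangle with vertices $x, y, z$, $M$--Morse sides $a = [x, y]$ and $b = [y, z]$, and third side $c = [x, z]$, I would first establish slimness. Fix $p \in c$ and assume for contradiction that $d(p, a \cup b) > 4M(3, 0)$. I would identify a maximal subsegment $[u, v]$ of $c$ containing $p$ on which the distance to $a \cup b$ exceeds $2M(3, 0)$; at its endpoints $u$ and $v$ the distance to $a \cup b$ equals exactly $2M(3, 0)$. Depending on whether the closest points $q_u, q_v$ lie on $a$ or on $b$, I would build a path by concatenating a prefix of $a$ (or $b$) ending at $q_u$, a short bridge of length $2M(3, 0)$ from $q_u$ to $u$, the subsegment of $c$ from $u$ to $v$, a bridge of length $2M(3, 0)$ from $v$ to $q_v$, and a suffix along $a$ or $b$. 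The key computation is to verify that this concatenation is a $(3, 0)$--quasi-geodesic; once verified, applying the $M$--Morse property of the relevant Morse side forces $p$ within $M(3, 0) < 4M(3, 0)$ of that side, a contradiction.

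Once slimness is in hand, for Morse-ness of $c$ I would invoke Lemma \ref{lem:Morse_easy_proof}: it suffices to bound uniformly, in terms of $\lambda'$ and $\epsilon'$, the neighborhood of $c$ containing any $(\lambda', \epsilon')$--quasi-geodesic $\alpha$ with endpoints on $c$. Slimness places $\alpha$ in a uniform neighborhood of $a \cup b$; combining this with the $M$--Morse property of $a$ and $b$ (so that projections of $\alpha$ onto these sides are close to $c$ by a second application of slimness) yields a Morse gauge $M' = M'(M)$ for $c$. Taking $M'$ pointwise at least $M$ makes all three sides $M'$--Morse.

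For the quadrilateral $R$ with three $M$--Morse sides, I would insert a diagonal joining two opposite vertices, splitting $R$ into two triangles. The first triangle inherits two $M$--Morse sides from $R$, so the triangle case applies, giving $4M(3, 0)$--slimness and a Morse gauge $M_1 = M_1(M)$ for the diagonal. The second triangle then has two Morse sides (the diagonal with gauge $M_1$ and another $M$--Morse side of $R$); a second application of the triangle case yields $4M_1(3, 0)$--slimness and Morse-ness of the remaining side with some gauge $M''$. Combining the two slimness bounds produces the overall $8M(3, 0)$--slim bound for $R$ after enlarging $M_1$ as needed, and the unified gauge $M'' = M''(M)$ makes all four sides $M''$--Morse.

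The principal obstacle is verifying that the bridged concatenation in the triangle case is genuinely a $(3, 0)$--quasi-geodesic: several positional configurations must be checked to ensure no shortcut through $y$ or across the bridges violates the lower quasi-geodesic inequality, and the specific constants $3$ and $4M(3, 0)$ emerge from these balance-of-length estimates. A related subtlety is the case where $q_u$ and $q_v$ lie on different Morse sides; here the constructed path runs from $x$ to $z$ and one must invoke the Morse property of both $a$ and $b$ in tandem, possibly after an intermediate-value step selecting the subsegment $[u, v]$ so that $q_u$ and $q_v$ lie on the same side.
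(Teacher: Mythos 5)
Your overall philosophy (nearest-point bridges, a $(3,0)$-quasi-geodesic verification, then the Morse property of $a$ or $b$, with Lemma \ref{lem:Morse_easy_proof}/Lemma \ref{lem:close_to_Morse} supplying the gauge for the third side, and a diagonal for the quadrilateral) is the right one, but the slimness construction has a genuine gap, and it is exactly the one you flag at the end. In the configuration that actually occurs in a typical (long, thin) triangle, the maximal far excursion $[u,v]$ of $c=[x,z]$ begins near $a=[x,y]$ and ends near $b=[y,z]$, so $q_u\in a$ and $q_v\in b$; no re-selection of a subsegment containing $p$ can put both witnesses on the same side, so your proposed fix is not available. In that mixed case the only way to give your path endpoints on a single Morse side is to route the $a$-prefix through the shared vertex $y$ (path $y\to q_u\to u\to v\to q_v\to z$, endpoints on $b$), but then the $(3,0)$ claim genuinely fails: nothing in the construction prevents $c|_{[u,v]}$ from running toward $y$ at distance just above the threshold $2M(3,0)$ from $a$, in which case a point of the prefix $[q_u,y]\subseteq a$ and a point deep in the $c$-portion are at distance about $2M(3,0)$ while the path between them is arbitrarily long. (Your same-side case does essentially check out, using $d(x,\cdot)$ as a potential along $c$, but even there the thresholds do not balance: with threshold $2M(3,0)$ and contradiction hypothesis $d(p,a\cup b)>4M(3,0)$, a moderately short excursion, e.g. of length $5M(3,0)$, makes the pair $q_u,q_v$ violate the $(3,0)$ lower bound.) Two smaller omissions: the paper's notion of slim requires \emph{each} side to lie near the other two, and your argument only treats the non-Morse side; and the diagonal reduction for the quadrilateral yields $4M(3,0)+4M_1(3,0)$ with $M_1=M_1(M)$, not the literal $8M(3,0)$ that the paper later uses.

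For comparison, the paper itself gives no proof here; it invokes \cite[Lemmas 2.2, 2.3]{MC1}, and the argument behind those lemmas avoids your case analysis by bridging only once, from the vertex $y$ shared by the two Morse sides to a nearest point $m$ on the opposite side $c$. The concatenations $[x,m]_c\ast[m,y]$ and $[z,m]_c\ast[m,y]$ are automatically $(3,0)$-quasi-geodesics (the nearest-point computation you already have), and their endpoints lie on $a$ and on $b$ respectively; with the strong (Hausdorff) form of Definition \ref{defn:Morse_quasi-geodesic} this at once places $c$ in a uniform neighborhood of $a\cup b$, places $a$ and $b$ near the other two sides, and, via Lemma \ref{lem:close_to_Morse}, makes both halves of $c$ (hence $c$) Morse with gauge depending only on $M$; the quadrilateral case is handled the same way. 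If you repair your argument along these lines, the mixed-witness difficulty and the constant-balancing issues disappear.
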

    
    \begin{lem}[Close to Morse implies Morse]\label{lem:close_to_Morse}
    \label{lem:Morse_finite_distance}
    If $\gamma$ is an $(M;\lambda,\epsilon)$--Morse quasi-geodesic and $\alpha$ is a $(\lambda',\epsilon')$--quasi-geodesic contained in the $C$--neighborhood of $\gamma$, then $\alpha$ is $M'$--Morse where $M'=M'(M,C,\lambda,\lambda',\epsilon,\epsilon')$. In particular, if $\gamma$ is finite and $\alpha$ is the geodesic connecting the endpoints of $\gamma$, then $\alpha$ is $M'$--Morse where $M'$ depends only on $M$, $\lambda$, and $\epsilon$.
    \end{lem}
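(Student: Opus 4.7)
The plan is to verify the weaker hypothesis of Lemma \ref{lem:Morse_easy_proof} for $\alpha$, so it suffices to show that any $(\lambda'',\epsilon'')$--quasi-geodesic $\beta$ with endpoints $p,q$ on $\alpha$ is contained in a uniform neighborhood of $\alpha$, with the neighborhood size depending only on $M$, $C$, $\lambda$, $\lambda'$, $\epsilon$, $\epsilon'$, $\lambda''$, $\epsilon''$.

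First I would lift the endpoints of $\beta$ to $\gamma$. Since $\alpha\subseteq N_C(\gamma)$, pick $p',q'\in\gamma$ with $d(p,p'),d(q,q')\leq C$, and form the concatenation
\[\beta' \;=\; [p',p]\cdot\beta\cdot[q,q'],\]
where the bracketed pieces are geodesics of length at most $C$. A routine check shows $\beta'$ is a $(\lambda''',\epsilon''')$--quasi-geodesic with constants depending only on $\lambda'',\epsilon'',C$. Since $\beta'$ has both endpoints on $\gamma$ and $\gamma$ is $M$--Morse, $\beta'$ lies in the $M(\lambda''',\epsilon''')$--neighborhood of $\gamma$; indeed, because the endpoints of $\beta'$ are $p',q'\in\gamma$, the strong form of the Morse property says $\beta'$ is Hausdorff--close to the subsegment $\gamma|_{[p',q']}$.

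Next I want to replace ``close to $\gamma$'' with ``close to $\alpha$''. Repeat the construction with $\alpha$: form $\alpha'=[p',p]\cdot\alpha|_{[p,q]}\cdot[q,q']$, which is a $(\bar\lambda,\bar\epsilon)$--quasi-geodesic from $p'$ to $q'$ with constants depending on $\lambda',\epsilon',C$. By the strong Morse property of $\gamma$, $\alpha'$ and $\gamma|_{[p',q']}$ are Hausdorff--close, with constant $M(\bar\lambda,\bar\epsilon)$. Combining the two estimates, $\beta\subseteq\beta'$ lies in a uniform neighborhood of $\gamma|_{[p',q']}$, which lies in a uniform neighborhood of $\alpha'$, which in turn is within $C$ of $\alpha$. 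Applying Lemma \ref{lem:Morse_easy_proof} upgrades this containment to the full Morse property for $\alpha$ with a gauge $M'$ depending only on the parameters listed.

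For the ``in particular'' clause, take $\alpha$ to be the geodesic between the endpoints of a finite Morse quasi-geodesic $\gamma$. Viewing $\alpha$ as a $(1,0)$--quasi-geodesic with endpoints on $\gamma$, the Morse property of $\gamma$ forces $\alpha\subseteq N_{M(1,0)}(\gamma)$, so we may apply the first part with $\lambda'=1$, $\epsilon'=0$, $C=M(1,0)$; the resulting Morse gauge then depends only on $M$, $\lambda$, $\epsilon$. The main (and only) subtlety to watch is that the strong definition of Morse is needed to bound the Hausdorff distance between $\alpha'$ and $\gamma|_{[p',q']}$ in the reverse direction; without this one can only place $\beta$ near $\gamma$, not near $\alpha$.
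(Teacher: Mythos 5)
Your argument is correct and is essentially the paper's approach spelled out in detail: the paper simply declares the lemma immediate from Definition \ref{defn:Morse_quasi-geodesic}, and your proof is the natural way to make that precise, extending a test quasi-geodesic $\beta$ (and the segment of $\alpha$) by short geodesics to land endpoints on $\gamma$ and invoking the strong, Hausdorff-distance form of the Morse property twice before closing with Lemma \ref{lem:Morse_easy_proof}. Your closing remark correctly identifies the one real subtlety, namely that the reverse containment $\gamma|_{[p',q']}\subseteq \mc{N}(\alpha')$ is exactly what the strong definition supplies.
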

    
    \begin{proof}
    This follows immediately from Definition \ref{defn:Morse_quasi-geodesic}
    \end{proof}

   The last result we record on Morse geodesics is an application of the Arzela--Ascoli Theorem to say limits of Morse geodesics are Morse with the same gauge. We state the result just for the cases we shall use, finitely generated groups.
   
   \begin{lem}\label{lem:long_Morse_segments_imply_Morse_ray}
   Let $G$ be a group generated by the finite set $S$. If there exists a Morse gauge $M$ so that for each $n \in \mathbb{N}$ there is an $M$--Morse geodesic $\gamma_n$ in $\cay(G,S)$ with $\diam(\gamma_n) \geq 2n$, then there exists a bi-infinite $M$--Morse geodesic $\gamma \colon \R \to \cay(G,S)$.
   \end{lem}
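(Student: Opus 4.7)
The plan is a standard Arzel\`a--Ascoli argument combined with the fact that the Morse condition only sees each finite subsegment at a time, so it survives taking limits with the same gauge $M$.

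First I would reparametrize and translate each $\gamma_n$ so that it is centered near $e$. Since $\gamma_n$ is a geodesic of diameter $\geq 2n$, we can parametrize $\gamma_n\colon [a_n,b_n]\to\cay(G,S)$ with $b_n-a_n\geq 2n$, pick a parameter $t_n$ with $t_n-a_n\geq n$ and $b_n-t_n\geq n$, and then let $g_n\in G$ be the group element at $\gamma_n(t_n)$. Setting $\widetilde\gamma_n(t):=g_n^{-1}\gamma_n(t+t_n)$ produces an $M$--Morse geodesic defined on an interval containing $[-n,n]$ with $\widetilde\gamma_n(0)=e$; here I am using that left multiplication by $g_n^{-1}$ is an isometry of $\cay(G,S)$ and so preserves both the geodesic property and the Morse gauge $M$.

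Next I would extract a bi-infinite limit by Arzel\`a--Ascoli. Because $S$ is finite, $\cay(G,S)$ is locally finite, so for each $R>0$ the ball $\ball(e,R)$ has finitely many vertices. Each $\widetilde\gamma_n$ is $1$--Lipschitz, hence on the integer points of $[-R,R]$ takes only finitely many possible values. A standard diagonal argument then yields a subsequence $\widetilde\gamma_{n_k}$ whose restrictions to each $[-R,R]$ stabilize for $k$ large, and the limit defines a map $\gamma\colon\R\to\cay(G,S)$; on each compact subinterval $\gamma$ agrees with infinitely many of the $\widetilde\gamma_{n_k}$, and is therefore a bi-infinite geodesic.

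Finally I would verify that $\gamma$ is $M$--Morse. Fix $s<t$ in $\R$ and a $(\lambda,\epsilon)$--quasi-geodesic $\alpha$ with endpoints $\gamma(s),\gamma(t)$. Choose $k$ large enough that $[s,t]\subseteq[-n_k,n_k]$ and $\widetilde\gamma_{n_k}$ coincides with $\gamma$ on $[s,t]$; in particular $\alpha$ has the same endpoints as $\widetilde\gamma_{n_k}|_{[s,t]}$, which is an $M$--Morse geodesic. Hence the Hausdorff distance between $\alpha$ and $\gamma|_{[s,t]}=\widetilde\gamma_{n_k}|_{[s,t]}$ is at most $M(\lambda,\epsilon)$, as required by Definition~\ref{defn:Morse_quasi-geodesic}.

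The only real point needing care is the centering step: if one naively keeps $\gamma_n$ pinned at an endpoint, the extracted limit is only a Morse ray, not a bi-infinite geodesic. Choosing the basepoint at least distance $n$ from both endpoints of $\gamma_n$ is what forces the limit to extend in both directions; after that the argument is essentially formal, since isometries preserve $M$, local finiteness gives compactness, and the Morse property is witnessed subsegment by subsegment.
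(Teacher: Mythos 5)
Your proof is correct and follows essentially the same route as the paper: the paper's "without loss of generality, assume $\gamma_n \colon [-n,n] \to \cay(G,S)$ with $\gamma_n(0)=e$" is exactly your recentering-by-isometry step, after which both arguments use local finiteness and a diagonal argument to extract a stabilizing subsequence and observe that the limit is $M$--Morse because the $M$--Morse segments exhaust it. You simply make explicit the two details the paper leaves implicit (why the basepoint can be taken in the middle, and why the Morse gauge passes to the limit), which is fine.
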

    
    \begin{proof}
   Without loss of generality, we can assume that each $\gamma_n$ has the form $\gamma_n \colon [-n,n] \to \cay(G,S)$ with $\gamma(0) = e$. Since there are only a finite number of paths of a specific length in $\cay(G,S)$, a diagonal argument produces a subsequence $\gamma_{n_k}$ where $\gamma_{n_k}\vert_{[-\ell,\ell]} = \gamma_{n_\ell}\vert_{[-\ell,\ell]}$ whenever $\ell \leq k$. The map $\gamma \colon \mathbb{R} \to \cay(G,S)$ defined by $\gamma(t) = \gamma_{n_k}(t)$ for $|t|<k$ is then an $M$--Morse geodesic, as the $M$--Morse $\gamma_{n_k}$ exhaust $\gamma$.
    \end{proof}

\subsection{Local quasi-geodesics and the local-to-global property}
In this section, $(X,d)$ will be a metric space and $I$ will be a closed, but not necessarily bounded, interval in $\R$.

 \begin{defn}[Local quasi-geodesic]
    Let $L >0$. The map $\gamma \colon I \to X$ is an \emph{$(L;\lambda,\epsilon)$--local quasi-geodesic} if for any $s<t$ in $I$ with $|s-t| \leq L$,
    $\gamma\vert_{[s,t]}$ is a $(\lambda,\epsilon)$--quasi-geodesic. If, in addition, $\gamma\vert_{[s,t]}$ is $M$--Morse, then $\gamma$ is an \emph{$(L;M;\lambda,\epsilon)$--local Morse quasi-geodesic}. We call the number $L$ the \emph{scale} of the local quasi-geodesic.
\end{defn}

The following theorem of Gromov shows that hyperbolic spaces are characterized by the property that all local quasi-geodesics of sufficiently large scale are global quasi-geodesics.

\begin{thm}[{\cite[Proposition 7.2.E]{Gromov_Hyp_Groups}}]\label{thm:local_to_global_hyperbolic}
A geodesic metric space is hyperbolic if and only if for all $\lambda \geq 1$, $\epsilon \geq 0$ there exists $L$, $\lambda'$, $\epsilon'$ so that every $(L;\lambda,\epsilon)$--local quasi-geodesics is a $(\lambda',\epsilon')$--quasi-geodesic.
\end{thm}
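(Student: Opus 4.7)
The plan is to prove both directions of this classical characterization, with most of the technical work in the forward direction. For the direction hyperbolic $\Rightarrow$ local-to-global, I would assume $X$ is $\delta$-hyperbolic and let $\gamma \colon [a,b] \to X$ be an $(L;\lambda,\epsilon)$-local quasi-geodesic, with $L$ to be chosen. The main step is to show that $\gamma$ lies in a bounded neighborhood of any geodesic $\sigma$ from $\gamma(a)$ to $\gamma(b)$. Once this is established, the global $(\lambda',\epsilon')$-quasi-geodesic conclusion follows from a routine estimate comparing parameterized length to spatial distance along a tubular region around $\sigma$.

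To carry out the neighborhood bound, let $p = \gamma(t_0)$ realize the maximum distance $D$ of $\gamma$ from $\sigma$, and consider the subpath $\gamma_0$ of parameterized length $L$ centered at $t_0$. The local hypothesis guarantees $\gamma_0$ is a $(\lambda,\epsilon)$-quasi-geodesic, so by the classical Morse lemma in $\delta$-hyperbolic spaces, $\gamma_0$ stays within a constant $K = K(\delta,\lambda,\epsilon)$ of any geodesic $\tau$ between its endpoints $u$ and $v$. These endpoints project to points $u',v'$ of $\sigma$ at distance at most $D$, and the thin quadrilateral property places $\tau$ within $D + 2\delta$ of $\sigma$ away from the short projection segments. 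Since $p$ is at spatial distance at least $L/(2\lambda) - \epsilon$ from both $u$ and $v$, yet lies within $K$ of $\tau$, balancing these estimates forces $D \leq R(\delta,\lambda,\epsilon)$ whenever $L$ is chosen large enough.

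For the reverse direction, the plan is a contrapositive argument. Assuming $X$ is not $\delta$-hyperbolic for any $\delta$, fat geometric configurations in $X$---such as arbitrarily fat geodesic triangles or slowly diverging geodesic rays---allow construction of an $(L;\lambda,\epsilon)$-local quasi-geodesic whose parameterized length vastly exceeds the spatial distance between its endpoints, violating any global $(\lambda',\epsilon')$-condition. A prototypical template is a circle-like path traversing the fat region of $X$: each short subpath is nearly geodesic, while the full path closes up or nearly does so at large scales, and scaling the fat region relative to the required $L$ makes the construction succeed for any prescribed $\lambda$, $\epsilon$. The main obstacle overall is the quantitative bookkeeping in the forward direction's descent step, where $L$, $K$, $D$, and $\delta$ must be balanced precisely---this delicate interaction is the technical heart of Gromov's result; the reverse construction, by contrast, requires care only to ensure that the constructed path genuinely remains a local quasi-geodesic at scale $L$ without degenerating at its ``bend'' regions.
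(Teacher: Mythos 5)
The paper offers no argument for this statement—it is quoted directly from Gromov and, in fact, only the forward implication (hyperbolic $\Rightarrow$ local-to-global) is ever used later—so your proposal has to stand on its own. Your forward direction follows the standard route and is essentially sound: restrict to the window of parametrized length $L$ centered at a farthest point $p$, apply the Morse lemma to that window, and compare with the geodesic $\sigma$ via a thin quadrilateral. Two caveats. First, the step ``the thin quadrilateral places $\tau$ within $D+2\delta$ of $\sigma$'' is, as stated, vacuous (you are trying to bound $D$); the argument actually needs the dichotomy that the point of $\tau$ within $K$ of $p$ is $2\delta$-close either to the subsegment of $\sigma$ between $u'$ and $v'$ (giving $D\leq K+2\delta$) or to one of the two sides of length at most $D$, in which case maximality of $D$ together with $d(p,u)\geq L/(2\lambda)-\epsilon$ forces $L\leq 2\lambda(2K+4\delta+\epsilon)$, impossible once $L$ is large; your ``balancing'' presumably means this computation, but it should be spelled out. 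Second, the ``routine estimate along the tube'' is not free: containment in a neighborhood of $\sigma$ does not by itself prevent backtracking, and one must re-use the local lower bound on every subsegment (each subsegment is again an $(L;\lambda,\epsilon)$-local quasi-geodesic, hence close to the geodesic between its own endpoints); this is exactly what Lemma \ref{lem:local_close_to_global} of the paper does, so the step is standard but should be acknowledged as such rather than dismissed.

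The genuine gap is the converse. Failure of hyperbolicity only gives you, for each $n$, a geodesic triangle that is not $n$-slim; to contradict the local-to-global property you must, for some fixed $(\lambda,\epsilon)$ and for every $L$, produce an $(L;\lambda,\epsilon)$-local quasi-geodesic that fails to be a $(\lambda',\epsilon')$-quasi-geodesic for the constants the property would supply. Your ``circle-like path traversing the fat region, scaled relative to $L$'' tacitly assumes two things that fatness of triangles does not provide: that fat configurations can be located at the scale of $L$, and that at each corner of the path the two concatenated geodesic pieces diverge at a definite linear rate up to scale $L$, so that a window straddling a corner is still a $(\lambda,\epsilon)$-quasi-geodesic. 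In a general geodesic space there is no scaling, and two sides of a fat triangle may fellow-travel near a vertex for an arbitrarily long time, in which case a window containing that corner backtracks and is not a quasi-geodesic for any bounded $\lambda$. Your template works verbatim in $\mathbb{R}^2$ (a rescaled hexagon), but carrying it out in an arbitrary non-hyperbolic geodesic space—controlling corner geometry and producing fatness at the prescribed scale—is precisely the hard content of the converse, and the sketch gives no mechanism for it. Since the paper cites this direction to Gromov and never uses it, the omission does not affect the rest of the paper, but as a proof of the stated ``if and only if'' your proposal is incomplete.
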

    
The main topic of this paper are spaces where local Morse quasi-geodesics with sufficiently large scale are global Morse quasi-geodesics. In Sections \ref{sec:examples} and \ref{sec:relativelyhyperbolic}, we show that this class of spaces includes CAT$(0)$ spaces, hierarchically hyperbolic spaces such as the mapping class groups and Teichm\"uller space, and spaces hyperbolic relative to spaces with the local-to-global property.

\begin{defn}
Let $\mathcal{M}$ be the collection of all Morse gauges. Let $\Phi\colon  \mathcal{M} \times [1,\infty)\times [0,\infty) \to (0,\infty)\times \mc{M} \times [1,\infty)\times [0,\infty)$ be a map. A metric space $X$ has the $\Phi$--\emph{Morse local-to-global property} if every $(L;M; \lambda,\epsilon)$--local Morse quasi-geodesic in $X$ is an $(N;k,c)$--Morse quasi-geodesic where $(L,N,k,c)=\Phi(M,\lambda,\epsilon)$.
\end{defn}

\begin{rem} \ 
\begin{enumerate}
    \item One can equivalently define the Morse local-to-global property without reference to the map $\Phi$ as follows: $X$ has the \emph{Morse local-to-global} property if for every $\lambda \geq 1$, $\epsilon \geq 0$ and Morse gauge $M$, there exist $L$, $k$, $c$, and $N$ so that every $(L;M;\lambda,\epsilon)$--local Morse quasi-geodesic in $X$ is a global $(N;k,c)$--quasi-geodesic.
    \item  The Morse local-to-global property applies to local Morse quasi-geodesics with both infinite and finite domains. However, when verifying a space has the Morse local-to-global property, it is sufficient to verify the property for only the local Morse quasi-geodesics with finite domains. 
\end{enumerate}

\end{rem}

We now record a pair of basic lemmas about local quasi-geodesics the we will use throughout this paper. The first reduces the problem of showing a local quasi-geodesic is a global quasi-geodesic to showing that every subsegment of the local quasi-geodesic is uniformly close to a geodesic.

\begin{lem} \label{lem:local_close_to_global}
Let $\gamma \colon I \to X$ be an $(L;\lambda,\epsilon)$--local quasi-geodesic and $C\geq0$. If $L > \lambda(3 C +\epsilon +2)$ and for all $[s,t] \subseteq I$, $\gamma\vert_{[s,t]}$ is contained in the $C$--neighborhood of a geodesic from $\gamma(s)$ to $\gamma(t)$, then $\gamma$ is a $(\lambda',\epsilon')$--quasi-geodesic where $\lambda'$ and $\epsilon'$ depend only on $\lambda$, $\epsilon$, and $C$.
\end{lem}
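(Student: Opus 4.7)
The plan is to establish the upper and lower quasi-geodesic bounds on $d(\gamma(s), \gamma(t))$ separately. The upper bound is routine: subdivide $[s,t]$ into $\lceil (t-s)/L \rceil$ subintervals of parametric length at most $L$, apply the local quasi-geodesic property on each subinterval, and sum via the triangle inequality to obtain $d(\gamma(s), \gamma(t)) \leq (\lambda + \epsilon/L)(t-s) + \epsilon$.

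For the lower bound I will prove by strong induction on $T = t - s$ the estimate
\[
d(\gamma(s), \gamma(t)) \geq \frac{C+2}{L} T - \epsilon.
\]
The base case $T \leq L$ follows directly from the local quasi-geodesic property on $[s,t]$, once one notes that the hypothesis $L > \lambda(3C+\epsilon+2)$ in particular forces $L \geq \lambda(C+2)$.

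For the inductive step, assume $T > L$ and set $u_m = s + L$. Then $\gamma|_{[s,u_m]}$ is a $(\lambda,\epsilon)$-quasi-geodesic with $d(\gamma(s), \gamma(u_m)) \geq L/\lambda - \epsilon$. The restriction $\gamma|_{[u_m,t]}$ inherits every hypothesis of the lemma (any subsegment of $[u_m,t]$ is also a subsegment of $[s,t]$) and has parameter length $T-L < T$, so the inductive hypothesis yields $d(\gamma(u_m), \gamma(t)) \geq (T-L)(C+2)/L - \epsilon$. Now let $\alpha$ be a geodesic from $\gamma(s)$ to $\gamma(t)$, and let $\pi \in \alpha$ be a closest point to $\gamma(u_m)$; the $C$-neighborhood hypothesis applied to $[s,t]$ guarantees $d(\gamma(u_m), \pi) \leq C$. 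Writing $P_m = d(\gamma(s), \pi)$ and using that $\alpha$ is a geodesic, the triangle inequality gives $P_m \geq d(\gamma(s), \gamma(u_m)) - C$ and $d(\gamma(s), \gamma(t)) - P_m \geq d(\gamma(u_m), \gamma(t)) - C$. Adding and substituting the previous estimates yields
\[
d(\gamma(s), \gamma(t)) \geq \frac{L}{\lambda} + \frac{(T-L)(C+2)}{L} - 2\epsilon - 2C,
\]
and an elementary rearrangement shows this is at least $T(C+2)/L - \epsilon$ precisely when $L/\lambda \geq 3C + \epsilon + 2$, which is exactly the given hypothesis.

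The main obstacle is identifying the correct form of the inductive bound in advance: the slope $(C+2)/L$ and the constant $3C+\epsilon+2$ in the hypothesis are calibrated to each other so that the $2C+\epsilon$ loss incurred when projecting $\gamma(u_m)$ to $\alpha$ (twice) is exactly absorbed by the gap $L/\lambda - (C+2)$. Once the correct inductive statement is in hand, the proof reduces to combining local quasi-geodesic estimates with triangle inequalities at the single subdivision point $u_m = s+L$, together with the closest-point projection to $\alpha$ supplied by the $C$-neighborhood hypothesis. Together with the upper bound, this yields $\lambda' = \max(\lambda + \epsilon/L, L/(C+2))$ and $\epsilon' = \epsilon$, depending only on $\lambda$, $\epsilon$, and $C$ since $L$ itself is chosen in terms of these.
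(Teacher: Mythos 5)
Your argument is correct, but it is a genuinely different proof from the one in the paper. The paper fixes the geodesic $\alpha$ from $\gamma(t_1)$ to $\gamma(t_2)$, partitions $\alpha$ into segments of length between $1$ and $2$, pulls these back under a projection $p\colon[t_1,t_2]\to\alpha$, and bounds the diameter of each pullback by $\lambda(2C+2+\epsilon)$: the key point there is that two parameters $s,r$ in the same pullback satisfy $d(\gamma(s),\gamma(v))\leq 3C+2$ for all $v\in[s,r]$, which combined with $d(\gamma(s),\gamma(s+L))>3C+2$ forces $|s-r|<L$ so that the local quasi-geodesic property applies. You instead induct on the number of length-$L$ steps, peeling off $[s,s+L]$ and using additivity of distance along $\alpha$ at the footpoint of $\gamma(s+L)$ to telescope, with the hypothesis $L>\lambda(3C+\epsilon+2)$ absorbing the $2C+2\epsilon$ loss at each splice. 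I checked the arithmetic: the inductive step reduces exactly to $L/\lambda\geq 3C+\epsilon+2$, as you say. Your route is arguably cleaner (no projection map or partition bookkeeping) and yields a sharper additive constant; the paper's version has the advantage that its multiplicative constant $2\lambda(2C+2+\epsilon)$ visibly does not involve $L$.

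Two small repairs are needed. First, the ``strong induction on $T$'' is an induction on a real parameter; since each step decreases $T$ by exactly $L$, you should phrase it as induction on $\lfloor T/L\rfloor\in\mathbb{Z}_{\geq 0}$. Second, and more substantively, your $\lambda'=\max\bigl(\lambda+\epsilon/L,\,L/(C+2)\bigr)$ depends on $L$, whereas the lemma asserts dependence only on $\lambda$, $\epsilon$, $C$ (in the lemma $L$ is an arbitrary number exceeding the threshold, not one chosen by you). This is fixed by one observation: an $(L;\lambda,\epsilon)$--local quasi-geodesic is also an $(L_0;\lambda,\epsilon)$--local quasi-geodesic for any $L_0\leq L$, and the neighborhood hypothesis does not involve $L$, so you may run your entire argument with $L$ replaced by $\min\{L,\lambda(3C+\epsilon+2)+1\}$, after which all constants depend only on $\lambda$, $\epsilon$, and $C$.
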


\begin{proof}
Let $t_1, t_2 \in I$ with $t_1<t_2$. Since $\gamma$ is an $(L;\lambda,\epsilon)$--local quasi-geodesic, we can assume $|t_1-t_2| > L$. 

For the first inequality, let $t_1=s_0<s_1<\cdots <s_{n}=t_2$ such that $L/2\leq |s_i-s_{i+1}|<L$. Thus, each $\gamma\vert_{[s_i,s_{i+1}]}$ is a $(\lambda,\epsilon)$--quasi-geodesic and we have
\[  d\bigl(\gamma(t_1),\gamma(t_2)\bigr) \leq \sum \limits_{i=0}^{n-1} \left[\lambda|s_i - s_{i+1}| +\epsilon \right]\leq \sum \limits_{i=0}^{n-1} (\lambda+\epsilon)|s_i-s_{i+1}|\leq  (\lambda+\epsilon) |t_1- t_2|. \]

For the second inequality, let $\alpha$ be a geodesic connecting $\gamma(t_1)$ and $\gamma(t_2)$. 
Since $\gamma\bigl([t_1,t_2]\bigr) \subseteq \mc{N}_C(\alpha)$, there exists a map $p \colon [t_1,t_2] \to \image(\alpha)$ such that $p(t_1) = \gamma(t_1)$, $p(t_2) = \gamma(t_2)$, and $d\bigl(p(s),\gamma(s)\bigr) \leq C$ for all $s \in (t_1,t_2)$. Let $\gamma(t_1) = x_0, x_1,\dots x_{n} = \gamma(t_2)$ be a sequence of points of $\alpha$ so that \[1\leq d(x_i,x_{i+1}) <2 \text{ and } d\bigl(\gamma(t_1),\gamma(t_2) \bigr) = \sum\limits_{i=0}^{n-1} d \bigl( x_i, x_{i+1} \bigr).\] Let $I_i$ be the closed subsegment of $\alpha$ between $x_i$ and $x_{i+1}$ and define $J_i = p^{-1}(I_i)$ for each $0\leq i \leq n-1$.
Since $n \leq 2 d\bigl(\gamma(t_1),\gamma(t_2) \bigr) +1$ and
\[|t_1-t_2| \leq \sum \limits_{i=0}^{n-1} \diam(J_i),\]
 
\noindent the desired inequality will follow if we can uniformly bound $\diam(J_i)$ in terms of $\lambda$, $\epsilon$, and $C$ for each $0\leq i \leq  n-1$.

Suppose $\diam(J_i) >0$ for some $0\leq i \leq n-1$ and let $s,r \in J_i$ with $s<r$. By definition of $J_i$ and $p$ we have
\[d\bigl(\gamma(s),\gamma(r) \bigr) \leq d\bigl(p(s),p(r)\bigr) + 2C \leq d(x_i,x_{i+1}) + 2C \leq 2C+2. \]

 By assumption, for all $v \in [s,r]$, $\gamma(v)$ is within $C$ of a geodesic from $\gamma(s)$ to $\gamma(r)$. This implies \begin{equation*}
    d\bigl(\gamma(s),\gamma(v)\bigr) \leq 3C+2 \text{ for all } v \in [s,r]. \tag{$\ast$}\label{eq:local_close_to_global}
\end{equation*} Because $L > \lambda(3 C +\epsilon +2)$ and $\gamma$ is an $(L;\lambda,\epsilon)$--local quasi-geodesic,
\[d\bigl(\gamma(s),\gamma(s + L) \bigr) \geq \frac{L}{\lambda} - \epsilon > 3C+2.\]
Hence, (\ref{eq:local_close_to_global}) implies that $s+L \not\in [s,r]$ and $|s-r| < L$. Thus, $\gamma\vert_{[s,r]}$ is a $(\lambda,\epsilon)$--quasi-geodesic and we have 
\[|s-r| \leq \lambda \bigl( d(\gamma(s),\gamma(r) )+\epsilon \bigr) \leq \lambda(2C+2 + \epsilon).  \]

This shows $\diam(J_i) \leq \lambda(2C+2 + \epsilon) $ for all $0 \leq i \leq n-1$.
Since $[t_1 , t_2] \subseteq \bigcup \limits_{i=0}^{n-1} J_i$ and $n \leq 2 d\bigl(\gamma(t_1),\gamma(t_2) \bigr)+1$, we have \[|t_1-t_2| \leq \lambda(2C+2 + \epsilon) n \leq 2\lambda(2C+2 + \epsilon) \cdot d\bigl(\gamma(t_1),\gamma(t_2) \bigr) + 2\lambda(2C+2 + \epsilon),\]
and the proof is complete.
\end{proof}

An important source of local quasi-geodesics are concatenations of geodesics. The next lemma shows that if a concatenation of Morse geodesics is a local quasi-geodesic, then is will automatically be a local Morse quasi-geodesic.

\begin{lem}
\label{lem:concatenation_of_morse_is_quasi_geodesic}
Let $X$ be a geodesic metric space. For each $i \in \{1,\cdots,n\}$, let $\gamma_i \colon [a_i,a_{i+1}]\to {X}$ be an $M$--Morse geodesic with $\gamma_i(a_{i+1}) = \gamma_{i+1}(a_{i+1})$ for $1 \leq i \leq n-1$. If there is $L>0$ such that $|a_{i+2}-a_i|>L$ for each $i$ and the concatenation $\gamma = \gamma_1 \ast \cdots \ast \gamma_n$ is an $(L;\lambda,\epsilon)$--local quasi-geodesic for some $\lambda \geq 1$ and $\epsilon \geq 0$, then there is a Morse gauge $M'$ depending on $\lambda$, $\epsilon$, and $M$ such that $\gamma$ is an $(L;M';\lambda,\epsilon)$--local quasi-geodesic. 
\end{lem}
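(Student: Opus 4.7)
The plan is to first use the spacing hypothesis $|a_{i+2}-a_i| > L$ to bound how many pieces $\gamma_i$ an arbitrary subsegment of parametrized length at most $L$ can overlap, then apply the Morse polygons lemma (Lemma~\ref{lem:Morse_polygons}) together with the closeness-to-Morse lemma (Lemma~\ref{lem:close_to_Morse}) to produce the uniform Morse gauge.

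Fix $[s,t] \subseteq [a_1, a_{n+1}]$ with $|s-t| \leq L$. The first step is a counting argument: I claim $[s,t]$ contains at most two of the breakpoints $a_2,\dots,a_n$, and if it contains two they must be consecutive. Indeed, if $a_\ell, a_m \in [s,t]$ with $m \geq \ell+2$, then $t-s \geq a_m - a_\ell \geq a_{\ell+2}-a_\ell > L$, a contradiction. Consequently $\gamma|_{[s,t]}$ falls into exactly one of three cases: (i) it is a subsegment of a single $\gamma_i$; (ii) it is the concatenation of subsegments of two consecutive $\gamma_i$'s meeting at a breakpoint $\gamma(a_\ell)$; or (iii) it is the concatenation of two partial pieces and one full middle piece meeting at two consecutive breakpoints $\gamma(a_\ell)$, $\gamma(a_{\ell+1})$.

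In case (i) there is nothing to prove, since a subsegment of an $M$--Morse geodesic is $M$--Morse. In case (ii), I consider the geodesic triangle with vertices $\gamma(s)$, $\gamma(a_\ell)$, $\gamma(t)$: two of its sides are $M$--Morse geodesics (the subsegments of $\gamma_{\ell-1}$ and $\gamma_{\ell}$), so by Lemma~\ref{lem:Morse_polygons} the triangle is $4M(3,0)$--slim and the third side (any geodesic from $\gamma(s)$ to $\gamma(t)$) is $M_1$--Morse with $M_1 = M_1(M)$. Slimness forces $\gamma|_{[s,t]}$ to lie in the $4M(3,0)$--neighborhood of this geodesic, and since by hypothesis $\gamma|_{[s,t]}$ is already a $(\lambda,\epsilon)$--quasi-geodesic, Lemma~\ref{lem:close_to_Morse} upgrades it to an $M_2$--Morse quasi-geodesic with $M_2 = M_2(M,\lambda,\epsilon)$. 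Case (iii) is identical in spirit but uses the quadrilateral version of Lemma~\ref{lem:Morse_polygons} on the quadrilateral with vertices $\gamma(s), \gamma(a_\ell), \gamma(a_{\ell+1}), \gamma(t)$, yielding a gauge $M_3 = M_3(M,\lambda,\epsilon)$. Setting $M' = \max\{M, M_2, M_3\}$ gives the desired uniform Morse gauge.

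The proof is largely a bookkeeping exercise once the counting argument is in hand; no step is really a serious obstacle. The only subtlety is making sure to invoke the spacing hypothesis in the correct form to rule out three separated breakpoints inside $[s,t]$, and to observe that the slimness conclusion of Lemma~\ref{lem:Morse_polygons} gives the hypothesis needed for Lemma~\ref{lem:close_to_Morse} in both the triangle and the quadrilateral cases.
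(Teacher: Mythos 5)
Your proof is correct and follows essentially the same route as the paper: the spacing hypothesis yields the same three-case decomposition (at most two consecutive breakpoints in a window of length $L$), after which Lemma~\ref{lem:Morse_polygons} and Lemma~\ref{lem:close_to_Morse} are applied exactly as in the paper's argument.
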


\begin{proof}
For each $t_1<t_2$ in $[a,b]$ satisfying $|t_1-t_2|<L$, we need to show the $(\lambda,\epsilon)$--quasi-geodesic $\gamma\vert_{[t_1,t_2]}$ is $M'$--Morse, where $M'$ depends only on $\lambda$, $\epsilon$, and $M$.

Since $|a_{i+2}-a_i|>L$ for each $i$, there are only three possibilities for the position of $t_1$ and $t_2$ in $[a_1,a_{n+1}]$:
\begin{enumerate}
    \item $t_1$ and $t_2$ both lie in $[a_j,a_{j+1}]$ for some $j$;
    \item $t_1\in[a_j,a_{j+1}]$ and $t_2\in[a_{j+1},a_{j+2}]$ for some $j$;
    \item $t_1\in[a_j,a_{j+1}]$ and $t_2\in[a_{j+2},a_{j+3}]$ for some $j$.
\end{enumerate}
 In all three cases, Lemma~\ref{lem:Morse_polygons} implies any geodesic $\alpha$ connecting $\gamma(t_1)$ and $\gamma(t_2)$ is an $N$--Morse geodesic and that lies in the $C$--neighborhood of $\gamma\bigl([t_1,t_2]\bigr)$ where $C$ and $N$ only depend on $M$. Lemma~\ref{lem:Morse_finite_distance} thus provides a Morse gauge $M'$ depending on $\lambda$, $\epsilon$, and $M$ such that $\gamma\vert_{[t_1,t_2]}$ is $M'$--Morse. Therefore, $\gamma$ is an $(L;M';\lambda,\epsilon)$--local quasi-geodesic.
\end{proof}

\subsection{Stable subgroups}
Intimately connected with the study of Morse geodesics are stable subgroups of finitely generated groups. Stable subgroups are a strong generalization of quasiconvex subgroups of hyperbolic groups.

\begin{defn}[Stable subgroup]\label{defn:stable_subgoup}
    Let $G$ be a group with finite generating set $S$, $M$ be a Morse gauge, and $\mu >0$. We say a subgroup $H < G$ is \emph{$(M,\mu)$--stable in $\cay(G,S)$} if every pair of elements in $H$ is connected by an $M$--Morse geodesic in $\cay(G,S)$ that lies in the $\mu$--neighborhood of $H$. A subgroup $H <G$ is a \emph{stable subgroup} if for any choice of finite generating set $S$ for $G$, there exist $M$ and $\mu$ such that $H$ is $(M,\mu)$--stable in $\cay(G,S)$.
    \end{defn}

The above definition of stable subgroup is equivalent to the definition originally given by Durham and Taylor in \cite{Durham_Taylor_Stability}. In Definition \ref{defn:stable_subgoup}, the parameters $(M,\mu)$ ``measure'' the stability of a subgroup $H$ in the Cayley graph $\cay(G,S)$. While the specific pair $(M,\mu)$ depends on the choice of finite generating set for $G$, the stability of the subgroup $H$ does not. That is, if $S$ and $T$ are two finite generating sets for $G$ and $H$ is $(M,\mu)$--stable in $\cay(G,S)$, then $H$ will be $(M',\mu')$--stable in $\cay(G,T)$ for some $M'$ and $\mu'$ depending on $M$, $\mu$, $S$, and $T$.

Durham and Taylor generalized several properties of quasiconvex subgroups of hyperbolic groups to all stable subgroups. The key properties we need are summarized in the next proposition.

\begin{prop}[Properties of Stable subgroups; {\cite{Durham_Taylor_Stability}}]\label{prop:stable_subgroups}
Let $G$ be a finitely generated group and $H<G$ a stable subgroup.
\begin{enumerate}
    \item $H$ is finitely generated and undistorted.
    \item $H$ is a hyperbolic group.
    \item If $K$ is a finitely generated, undistorted subgroup of $H$, then $K$ is stable in $G$.
\end{enumerate}
\end{prop}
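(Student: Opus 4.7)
The plan is to prove the three properties in turn, using in each case the defining $(M,\mu)$-stability of $H$ in $\cay(G,S)$ as the main hook.

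\emph{For (1)}, I would fix a finite generating set $S$ of $G$ and set $S' = \{ h \in H : |h|_G \leq 2\mu + 1\}$, which is finite since balls in $\cay(G,S)$ are finite. Given any $h \in H$, $(M,\mu)$-stability produces an $M$-Morse geodesic $\gamma$ in $\cay(G,S)$ from $e$ to $h$ of length $n = |h|_G$ lying in the $\mu$-neighborhood of $H$. For each vertex $v_i$ of $\gamma$ ($0 \leq i \leq n$), choose $h_i \in H$ with $d_G(v_i, h_i) \leq \mu$ and $h_0 = e$, $h_n = h$. Each $h_{i-1}^{-1} h_i$ has $G$-length at most $2\mu+1$ and so lies in $S'$, exhibiting $h$ as a product of $n$ elements of $S'$; this gives $|h|_{S'} \leq |h|_G$. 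Together with the trivial bound $|h|_G \leq (2\mu+1)|h|_{S'}$, this shows $S'$ generates $H$ and that the natural inclusion $\cay(H,S') \hookrightarrow \cay(G,S)$ is a quasi-isometric embedding.

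\emph{For (2)}, I would transfer slimness of triangles across the quasi-isometric embedding from (1). Given a geodesic triangle $T$ in $\cay(H,S')$ with vertices $h_1, h_2, h_3$, the image of $T$ in $\cay(G,S)$ is a triangle with the same vertices whose sides are uniform quasi-geodesics. By stability of $H$, the actual $G$-geodesics between $h_i$ and $h_j$ are $M$-Morse, so by Lemma~\ref{lem:Morse_polygons} the associated $G$-triangle is uniformly slim, and Lemma~\ref{lem:close_to_Morse} (applied at the bounded Hausdorff distance between each Morse side and the corresponding image of $T$) makes the image of $T$ itself uniformly slim in $\cay(G,S)$. Pulling back through the quasi-isometric embedding produces a uniform slimness constant in $\cay(H,S')$, so $H$ is hyperbolic.

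\emph{For (3)}, I would combine undistortion across a composition with the Morse property. Since undistortion composes, the embeddings $K \hookrightarrow H$ (by hypothesis) and $H \hookrightarrow G$ (by (1)) make $K$ undistorted in $G$. Since $H$ is hyperbolic by (2), the undistorted subgroup $K<H$ is quasiconvex: there is $\nu$ so that every $H$-geodesic between points of $K$ lies in the $\nu$-neighborhood of $K$ in $H$. Now for any $k_1, k_2 \in K$, stability of $H$ makes the $G$-geodesic $\alpha$ from $k_1$ to $k_2$ an $M$-Morse geodesic. An $H$-geodesic $\beta$ from $k_1$ to $k_2$, when expanded to a path in $\cay(G,S)$ by replacing each $S'$-edge with a $G$-path of length at most $2\mu+1$, becomes a uniform $(\lambda, \epsilon)$-quasi-geodesic lying in a bounded $G$-neighborhood of $K$. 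By Definition~\ref{defn:Morse_quasi-geodesic}, $\alpha$ has Hausdorff distance at most $M(\lambda, \epsilon)$ from $\beta$, so $\alpha$ lies in a uniform $G$-neighborhood of $K$, verifying stability of $K$. The main obstacle will be (3), where constants from three ingredients---undistortion of $H$ in $G$, quasiconvexity of $K$ in $H$, and the Morse gauge $M$---must chain together to produce the stability data for $K$; but each ingredient is explicit, so the bookkeeping is direct.
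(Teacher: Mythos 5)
Your three arguments are all correct. Note, however, that the paper does not prove this proposition at all: it is stated as a citation to Durham--Taylor, so there is no in-paper proof to compare against. What you have written is a faithful, self-contained reconstruction of the standard arguments: (1) is the usual ``quasiconvexity implies finite generation and undistortion'' connect-the-dots argument, which applies because $(M,\mu)$--stability in particular supplies, for each $h\in H$, a geodesic from $e$ to $h$ in the $\mu$--neighborhood of $H$; (2) transfers slimness across the quasi-isometric embedding using Lemma~\ref{lem:Morse_polygons}; and (3) chains undistortion, quasiconvexity of $K$ in the hyperbolic group $H$, and the Hausdorff-distance form of the Morse property (Definition~\ref{defn:Morse_quasi-geodesic}) exactly as Durham--Taylor do. The only imprecision is in (2), where you assert that ``the actual $G$--geodesics between $h_i$ and $h_j$ are $M$--Morse'': stability only guarantees that \emph{some} geodesic between each pair is $M$--Morse, with arbitrary geodesics being $M'$--Morse for a uniform $M'$ by Lemma~\ref{lem:close_to_Morse}; this does not affect the argument, since you only need one Morse geodesic per pair to compare the quasi-geodesic sides against.
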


The third author and Antol\'in, Mj, Sisto, and Taylor independently studied the intersection properties of stable subgroups and showed that all infinite index stable subgroups have finite width and are finite index in their commensurators.

\begin{thm}[{\cite[Theorem 1.2]{Tran2017}, \cite[Theorem 1.1, Proposition 3.3]{AMST}}]\label{thm:finite_width}
Let $G$ be a finitely generated group and $H$ an infinite index stable subgroup of $G$. 
\begin{itemize}
    \item $H$ is finite index in the commensurator of $H$ in $G$.
    \item $H$ has finite width, i.e., there exist $n$ so that if $\mc{H}$ is a set of at least $n$ distinct cosets of $H$, then there exists $g_1H,g_2H\in \mc{H}$ so that $g_1Hg_1^{-1} \cap g_2 H g_2^{-1}$ is finite.
\end{itemize}
\end{thm}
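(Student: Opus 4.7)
The plan is to derive both assertions from a single geometric principle: \emph{if $aH$ and $bH$ are distinct cosets of the stable subgroup $H$ satisfying $|aHa^{-1} \cap bHb^{-1}| = \infty$, then the Hausdorff distance $d_{\mathrm{Haus}}(aH, bH)$ in $\cay(G,S)$ is bounded by a constant $D = D(M,\mu)$ depending only on the stability parameters of $H$.} Granting this Hausdorff-closeness lemma, both conclusions follow by counting in a ball. For the commensurator, any $g \in \commensurator{G}{H}$ satisfies $|H \cap gHg^{-1}| = \infty$, so $d_{\mathrm{Haus}}(H, gH) \leq D$ forces $gH$ to contain an element of word length at most $D$; local finiteness of $\cay(G, S)$ then bounds the number of distinct cosets containing such an element, yielding $[\commensurator{G}{H} : H] < \infty$. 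For finite width, if $g_1H, \dots, g_n H$ are distinct cosets with pairwise infinite conjugate-intersections, each $g_i H$ ($i \geq 2$) contains an element within $D$ of $g_1$; as distinct cosets are disjoint, this forces $n-1 \leq |\ball_D(e)|$.

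The technical core is a Morse fellow-traveling rectangle lemma: if $\alpha$ and $\beta$ are $M$-Morse geodesics in $\cay(G,S)$ whose corresponding pairs of endpoints lie within distance $C$, then $d_{\mathrm{Haus}}(\alpha, \beta)$ is bounded by a function of $M$ and $C$. I would prove this by concatenating $\beta$ with two short geodesic segments of length at most $C$ joining its endpoints to those of $\alpha$; when $\alpha$ is substantially longer than $C$, this concatenation is a uniform quasi-geodesic between the endpoints of $\alpha$, so Definition \ref{defn:Morse_quasi-geodesic} controls one-sided Hausdorff distance and Lemma \ref{lem:close_to_Morse} upgrades the bound to a symmetric one. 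To apply this to cosets, pick $h \in aHa^{-1} \cap bHb^{-1}$ with $|h|$ arbitrarily large and write $h = ah_a a^{-1} = bh_b b^{-1}$ with $h_a, h_b \in H$; then $ha = ah_a \in aH$ and $hb = bh_b \in bH$, so the left translates $[a, ah_a]$ and $[b, bh_b]$ of the $M$-Morse geodesics $[e,h_a], [e,h_b] \subseteq \mc{N}_\mu(H)$ are themselves $M$-Morse geodesics in $\mc{N}_\mu(aH)$ and $\mc{N}_\mu(bH)$ with endpoint separation $d(a,b) = d(ah_a,bh_b)$. Choosing minimal length coset representatives $a,b$ bounds $d(a,b)$ in terms of $d_{\mathrm{Haus}}(aH, bH)$ itself, so the rectangle lemma produces the uniform bound $D$.

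The principal obstacle is the rectangle lemma in the absence of global hyperbolicity: a geodesic rectangle whose two long sides are $M$-Morse but whose two short sides are generic does not directly satisfy the slimness of Lemma \ref{lem:Morse_polygons}, which requires three $M$-Morse sides. The concatenation trick above circumvents this by reducing the problem to the defining property of Morse quasi-geodesics. A secondary subtlety is ensuring the final constant $D$ depends only on $M$ and $\mu$, rather than on the specific cosets $a, b$; this is resolved by passing to minimal length coset representatives and invoking undistortion of $H$ (Proposition \ref{prop:stable_subgroups}).
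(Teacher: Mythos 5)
The paper does not actually prove Theorem \ref{thm:finite_width}; it imports it from \cite{Tran2017} and \cite{AMST}, so the only question is whether your argument stands on its own. It does not: the Hausdorff-closeness lemma on which both deductions rest is false. Take $G=F_2=\langle a,b\rangle$ and $H=\langle a,\,bab^{-1}\rangle$, a finitely generated, infinite index (it lies in the kernel of $a\mapsto 0$, $b\mapsto 1$), hence quasiconvex and stable, subgroup. The cosets $H$ and $bH$ are distinct, and $H\cap bHb^{-1}\supseteq\langle bab^{-1}\rangle$ is infinite; yet every element of $bH$ has reduced form beginning with the letter $b$, so $d(a^n,bH)\geq n$ for all $n$ and $d_{\mathrm{Haus}}(H,bH)=\infty$. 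The conceptual obstruction is that $d_{\mathrm{Haus}}(aH,bH)\leq D$ forces $aHa^{-1}$ and $bHb^{-1}$ to be \emph{commensurable} (one covers the other by finitely many translates of the intersection), whereas your hypothesis only supplies an infinite intersection; for stable subgroups these are genuinely different conditions, and the gap between them is exactly what makes ``width'' a nontrivial invariant.

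Your rectangle lemma itself is fine, but what it yields is much weaker than you claim: it shows the two finite Morse segments $[a,ah_a]$ and $[b,bh_b]$ are Hausdorff-close with constant roughly $M(1,4d(a,b))+d(a,b)$. This constant depends on $d(a,b)$, and your appeal to minimal coset representatives does not remove that dependence --- you are trying to bound $d_{\mathrm{Haus}}(aH,bH)$ by a quantity that is itself controlled only in terms of $d_{\mathrm{Haus}}(aH,bH)$ --- and in any case closeness of two finite segments inside the cosets says nothing about the tails of the infinite cosets. The honest output of the quadrilateral argument is that $aH$ and $bH$ have a long, uniformly close coarse intersection (so the \emph{minimal} distance $d(aH,bH)$ is uniformly bounded), which is not enough to place the specific point $g_1$ near each $g_iH$, so the pigeonhole for width also fails. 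The genuine proofs need more: the commensurator statement uses the finite-index hypothesis in an essential way (in \cite{AMST} via limit sets in the Morse boundary), and finite width requires a bounded-packing style argument layered on top of the fellow-traveling estimate, not a single count in a ball.
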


The most well studied examples of stable subgroups in the literature are stable cyclic subgroups. The generators of these subgroups are called \emph{Morse elements} since the cyclic subgroup they generate will be a Morse quasi-geodesic in the Cayley graph.

\begin{defn}[Morse element]\label{defn:morse_element}
Let $G$ be a finitely generated group with a finite generating set $S$ and $g \in G$. We say $g$ is $M$--\emph{Morse}
with respect to $S$, if $g$ has infinite order and $\langle g \rangle$ is $(M,\mu)$--stable in $\cay(G,S)$ for some $\mu \geq 0$. 
A group element in $G$ is \emph{Morse} if it is $M$--Morse with respect to some finite generating set $S$.
\end{defn}

While the specific Morse gauge of an element depends on the choice of generating set, whether or not an element is Morse is independent of choice of generating set.

\section{Consequences of the Morse local-to-global property}\label{sec: consequences of Mltg}

We now give our main applications of the local-to-global property for Morse quasi-geodesics. In Section \ref{sec:combination_theorems}, we prove two combination theorems for stable subgroups of Morse local-to-global groups as well as several consequences of these theorems. In Section \ref{sec:translation_length}, we show that the algebraic translation length of conjugacy classes with a fixed Morse gauge is discrete. In Section \ref{sec:Cartan-Hadamard}, we prove our local condition for checking hyperbolicity.

\subsection{Stable subgroup combination theorems}\label{sec:combination_theorems}

The primary results of this section are the following combination theorems for stable subgroups of Morse local-to-global groups. These results extend results of Gitik for quasiconvex subgroups of hyperbolic groups \cite[Theorems 1, 2]{Gitik_ping_pong} and prove Theorems \ref{intro_thm:combination_theorems} and \ref{intro_thm:MCG_combination_theorem} from the introduction.

\begin{thm}\label{thm:stable_subgroup_combination}
Let $G$ be a finitely generated group with the Morse local-to-global property and $S$ be a fixed finite generating set for $G$. If $P,Q$ are $(M,\mu)$--stable subgroups of $G$, then there exists $C = C(M,\mu,S)>0$ such that the following holds for all subgroups $P_1\leq P$ and $Q_1\leq Q$.
\begin{enumerate}
    \item \label{item:combination_thrm_1} If $P_1\cap Q_1=P\cap Q=I$ and $I$ contains all elements of $P_1 \cup Q_1$ whose word length in $G$ is less than $C$, then $\langle P_1,Q_1\rangle \cong P_1\ast_{I}Q_1$. If $P_1$ and $Q_1$ are additionally finitely generated and undistorted in $G$, then the subgroup  $\langle P_1,Q_1\rangle$ is stable in $G$.
    \item \label{item:combination_thrm_2} If $P$ is malnormal in $G$, $P_1\cap Q=P\cap Q=I$, and $I$ contains all elements of $P_1$ whose word length in $G$ is less than $C$, then $\langle P_1,Q\rangle \cong P_1\ast_{I}Q$. If $P_1$ is additionally finitely generated and undistorted in $G$, then the subgroup  $\langle P_1,Q\rangle$ is stable in $G$.     
\end{enumerate}
\end{thm}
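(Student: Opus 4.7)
The plan is to adapt Gitik's ping-pong argument from the hyperbolic setting, replacing his use of Gromov's local-to-global theorem for quasi-geodesics by the Morse local-to-global property. Given a reduced alternating word $w = g_1 g_2 \cdots g_n$ in the amalgamated product $P_1 *_I Q_1$ (so each $g_i$ lies in $P_1 \setminus I$ or $Q_1 \setminus I$, alternating between the two factors), I will use the $(M,\mu)$--stability of $P$ and $Q$ to choose, for each $i$, an $M$--Morse geodesic $\gamma_i$ in $\cay(G,S)$ from $w_{i-1} = g_1\cdots g_{i-1}$ to $w_i$, lying in the $\mu$--neighborhood of $w_{i-1}P$ or $w_{i-1}Q$. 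Concatenating gives $\gamma = \gamma_1 * \cdots * \gamma_n$. The strategy is to show that $\gamma$ is an $(L;\lambda,\epsilon)$--local quasi-geodesic for a suitable $L$, apply Lemma \ref{lem:concatenation_of_morse_is_quasi_geodesic} to promote it to a local $M$--Morse quasi-geodesic, and then invoke the Morse local-to-global property to upgrade it to a global $M'$--Morse $(\lambda',\epsilon')$--quasi-geodesic with $M',\lambda',\epsilon'$ depending only on $M,\lambda,\epsilon$. Since a global quasi-geodesic has endpoint distance bounded below linearly in its parametrized length, $w$ will then represent a nontrivial element of $G$, yielding injectivity of the natural map $P_1 *_I Q_1 \to G$.

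The main obstacle, and the source of the threshold $C$, is proving the local quasi-geodesic property. Fix convenient constants $\lambda,\epsilon$ and let $L$ be the scale provided by the Morse local-to-global property for the gauge $M$ and constants $(\lambda,\epsilon)$. Suppose some restriction of $\gamma$ of parametrized length at most $L$ fails to be a $(\lambda,\epsilon)$--quasi-geodesic; since each $\gamma_i$ is itself a geodesic, the failure must involve substantial backtracking between two adjacent pieces $\gamma_i$ and $\gamma_{i+1}$. The slim-polygon property from Lemma \ref{lem:Morse_polygons}, applied to the geodesic triangle with sides $\gamma_i$, $\gamma_{i+1}$, and a short geodesic between their outer endpoints, produces a pair of points on $\gamma_i$ and $\gamma_{i+1}$ at bounded $G$--distance. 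Using the containments $\gamma_i \subset \mc{N}_\mu(w_{i-1}H)$ and $\gamma_{i+1} \subset \mc{N}_\mu(w_i H')$ for $H, H' \in \{P,Q\}$ and translating by $w_{i-1}^{-1}$, such a near-coincidence yields a short product relation between an element of $P$ and an element of $Q$; combined with $P \cap Q = I$ and $P_1 \cap Q_1 = I$, this produces a nontrivial element of $P_1 \cup Q_1$ outside $I$ whose $S$--length is bounded by a constant $C_0 = C_0(M,\mu,L,S)$. Choosing $C$ strictly larger than $C_0$ contradicts the separation hypothesis of (1), so no backtracking occurs, $\gamma$ is a local quasi-geodesic, and the Morse local-to-global upgrade goes through.

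For stability in (1), when $P_1$ and $Q_1$ are finitely generated and undistorted, Proposition \ref{prop:stable_subgroups}(3) ensures that $P_1$ and $Q_1$ are themselves stable in $G$, so the construction can be rerun with each $\gamma_i$ chosen in a uniform neighborhood of $\langle P_1, Q_1\rangle$. For any $h_1, h_2 \in \langle P_1, Q_1\rangle$, writing $h_1^{-1}h_2$ in reduced form and building $\gamma$ based at $h_1$ yields a uniform Morse quasi-geodesic from $h_1$ to $h_2$ lying in a uniform neighborhood of $\langle P_1, Q_1\rangle$. Lemma \ref{lem:close_to_Morse} then produces a uniformly Morse geodesic between $h_1$ and $h_2$ in a uniform neighborhood of $\langle P_1, Q_1\rangle$, establishing stability.

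Case (2) proceeds identically, with the backtracking analysis adjusted using malnormality of $P$. A backtracking event involving a $P_1$--piece and an adjacent $Q$--piece leads to a nontrivial element of $P \cap gPg^{-1}$ for some $g \notin P$, which malnormality forbids unless the geometric configuration already encodes a short element of $P_1$ alone. Hence only the separation hypothesis on $P_1$ is required, matching the weaker assumption of~(2), and the remainder of the argument, including the stability conclusion, is unchanged.
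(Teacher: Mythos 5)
Your overall architecture---concatenate Morse geodesics along an alternating normal form, verify a local quasi-geodesic condition, promote it via Lemma \ref{lem:concatenation_of_morse_is_quasi_geodesic}, and finish with the Morse local-to-global property---is exactly the paper's (and Gitik's). The gap is in the one step carrying all the difficulty: the claim that a backtracking event between adjacent pieces ``yields a short product relation between an element of $P$ and an element of $Q$'' which then ``produces a nontrivial element of $P_1\cup Q_1$ outside $I$ whose $S$--length is bounded.'' It does not. A near-coincidence between a point of $\gamma_i\subseteq \mc{N}_\mu(w_{i-1}P)$ and a point of $\gamma_{i+1}\subseteq \mc{N}_\mu(w_iQ)$, both far from the corner $w_i$, produces a pair $p\in P$, $q\in Q$ with $|p|$ and $|q|$ large but $|pq|$ small. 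Nothing in the hypotheses forbids this: $I=P\cap Q$ may be infinite, and a short product of two long elements of $P$ and $Q$ is neither a short element of $P_1\cup Q_1$ nor an element outside $I$. (Note also that the pieces only lie near cosets of $P$ and $Q$, not of $P_1$ and $Q_1$, so even a genuinely bad short element would a priori live in $P\cup Q$, about which the separation hypothesis says nothing.) In fact the hypothesis that short elements of $P_1\cup Q_1$ lie in $I$ is used for something else entirely: since each $p_i,q_i\notin I$, it forces $|p_i|,|q_i|\geq C$, i.e.\ every piece of the concatenation is long, which is what makes Lemma \ref{lem:concatenation_of_morse_is_quasi_geodesic} applicable and ensures a window of parametrized length $L$ meets at most two consecutive pieces.

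The genuine content of the local quasi-geodesic verification is Gitik's estimate, which is a direct bound rather than a contradiction: one must choose each $p_i$ to be a shortest representative of its double coset $Ip_iI$ (your reduced form does not record this minimality), use slimness of the Morse triangle to produce a whole interval of $(2\mu+\delta)$--near-coincidences between the two pieces, pigeonhole on the finitely many (say $A$) elements of $G$ of length at most $2\mu+\delta$ to find two coincident ``connectors,'' and derive from their coincidence either a violation of the minimality of $p_i$ in $Ip_iI$ or the conclusion $q_i\in I$. This is why the paper fixes $\epsilon=4A\mu+\delta$ in advance in terms of the counting constant instead of obtaining constants by contradiction. The same omission affects your case (2): there the $q_i$ may be arbitrarily short, so a window of length $L$ can meet three consecutive pieces, and one needs slim quadrilaterals, the weaker $(6,\epsilon)$ constants, and malnormality of $P$ applied to the coincidence data; the one-line reduction ``to a short element of $P_1$ alone'' skips this. (A last, fixable, quantifier slip: the scale $L$ must be taken for the gauge $M'$ output by Lemma \ref{lem:concatenation_of_morse_is_quasi_geodesic}, not for $M$ itself.)
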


The proofs of (\ref{item:combination_thrm_1}) and (\ref{item:combination_thrm_2}) are quite similar to each other and to Gitik's original proofs in the case of hyperbolic groups. Before giving the proofs, we sketch the argument for the case where $P_1 = P$ and $Q_1 =Q$.

The amalgamated product $P \ast_I Q$ naturally surjects onto the subgroups $\langle P , Q\rangle$. The goal is therefore to show this map is an isomorphism by proving any $h \in P \ast _I Q $ that is not in $I = P \cap Q$ is non-trivial under the map  $P \ast _I Q \to G$. The element $h\in P \ast _I Q - I$ can be decomposed as an alternating product $p_1q_1\dots p_nq_n$ of elements of $P$ and $Q$ so that each $p_i$ and $q_i$ contains some subword that is not in $P \cap Q$. Since $P$ and $Q$ are stable subgroups, the path $\gamma$ starting at the identity and labeled by $p_1q_1\dots p_nq_n$ is a concatenation of Morse geodesics. By requiring that all short element of $P$ and $Q$ are contained in $P \cap Q$, we can mimic Gitik's proof in the hyperbolic case to show that $\gamma$ is a local Morse quasi-geodesic with sufficiently large scale. We can then apply the Morse local-to-global property to show $\gamma$ is actually a uniform Morse quasi-geodesic with small enough constants that $\gamma$ cannot be a loop in the Cayley graph of $G$. Since $\gamma$ is not a loop, $p_1q_1\dots p_nq_n$ cannot be the identity and $\langle P, Q \rangle \cong P \ast_I Q$. The stability of $P$ and $Q$ implies that the uniform Morse quasi-geodesic $\gamma$ is contained in a regular neighborhood of  $\langle P, Q \rangle$, proving the subgroup is stable. 
 
\begin{proof}[Proof of (1)]
There exist $M$ and $\mu$ such that $P$ and $Q$ are $(M,\mu)$--stable in $\cay(G,S)$. Let $\delta=4M(3,0)$ and $A$ be the number of elements of $G$ with length less than $2\mu+\delta$. Define $\epsilon=4A\mu+\delta$ and let $M'$ be the Morse gauge so that a concatenation of $M$--Morse geodesics satisfying the hypothesis of Lemma \ref{lem:concatenation_of_morse_is_quasi_geodesic} is an $(L;M';3,\epsilon)$--local Morse geodesic whenever the concatenation is an $(L;3,\epsilon)$--local geodesic. Since $\cay(G,S)$ has the Morse local-to-global property, there are $L>0$, $k\geq 1$, $c\geq 0$ and a Morse gauge $N$ such that every $(L;M';3,\epsilon)$--local Morse quasi-geodesic in $\cay(G,S)$ is an $(N;k,c)$--Morse quasi-geodesic. Let $C=\max\{L,kc\}+1$.

Form the abstract group $P_1\ast_{I}Q_1$ from isomorphic copies of $P_1$ and $Q_1$. Let $\iota \colon P_1\ast_{I}Q_1 \to G$ be the natural map whose image is the subgroup $\langle P_1,Q_1 \rangle <G$. We need to prove that the map $\iota$ is injective. 

Consider an element $h \in P_1\ast_{I}Q_1$ such that $h\notin I$. We will show that $\iota(h)$ is not the identity in $G$. We can write $h$ as a product $h=p_1q_1p_2q_2\cdots q_{m-1}p_m$ where $p_i$ and $q_i$ satisfy the following:
\begin{enumerate}
    \item $q_i\in Q_1-I$ for $1\leq i \leq m-1$;
    \item $p_1,p_m \in P_1$ where $p_1$ (resp. $p_m)$ is a shortest representative of the coset $p_1 I$ (resp. $Ip_m$);
    \item For $2 \leq i \leq m-1$, $p_i\in P_1-I$ is a shortest representative of the double coset $I p_i I$.
\end{enumerate}

 For $1\leq i \leq m$, let $\alpha_i$ and $\beta_i$ be $M$--Morse geodesics  in $\cay(G,S)$ connecting the following points:
 \begin{itemize}
     \item $\alpha_1$ connects $e$ and $p_1$;
     \item $\alpha_m$ connects $p_1q_1\cdots q_{m-1}$ and $p_1q_1\cdots q_{m-1}p_m$;
     \item $\alpha_i$ connects $p_1q_1\cdots q_{i-1}$ and $p_1q_1\cdots q_{i-1}p_i$ for all $2\leq i \leq m-1$;
     \item $\beta_i$ connects $p_1q_1\cdots p_i$ and $p_1q_1\cdots p_iq_i$ for $1\leq i \leq m-1$.
 \end{itemize}
 
  Note, $\alpha_1$ is degenerate if $p_1$ is trivial and $\alpha_m$ is degenerate if $p_m$ is trivial, but all other $\alpha_i$ and $\beta_i$ have length as least $C>L$. Let $\gamma=\alpha_1\ast \beta_1\ast \alpha_2\ast \beta_2\ast\cdots\ast \beta_{m-1} \ast \alpha_m$. We claim that $\gamma$ is an $(L;M';3,\epsilon)$--local Morse quasi-geodesic. 
  
  Since each $\alpha_i$ except $\alpha_1$ and $\alpha_m$ has length at least $ C>L$, Lemma \ref{lem:concatenation_of_morse_is_quasi_geodesic} states it is sufficient to verify that $\gamma$ is an $(L;3,\epsilon)$--local quasi-geodesic. Let $\psi$ be a subsegment of $\gamma$ with parametrized length at most $L$.   If $\psi$ is contained entirely in a single $\alpha_i$ or $\beta_i$, then $\psi$ is a $(3,\epsilon)$--quasi-geodesic. Otherwise, $C > L$ implies that $\psi$ decomposes into two pieces $\eta_1$ and $\eta_2$ where, without loss of generality, $\eta_1 \subseteq \alpha_i$ and $\eta_2 \subseteq \beta_i$ for some $1\leq i \leq m-1$. If $\eta_3$ is a  geodesic in $\cay(G,S)$ connecting the endpoints of $\psi$, then $\eta_1\cup \eta_2 \cup \eta_3$ is $\delta$--slim by Lemma~\ref{lem:Morse_polygons}.

In the proof of  \cite[Theorem 1]{Gitik_ping_pong}, Gitik shows that if $G$ is a $\delta$--hyperbolic group, then $\psi$ will be a $(3,\epsilon)$--quasi-geodesic. This argument only uses the fact that the triangle $\eta_1\cup \eta_2 \cup \eta_3$ is uniformly $\delta$--slim, that the segments $\alpha_i$ and $\beta_i$ are contained in the $\mu$--neighborhood of a coset of either $P$ or $Q$ respectively, that there are only  $A$ elements of $G$ of length $2\delta+\mu$, and the minimality of the choice of the $p_i$. Since these facts remain true in this setting, we can apply the same argument to conclude that $\psi$ is a $(3,\epsilon)$--quasi-geodesic. This implies $\gamma$ is an $(L;3,c)$--local quasi-geodesic and hence an $(L;M';3,\epsilon)$--local Morse quasi-geodesic.

Since $\gamma$ is an $(L; M';3,\epsilon)$--local Morse quasi-geodesic, the Morse local-to-global property implies that $\gamma$ is an $(N;k,c)$--Morse quasi-geodesic. Now, the path $\gamma$ connects the identity $e$ and the element $\iota(h)$ in $\cay(G,S)$. Since the parametrized length of $\gamma$ is greater than $C \geq kc+1$ and the distance between the endpoints of $\gamma$ is positive, and $\iota(h)$ is not the identity. This implies that $\iota$ is injective and  $\langle P_1,Q_1\rangle \cong P_1\ast_{I}Q_1$.

  If $P_1$ and $Q_1$ are finitely generated and undistorted in $G$, then they are both stable subgroups of $G$. To verify that $\langle P_1,Q_1\rangle$ is stable, it is sufficient to check that every element $h \in  \langle P_1,Q_1\rangle - I$ can be connected to the identity with a uniform quality Morse quasi-geodesic that is contained in a uniform neighborhood of $\langle P_1,Q_1\rangle$. Since $P_1$ and $Q_1$ are stable, each of the $\alpha_i$ and $\beta_i$ used to construct $\gamma$ will be uniformly close to a coset of $P_1$ or $Q_1$ respectively. Thus, the $(N;k,c)$--Morse quasi-geodesic $\gamma$ will then be contained in a uniform neighborhood of $\langle P_1,Q_1\rangle$, and $\langle P_1,Q_1\rangle$ will be a stable subgroup of $G$.
  \end{proof}

\begin{proof}[Proof of (2)]
There exist $M$ and $\mu$ such that $P$ and $Q$ are $(M,\mu)$--stable in $\cay(G,S)$. 
Let $\delta=4M(3,0)$ and  $A$ be the number of elements of $G$ with length less than $2\mu+\delta$. Let $\Gamma$ be the quotient of the action of $P$ on $\cay(G,S)$.  If $v_0 \in \Gamma$ is the vertex that  represents the orbit of the identity, then let $r$ be the number of vertices of $\Gamma$ in the ball of radius $\mu+2\delta$ centered at $v_0$. Set $\epsilon=4A\mu+\delta+r^2+1$. Let $M'$ be the Morse gauge so that a concatenation of $M$--Morse geodesics satisfying the hypotheses of Lemma \ref{lem:concatenation_of_morse_is_quasi_geodesic} is an $(L;M';6,\epsilon)$--local Morse geodesic whenever the concatenation is an $(L;6,\epsilon)$--local quasi-geodesic. Since $\cay(G,S)$  has the Morse local-to-global property, there are $L>0$, $k\geq 1$, $c\geq 0$ and a Morse gauge $N$ such that every $(L;M';6,\epsilon)$--local Morse quasi-geodesic is an $(N;k,c)$--Morse quasi-geodesic. Let $C=\max\{L,kc\}+1$.

Form the abstract group $P_1\ast_{I}Q$ from isomorphic copies of $P_1$ and $Q$. As in the proof of (\ref{item:combination_thrm_1}), we need to show that the natural map of $P_1\ast_{I}Q \rightarrow \langle P_1, Q \rangle < G$ is injective. As before, for  $h \in P_1\ast_{I}Q - I$, we can pick a representation $h=p_1q_1p_2q_2\cdots q_{m-1}p_m$ that satisfies the same properties as in (\ref{item:combination_thrm_1}) with the change that $q_i \in Q - \{e\}$ for each $1\leq i \leq m-1$. Continuing to follow the proof of (\ref{item:combination_thrm_1}), we  select  $M$--Morse geodesics $\alpha_i$ and $\beta_i$ in $\cay(G,S)$, and let $\gamma = \alpha_1 \ast \beta_1 \ast \dots \alpha_{m-1}\ast \beta_{m-1} \ast\alpha_m$. Once we show that $\gamma$ is an $(L;M';6,c)$--local Morse quasi-geodesic, the remainder of the proof will finish identically to the proof of (\ref{item:combination_thrm_1}).

 As in (\ref{item:combination_thrm_1}), every $\alpha_i$ except $\alpha_1$ and $\alpha_m$ will have length longer than $C>L$.  Thus, if $\gamma$ is an $(L;6,\epsilon)$--local quasi-geodesic, then it will satisfy the hypotheses of Lemma \ref{lem:concatenation_of_morse_is_quasi_geodesic} and be an $(L;M';6,\epsilon)$--local Morse quasi-geodesic. Let $\psi$ be a subpath of $\gamma$ with parametrized length at most $L$.   If $\psi$ is contained entirely in a single $\alpha_i$ or $\beta_i$, then $\psi$ is a $(6,\epsilon)$--quasi-geodesic. Otherwise, $C > L$ implies that $\psi$ decomposes into three pieces $\eta_1,\eta_2,\eta_3$. Without loss of generality,  $\eta_1 \subseteq \alpha_i$, $\eta_2 \subseteq \beta_i$ and $\eta_3\subseteq \alpha_{i+1}$ for some $1\leq i \leq m-1$ where  at most one of  $\eta_1$ or $\eta_3$ is empty. If $\eta_4$ is a  geodesic in $\cay(G,S)$ connecting the endpoints of $\psi$, then the rectangle $\eta_1\cup \eta_2 \cup \eta_3 \cup \eta_4$ is $2\delta$--slim by Lemma~\ref{lem:Morse_polygons}.
 
 In the proof of  \cite[Theorem 2]{Gitik_ping_pong}, Gitik shows that if $G$ is a $\delta$--hyperbolic group, then $\psi$ will be a $(6,\epsilon)$--quasi-geodesic. This argument only use the fact that  $P$ is malnormal, the rectangle $\eta_1\cup \eta_2 \cup \eta_3 \cup \eta_4$ is uniformly $2\delta$--slim, that the segments $\alpha_i$ and $\beta_i$ are contained in the $\mu$--neighborhood of a coset of either $P$ or $Q$ respectively, that $G$ contains only $A$ elements of length $2\delta + \mu$, and the minimality of the choice of the $p_i$. These facts all remain true in this setting, so we can  apply the same argument to conclude that $\psi$ is a $(6,\epsilon)$--quasi-geodesic.
 
 The remainder of the proof now follows identically to the proof of (\ref{item:combination_thrm_1}).
\end{proof}

Verifying that the intersection of two stable subgroups contains all the short elements of each subgroup can be quite challenging in practice, especially since the function $\Phi$ governing the Morse local-to-global property (and therefore the constant $C$) is often not explicit. However, one can circumvent this difficulty by utilizing the separability of subgroups.

\begin{defn}
A subgroup $H$ of a group $G$ is called \emph{separable} if for every $g\in G-H$, there is a subgroup $K$ of finite index in $G$ such that $H\leq K$ but $g\notin K$. 
\end{defn}

\begin{cor}
\label{cor:separable_intersection}
Let $G$ be a finitely generated group with the Morse local-to-global property. Let $P$ and $Q$ be infinite, stable subgroups of $G$ and  $I =P\cap Q$.
\begin{enumerate}
    \item \label{item:separability_cor_1} If $I$ is separable and infinite index in both $P$ and $Q$, then there exist infinite families of finite index subgroups $P_i <P$ and $Q_i <Q$ with $P_i \cap Q_i = I$, $P_{i} < P_{i-1}$, and $Q_i < Q_{i-1}$  so  that each subgroup $\langle P_i, Q_i\rangle$ is stable in $G$ and  $\langle P_i, Q_i\rangle \cong P_i\ast_{I} Q_i$ .  
    \item \label{item:separability_cor_2} If $P$ is malnormal in $G$ and $I$ is separable and infinite index in $P$, then there exists an infinite family of finite index subgroups $P_i <P$ with $P_i \cap Q = I$ and $P_i < P_{i-1}$ so  that each subgroup $\langle P_i, Q\rangle$ is stable in $G$ and  $\langle P_i, Q\rangle \cong P_i\ast_{I} Q$ . 
\end{enumerate}
\end{cor}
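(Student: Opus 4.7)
The strategy is to apply Theorem~\ref{thm:stable_subgroup_combination} as a black box. That theorem yields a constant $C>0$ (depending only on the stability parameters of $P$ and $Q$ and on $S$) such that once subgroups $P_1 \le P$ and $Q_1 \le Q$ satisfy the appropriate intersection and short-element hypotheses, the free-product decomposition and the stability of $\langle P_1, Q_1\rangle$ follow automatically. Since finite-index subgroups of stable subgroups are themselves finitely generated and undistorted, hence stable, the only real task is to use separability of $I$ to exhibit finite-index $P_1 \le P$ and $Q_1 \le Q$ whose elements of $G$-length at most $C$ all lie in $I$.

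For part~(1), the plan is as follows. Let $C$ be the constant from Theorem~\ref{thm:stable_subgroup_combination}(1) and let $E := \{g \in (P \cup Q) - I : |g| \le C\}$, a finite set. For each $g \in E$, separability provides a finite-index subgroup $K_g \le G$ containing $I$ but not $g$; set $K := \bigcap_{g \in E} K_g$, $P_1 := K \cap P$, and $Q_1 := K \cap Q$. These are finite-index in $P$ and $Q$, contain $I$, and meet $E$ trivially. Since
\[ P_1 \cap Q_1 \subseteq P \cap Q = I \subseteq P_1 \cap Q_1, \]
we have $P_1 \cap Q_1 = I$, and every element of $(P_1 \cup Q_1) - I$ has $G$-length greater than $C$. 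Theorem~\ref{thm:stable_subgroup_combination}(1) then yields the free-product decomposition and the stability of $\langle P_1, Q_1\rangle$. To obtain the strictly nested family, I would iterate: since $I$ has infinite index in both $P$ and $Q$, it has infinite index in each previously constructed $P_{i-1}$ and $Q_{i-1}$, so there exist $p \in P_{i-1} - I$ and $q \in Q_{i-1} - I$. Using separability, find a finite-index $K_i \le K_{i-1}$ containing $I$ but neither $p$ nor $q$, and set $P_i := K_i \cap P$, $Q_i := K_i \cap Q$. The same argument as above shows $P_i \cap Q_i = I$ and that Theorem~\ref{thm:stable_subgroup_combination}(1) applies, while the choice of $p$ and $q$ ensures $P_i \propnest P_{i-1}$ and $Q_i \propnest Q_{i-1}$.

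Part~(2) is essentially identical, using only the $P$-side and Theorem~\ref{thm:stable_subgroup_combination}(2). Let $C$ be the corresponding constant, enumerate the finite set $E' := \{g \in P - I : |g| \le C\}$, and use separability of $I$ to produce a finite-index $P_1 \le P$ containing $I$ and disjoint from $E'$. The malnormality of $P$ together with the equality $P_1 \cap Q = P \cap Q = I$ (obtained exactly as above) allows one to invoke Theorem~\ref{thm:stable_subgroup_combination}(2) to conclude. The strictly decreasing family is produced by the same iteration, using that $I$ has infinite index in $P$ and hence in each $P_{i-1}$. No serious obstacle is expected here: the entire argument is a bookkeeping exercise built on top of Theorem~\ref{thm:stable_subgroup_combination} and the definition of separability, the only mild subtlety being the automatic verification that $P_i \cap Q_i = I$ (respectively $P_i \cap Q = I$) from the containments $I \subseteq P_i \subseteq P$ and $I \subseteq Q_i \subseteq Q$.
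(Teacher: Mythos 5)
Your proof is correct and follows essentially the same route as the paper: fix the constant $C$ from Theorem \ref{thm:stable_subgroup_combination}, use separability of $I$ to pass to finite-index subgroups of $P$ and $Q$ avoiding the finitely many short elements outside $I$, note these are finitely generated and undistorted, and iterate to get the nested family. Your explicit choice of witnesses $p \in P_{i-1}-I$ and $q \in Q_{i-1}-I$ to guarantee proper nesting just spells out the paper's closing remark about inductively separating from elements of $P_{i-1}$ and $Q_{i-1}$ not in $P\cap Q$.
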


\begin{proof}
We give the proof for (\ref{item:separability_cor_1}) and the proof for (\ref{item:separability_cor_2}) is similar.
Fix a finite generating $S$ for $G$ and let $C$ be the constant from Theorem \ref{thm:stable_subgroup_combination}.(\ref{item:combination_thrm_1}) for $P$ and $Q$. For each of $P$ and $Q$, there exists a finite set of elements outside of $P\cap Q$ that have word length less than $C$. Since $P \cap Q$ is separable in $P$ and $Q$, there exist finite index subgroups $P_1<P$ and $Q_1<Q$ so that $P_1 \cap Q_1 = P \cap Q$ and all elements of $P_1$ and $Q_1$ with word length less than $C$ are contained in $P_1 \cap Q_1$. Since $P_1$ and $Q_1$ are finite index in $P$ and $Q$, they are finitely generated and undistorted in $G$ and the conclusion  follows from Theorem \ref{thm:stable_subgroup_combination}.(\ref{item:combination_thrm_1}). We produce the infinite family of subgroups by inductively separating $P_{i}$ and $Q_{i}$ from the short elements of $P_{i-1}$ and $Q_{i-1}$ that are not contained in $P \cap Q$.
\end{proof}

Corollary \ref{cor:separable_intersection} allows us to produce a plethora of examples where our combination theorem applies with a non-trivial intersection between the subgroups. We demonstrate an explicit example in right-angled Coxeter groups, which are Morse local-to-global by virtue of being  cocompact $\CAT(0)$ groups (or alternatively hierarchically hyperbolic groups).

\begin{exmp}\label{ex:example_of_combination_theorem}
\begin{figure}
\begin{tikzpicture}[scale=0.5]

\draw (0,0) node[circle,fill,inner sep=1.5pt, color=black](1){} -- (-2,-2) node[circle,fill,inner sep=1.5pt, color=black](1){}-- (-2,-5) node[circle,fill,inner sep=1.5pt, color=black](1){}-- (0,-7) node[circle,fill,inner sep=1.5pt, color=black](1){} -- (2,-5) node[circle,fill,inner sep=1.5pt, color=black](1){} -- (2,-2) node[circle,fill,inner sep=1.5pt, color=black](1){} -- (0,0) node[circle,fill,inner sep=1.5pt, color=black](1){};

\draw (-2,-2) node[circle,fill,inner sep=1.5pt, color=black](1){} -- (0,-4) node[circle,fill,inner sep=1.5pt, color=black](1){}-- (2,-2) node[circle,fill,inner sep=1.5pt, color=black](1){};

\node at (0, 0.5) {$a$}; \node at (-2.5, -2) {$b$};\node at (-2.5, -5) {$c$}; \node at (0, -7.5) {$d$};\node at (2.5, -5) {$e$}; \node at (2.5, -2) {$f$}; \node at (0, -4.5) {$g$};

\draw (0,0) node[circle,fill,inner sep=1.5pt, color=black](1){} -- (-7,-4) node[circle,fill,inner sep=1.5pt, color=black](1){}-- (0,-7) node[circle,fill,inner sep=1.5pt, color=black](1){};

\draw (0,0) node[circle,fill,inner sep=1.5pt, color=black](1){} -- (-7,-4) node[circle,fill,inner sep=1.5pt, color=black](1){}-- (0,-7) node[circle,fill,inner sep=1.5pt, color=black](1){};

\draw (0,0) node[circle,fill,inner sep=1.5pt, color=black](1){} -- (5,0) node[circle,fill,inner sep=1.5pt, color=black](1){}-- (5,-7) node[circle,fill,inner sep=1.5pt, color=black](1){}--(0,-7) node[circle,fill,inner sep=1.5pt, color=black](1){};

\node at (-7.8, -4) {$m$}; \node at (5, 0.5) {$n$}; \node at (5, -7.5) {$p$};

\end{tikzpicture}
\caption{Defining graph $\Gamma$ for the right-angled Coxeter group in Example \ref{ex:example_of_combination_theorem}}
\label{figure:defining_graph}
\end{figure}
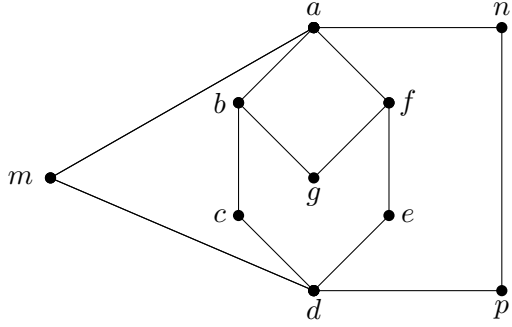

Let $G$ be the right-angled Coxeter group defined by the graph $\Gamma$ in Figure \ref{figure:defining_graph}. Let $\Gamma_1$ be the subgraph of $\Gamma$ with vertex set $\{a,b,c,d,m\}$ and let $\Gamma_2$ be the subgraph of $\Gamma$ with vertex set $\{a,f,e,d,p,n\}$. Let $P$ and $Q$ be the right-angled Coxeter subgroups of $G$ defined by $\Gamma_1$ and $\Gamma_2$ respectively. Both $P$ and $Q$ are stable subgroups by \cite[Corollary 7.12]{Tran2017}, and $P\cap Q$ is the virtually cyclic subgroup $I$ generated by the vertex set $\{a,d\}$. While the subgroups $\langle P,Q \rangle$ is isomorphic to $P\ast_I Q$, $\langle P,Q \rangle$ is not a stable subgroup by \cite[Corollary 7.12]{Tran2017}. However, $I$ is separable in both $P$ and $Q$  (see \cite[Corollary 1.2]{Agol_Virtual_Haken} for instance), so there are two finite index subgroups $P_1 <P$ and $Q_1<Q$ such that $P_1\cap Q_1=I$, $\langle P_1,Q_1 \rangle \cong P_1\ast_I Q_1$, and $\langle P_1,Q_1 \rangle$  is a stable subgroup by Corollary~\ref{cor:separable_intersection}.  In addition to $I$ being non-trivial, $\langle P_1, Q_1 \rangle$ is a one-end group as it is virtually a graph of closed surface groups with cyclic edge groups \cite[Theorem 18]{wilton_one_ended_graphs_of_free_groups}. 
\end{exmp}

When one of the subgroups is cyclic, we can separate the subgroup from short elements by raising the generator to sufficiently high powers. In Section \ref{sec:trivial_examples}, we will employ this idea to show that all Morse local-to-global groups that contain a Morse element  are either virtually cyclic or contain a stable free subgroup of rank 2. Here, we employ this trick to generalize a theorem of Arzhantseva \cite[Theorem 1]{Arzhantseva_quasiconvex_subgroups}.

\begin{cor}\label{cor: extending_stable_subgroups}
Let $G$ be a torsion free, Morse local-to-global group. If $Q$ is a non-trivial infinite index stable subgroup of $G$, then there is an infinite order element $h$ such that $\langle Q,h\rangle \cong Q \ast \langle h \rangle$ and $\langle Q,h\rangle$ is stable in $G$.     
\end{cor}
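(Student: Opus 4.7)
The strategy is to produce an infinite-order Morse element $h_0 \in G$ such that $\langle h_0 \rangle$ is malnormal in $G$ and satisfies $\langle h_0 \rangle \cap Q = \{e\}$, and then invoke Theorem~\ref{thm:stable_subgroup_combination}(\ref{item:combination_thrm_2}) with $P = \langle h_0 \rangle$ and $P_1 = \langle h_0^n \rangle$ for sufficiently large $n$. First, $G$ contains a Morse element: since $Q$ is non-trivial, torsion-freeness gives an infinite-order element $g \in Q$, and $Q$ is hyperbolic by Proposition~\ref{prop:stable_subgroups}(2), so $\langle g \rangle$ is undistorted in $Q$; Proposition~\ref{prop:stable_subgroups}(3) then makes $\langle g \rangle$ stable in $G$, i.e., $g$ is Morse in $G$. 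Moreover, $G$ is neither Morse limited nor virtually infinite cyclic: $Q$ being stable and infinite contains pairs of arbitrarily distant points joined by Morse geodesics, so Morse geodesics in $G$ are unbounded; and a torsion-free virtually infinite cyclic group is isomorphic to $\Z$, in which $Q$ would be either trivial or of finite index. Corollary~\ref{intro_cor:trichotomy} therefore furnishes an infinite-index stable free subgroup of rank two inside $G$, providing an abundance of Morse elements.

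To arrange malnormality, start with any Morse element $g \in G$. By Theorem~\ref{thm:finite_width}, the stable cyclic subgroup $\langle g \rangle$ has finite index in its commensurator $E = \Commensurator_G(\langle g \rangle)$. Since $G$ is torsion-free, $E$ is a torsion-free virtually infinite cyclic group, hence $E \cong \Z$; write $E = \langle g_* \rangle$. Commensurable subgroups have the same commensurator in $G$, so $\langle g_* \rangle$ is its own commensurator in $G$. For an infinite cyclic subgroup in a torsion-free group this is equivalent to malnormality: any non-trivial element of $s\langle g_* \rangle s^{-1} \cap \langle g_* \rangle$ generates a finite-index subgroup of $\langle g_* \rangle$, forcing $s \in E = \langle g_* \rangle$. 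It therefore suffices to find a Morse element $g$ whose commensurator satisfies $\langle g_* \rangle \cap Q = \{e\}$; then set $h_0 := g_*$.

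The main technical obstacle is establishing this disjointness. The plan is to combine the abundance of Morse elements from the stable $F_2$ with the finite width of $Q$ (Theorem~\ref{thm:finite_width}). Roughly, if every maximal stable cyclic subgroup of $G$ met $Q$ non-trivially, then a careful analysis of the action of such a subgroup $\langle g_* \rangle$ on the infinite coset space $Q \backslash G$, together with finite width, should push a power of $g_*$ into every conjugate of $Q$, placing an infinite cyclic subgroup of $G$ inside the normal core of $Q$. Any infinite normal subgroup of $G$ contained in $Q$ would lie inside $gQg^{-1} \cap Q$ for every $g \in G$, yet by finite width these intersections are trivial for all but finitely many cosets, giving the desired contradiction. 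Alternatively, the stable $F_2$ yields many pairwise non-commensurable maximal cyclic subgroups, and a Schottky-style argument relative to the intersection $F_2 \cap Q$ should pick out a Morse element in $F_2$ whose cyclic subgroup misses $Q$.

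With $h_0$ in hand, let $C$ be the constant from Theorem~\ref{thm:stable_subgroup_combination}(\ref{item:combination_thrm_2}) applied to $P = \langle h_0 \rangle$ and $Q$. Because $\langle h_0 \rangle$ is undistorted in $G$, there exists $n \in \mathbb{N}$ so that every non-identity element of $\langle h \rangle := \langle h_0^n \rangle$ has word length strictly greater than $C$ in $G$. Taking $P_1 = \langle h \rangle$, $Q_1 = Q$, and $I = \{e\}$, we have $P_1 \cap Q \subseteq \langle h_0 \rangle \cap Q = \{e\}$, so $P_1 \cap Q = P \cap Q = I$, and the only element of $\langle h \rangle$ of word length at most $C$ is the identity, which belongs to $I$. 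Since $\langle h \rangle$ is finitely generated and undistorted in $G$, Theorem~\ref{thm:stable_subgroup_combination}(\ref{item:combination_thrm_2}) yields $\langle h, Q \rangle \cong \langle h \rangle \ast Q$ and that $\langle h, Q \rangle$ is stable in $G$, completing the proof.
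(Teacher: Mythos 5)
Your skeleton (find a malnormal, maximal cyclic Morse subgroup meeting $Q$ trivially, then apply Theorem~\ref{thm:stable_subgroup_combination}.(\ref{item:combination_thrm_2}) to a high power of its generator) is the right one, and your malnormality argument via commensurators matches the paper's. But there is a genuine gap exactly where you flag ``the main technical obstacle'': you never actually prove that some Morse element has $\langle g_*\rangle \cap Q = \{e\}$. Both of your proposed routes are left at the level of ``should push'' and ``should pick out,'' and neither is routine --- the first would require controlling the action of $\langle g_*\rangle$ on $Q\backslash G$ in a way you do not set up, and the second presupposes a Schottky/ping-pong statement that is itself close to what the corollary is trying to establish. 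As written, the proof does not close.

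The paper sidesteps this entirely with one move you are missing: rather than taking an arbitrary Morse element and trying to verify disjointness from $Q$ afterwards, it \emph{manufactures} the Morse element inside a conjugate of $Q$ that has already been pushed off of $Q$. Concretely, finite width (Theorem~\ref{thm:finite_width}) gives $g\in G$ with $Q\cap gQg^{-1}=\{e\}$; then for any non-identity $k\in Q$, the element $gkg^{-1}$ generates a stable cyclic subgroup contained in $gQg^{-1}$, so $\langle gkg^{-1}\rangle\cap Q\subseteq gQg^{-1}\cap Q=\{e\}$ for free. One then passes to the commensurator $P$ of $\langle gkg^{-1}\rangle$ (which is stable, malnormal, and still meets $Q$ trivially, since a non-trivial intersection would be infinite by torsion-freeness and hence finite index in the virtually cyclic $P$, forcing $gQg^{-1}\cap Q$ to be infinite) and applies Theorem~\ref{thm:stable_subgroup_combination}.(\ref{item:combination_thrm_2}) with $P_1=\langle gk^ng^{-1}\rangle$ for $n$ large, exactly as in your final paragraph. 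If you replace your third paragraph with this construction, the rest of your argument goes through.
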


\begin{proof}
  By Theorem \ref{thm:finite_width}, there exists $g \in G$ such that $Q \cap gQg^{-1} = \{e\}$. Let $k$ be a non-identity element of $Q$. Since $Q$ is hyperbolic, $\langle gkg^{-1}\rangle$ is undistorted in $gQg^{-1}$ implying $\langle gkg^{-1}\rangle$ is  stable in $G$. Let $P$ be the commensurator of $\langle gkg^{-1}\rangle$ in $G$. By  Theorem \ref{thm:finite_width},  $\langle gkg^{-1}\rangle$ has finite index in $P$ implying $P$ is stable. 

We now prove that $P$ is malnormal. If $u \in G$ such that $uPu^{-1}\cap P \neq \{e\}$, then $uPu^{-1}\cap P$ is infinite as $G$ is torsion free. Since $\langle gkg^{-1}\rangle$ is finite index in $P$, $u\langle gkg^{-1}\rangle u^{-1}\cap \langle gkg^{-1}\rangle$ is finite index in $uPu^{-1}\cap P$. Therefore, $u\langle gkg^{-1}\rangle u^{-1}\cap \langle gkg^{-1}\rangle$ is infinite and finite index in both cyclic subgroups $u\langle gkg^{-1}\rangle u^{-1}$ and $\langle gkg^{-1}\rangle$. Thus, $u$ is an element of $P$, the commensurator of $\langle gkg^{-1}\rangle$. This proves $P$ is malnormal.  

By Theorem~\ref{thm:stable_subgroup_combination}.(\ref{item:combination_thrm_2}) the subgroup generated by $Q$ and $h=gk^ng^{-1}$ for $n$ large enough is then stable in $G$  and isomorphic to $Q\ast \langle h \rangle$. 
\end{proof}

Our final application of our combination theorem is to show that every normal subgroup of a Morse local-to-global group contains a Morse element. For the mapping class group of  a surface more complex than a one-holed torus or a $\CAT(0)$ group, the Morse elements are respectively the pseudo-Anosov and rank-1 elements. Thus, Corollary \ref{cor:Morse elements in normal subgroups} shows every infinite normal subgroup of such a mapping class group or $\CAT(0)$ group contains a pseudo-Anosov or rank-1 element respectively.

\begin{cor}\label{cor:Morse elements in normal subgroups}
Let $G$ be a finitely generated group that contains an infinite order Morse element. If $G$ has the Morse local-to-global property, then every infinite normal subgroup of $G$ contains an infinite order Morse element of $G$. 
\end{cor}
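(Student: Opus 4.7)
The plan is to construct a Morse element inside $N$ by combining a fixed Morse element $g \in G$ with a suitable conjugate via the combination theorem for stable subgroups (Theorem \ref{thm:stable_subgroup_combination}(\ref{item:combination_thrm_1})). Let $g$ be an infinite-order Morse element of $G$ and let $E$ denote the commensurator of $\langle g \rangle$ in $G$. By Theorem \ref{thm:finite_width}, $\langle g \rangle$ has finite index in $E$, so $E$ is virtually infinite cyclic. I would split into two cases according to whether $N \cap E$ is infinite or finite.

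The easy case is when $N \cap E$ is infinite. Then $N \cap \langle g \rangle$ is also infinite, being the intersection of $N \cap E$ with the finite-index subgroup $\langle g \rangle$ of $E$, so $g^m \in N$ for some $m \neq 0$. Since any non-trivial power of a Morse element is Morse, we are done.

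The substantive case is when $N \cap E$ is finite, so there exists $h \in N \setminus E$. Set $P = \langle g \rangle$ and $Q = h \langle g \rangle h^{-1} = \langle hgh^{-1} \rangle$; both are stable subgroups of $G$, as being Morse is invariant under conjugation. I would first observe that $P \cap Q = \{e\}$: an infinite intersection would be a common finite-index subgroup of $P$ and $Q$, forcing $h$ into $E$. Let $C$ be the constant produced by Theorem \ref{thm:stable_subgroup_combination}(\ref{item:combination_thrm_1}) applied to $P$ and $Q$. Because $\langle g \rangle$ is undistorted in $G$, I can choose $n$ large enough that every non-identity element of $\langle g^n \rangle$ has word length exceeding $C$ in the fixed Cayley graph; the same $n$ works for $h \langle g^n \rangle h^{-1}$. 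Applying the combination theorem to $P_1 = \langle g^n \rangle$ and $Q_1 = h \langle g^n \rangle h^{-1}$ (both finitely generated and undistorted) then yields that $\langle P_1, Q_1 \rangle \cong P_1 \ast Q_1 \cong F_2$ is a stable subgroup of $G$.

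Finally, I would consider the element $y = h g^n h^{-1} g^{-n}$. Normality of $N$ gives $g^n h^{-1} g^{-n} \in N$, so $y = h \cdot (g^n h^{-1} g^{-n}) \in N$. Rewriting $y = (hg^nh^{-1}) \cdot (g^n)^{-1}$ exhibits $y$ as a reduced word of length $2$ in $P_1 \ast Q_1$, hence of infinite order, and $\langle y \rangle$ is undistorted in the free group $\langle P_1, Q_1 \rangle$. By Proposition \ref{prop:stable_subgroups}(3), $\langle y \rangle$ is stable in $G$, so $y$ is a Morse element of $G$ lying in $N$. The main point requiring care is the commensurator dichotomy together with choosing $n$ large enough that the combination theorem applies with trivial intersection; these hinge respectively on Theorem \ref{thm:finite_width} and the undistortion of $\langle g \rangle$.
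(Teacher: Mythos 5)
Your proof is correct, and its overall strategy is the same as the paper's: conjugate the cyclic subgroup generated by the Morse element by an element of $N$ so that the two cyclic subgroups intersect trivially, apply Theorem \ref{thm:stable_subgroup_combination}.(\ref{item:combination_thrm_1}) to sufficiently high powers to get a stable free subgroup of rank two, and then observe that the commutator-type element $hg^{n}h^{-1}g^{-n}$ lies in $N$ by normality and is Morse because its cyclic subgroup is undistorted in that stable free subgroup (Proposition \ref{prop:stable_subgroups}). The one place you diverge is in how the conjugating element of $N$ is found: the paper assumes no power of the Morse element lies in $N$, notes that $N$ then meets infinitely many distinct left cosets of the cyclic subgroup, and invokes the finite-width statement of Theorem \ref{thm:finite_width} to produce an element of $N$ whose conjugate of the cyclic subgroup meets it trivially; you instead split on whether $N$ meets the commensurator of $\langle g \rangle$ in an infinite set, using only the finite-index-in-commensurator statement of Theorem \ref{thm:finite_width} together with the elementary fact that for an infinite cyclic subgroup a non-commensurating conjugate intersects it in a finite, hence trivial, subgroup. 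Your route is slightly more economical in that it avoids the finite-width theorem altogether, though it leans on the cyclicity of $\langle g \rangle$ (harmless here), whereas the paper's finite-width argument is the one that would adapt to non-cyclic stable subgroups.
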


\begin{proof}
Let $h\in G$ be an infinite order Morse element and $N$ an infinite normal subgroup of $G$. Assume that no positive power of $h$ is an element of $N$. 
The subgroup $H = \langle h \rangle$ is an infinite index stable subgroup of $G$ because $h$ is a Morse element. Since $H$ is finite index in its commensurator in $G$ (Theorem \ref{thm:finite_width}) and $N$ is infinite, there exists an infinite number of distinct left cosets of $H$ with representatives in $N$. 
Theorem \ref{thm:finite_width} thus provides $g\in N$ such that $H\cap gHg^{-1} = \{e\}$.
By Theorem \ref{thm:stable_subgroup_combination}.(\ref{item:combination_thrm_1}), there is $n>0$ so that the elements $h^{n}$ and $gh^{n}g^{-1}$ generate a stable free subgroup. 
 Since $N$ is a normal subgroup and $g \in N$, $(h^{-n}gh^{n})g^{-1}$ is an infinite order Morse element of $N$.
\end{proof}

\begin{rem}
By \cite[Lemma 3.25]{DMS_divergence}, if the subgroup $N$ is also finitely generated, then the Morse element of $G$ it contains is also a Morse element of $N$ with respect to the word metric on $N$.
\end{rem}

\subsection{Translation lengths for Morse elements}\label{sec:translation_length}

We now generalize a result stated by Gromov and proved by Delzant on the discreteness of the set of algebraic translation lengths of elements of hyperbolic groups. We show that in the case of Morse local-to-global groups, the same result applies to conjugacy classes with a fixed Morse gauge.

\begin{defn}
Let $G$ be a group with finite generating set $S$. The \emph{algebraic translation length} of $g\in G$ is defined to be $$\tau_{G,S}(g)=\lim_{n\to \infty} \frac{|g^n|}{n}.$$
The limit $\displaystyle \lim_{n\to \infty} \frac{|g^n|}{n}$ always exists, because the function $n\mapsto |g^n|$ is sub-additive.
\end{defn}

For a given finite generating set, the algebraic translation length depends only on the conjugacy class of  an element. While being Morse is also a conjugacy class invariant, the specific Morse gauge of an element is not. Thus, we need to define a useful definition of Morse gauge for a conjugacy class of Morse elements. We do so by taking the Morse gauge for the elements in the conjugacy class with the shortest word length.

\begin{defn}[Morse gauge of a conjugacy class]
Let $G$ be a finitely generated group with a finite generating set $S$. For each element $g\in G$, let $[g]$ denote the conjugacy class of $g$ and $\mathrm{short}([g])$ be the collection of group elements in $[g]$ with minimal length with respect to $S$. The conjugacy class $[g]$ is \emph{$M$--Morse} with respect to $S$ if every element of $\mathrm{short}([g])$ is $M$--Morse.
\end{defn}

The key use of the Morse local-to-global property in establishing discreteness of translation length is Lemma \ref{lem:uniform_embedding_morse} below, which says $M$--Morse elements with minimal word length in their conjugacy class have a Morse quasi-axis with constants depending only on $M$.

\begin{defn}[Quasi-axis]
\label{defn:quasi-axis}
Let $G$ be a finitely generated group with a finite generating set $S$. The \emph{quasi-axis} for group element $g \in G$ and a geodesic $\alpha$ from $e$ to $g$ in $\cay(G,S)$ is the path $p_g \colon (-\infty,\infty) \to \cay(G,S)$ such that for all $n \in \Z$, $p_g$ restricted to the interval $[(n-1)|g|,n|g|]$ is~$g^n \alpha$.

\end{defn}

\begin{lem}\label{lem:uniform_embedding_morse}
Let $G$ be a finitely generated group with the Morse local-to-global property and $S$ be a finite generating set for $G$. Let $g\in G$ be an infinite order element such that $|g|$ is minimal in $[g]$. If $g$ is $M$--Morse with respect to $S$, then any quasi-axis $p_g$ is an $(N;\lambda,\epsilon)$--Morse quasi-geodesic in $\cay(G,S)$, where $\lambda$, $\epsilon$, and $N$  depend only on $G$, $S$, and $M$.
\end{lem}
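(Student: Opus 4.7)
My plan is to show that the quasi-axis $p_g$ is a local Morse quasi-geodesic at scale comparable to $|g|$, then invoke the Morse local-to-global property. A case split on $|g|$ is needed: if $|g|$ is bounded above by a constant $K_0 = K_0(M, G, S)$ chosen below, then there are only finitely many candidates for $g$. For each such $g$, the subgroup $\langle g\rangle$ is stable by Definition~\ref{defn:morse_element}, so $p_g$ is a Morse quasi-geodesic with some parameters, and taking the worst case over this finite list yields uniform bounds. The substantive case is $|g| > K_0$.

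The geometric core is the lower bound $|g^2| \geq 2|g| - 4\delta$, where $\delta := 4M(3,0)$. Let $\alpha$ be a geodesic from $e$ to $g$ (which is $M$--Morse by hypothesis) and let $\beta$ be a geodesic from $e$ to $g^2$. The triangle with sides $\alpha$, $g\alpha$, $\beta$ has two $M$--Morse sides, so it is $\delta$--slim by Lemma~\ref{lem:Morse_polygons}. Let $x = \alpha(|g|/2)$ and $y = g\alpha(|g|/2)$ be the midpoints of the two Morse sides. If $x$ lay within $\delta$ of some $g\alpha(s)$, writing $\alpha(|g|/2)^{-1} g\alpha(|g|/2) = [\alpha(|g|/2)^{-1} g\alpha(s)]\cdot[\alpha(s)^{-1}\alpha(|g|/2)]$ exhibits a conjugate of $g$ of length at most $\delta + |g|/2$, contradicting minimality of $|g|$ in $[g]$ once $K_0 > 2\delta$; a symmetric estimate rules out $y$ within $\delta$ of $\alpha$. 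Hence $x$ and $y$ both lie within $\delta$ of points $x', y' \in \beta$. Minimality applied to the conjugate $\alpha(|g|/2)^{-1} g \alpha(|g|/2)$ gives $d(x,y)\geq |g|$, so $d(x',y') \geq |g| - 2\delta$, and combining with $d(e,x') \geq |g|/2 - \delta$ and $d(y',g^2) \geq |g|/2 - \delta$ (with an analogous estimate if $y'$ precedes $x'$ on $\beta$, using $d(e,y), d(x,g^2) \geq |g|/2$) yields $|g^2| \geq 2|g| - 4\delta$. A triangle-inequality argument then gives $d\bigl((\alpha \ast g\alpha)(s), (\alpha \ast g\alpha)(t)\bigr) \geq |g^2| - s - (2|g| - t) \geq (t-s) - 4\delta$ for $s \in [0,|g|]$ and $t \in [|g|, 2|g|]$, showing $\alpha \ast g\alpha$ is a $(1,4\delta)$--quasi-geodesic.

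By Lemma~\ref{lem:concatenation_of_morse_is_quasi_geodesic}, this concatenation is additionally $M'$--Morse for some $M' = M'(M)$. Every subsegment of $p_g$ of parametrized length at most $|g|$ lies in at most two consecutive translates and is therefore an $M'$--Morse $(1,4\delta)$--quasi-geodesic, so $p_g$ is an $(|g|; M'; 1, 4\delta)$--local Morse quasi-geodesic. The Morse local-to-global property of $\cay(G,S)$ produces a scale $L_0$ depending only on $M'$, $1$, $4\delta$---and hence only on $M$, $G$, $S$---making every such local Morse quasi-geodesic globally $(N;\lambda,\epsilon)$--Morse with $N,\lambda,\epsilon$ depending only on $M$, $G$, $S$. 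Setting $K_0 := \max(L_0, 2\delta + 1)$ closes the case split: in the large-$|g|$ case, $|g| \geq L_0$, so the local Morse quasi-geodesic is at sufficient scale. The principal obstacle is the geometric estimate $|g^2| \geq 2|g| - 4\delta$, which couples Morse slimness of the triangle $\alpha \cup g\alpha \cup \beta$ with minimality of $|g|$ in $[g]$; once this is in hand, the Morse local-to-global framework absorbs all further combinatorics.
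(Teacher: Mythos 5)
Your proof is correct, and its overall skeleton matches the paper's: split on whether $|g|$ exceeds a threshold determined by the Morse local-to-global scale, handle short $g$ by finiteness, and for long $g$ show the quasi-axis is a local Morse quasi-geodesic at scale $|g|$, upgrade local quasi-geodesic to local \emph{Morse} quasi-geodesic via Lemma \ref{lem:concatenation_of_morse_is_quasi_geodesic}, and then invoke the Morse local-to-global property. Where you genuinely diverge is in the key local estimate. The paper fixes arbitrary parameters $s_1,s_2$ in two adjacent translates of $\alpha$, takes a geodesic $\eta$ between $p(s_1)$ and $p(s_2)$, uses $\delta$--slimness of the triangle with apex at the shared endpoint to produce a factorization $g=v_2g_0v_1$ with $|g|=|v_2|+|g_0|+|v_1|$, and then uses minimality of $|g|$ in $[g]$ to force $|v_1|+|v_2|\leq 2\delta+2$, giving the lower bound $|s_2-s_1|-4\delta-4$ pointwise. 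You instead prove the single inequality $|g^2|\geq 2|g|-4\delta$ (midpoints of $\alpha$ and $g\alpha$ cannot be $\delta$--close to the opposite Morse side by minimality, so they project to $\beta$, and minimality applied to $x^{-1}gx$ spreads their shadows along $\beta$), and then the local $(1,4\delta)$--quasi-geodesic property of the whole axis follows from that one inequality by the triangle inequality alone. This is a clean and arguably more conceptual route: one global statement about $|g^2|$ replaces the per-pair slim-triangle argument, at the cost of only a slightly different additive constant. Two small bookkeeping caveats, neither a gap: when $|g|$ is odd (or when the $\delta$--close point is not a vertex) the points $\alpha(|g|/2)$, $g\alpha(s)$ should be replaced by nearest vertices, which perturbs your constants by $O(1)$ exactly as the paper's ``$\delta+1$'' adjustments do; and, as in the paper's own proof, an arbitrary geodesic $\alpha$ from $e$ to $g$ is only guaranteed to be Morse with a gauge depending on $M$ (via Lemma \ref{lem:close_to_Morse}) rather than literally $M$--Morse, which again only changes constants depending on $M$.
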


\begin{proof}

Let $\delta = 4M(3,0)$ be the constant so that geodesic triangles with two $M$--Morse sides are $\delta$--slim (Lemma \ref{lem:Morse_polygons}). Let $M_0$ be the Morse gauge, depending only on $M$, so that any concatenation of $M$--Morse geodesics satisfying the hypothesis of Lemma \ref{lem:concatenation_of_morse_is_quasi_geodesic} is an $(L;M_0;1,4\delta +4)$--local Morse quasi-geodesic whenever the concatenation is an $(L;1,4\delta +4)$--local quasi-geodesic. Let $L$, $\lambda_0$, $\epsilon_0$ be the constants and $N_0$ the Morse gauge such that every $(L;M_0;1,4\delta +4)$--local Morse quasi-geodesic in $\cay(G,S)$ is a global $(N_0;\lambda_0,\epsilon_0)$--Morse quasi-geodesic in $\cay(G,S)$. 

Let $\alpha$ be any geodesic from $e$ to $g$ and $p = p_g$ be the quasi-axis for $g$ and $\alpha$ as described in Definition \ref{defn:quasi-axis}. We  first use the Morse local-to-global property to show that if $|g|>L$, then the quasi-axis $p$ is an $(N_0;\lambda_0,\epsilon_0)$--Morse quasi-geodesic. 

\begin{claim}\label{claim:translation_length}
If $|g| >L$, then $p$ is an $(L;M_0;1,4\delta+4)$--local Morse quasi-geodesic.
\end{claim}

\begin{subproof}
We first show $p$ is an $(L;1,4\delta+4)$--local quasi-geodesic.

Suppose $|g|>L$ and let $t_1,t_2\in (-\infty,\infty)$ with $t_1<t_2$ and $|t_2-t_1|<L$. Since the length of 
$\alpha$ is greater than $L$,  $p\bigl([t_1,t_2]\bigr)$ must be contained in either $g^n \alpha$  or  $g^n\alpha \cup g^{n+1}\alpha$ for some $n \in \mathbb{Z}$. If $p\bigl([t_1,t_2]\bigr) \subseteq g^n \alpha$, then $p\vert_{[t_1,t_2]}$ is an $M$--Morse 
geodesic, so suppose $p\bigl([t_1,t_2]\bigr) \not\subseteq g^n\alpha$ for any $n \in \mathbb{Z}$. Without loss of generality, we can assume $p\bigl([t_1,t_2]\bigr) \subseteq g^{-1}\alpha \cup \alpha$.

Let $s_1,s_2 \in [t_1,t_2]$. Since $p\vert_{[t_1,t_2]}$ is a concatenation of geodesics, the inequality $d\bigl(p(s_1),p(s_2)\bigr)\leq |s_2-s_1|$ holds by the triangle inequality. For the other inequality we can assume that $p(s_1) \in g^{-1} \alpha$ and $p(s_2) \in \alpha$. 

Let $\eta$ be a geodesic in $\cay(G,S)$ connecting $p(s_1)$ and $p(s_2)$, $\gamma_1$ be the geodesic segment of $g^{-1} \alpha$ connecting $e$ and $p(s_1)$, and  $\gamma_2$ be the geodesic segment of $\alpha$ connecting $e$ and $p(s_2)$. Since the triangle $\gamma_1 \cup \eta \cup \gamma_2$ is $\delta$--slim and the sum of the lengths of $\gamma_1$ and $\gamma_2$ is less than $|g|$, there are group elements $z \in \eta$ and $v_1, v_2, g_0 \in G$ satisfying the following:
\begin{enumerate}
    \item $v^{-1}_1$ is a vertex of $\gamma_1$ and $v_2$ is a vertex of $\gamma_2$;
    \item $d(z,v^{-1}_1)\leq \delta +1$ and $d(z,v_2) \leq \delta +1$;
    \item $g = v_2g_0v_1$ with $|g|=|v_2|+|g_0|+|v_1|$.
\end{enumerate}

\noindent Now, $v_2^{-1} g v_2 = g_0v_1v_2$, which implies $|v_2^{-1} g v_2| \leq |g_0| +|v_1v_2|\leq |g_0|+2\delta+2.$ Since $g$ is an element of minimal length in the conjugacy class, we have
\[|g| = |v_1| + |g_0| + |v_2| \leq |v_2^{-1} g v_2| \leq |g_0| +2\delta+2.\] Thus, $|v_1| + |v_2| \leq  2\delta+2$.  We now finish establishing that $p\vert_{[t_1,t_2]}$ is a $(1,4\delta +4)$--quasi-geodesic with the following calculation.
\begin{align*}
    d\bigl(p(s_1),p(s_2)\bigr) &=d\bigl(p(s_1),z\bigr)+d\bigl(z,p(s_2)\bigr)\\&\geq d\bigl(p(s_1),v^{-1}_1\bigr) + d(p(s_2),v_2\bigr) - 2 \delta -2\\ &=   \bigl(d\bigl(p(s_1),p(0)\bigr)-|v_1|\bigr) + \bigl(d\bigl(p(s_2),p(0)\bigr)-|v_2|\bigr) -2 \delta -2\\
    &\geq |s_2-s_1| - 4\delta -4.
\end{align*}

Since  $p$ is an $(L;1,4\delta +4)$--local quasi-geodesic, $p$ satisfies the hypothesis of Lemma \ref{lem:concatenation_of_morse_is_quasi_geodesic} as $\alpha$ has length more than $L$. Thus, $p$ is an $(L;M_0;1,4\delta +4)$--local Morse quasi-geodesic.
\end{subproof}

By the Morse local-to-global property of $\cay(G,S)$, Claim \ref{claim:translation_length} implies that $p=p_g$ is an $(N_0;\lambda_0,\epsilon_0)$--Morse quasi-geodesic when $|g| > L$.

To finish the proof in the remaining cases, recall  that $L$  depends only on $M$, $G$, and $S$. Therefore, the number of $M$--Morse elements with word length at most $L$ is bounded by a constant depending only on $M$, $G$, and $S$. This implies that there are constants $\lambda_1$, $\epsilon_1$ and a Morse gauge $N_1$ depending only on $M$, $G$, and $S$ such that $p_g$ is an $(N_1;\lambda_1,\epsilon_1)$--quasi-geodesic for each $M$--Morse element $g$ satisfying $|g|\leq L$. Thus, the lemma follows with  $\lambda=\max\{\lambda_0,\lambda_1\}$, $\epsilon=\max\{\epsilon_0,\epsilon_1\}$, and $N=\max\{N_0,N_1\}$.
\end{proof}

With Lemma \ref{lem:uniform_embedding_morse} in hand, we can apply Delzant's argument to show discreteness of translation length for conjugacy classes with a fixed Morse gauge.

\begin{thm}\label{thm:discrete_translation_lengths}
Let $G$ be a finitely generated group with the Morse local-to-global property. For all Morse gauges $M$ and finite generating sets $S$, the set of algebraic translation lengths of elements of $G$ whose conjugacy class is $M$--Morse with respect to $S$ is a discrete subset of the rational numbers.
\end{thm}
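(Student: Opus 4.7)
The plan is to adapt Delzant's argument for hyperbolic groups to the Morse local-to-global setting, with Lemma~\ref{lem:uniform_embedding_morse} replacing the role of global hyperbolicity. I would begin by reducing to the case that $g$ is a minimal-length representative of its conjugacy class; since $\tau_{G,S}$ is a conjugacy invariant, this does not change the translation length, and Lemma~\ref{lem:uniform_embedding_morse} then produces a quasi-axis $p_g$ that is a uniform $(N;\lambda,\epsilon)$-Morse quasi-geodesic with constants depending only on $G$, $S$, and $M$.

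From this uniform Morse quasi-axis I would extract two key estimates. First, the quasi-isometric embedding inequality for $p_g$ applied to $p_g(0) = e$ and $p_g(n|g|) = g^n$ gives $|g^n| \geq n|g|/\lambda - \epsilon$, so $\tau_{G,S}(g) \geq |g|/\lambda \geq 1/\lambda$. Consequently the set of translation lengths of $M$-Morse conjugacy classes is bounded below by a universal constant $\tau_0 = 1/\lambda > 0$ depending only on $M$, $G$, and $S$. Second, the Morse property ensures that any geodesic from $e$ to $g^{n+m}$ passes within a uniform distance $D$ of $g^n$, yielding a uniform defect bound $0 \leq |g^n| + |g^m| - |g^{n+m}| \leq 2D$ for all $n, m \geq 0$.

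The core step is Delzant's pigeonhole argument for rationality. Using the uniform defect bound together with a pigeonhole over the bounded combinatorial patterns that appear near the endpoints of geodesics from $e$ to $g^n$ (which lie in balls of uniform radius around $e$ and $g^n$ in $\cay(G,S)$), one produces integer linear relations among the values $|g^{n_i}|$ that force $K \cdot \tau_{G,S}(g) \in \mathbb{Z}$ for some $K = K(M, G, S)$. Because this step relies only on uniform Morse control of the quasi-axis and not on global hyperbolicity of the ambient space, Lemma~\ref{lem:uniform_embedding_morse} allows it to be transported directly to the Morse local-to-global setting. Combining with the lower bound then shows that the translation lengths of $M$-Morse conjugacy classes form a subset of $\tfrac{1}{K}\mathbb{Z} \cap [\tau_0, \infty)$, which is a discrete set of rational numbers. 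The main obstacle will be verifying that Delzant's pigeonhole step truly uses only the uniform quasi-axis structure rather than ambient hyperbolicity, but this is exactly the geometric input supplied by Lemma~\ref{lem:uniform_embedding_morse}.
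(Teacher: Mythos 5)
Your opening moves match the paper's: reduce to a shortest representative of the conjugacy class and invoke Lemma~\ref{lem:uniform_embedding_morse} to get a quasi-axis $p_g$ that is an $(N;\lambda,\epsilon)$--Morse quasi-geodesic with constants depending only on $G$, $S$, $M$; and your two estimates (the lower bound $\tau_{G,S}(g)\geq 1/\lambda$ and the defect bound $0\leq |g^n|+|g^m|-|g^{n+m}|\leq 2D$ with $D=N(1,0)$) are correct consequences. The gap is exactly the step you flag as the ``main obstacle'': the rationality mechanism is asserted, not supplied, and as described it would not work. A bounded defect plus pigeonhole cannot by itself force $K\cdot\tau_{G,S}(g)\in\mathbb{Z}$: the function $f(n)=\lceil n\alpha\rceil$ with $\alpha$ irrational satisfies $0\leq f(n)+f(m)-f(n+m)\leq 1$ while $f(n)/n\to\alpha$, so purely numerical control of $|g^n|$ of the kind you extract is consistent with irrational translation length. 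Moreover, ``integer linear relations among the values $|g^{n_i}|$'' is not what the Delzant/Bridson--Haefliger argument produces. Its actual mechanism is geometric: fix a bi-infinite geodesic $c$ with the same endpoints at infinity as the axis, use that every translate $g^nc$ lies in a uniform neighborhood of $c$, record for each $n$ a bounded-length ``marking'' (a group element of uniformly bounded word length) describing how $g^nc$ sits relative to $c$, and pigeonhole over the finitely many markings to find a bounded power $g^k$ carrying a vertex of $c$ to another vertex of $c$ coherently, whence $k\cdot\tau_{G,S}(g)\in\mathbb{Z}$ with $k$ bounded by the number of elements in a ball of uniform radius. Nothing in your sketch identifies this mechanism or a substitute for it, and pigeonholing on ``patterns near the endpoints'' of the finite geodesics $[e,g^n]$ is not obviously adequate, since those geodesics vary with $n$ and no comparison between them is set up.

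This is also where the Morse local-to-global setting requires a genuinely new input beyond Lemma~\ref{lem:uniform_embedding_morse}: to run the argument one needs bi-infinite geodesics asymptotic to $p_g$ (with endpoints in the Morse boundary) to exist and to lie in a uniform neighborhood of $p_g$, which is not a formal consequence of the Morse property of $p_g$ applied to finite segments. The paper supplies precisely this via \cite[Lemma 9]{MR_Morse_Boundary}, after which the proof of \cite[Theorem III.H.3.17]{Bridson_Haefliger} can be followed verbatim. So your framing is right, but the core of the proof --- the fellow-traveling statement for geodesic lines asymptotic to the quasi-axis and the marking/pigeonhole argument converting it into bounded denominators --- is missing rather than merely deferred.
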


\begin{proof}
Let $[g]$ be an $M$--Morse conjugacy class in $G$ with respect to $S$ and let $u$ be the shortest representative of $[g]$. By Lemma~ \ref{lem:uniform_embedding_morse}, any quasi-axis $p_u$ is an $(N;\lambda,\epsilon)$--Morse quasi-geodesic where $N$, $\lambda$, and $\epsilon$ depend only on $M$, $G$, and $S$. Let $U_+$ and $U_-$ be the endpoint of $p_u$ in the Morse boundary of $\cay(G,S)$ (see \cite{MC1} for the definition of the Morse boundary). By \cite[Lemma 9]{MR_Morse_Boundary}, there exists $R >0$, depending on $M$, $G$, and $S$ such that every bi-infinite geodesic in $\cay(G,S)$ with endpoints $U_+$ and $U_-$ is contained in the $R$--neighborhood of $p_u$.
We can now follow the proof of \cite[Theorem III.H.3.17]{Bridson_Haefliger} verbatim. \end{proof}

\begin{rem}
Morse gauges come with a natural partial order: $M_1 \preceq M_2$ if  $M_1(\lambda,\epsilon) \leq M_2(\lambda,\epsilon)$ for all $(\lambda,\epsilon) \in [1,\infty) \times [0,\infty)$. If $M_1 \preceq M_2$, then every conjugacy class that is $M_1$--Morse is also $M_2$--Morse. Thus, Theorem \ref{thm:discrete_translation_lengths} can be rephrased as ``the set of translation lengths of all conjugacy classes that are at most $M$--Morse is discrete for every Morse gauge $M$".
\end{rem}

\subsection{A Cartan--Hadamard theorem for Morse local-to-global spaces}\label{sec:Cartan-Hadamard}

Our final application of the Morse local-to-global property is a short proof that hyperbolicity in Morse local-to-global spaces can be checked locally. 

\begin{defn}
A geodesic metric space $X$ is \emph{$(R;\delta)$--locally hyperbolic} if for each point $x\in X$ and for each triple of points $a,b,c \in \ball_R(x)$, any geodesic triangle with vertices $a$, $b$, and $c$ is $\delta$--slim.
\end{defn}

\begin{thm}\label{thm:cartan-hadamard}
Let $X$ be a metric space with the $\Phi$--Morse local-to-global property. For each $\delta \geq 0$, there exist $R=R(\delta, \Phi)$ and $\delta'=\delta'(\delta, \Phi)$ such that if $X$ is $(R;\delta)$--locally hyperbolic, then $X$ is globally $\delta'$--hyperbolic.
\end{thm}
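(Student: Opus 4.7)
The plan is to leverage the Morse local-to-global property to promote local Morse information (coming from local hyperbolicity) into a global Morse property for every geodesic in $X$, and then apply Lemma \ref{lem:Morse_polygons} to conclude that geodesic triangles in $X$ are uniformly slim.

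First, I would verify that in an $(R;\delta)$--locally hyperbolic space, every geodesic of length at most $L$ (for $L$ small compared to $R$) is $M$--Morse for an appropriate Morse gauge $M$ depending on $\delta$ and $L$. The argument splits by quasi-geodesic quality: for a geodesic $\sigma$ of length at most $L$ and a $(\lambda,\epsilon)$--quasi-geodesic $\alpha$ sharing endpoints with $\sigma$, a direct computation shows $\alpha$ has diameter at most $\lambda^2 L + \lambda^2\epsilon + \epsilon$. When this quantity is small compared to $R$, the union $\sigma\cup\alpha$ fits in a single $R$--ball and the local $\delta$--slimness of triangles yields the classical Morse-lemma bound $M_\delta(\lambda,\epsilon)$ depending only on $\delta$. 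For wilder $(\lambda,\epsilon)$, the trivial diameter bound on $\alpha$ directly gives a Hausdorff bound. Combining these yields a single Morse gauge of the form $M(\lambda,\epsilon):=\max\{M_\delta(\lambda,\epsilon),\,\lambda^2 L + \lambda^2\epsilon + \epsilon\}$.

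Second, I would feed this gauge into the $\Phi$--Morse local-to-global property at $(\lambda,\epsilon)=(1,0)$ to obtain a scale $L_0 := \Phi(M,1,0)_{\text{first coord}}$ together with output data $N,k,c$. Taking $L=L_0$ in the construction of $M$, and choosing $R$ as a sufficiently large function of $L_0$ and $\delta$, every geodesic $\gamma$ in $X$ is then an $(L_0;M;1,0)$--local Morse quasi-geodesic, since each subsegment of parameterized length at most $L_0$ is a geodesic of length at most $L_0$, hence $M$--Morse by the first step. Morse local-to-global upgrades $\gamma$ to a global $N$--Morse quasi-geodesic for $N$ depending only on $\delta$ and $\Phi$. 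Since every side of every geodesic triangle in $X$ is now $N$--Morse, Lemma \ref{lem:Morse_polygons} forces every geodesic triangle in $X$ to be $4N(3,0)$--slim, so $X$ is $\delta'$--hyperbolic for $\delta':=4N(3,0)$.

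The main obstacle is the apparent circularity between $M$ and $L_0$: the gauge $M$ depends on $L_0$ through the diameter-based bound for wild quasi-geodesics, while $L_0$ is determined by $\Phi$ applied to $M$. The resolution is to carry out the choices in the correct order: first fix a Morse gauge $M$ of the prescribed form, then let $\Phi$ return $L_0$, then set $L=L_0$ self-consistently in the definition of $M$, and finally choose $R$ large enough (as a function of $L_0$, $\delta$, and $\Phi$) that the local $\delta$--slimness controls the $(\lambda,\epsilon)$--quasi-geodesics arising in the first step. With this arrangement, $R$, $L_0$, $N$, and $\delta'$ end up depending only on $\delta$ and $\Phi$, as required.
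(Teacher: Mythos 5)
Your overall strategy is the paper's: extract a Morse gauge for short geodesics from local hyperbolicity (padding it to absorb quasi-geodesics too wild for the local hypothesis to see), feed that gauge into $\Phi$ at $(\lambda,\epsilon)=(1,0)$ to get a scale $L$, conclude that every geodesic is globally $N$--Morse, and finish with Lemma \ref{lem:Morse_polygons}. However, the circularity you flag at the end is a genuine gap, and your proposed resolution does not close it. Your gauge $M(\lambda,\epsilon)=\max\{M_\delta(\lambda,\epsilon),\,\lambda^2 L+\lambda^2\epsilon+\epsilon\}$ depends on $L$, while $L$ is the first coordinate of $\Phi(M,1,0)$; ``setting $L=L_0$ self-consistently'' amounts to asking for a fixed point of the map sending $L$ to the first coordinate of $\Phi(M_L,1,0)$, and nothing guarantees such a fixed point exists: $\Phi$ is an arbitrary function of the gauge, with no monotonicity or continuity hypotheses, and enlarging the gauge weakens the local hypothesis, so the scale returned by $\Phi$ can keep growing and the implicit iteration need never stabilize. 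As written, $R$, $N$, and $\delta'$ cannot be pinned down as functions of $\delta$ and $\Phi$ alone, which is exactly what the theorem requires.

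The fix --- and what the paper actually does --- is to make the padded gauge independent of $L$ before $\Phi$ is ever invoked. The point is that the ``wild'' regime, where the trivial diameter bound $\lambda^2 L+\lambda^2\epsilon+\epsilon$ may exceed the radius on which local hyperbolicity helps, can be taken to be precisely the regime $L\leq\lambda+\epsilon$; there the same computation gives diameter at most $\lambda^2(\lambda+2\epsilon)+\epsilon$, a function of $(\lambda,\epsilon)$ only. So one pads the gauge from the local Morse lemma to satisfy $M(\lambda,\epsilon)\geq\lambda^2(\lambda+2\epsilon)+\epsilon+1$, obtaining a gauge depending only on $\delta$; then $(L,N,k,c)=\Phi(M,1,0)$ is determined; and only afterwards is $R$ chosen (the paper takes $R=8L^3+4L+1$), so that in the complementary regime $L>\lambda+\epsilon$ every competing $(\lambda,\epsilon)$--quasi-geodesic stays within $2L^3+L<R/4$ of its starting point and the local Morse lemma (the paper's Lemma \ref{lem:mimic}) applies with the $\delta$--only bound. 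With the choices made in this order --- $\delta$ determines $M$, then $\Phi$ gives $(L,N,k,c)$, then $R$ --- there is no circularity, and the rest of your argument (every geodesic is an $(L;M;1,0)$--local Morse quasi-geodesic, hence $N$--Morse, hence every geodesic triangle is $4N(3,0)$--slim) goes through as you wrote it.
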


To prove Theorem \ref{thm:cartan-hadamard}, we show  local hyperbolicity implies every geodesic is uniformly locally Morse. We can then apply the Morse local-to-global property to conclude that every geodesic is Morse. This implies hyperbolicity by Lemma \ref{lem:Morse_polygons}. We first record an auxiliary lemma.

\begin{lem}
\label{lem:mimic}
For each $\delta>0$ there is a Morse gauge $M$ so that the following holds. Let $X$ be an $(R;\delta)$--locally hyperbolic space and $x \in X$. Let $\alpha$ be a geodesic in $X$ and $\beta$ be a $(\lambda,\epsilon)$--quasi-geodesic with the same endpoints as $\alpha$. If $\alpha$ and $\beta$ are both contained $\ball_{R/4}(x)$, then the Hausdorff distance between $\beta$ and $\alpha$ is at most $M(\lambda,\epsilon)$.
\end{lem}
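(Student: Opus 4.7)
The approach is to transplant the standard proof of the Morse Lemma for $\delta$--hyperbolic spaces into this local setting. The key observation is that since $\alpha, \beta \subseteq \ball_{R/4}(x)$, any triple of points chosen from $\alpha \cup \beta$ lies in $\ball_R(x)$, and any geodesic triangle on such a triple is $\delta$--slim by the $(R;\delta)$--local hyperbolicity hypothesis. The sides of such auxiliary triangles may leave $\ball_R(x)$, but this does not matter because slimness is a condition only on the three vertices.

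First I would tame $\beta$ into a continuous $(\lambda', \epsilon')$--quasi-geodesic $\beta'$ with $\lambda', \epsilon'$ depending only on $\lambda, \epsilon$: sample $\beta$ at unit-spaced parameters and connect consecutive samples by geodesics. Standard estimates give $d_H(\beta, \beta') \leq \lambda + \epsilon$. A midpoint argument with the triangle inequality shows that any geodesic between two points of $\ball_{R/4}(x)$ lies in $\ball_{R/2}(x)$, so $\beta' \subseteq \ball_{R/2}(x) \subseteq \ball_R(x)$.

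Next I would run a standard Morse Lemma argument to bound $d_H(\alpha, \beta')$ by a constant $D_0 = D_0(\delta, \lambda', \epsilon')$. The quadrilateral version adapts most cleanly. Assuming for contradiction that some $p \in \beta'$ lies at distance $D > D_0$ from $\alpha$, select the first and last points $y, z$ of the subarc of $\beta'$ through $p$ at distance $D/2$ from $\alpha$, and let $y', z' \in \alpha$ be closest-point projections. The quadrilateral with vertices $y, y', z', z$ has all four vertices in $\ball_{R/2}(x) \subseteq \ball_R(x)$, so splitting it by a diagonal yields two geodesic triangles each $\delta$--slim by local hyperbolicity, and hence the quadrilateral is $2\delta$--slim. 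Comparing the parametrized length of the $\beta'$--subarc from $y$ to $z$ with how much of it can track the two short sides $[y, y']$ and $[z, z']$ (versus the long side of $\alpha$ from $y'$ to $z'$) yields a contradiction once $D$ exceeds an explicit function of $\delta, \lambda', \epsilon'$.

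Combining the bound $d_H(\alpha, \beta') \leq D_0$ with $d_H(\beta, \beta') \leq \lambda + \epsilon$ gives the required bound $M(\lambda, \epsilon)$ on $d_H(\alpha, \beta)$ depending only on $\delta, \lambda, \epsilon$. The main technical obstacle I anticipate is the bookkeeping: one must verify that every auxiliary point introduced in both the taming step and the Morse Lemma argument --- samples of $\beta$, piecewise-geodesic vertices, and closest-point projections onto $\alpha$ or $\beta'$ --- lies within $\ball_R(x)$, so that the local hyperbolicity hypothesis actually applies. The buffer between $R/4$ and $R$ in the hypothesis is precisely what makes all these containments work out.
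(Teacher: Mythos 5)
Your framing is the same as the paper's: the paper's proof simply observes that any geodesic joining two points of $\alpha \cup \beta$ stays in $\ball_R(x)$ and then follows the stability argument of \cite[Theorem III.H.1.7]{Bridson_Haefliger}, and you correctly identified the essential point that slimness is a hypothesis on vertices only, so the buffer between $R/4$ and $R$ makes every auxiliary triangle admissible (your containment bookkeeping — samples and projections in $\ball_{R/4}(x)$, taming geodesics and triangle sides in $\ball_{R/2}(x)\subseteq \ball_R(x)$ — is fine). However, the core estimate you actually sketch has a genuine gap. The quadrilateral with vertices $y,y',z',z$ is a \emph{geodesic} quadrilateral, so its $2\delta$--slimness constrains its geodesic sides, in particular the geodesic $[y,z]$ — it says nothing about the subarc of $\beta'$ from $y$ to $z$, which is not one of its sides; if instead you take that subarc as the fourth side, the figure is no longer a geodesic quadrilateral and the local hyperbolicity hypothesis does not apply to it. Consequently the proposed ``compare the parametrized length of the subarc with how much of it can track the short sides'' step has nothing to compare against: from a single $2\delta$--slim quadrilateral plus the quasi-geodesic length bound one cannot force a contradiction, because no a priori relation between the subarc and $[y,z]$ (or $\alpha$) is available — that relation is exactly the Morse property being proved. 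The phenomenon that does produce the contradiction (in $\mathbb{H}^2$ it is the $\cosh$--factor for paths running parallel to a geodesic at distance $D$) is \emph{exponential divergence}, and in a general (locally) $\delta$--hyperbolic space this must be supplied by the dyadic-subdivision estimate \cite[Lemma III.H.1.6]{Bridson_Haefliger}, bounding the distance from a point of a geodesic to a rectifiable path by $\delta\log_2$ of the path's length.

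The fix is to revert to the standard Bridson--Haefliger scheme, which is what the paper's proof invokes: first apply the dyadic-subdivision lemma to a point $x_0\in\alpha$ at maximal distance from the tamed path $\beta'$ (all triangles arising in that recursion have vertices on $\beta'\cup\alpha\subseteq \ball_{R/2}(x)$, so they are $\delta$--slim, and their sides stay in $\ball_R(x)$), obtaining a bound $D_0=D_0(\delta,\lambda,\epsilon)$ on $\sup_{x_0\in\alpha} d(x_0,\beta')$; then deduce $\beta'\subseteq \mc{N}_{D_1}(\alpha)$ by the connectedness/coarse intermediate value argument, which uses no hyperbolicity at all. Note also that your sketch only addresses one direction of the Hausdorff distance; in your ordering the other direction would still need the coarse connectedness step (easy, since $\alpha$ is a geodesic segment), whereas in the Bridson--Haefliger ordering it is the direction you attempted that comes for free.
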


\begin{proof}
Any geodesic connecting two points in $\alpha\cup \beta$ is contained in $\ball_R(x)$. Since $X$ is $(R;\delta)$--locally hyperbolic, we can follow the proof of \cite[Theorem III.H.1.7]{Bridson_Haefliger} to find a Morse gauge for $\alpha$ depending only on $\delta$.
\end{proof}

\begin{proof}[Proof of Theorem \ref{thm:cartan-hadamard}]
Let $\delta >0$ and $M$ be the Morse gauge, depending only on $\delta$, from  Lemma~\ref{lem:mimic}. By increasing $M$, we can assume that $M(\lambda, \epsilon) \geq \lambda^2(\lambda+2\epsilon)+\epsilon +1$ for all $\lambda\geq 1$ and $\epsilon\geq 0$. Let $N$ be the Morse gauge and $L$, $k$, $c$ be the constants such that every $(L;M;1,0)$--local Morse quasi-geodesic in $X$ is a global $(N;k,c)$--Morse quasi-geodesic, i.e., $(L,N,k,c) = \Phi(M,1,0)$. Let $R=8L^3+4L+1$ and assume that $X$ is $(R;\delta)$--locally hyperbolic. If we show that every geodesic of $X$ is an $(L;M;1,0)$--Morse quasi-geodesic, then every geodesic of $X$ is $N$--Morse. By Lemma \ref{lem:Morse_polygons}, this implies that $X$ is $\delta'$--hyperbolic where $\delta' = 4N(3,0)$.

Let $\gamma$ be an arbitrary geodesic in $X$ and $\alpha \colon[a,b]\to X$ be a $(\lambda,\epsilon)$--quasi-geodesic with endpoints on $\gamma$ such that $d\bigl(\alpha(a),\alpha(b)\bigr)<L$. This forces $|a-b|\leq \lambda(L+\epsilon)$. 

First, assume $L \leq \lambda+\epsilon$. For each $t\in[a,b]$ we have \[d\bigl(\alpha(t),\alpha(a)\bigr)\leq \lambda|t-a|+\epsilon\leq \lambda^2(\lambda+2\epsilon)+\epsilon< M(\lambda,\epsilon).\]
Therefore, the Hausdorff distance between $\alpha$ and the subsegment of $\gamma$ between $\alpha(a)$ and $\alpha(b)$ is bounded by $M(\lambda,\epsilon)$.

Now, assume $L > \lambda+\epsilon$. Therefore, $|a-b|\leq 2L^2$, and we have
\[d\bigl(\alpha(t),\alpha(a)\bigr)\leq \lambda |t-a|+\epsilon\leq 2 L^3+L<R/4 \text { for all } t\in[a,b].\]
Thus, $\alpha$ and the subsegment of $\gamma$ between $\alpha(a)$ and $\alpha(b)$ both lie in the ball $\ball_{R/4}\bigl(\alpha(a)\bigr)$. The Hausdorff distance between $\alpha$ and the geodesic segment of $\gamma$ between $\alpha(a)$ and $\alpha(b)$ is then at most $M(\lambda,\epsilon)$ by Lemma \ref{lem:mimic}. 

This verifies that the geodesic $\gamma$ is an $(L;M;1,0)$--local Morse quasi-geodesic. Since $X$ has the Morse local-to-global, $\gamma$ is $N$--Morse. Hence,  $X$ is $\delta'$--hyperbolic where $\delta' = \delta'(\delta,\Phi)$. 
\end{proof}

\section{Spaces with the Morse local-to-global property}\label{sec:examples}
In this section, we prove unconstricted spaces, $\CAT(0)$ spaces, and hierarchically hyperbolic spaces with a technical condition have the Morse local-to-global property. This covers all of the examples given in Theorem  \ref{intro_thm:examples_of_local_to_global}, except the fundamental groups of closed $3$--manifolds with Nil or Sol components in their prime decomposition. The proof for all closed $3$--manifold groups is contained in the next section on relative hyperbolicity. In section \ref{sec:trivial_examples}, we also give examples of groups that contain infinite Morse quasi-geodesics, but do not have the Morse local-to-global property. As we will not be needing the full breath of the definitions, we opt to give the salient properties of each class of spaces that we need and forgo giving the definitions in full. We direct the reader to the following sources for detailed background on the spaces in question: unconstricted spaces \cite{DrutuSapir}, $\CAT(0)$ spaces \cite{Bridson_Haefliger}, and hierarchically hyperbolic spaces \cite{BHS_HHSII}.

\subsection{Spaces that trivially satisfy the definition and non-examples.}\label{sec:trivial_examples}

The simplest examples of non-hyperbolic spaces with the Morse local-to-global property are spaces, like the Euclidean plane, where the Morse quasi-geodesics have uniformly bounded domains. We call these spaces \emph{Morse limited}.

\begin{defn}
A metric space $X$ is \emph{Morse limited} if for  every  Morse gauge $M$ and $\lambda \geq 1, \epsilon \geq 0$ there exists  $B \geq 0$ so that every $(M;\lambda, \epsilon)$--Morse quasi-geodesic $\gamma \colon I\to X$ has $\diam(I) \leq B$. A finitely generated group $G$ is \emph{Morse limited} if  $\cay(G,S)$ is Morse limited for some finite generating set $S$.
\end{defn}

Since Morse quasi-geodesics are preserved by quasi-isometries, being Morse limited is a quasi-isometry invariant. 

When a space is Morse limited, it trivially satisfies the Morse local-to-global property by choosing the local scale to be larger than the bound on the length of the domain of a Morse quasi-geodesic. 
\begin{lem}\label{lem: trivially ltg}
Let $X$ be a metric space. If $X$ is Morse limited, then $X$ has the Morse local-to-global property.
\end{lem}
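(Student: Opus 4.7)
My plan is to show that the bound from the Morse limited hypothesis directly forces any sufficiently long local Morse quasi-geodesic to be short, so there are essentially no non-trivial cases to check. Given a Morse gauge $M$ and constants $\lambda \geq 1$, $\epsilon \geq 0$, the Morse limited hypothesis supplies a number $B \geq 0$ so that every $(M;\lambda,\epsilon)$--Morse quasi-geodesic has parametrized length at most $B$. I would simply choose the local scale $L$ to be any number strictly greater than $B$, say $L = B + 1$.

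Now let $\gamma \colon [a,b] \to X$ be an $(L;M;\lambda,\epsilon)$--local Morse quasi-geodesic. The key observation is that the diameter of the domain $[a,b]$ must satisfy $|b-a| \leq L$. Indeed, if $|b-a| > L > B$, then picking any subinterval $[s,t] \subseteq [a,b]$ with $|s-t| = L$ would give a restriction $\gamma\vert_{[s,t]}$ which, by the definition of an $(L;M;\lambda,\epsilon)$--local Morse quasi-geodesic, is an $M$--Morse $(\lambda,\epsilon)$--quasi-geodesic of parametrized length $L > B$, contradicting the Morse limited hypothesis.

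Once we know $|b-a| \leq L$, the entire map $\gamma$ itself is a subsegment of parametrized length at most $L$, and hence by the local Morse quasi-geodesic hypothesis applied with $[s,t] = [a,b]$, it is globally an $M$--Morse $(\lambda,\epsilon)$--quasi-geodesic. Thus $\gamma$ is a global $M'$--Morse $(\lambda',\epsilon')$--quasi-geodesic with $M' = M$, $\lambda' = \lambda$, and $\epsilon' = \epsilon$, verifying the Morse local-to-global property. There is no main obstacle here; the proof is essentially a one-line unwrapping of definitions, which is exactly the point of calling these examples ``trivial.''
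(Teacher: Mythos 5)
Your proof is correct and follows essentially the same argument as the paper: choose the scale $L$ larger than the Morse limited bound $B$, derive a contradiction if the domain is longer than the scale, and then apply the local hypothesis to the whole (short) domain. The only cosmetic difference is that the paper bounds $\diam(I)$ by $B$ rather than by $L$, and phrases the argument for a general (possibly unbounded) interval $I$, but this does not change the substance.
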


\begin{proof}
Let $B \geq 0$ so that the domain of any $(M;\lambda,\epsilon)$--Morse quasi-geodesic has diameter at most $B$. Let $L > B$ and suppose $\gamma \colon I \to X$ is an $(L;M;\lambda,\epsilon)$--local Morse quasi-geodesic. If $\diam(I) > B$, then there exists $[a,b]\subseteq I$ with $B<|a-b|\leq L$. However, this is a contradiction as $\gamma \vert_{[a,b]}$ would then be an $(M;\lambda,\epsilon)$--Morse quasi-geodesic with domain longer than $B$. Thus, $\diam(I) \leq B <L$ and $\gamma$ is an $(M;\lambda,\epsilon)$--Morse quasi-geodesic.
\end{proof}

While Lemma \ref{lem: trivially ltg} is a simple observation, it plays a key role when combined with the relative hyperbolicity results of Section \ref{sec:relativelyhyperbolic}. In particular, the fundamental groups of all closed 3--manifolds will have the Morse local-to-global property as they are hyperbolic relative to subgroups that have the Morse local-to-global property either by being hierarchically hyperbolic space or by being Morse limited. 

A large class of examples of Morse limited spaces come from the class of unconstricted groups introduced by Dru\c{t}u and Sapir.

\begin{defn}[{\cite{DrutuSapir}}]
A finitely generated group $G$ is \emph{unconstricted} if for any finite generating set $S$, there exists an asymptotic cone of $\cay(G,S)$ that does not contain a cut-point.
\end{defn}

Drutu and Sapir demonstrate several classes of groups are unconstricted. The most interesting ones for us will be solvable groups, which satisfy a law by virtue of being uniformly amenable \cite[Corollary 6.17]{DrutuSapir}.

\begin{thm}[{\cite[Theorem 6.5, Corollary 6.14]{DrutuSapir}}]
Let $G$ be finitely generated group that is not virtually cyclic. If $G$ either satisfies a law or has  a central, infinite cyclic subgroup, then  $G$ is unconstricted.
\end{thm}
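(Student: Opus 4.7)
The plan is to handle the two hypotheses separately, in each case exhibiting an asymptotic cone of $G$ that contains no cut-points.

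For the central element case, let $z$ be an infinite-order central element. The idea is to form the asymptotic cone $C = \mathrm{Cone}_\omega(G,(|z^n|))$ for some non-principal ultrafilter $\omega$, so that the sequence $(z^n)$ converges in $C$ to a well-defined isometry $\tau$ with $d(x,\tau x) = 1$ for every $x \in C$. Centrality of $z$ forces $\tau$ to commute with every isometry of $C$ coming from the action of $G$, which upgrades $\tau$ to a Clifford-type translation: its displacement function is constantly $1$, and the $\tau$-orbit of any point embeds isometrically as a copy of $\mathbb{R}$. The plan is then to verify that any two points $x,y \in C$ admit two paths between them meeting only at $\{x,y\}$, ruling out a cut-point between them. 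Given one path $\gamma$ from $x$ to $y$, one concatenates a $\tau$-orbit segment from $x$ to $\tau^k x$, the translate $\tau^k\gamma$, and a $\tau$-orbit segment from $\tau^k y$ back to $y$; for $k$ large enough, this second path can be made disjoint from any prescribed interior point of $\gamma$. The non-virtually-cyclic hypothesis enters precisely to exclude the degenerate case where $C$ is itself a single $\tau$-orbit (a line): by the Milnor--\v{S}varc type argument, a finitely generated group quasi-isometric to $\mathbb{Z}$ is virtually cyclic, so ruling this out guarantees that translation by $\tau^k$ genuinely moves $\gamma$ off itself.

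For the case where $G$ satisfies a law, the hypothesis feeds in through uniform amenability: as recorded in the cited Dru\c{t}u--Sapir corollary, groups satisfying a nontrivial identity admit F\o lner sets in each ball of radius $R$ whose size and $\epsilon$-invariance depend only on $R$ and $\epsilon$, not on the group. The plan is to take a sequence of such F\o lner sets $F_n$ sitting in balls of radius $r_n \to \infty$ and to pass them through the asymptotic cone with scaling $d_n = r_n$. Their ultralimit is a compact subset $K$ of the cone that is invariant under a large family of limit isometries (the asymptotic translations by elements of bounded word length in the rescaled metric), and the almost-invariance estimates translate into the statement that $K$ meets every neighborhood of every point in a set that cannot be disconnected by removing a single point. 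A careful bookkeeping argument then joins any two points of the cone through such $K$-regions, producing the required asymptotic cone without cut-points.

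The main obstacle is the second case: the F\o lner estimates must be transferred through the ultralimit in such a way that the resulting subset really does exclude cut-points, rather than merely producing accumulation points or fine local connectivity of the wrong type. The central-element case is, by contrast, structurally clean once one unwinds the definitions, reducing to the single observation that centrality promotes $\tau$ to a Clifford translation combined with the non-triviality of the cone's dimension under the non-virtually-cyclic hypothesis.
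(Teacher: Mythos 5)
The paper does not prove this statement; it is imported verbatim from Dru\c{t}u--Sapir, whose argument runs by contradiction: if an asymptotic cone has a cut-point it is tree-graded, the ultrapower $\Pi G/\omega$ then acts isometrically on an associated $\mathbb{R}$-tree, and this action is ruled out using either the centrality of $z$ or the fact that the ultrapower satisfies the same law (hence contains no nonabelian free subgroup). Your direct constructions take a different route, and both halves have genuine gaps. In the central case, the assertion that the $\tau$-orbit of a point embeds isometrically as a copy of $\mathbb{R}$ is equivalent to $\langle z\rangle$ being undistorted in $G$, which is false in general: in the discrete Heisenberg group the central generator satisfies $|z^m|\asymp\sqrt{m}$, so with your scaling $d_n=|z^n|$ one gets $d(x,\tau^k x)=\lim_\omega |z^{kn}|/|z^n|=\sqrt{k}$, and the orbit is a snowflaked arc, not a geodesic line (constant displacement does not force geodesic axes outside the $\CAT(0)$ setting). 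Your disjointness step only needs $d(x,\tau^k x)\to\infty$ as $k\to\infty$, but that is precisely the claim that the distortion function $m\mapsto|z^m|$ is not slowly varying; this does not follow from the hypotheses, and for a central element of sufficiently large distortion every power $\tau^k$ would have displacement exactly $1$ in this cone, so $\tau^k\gamma$ could never be pushed off $\gamma$. There is also the unaddressed degenerate case where the prescribed point $p$ lies on the connecting orbit segment from $x$ to $\tau^k x$; enlarging $k$ does not help there.

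The law case rests on a false premise: satisfying a law does not imply uniform amenability, or amenability at all --- infinite free Burnside groups satisfy the identity $x^n=1$ and are non-amenable, so no F\o lner sets are available. (The Dru\c{t}u--Sapir corollary you have in mind goes the other way: it is for uniformly amenable, e.g.\ solvable, groups.) What a law actually provides is that the ultrapower satisfies the same law and therefore has no nonabelian free subgroup, which is exactly the input their $\mathbb{R}$-tree argument consumes. Without that mechanism, or a substitute for it, the concluding ``careful bookkeeping argument'' in your sketch has nothing to work with, and the F\o lner-set route cannot be repaired within the stated hypotheses.
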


Examples of groups that are not unconstricted included those that contain a bi-infinite Morse geodesics.

\begin{lem}[{A consequence of \cite[Proposition 1]{DMS_Corrigendum}}]
Let $G$ be a group generated by a finite generating set $S$. If $\cay(G,S)$ contains a bi-infinite Morse geodesic, then for every finite generating set $T$, every asymptotic cone of $\cay(G,T)$ contains a cut-point.
\end{lem}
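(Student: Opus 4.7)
The plan is to reduce the statement to the content of \cite[Proposition~1]{DMS_Corrigendum} via quasi-isometry invariance, and then to use the Morse geodesic to manufacture a cut-point in each asymptotic cone.

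First I would observe that the identity map $\cay(G,S)\to \cay(G,T)$ is a quasi-isometry for any two finite generating sets $S,T$ of $G$. Since the property of being a Morse quasi-geodesic is preserved (with possibly different gauge and quasi-geodesic constants) under quasi-isometries, a bi-infinite Morse geodesic $\gamma$ in $\cay(G,S)$ pushes forward to a bi-infinite Morse quasi-geodesic in $\cay(G,T)$. So, replacing $S$ by $T$, it suffices to prove the statement under the assumption that $\cay(G,T)$ itself contains a bi-infinite $(M;\lambda,\epsilon)$--Morse quasi-geodesic for some gauge $M$ and constants $\lambda,\epsilon$.

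Next, fix an asymptotic cone $\mathrm{Cone}_\omega(\cay(G,T),(x_n),(s_n))$. Using the transitive action of $G$ on itself by left multiplication, for each $n$ pick a group element $g_n$ lying on $\gamma$ close to $x_n$ and translate $\gamma$ by $g_n^{-1}x_n$-type adjustments so that the translated geodesics $\gamma_n$ all pass through (or uniformly close to) the basepoint $x_n$. Because left translation is an isometry of $\cay(G,T)$, each $\gamma_n$ is again an $(M;\lambda,\epsilon)$--Morse quasi-geodesic. The ultralimit of the rescaled sequence $(\gamma_n/s_n)$ then defines a bi-Lipschitz embedded copy of $\mathbb{R}$ through the basepoint of the cone. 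This construction is exactly the one packaged by \cite[Proposition~1]{DMS_Corrigendum}, which asserts that in the asymptotic cone of a finitely generated group, the limit of a suitably translated and rescaled sequence of bi-infinite Morse quasi-geodesics yields a bi-Lipschitz embedding $\mathbb{R}\hookrightarrow \mathrm{Cone}_\omega$ whose image consists of cut-points of the cone.

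Granting this, the basepoint of the cone is a cut-point, and since the choice of cone was arbitrary, every asymptotic cone of $\cay(G,T)$ contains a cut-point, which is the desired conclusion. The only non-cosmetic step is the appeal to \cite[Proposition~1]{DMS_Corrigendum}, whose proof is the real content: the Morse property of $\gamma$ ensures that any path in the cone joining points on opposite sides of the basepoint must asymptotically hit the basepoint, since otherwise one could lift such a detour to long $(\lambda',\epsilon')$--quasi-geodesics in $\cay(G,T)$ with endpoints on $\gamma$ that wander far outside the $M(\lambda',\epsilon')$--neighborhood of $\gamma$, contradicting the Morse gauge. This lifting-and-contradiction argument is the main obstacle, but it is precisely what is established in the cited proposition, so we are free to invoke it.
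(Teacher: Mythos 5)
Your proposal is correct and is essentially the argument the paper intends: the paper gives no proof beyond citing \cite[Proposition 1]{DMS_Corrigendum}, and your write-up supplies exactly the routine reduction (quasi-isometry invariance of the Morse property to change generating sets, then left-translation/homogeneity to move the Morse quasi-geodesic to the observation points so its ultralimit is nonempty) before invoking that proposition, which asserts that every point of the limit of a Morse quasi-geodesic in an asymptotic cone is a cut-point. No gaps beyond the deliberate appeal to the cited result.
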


\begin{prop}\label{prop: unconstricted implies trivial ltg}
 If a finitely generated group $G$ is unconstricted, then $G$ is Morse limited.
\end{prop}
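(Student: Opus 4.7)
The plan is to argue by contradiction: assume $G$ is unconstricted but fails to be Morse limited, and then construct a bi-infinite Morse geodesic in $\cay(G,S)$. This will contradict unconstrictedness via the lemma stated immediately before the proposition (the consequence of \cite[Proposition 1]{DMS_Corrigendum}).

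Suppose $G$ is not Morse limited. Then there exist a Morse gauge $M$, constants $\lambda \geq 1$, $\epsilon \geq 0$, and a sequence of $(M;\lambda,\epsilon)$--Morse quasi-geodesics $\gamma_n \colon [a_n, b_n] \to \cay(G,S)$ with $|b_n - a_n| \to \infty$. The quasi-geodesic inequality forces $d\bigl(\gamma_n(a_n), \gamma_n(b_n)\bigr) \geq \tfrac{1}{\lambda}|b_n - a_n| - \epsilon$, so the endpoints of $\gamma_n$ are at distance tending to infinity. Let $\alpha_n$ be a geodesic segment in $\cay(G,S)$ joining these endpoints. By Lemma~\ref{lem:close_to_Morse}, each $\alpha_n$ is $M'$--Morse for a single Morse gauge $M'$ depending only on $M$, $\lambda$, $\epsilon$, and the length of $\alpha_n$ tends to infinity.

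Since $G$ acts on $\cay(G,S)$ transitively by isometries and Morse gauges are preserved by isometries, I would pass to a subsequence with $\mathrm{length}(\alpha_n) \geq 2n$, select a subsegment of $\alpha_n$ of length exactly $2n$, reparametrize it over $[-n,n]$, and translate by a suitable group element so that the point at parameter $0$ is the identity $e$. The resulting sequence of $M'$--Morse geodesics satisfies the hypotheses of Lemma~\ref{lem:long_Morse_segments_imply_Morse_ray}, producing a bi-infinite $M'$--Morse geodesic $\gamma \colon \R \to \cay(G,S)$.

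Finally, the consequence of \cite[Proposition 1]{DMS_Corrigendum} recorded just above the proposition then forces every asymptotic cone of every Cayley graph of $G$ (with respect to any finite generating set) to contain a cut-point, directly contradicting the assumption that $G$ is unconstricted. No step here is expected to pose a real obstacle; the content of the argument is simply assembling the machinery of Lemmas~\ref{lem:close_to_Morse} and~\ref{lem:long_Morse_segments_imply_Morse_ray} with the cited corrigendum result to bridge from ``arbitrarily long Morse quasi-geodesics'' to ``bi-infinite Morse geodesic.''
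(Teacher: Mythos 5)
Your proof is correct and follows essentially the same route as the paper: negate Morse limitedness to get arbitrarily long Morse quasi-geodesics, use Lemma~\ref{lem:close_to_Morse} to replace them with geodesics that are $M'$--Morse for a uniform gauge, feed these into Lemma~\ref{lem:long_Morse_segments_imply_Morse_ray} to obtain a bi-infinite Morse geodesic, and conclude via the lemma derived from \cite[Proposition 1]{DMS_Corrigendum} that $G$ cannot be unconstricted. The paper phrases this as a contrapositive rather than a contradiction, and the normalization/translation step you spell out is already handled inside the proof of Lemma~\ref{lem:long_Morse_segments_imply_Morse_ray}, but these are only cosmetic differences.
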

\begin{proof}
We prove the contrapositive
Let $S$ be a finite generating set for $G$ and suppose there is a sequence, $\gamma_n \colon I_n \to \cay(G,S)$, of  $(M; \lambda, \epsilon)$--Morse quasi-geodesics with $\diam(I_n) \to \infty$ as $n \to \infty$. By Lemma \ref{lem:close_to_Morse}, there exists a Morse gauge $M'$ so that $\cay(G,S)$ contains $M'$--Morse geodesics of arbitrarily long length. This implies $G$ is not unconstricted as $\cay(G,S)$ will contain an infinite $M'$--Morse geodesic by applying Lemma \ref{lem:long_Morse_segments_imply_Morse_ray}
\end{proof}

Morse limited groups give a structural dichotomy to Morse local-to-global group: either the group is Morse limited or it contains a Morse element.

\begin{thm}\label{thm:trivial_Morse_element_dichotomy}
Let $G$ be a finitely generated group with Morse local-to-global property. Either $G$ contains a Morse element or $G$ is Morse limited. 
\end{thm}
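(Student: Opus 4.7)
The plan is to prove the contrapositive: if $G$ is not Morse limited, then $G$ contains a Morse element. The strategy combines a pigeonhole argument with the Morse local-to-global property to produce a cyclic subgroup that is undistorted and stable. Fix a finite generating set $S$.

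First, I would extract a bi-infinite Morse geodesic. If $G$ is not Morse limited, there exist a Morse gauge $M$ and constants $\lambda \geq 1$, $\epsilon \geq 0$ such that $\cay(G,S)$ contains $(M;\lambda,\epsilon)$--Morse quasi-geodesics with arbitrarily long parametrized domains. Passing to the geodesics between their endpoints and applying Lemma~\ref{lem:close_to_Morse} yields $M_1$--Morse geodesics of arbitrarily long length (with $M_1$ depending only on $M,\lambda,\epsilon$). Lemma~\ref{lem:long_Morse_segments_imply_Morse_ray} then produces a bi-infinite $M_1$--Morse geodesic $\gamma \colon \mathbb{R} \to \cay(G,S)$ with $\gamma(0) = e$, parametrized so that $v_n := \gamma(n) \in G$ for each $n \in \mathbb{Z}$.

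Next I would invoke the Morse local-to-global property on the gauge $M_1$ with $(\lambda,\epsilon) = (1,0)$ to obtain a scale $L$ and output constants $N, k, c$ with the property that every $(L; M_1; 1, 0)$--local Morse quasi-geodesic is a global $(N;k,c)$--Morse quasi-geodesic. Since there are only finitely many words in $S \cup S^{-1}$ of length at most $2L$, the pigeonhole principle applied to the labels of the subsegments $\gamma|_{[n - L,\, n + L]}$, as $n$ ranges over $\mathbb{Z}$, yields two indices $n_1 < n_2$ whose corresponding labels agree. Setting $T := n_2 - n_1$ and $g := v_{n_2} v_{n_1}^{-1}$, we obtain an element of $G$ satisfying $g \cdot v_{n_1 + j} = v_{n_2 + j}$ for every $|j| \leq L$.

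I would then assemble $\eta \colon \mathbb{R} \to \cay(G,S)$ as the concatenation $\bigcup_{m \in \mathbb{Z}} g^m \cdot \gamma|_{[n_1, n_2]}$, reparametrized continuously so that $\eta(mT + s) = g^m \gamma(n_1 + s)$ for $0 \leq s \leq T$. The key technical step, and the main obstacle, is verifying that every subsegment of $\eta$ of parametrized length $\leq L$ is itself an $M_1$--Morse geodesic. A subsegment lying entirely inside one translate $g^m \gamma|_{[n_1,n_2]}$ is immediate; a subsegment crossing a junction point $mT$ equals, after applying $g^{-m}$, a piece of the form $\gamma|_{[n_1 - s_1,\, n_1 + s_2]}$ with $s_1, s_2 \leq L$ because our two-sided pigeonhole choice ensures $\gamma|_{[n_2 - s_1, n_2]}$ coincides with the $g$--translate of $\gamma|_{[n_1 - s_1, n_1]}$. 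Hence $\eta$ is an $(L; M_1; 1, 0)$--local Morse geodesic, and the Morse local-to-global property upgrades it to a global $(N; k, c)$--Morse quasi-geodesic.

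Finally I would extract the Morse element. Since $g$ acts by translation by $T$ along $\eta$, the orbit $\{g^m\}_{m \in \mathbb{Z}}$ lies within bounded distance of $\eta$, so $\langle g \rangle$ is undistorted in $G$ and $g$ has infinite order. For any pair $g^m, g^{m'}$, the subsegment of $\eta$ joining them is $N$--Morse, and by Lemma~\ref{lem:close_to_Morse} the geodesic between them is $M'$--Morse for a gauge depending only on $N$; moreover this geodesic is uniformly close to $\eta$ and hence to $\langle g \rangle$. Thus $\langle g \rangle$ is $(M',\mu)$--stable in $\cay(G,S)$ for some $\mu$, which by Definition~\ref{defn:morse_element} means $g$ is a Morse element of $G$, completing the dichotomy.
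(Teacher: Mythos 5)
Your proposal is correct and follows essentially the same route as the paper's proof (which factors through Proposition~\ref{prop:Morse_ray_imlies_Morse_element}): extract an infinite Morse geodesic via Lemmas~\ref{lem:close_to_Morse} and~\ref{lem:long_Morse_segments_imply_Morse_ray}, pigeonhole on the labels of length-$2L$ windows to find a repetition, build the resulting periodic path, check it is an $(L;M_1;1,0)$--local Morse geodesic, and apply the Morse local-to-global property. The only stylistic difference is that the paper chooses the two matching blocks non-adjacent so that both the repeated word $w_1$ and the gap word $w_2$ have length at least $2L$, making every $L$-window a subword of $w_1w_2$ or $w_2w_1$; your two-sided-window version also works, but when the period $T=n_2-n_1$ happens to be smaller than $L$ a window can cross several junctions, and the identification with a subsegment of $\gamma$ then requires iterating the relation $g\cdot\gamma(n_1+j)=\gamma(n_2+j)$ rather than applying it once.
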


Since Lemma \ref{lem:long_Morse_segments_imply_Morse_ray} guarantees that a group containing arbitrarily long Morse geodesics must contain an infinite Morse geodesic, Theorem \ref{thm:trivial_Morse_element_dichotomy} follows immediately from the next proposition plus Lemma~\ref{lem:long_Morse_segments_imply_Morse_ray}.

\begin{prop}\label{prop:Morse_ray_imlies_Morse_element}
Let $G$ be a finitely generated group with the Morse local-to-global property. If there is some finite generating set $S$ of $G$ such that the Cayley graph $\cay(G,S)$ contains a Morse geodesic ray, then $G$ contains a Morse element.
\end{prop}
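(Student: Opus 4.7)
The plan is to combine a pigeonhole argument with the Morse local-to-global property to upgrade the Morse ray into a Morse element. Assume without loss of generality that $\rho \colon [0, \infty) \to \cay(G,S)$ is an $M$--Morse geodesic ray with $\rho(0) = e$, parameterized by arc length so that $\rho(n)$ is a group element for every $n \in \mathbb{N}$. The Morse local-to-global property applied to the gauge $M$ and constants $(\lambda,\epsilon)=(1,0)$ supplies a scale $L \geq 1$ such that every $(L;M;1,0)$--local Morse quasi-geodesic in $\cay(G,S)$ is a global $(N;k_0,c_0)$--Morse quasi-geodesic.

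For each integer $n \geq L$, let $T_n$ be the word of length $2L$ in $S \cup S^{-1}$ labelling the subsegment of $\rho$ from $\rho(n-L)$ to $\rho(n+L)$. Since $S$ is finite, only finitely many such words exist, so by pigeonhole applied to the values $n \in \{L, 2L, 3L, \dots\}$ there exist integers $n_1 < n_2$ with $D := n_2 - n_1 \geq L$ and $T_{n_1} = T_{n_2}$. Equivalently,
\begin{equation*}
\rho(n_1)^{-1}\rho(n_1 + s) = \rho(n_2)^{-1}\rho(n_2 + s) \qquad \text{for every } s \in [-L, L]. \tag{$\star$}
\end{equation*}
Set $h := \rho(n_1)^{-1}\rho(n_2)$, so $|h|=D$, and define the geodesic $\sigma \colon [0, D] \to \cay(G,S)$ by $\sigma(s) = \rho(n_1)^{-1}\rho(n_1 + s)$. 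Then $\sigma(0)=e$, $\sigma(D)=h$, and $\sigma$ is an $M$--Morse geodesic as a left-translate of a subsegment of $\rho$. Define the putative quasi-axis $p \colon \mathbb{R} \to \cay(G, S)$ by requiring $p\big|_{[jD, (j+1)D]} = h^j \sigma$ for each $j \in \mathbb{Z}$, so that $p(jD) = h^j$.

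I claim $p$ is an $(L;M;1,0)$--local Morse quasi-geodesic. Because $D \geq L$, any interval in the domain of $p$ of parametrized length at most $L$ either lies inside a single piece $[jD, (j+1)D]$ or straddles a single join $jD$. In the first case, $p$ agrees with the $M$--Morse geodesic $h^j\sigma$. In the second case, a direct manipulation using $(\star)$ shows that $p(t) = h^j \rho(n_1)^{-1}\rho(n_1 + (t - jD))$ whenever $|t - jD|\leq L$, so the relevant subsegment of $p$ is an isometric left-translate of a subsegment of $\rho$, hence again an $M$--Morse geodesic. Invoking the Morse local-to-global property, $p$ is a global $(N;k_0,c_0)$--Morse quasi-geodesic; in particular $h$ has infinite order. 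Combining Lemmas~\ref{lem:Morse_polygons} and~\ref{lem:close_to_Morse} shows that the geodesic between any two powers of $h$ is uniformly Morse and lies in a uniform neighborhood of $\langle h \rangle$, so $\langle h \rangle$ is stable and $h$ is an infinite order Morse element of $G$.

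The main obstacle is arranging the pigeonhole data finely enough that the matching at $n_1$ and $n_2$ propagates through the whole join neighborhood, not only at the endpoints at distance exactly $L$. Recording the complete length-$2L$ label $T_n$ guarantees the pointwise identity $(\star)$, which in turn guarantees that $p$ locally looks like an isometric copy of $\rho$ on both sides of every join $h^j$. Without this stronger matching, the concatenation $p$ would not be a local Morse quasi-geodesic at scale $L$, and the Morse local-to-global hypothesis could not be applied.
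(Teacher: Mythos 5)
Your proof is correct and follows essentially the same strategy as the paper's: pigeonhole on the labels of uniformly long subsegments of the Morse ray to find a repetition, build a periodic bi-infinite path from the repeated data, verify it is an $(L;M;1,0)$--local Morse quasi-geodesic (the only delicate point being the behaviour at the joins, which your condition $(\star)$ handles), and apply the Morse local-to-global property to conclude the period represents a Morse element. The paper's version records disjoint consecutive blocks of length $2L$ and uses the word $w_1w_2$ where $w_1$ is the repeated block, but this is only a bookkeeping difference from your overlapping-window formulation.
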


\begin{proof}
Let $\alpha \colon [0,\infty)\to \cay(G,S)$ be an $M$--Morse geodesic ray in $\cay(G,S)$. We can assume that $\alpha(0) = e$. Since $\cay(G,S)$ has the Morse local-to-global property, there is a positive integer $L$, real numbers $\lambda\geq 1$, $\epsilon \geq 0$, and Morse gauge $N$ such that any $(L;M;1,0)$--local Morse quasi-geodesic in $\cay(G,S)$ is an $(N;\lambda,\epsilon)$--Morse quasi-geodesic. 

For each integer $i\geq 0$, let $\alpha_i$ be the subsegment of $\alpha$ from $\alpha(i\cdot 2L)$ to $\alpha\bigl((i+1)\cdot 2L\bigr)$. There are two non-negative integers $m$ and $n$ such that $n-m\geq 2$ and $\alpha_m$ and $\alpha_n$ are both labeled by same word $w_1$ in $S$. Let $w_2$ be the word in $S$ labeling the subsegment of $\alpha$ from the end of $\alpha_m$ to the beginning of $\alpha_n$. By construction, $|w_1|,|w_2| \geq 2L$ and any geodesic labeled by any subword of $w_1w_2$ or $w_2w_1$ is an $M$--Morse geodesic.

Let $\ell=|w_1|+|w_2|$ and define $\beta \colon (-\infty,\infty)\to \cay(G,S)$ to be the path such that $\beta(0)=e$ and for each $i \in \mathbb{Z}$, $\beta\vert_{[i\ell,(i+1)\ell]}$ is labeled by the word $w_1w_2$. Then, $\beta$ is an $(L;M;1,0)$--local Morse quasi-geodesic and hence an $(N;\lambda,\epsilon)$--Morse quasi-geodesic. This implies that $w_1w_2$ represent a Morse element of $G$.  
\end{proof}

By applying our combination theorem for stable subgroups (Theorem \ref{thm:stable_subgroup_combination}), we  expand Theorem \ref{thm:trivial_Morse_element_dichotomy} to show that Morse local-to-global groups that are not Morse limited are either virtually cyclic or contain a stable free subgroups of rank 2.

\begin{cor}\label{cor:trichotomy}
If $G$ is a finitely generated group with the Morse local-to-global property, then $G$ satisfies exactly one of the following:
\begin{enumerate}
    \item $G$ is Morse limited;
    \item $G$ is virtually infinite cyclic;
    \item $G$ contains a stable, free subgroup of rank $2$.
\end{enumerate}
\end{cor}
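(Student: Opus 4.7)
The plan is to show the three alternatives are mutually exclusive and then deduce that failure of (1) and (2) forces (3), using Theorem \ref{thm:trivial_Morse_element_dichotomy} together with the stable subgroup combination theorem and the finite width of stable subgroups.

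For mutual exclusivity, I would first note that Morse limited groups cannot contain a Morse element, since a Morse element $g$ generates an undistorted stable cyclic subgroup with arbitrarily long $M$--Morse geodesic segments in $\cay(G,S)$ for some fixed Morse gauge $M$. This immediately rules out (1) alongside either (2) (virtually infinite cyclic groups are quasi-isometric to $\integers$, hence hyperbolic, so every quasi-geodesic is Morse with uniform gauge) or (3) (stable free subgroups of rank $2$ contain Morse elements of $G$). Finally, virtually infinite cyclic groups are virtually abelian and cannot contain a free subgroup of rank $2$, so (2) and (3) are incompatible.

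Now assume $G$ is neither Morse limited nor virtually infinite cyclic; I will produce a stable free subgroup of rank $2$. By Theorem \ref{thm:trivial_Morse_element_dichotomy}, $G$ contains an infinite order Morse element $g$, so $H := \langle g\rangle$ is an infinite cyclic stable subgroup of $G$. If $H$ had finite index in $G$, then $G$ would be virtually infinite cyclic, which was excluded; hence $H$ is infinite index in $G$. Theorem \ref{thm:finite_width} then gives a bound on the width of $H$, so among the infinitely many distinct cosets of $H$ we can pick two, $g_1 H$ and $g_2 H$, with $g_1 H g_1^{-1}\cap g_2 H g_2^{-1}$ finite. Setting $h := g_1^{-1} g_2$ and conjugating, we have that $H \cap h H h^{-1}$ is a finite subgroup of the torsion-free group $H$, so $H \cap h H h^{-1} = \{e\}$.

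To finish, I apply Theorem \ref{thm:stable_subgroup_combination}(\ref{item:combination_thrm_1}) with the $(M,\mu)$--stable subgroups $P := H$ and $Q := h H h^{-1}$, which produces a constant $C = C(M,\mu,S) > 0$. Since $H$ is undistorted in $G$, the word length $|g^n|$ grows linearly in $n$, so for $n$ sufficiently large every nontrivial element of $P_1 := \langle g^n\rangle$ has length at least $C$ in $G$, and similarly every nontrivial element of $Q_1 := h \langle g^n\rangle h^{-1}$ does. Both $P_1$ and $Q_1$ are finitely generated and undistorted in $G$, being finite index in the stable subgroups $P$ and $Q$. Because $P_1 \cap Q_1 \subseteq P \cap Q = \{e\}$, the hypotheses of Theorem \ref{thm:stable_subgroup_combination}(\ref{item:combination_thrm_1}) with $I = \{e\}$ are satisfied, giving $\langle P_1, Q_1\rangle \cong P_1 \ast Q_1 \cong F_2$ as a stable subgroup of $G$. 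The main obstacle in this plan is simply choosing $h$ correctly so that the combination theorem applies with trivial intersection; once the finite width of the stable cyclic subgroup $H$ is invoked, everything else is an application of the tools already established in the paper.
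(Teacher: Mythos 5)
Your proposal is correct and follows essentially the same route as the paper: invoke Theorem \ref{thm:trivial_Morse_element_dichotomy} to get a Morse element, use the finite-width statement of Theorem \ref{thm:finite_width} to find a conjugate of the cyclic stable subgroup meeting it trivially, and then apply Theorem \ref{thm:stable_subgroup_combination}(\ref{item:combination_thrm_1}) to high powers of the generator. Your additional verification of mutual exclusivity and of the combination theorem's hypotheses (undistortedness forcing nontrivial elements of $\langle g^n\rangle$ to be long) is correct and merely makes explicit what the paper leaves implicit.
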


\begin{proof}
Assume that $G$ is not Morse limited and not virtually a cyclic group.  By Theorem~\ref{thm:trivial_Morse_element_dichotomy}, $G$ must contain a Morse element $h$. The subgroup $H = \langle h\rangle$ is then an infinite index stable subgroup of $G$. Thus, there is an element $g \in G$ such that $gHg^{-1}\cap H =\{e\}$ (Theorem \ref{thm:finite_width}). By Theorem~\ref{thm:stable_subgroup_combination}, there is $n>0$ such that the elements $h^{n}$ and $gh^{n}g^{-1}$ generate a stable free subgroup of rank $2$.
\end{proof}

Theorem \ref{thm:trivial_Morse_element_dichotomy} and Corollary \ref{cor:trichotomy} allow us to give examples of finitely generated groups that contain bi-infinite Morse geodesics (and even Morse elements), but do not have the Morse local-to-global property.

\begin{exmp}[Groups that are not Morse local-to-global]\label{ex:non-examples}
The following finitely generated groups do not have the Morse local-to-global property.
\begin{enumerate}
    \item In \cite{Fink_Torsion}, Finks the author gives an example of a finitely generated, torsion group that contains a bi-infinite Morse geodesic. Theorem \ref{thm:trivial_Morse_element_dichotomy} shows this groups is not Morse local-to-global as it is not Morse limited and does not contain a Morse element.
    \item  In \cite[Theorem 1.12]{OOS_Lacunary_hyperbolic_groups}, Osin, Ol'shanskii, and Sapir produce a finitely generated, non-virtually cyclic group $G$ where every proper, non-trivial subgroup is infinite cyclic and stable. Every non-identity element of $G$ is Morse, but $G$ does not contains a free subgroup of rank 2. Thus, $G$ is not Morse local-to-global by Corollary \ref{cor:trichotomy}.
\end{enumerate}
\end{exmp}

Since the direct product of any group with $\mathbb{Z}$ is unconstricted and hence Morse limited, the above examples show the Morse local-to-global property does not descend to undistorted finitely generated subgroups.

\subsection{$\CAT(0)$ spaces}

The main tool we need to establish the Morse local-to-global property in $\CAT(0)$ spaces is the closest point projection onto geodesics. For the remainder of this section $\pi_\gamma$ will denote the closest point projection onto the geodesic $\gamma$ described in the next lemma.

\begin{lem}[{\cite[Proposition 2.4, Corollary 2.5]{Bridson_Haefliger}}]\label{lem:CAT(0)_closest_projection}
Let $X$ be a $\CAT(0)$ space and $\gamma$ be a geodesic in $X$.
\begin{enumerate}
    \item \label{item:definition}  There exists a continuous function $\pi_\gamma \colon X \to \gamma$ so that for all $x \in X$, $\pi_\gamma(x)$ is the unique point in $\gamma$ that minimizes the distance from $x$ to $\gamma$.
    \item \label{item:convexity} For any geodesic $\eta \colon [a,b] \to X$, \[d(\eta(t), \pi_\gamma(\eta(t)) \leq \max \{ d(\eta(a), \pi_\gamma(\eta(a)), d(\eta(b),\pi_\gamma(\eta(b))\}\] where $\pi_\gamma$ is the map from (\ref{item:definition}).
\end{enumerate}
\end{lem}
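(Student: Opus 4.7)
The plan is to extract both parts from the convexity of the metric in a $\CAT(0)$ space, which in turn is a direct consequence of the $\CAT(0)$ four-point condition (the CN inequality of Bruhat--Tits).

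For part (1), I would first establish uniqueness: if $p_1,p_2 \in \gamma$ both minimize $d(x,\cdot)$ for some $x \in X$, apply the CN inequality to the comparison triangle for $\triangle(x,p_1,p_2)$. Since $\gamma$ is geodesically convex, the midpoint $m$ of the geodesic from $p_1$ to $p_2$ lies on $\gamma$, and CN gives
\[
d(x,m)^2 \le \tfrac{1}{2}d(x,p_1)^2 + \tfrac{1}{2}d(x,p_2)^2 - \tfrac{1}{4}d(p_1,p_2)^2,
\]
forcing $p_1=p_2$. For existence, take a minimizing sequence on $\gamma$; the same CN inequality, applied to triangles formed with $x$, shows that this sequence is Cauchy in $\gamma$, and its limit realizes the infimum. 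This defines $\pi_\gamma(x)$. For continuity, note that if $x_n \to x$, then $d(x_n,\pi_\gamma(x_n)) \to d(x,\pi_\gamma(x))$, and a repeated application of CN (this time to triangles with vertex at $x$ and one side on $\gamma$) shows $\pi_\gamma(x_n)$ cannot drift away from $\pi_\gamma(x)$ without violating the minimality.

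For part (2), the key engine is the fact that, in a $\CAT(0)$ space, for any two geodesics $c_0,c_1 \colon [0,1] \to X$ the function $t \mapsto d(c_0(t),c_1(t))$ is convex. I would apply this to $c_0 = \eta$ (reparametrized on $[0,1]$) and $c_1$ the geodesic parametrization of the sub-arc of $\gamma$ from $\pi_\gamma(\eta(a))$ to $\pi_\gamma(\eta(b))$. Then
\[
d\bigl(\eta(t),\pi_\gamma(\eta(t))\bigr) \;\le\; d\bigl(\eta(t),c_1(t)\bigr) \;\le\; (1-t)\,d\bigl(\eta(a),\pi_\gamma(\eta(a))\bigr) + t\,d\bigl(\eta(b),\pi_\gamma(\eta(b))\bigr),
\]
where the first inequality uses that $c_1(t) \in \gamma$ and $\pi_\gamma(\eta(t))$ is the closest point of $\gamma$ to $\eta(t)$. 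The right-hand side is bounded by $\max\{d(\eta(a),\pi_\gamma(\eta(a))),\,d(\eta(b),\pi_\gamma(\eta(b)))\}$, which is the claim.

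The only subtle step is the convexity of $t \mapsto d(c_0(t),c_1(t))$; the cleanest way to obtain it is to apply the $\CAT(0)$ four-point (or two-triangle) inequality to the quadrilateral $c_0(0),c_0(1),c_1(1),c_0(1)$ after subdividing by a diagonal geodesic, reducing convexity of the distance function to convexity along a single geodesic in a comparison Euclidean triangle, which is automatic. I expect this convexity lemma to be the main technical piece; everything else in the proof is a direct comparison-triangle argument.
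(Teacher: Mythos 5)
The paper gives no proof of this lemma --- it is quoted directly from Bridson--Haefliger (Propositions II.2.2, II.2.4 and Corollary II.2.5) --- and your argument is precisely the standard proof found there: uniqueness and existence of the projection via the CN inequality applied to midpoints of a minimizing sequence, continuity from minimality, and part (2) from convexity of $t \mapsto d\bigl(c_0(t),c_1(t)\bigr)$ applied to $\eta$ and the sub-arc of $\gamma$ between the projected endpoints (which lies in $\gamma$ by uniqueness of geodesics in a $\CAT(0)$ space). The only points worth flagging are that the existence of the limit of your minimizing sequence uses that the image of $\gamma$ is complete (true here, since geodesics are isometric embeddings of closed intervals), and that the quadrilateral in your last paragraph should read $c_0(0),c_0(1),c_1(1),c_1(0)$; neither affects correctness.
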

 
 We  also need a characterization of Morse geodesics in $\CAT(0)$ spaces in term of the closest point projection established by Charney and Sultan. 

\begin{defn}\label{defn:contracting}
Let $D\geq 0$. A geodesic $\gamma$ in a $\CAT(0)$ space $X$ is \emph{$D$--contracting} if for all $x ,y \in X$ with $d(x,y) < d(x,\gamma)$, the distance $d\bigl(\pi_\gamma(x), \pi_\gamma(y) \bigr)$ is at most $D$.
\end{defn}

\begin{thm}[{\cite[Theorem 2.9]{Charney_Sultan_CAT(0)}}]\label{thm:CAT(0)_Morse_equiv_contracting}
Let  $\gamma$ be a geodesic in a $\CAT(0)$ space. 
\begin{enumerate}
    \item For each Morse gauge $M$, there exists $D=D(M) \geq 0$ so that if $\gamma$ is $M$--Morse, then $\gamma$ is $D$--contracting.
    \item For each $D \geq 0$, there exists a Morse gauge $M = M(D)$ so that if $\gamma$ is $D$--contracting, then $\gamma$ is $M$--Morse.
\end{enumerate}
\end{thm}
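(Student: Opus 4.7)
For direction (1), suppose $\gamma$ is $M$--Morse. Given $x, y \in X$ with $d(x, y) < d(x, \gamma)$, set $p = \pi_\gamma(x)$, $q = \pi_\gamma(y)$, $a = d(x, \gamma)$, and $c = d(p, q)$; the goal is to bound $c$ by a constant depending only on $M$. The plan is to construct an auxiliary path
\[
\alpha = \gamma\vert_{[s, t_p]} \ast [p, x] \ast [x, y] \ast [y, q] \ast \gamma\vert_{[t_q, t]}
\]
where $t_p, t_q$ are the parameter values on $\gamma$ corresponding to $p, q$, and $s < t_p < t_q < t$ are chosen with $|t_p - s| = |t - t_q| = T$ for some $T$ appropriately large relative to $a$. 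The key CAT(0) fact to exploit is that the Alexandrov angle at $p$ between $[p, x]$ and any geodesic along $\gamma$ is at least $\pi/2$, and similarly at $q$; CAT(0) comparison converts these right angles into lower bounds on distances between points lying on different segments of $\alpha$. Choosing $T$ suitably, I would verify that $\alpha$ is a $(\lambda, \epsilon)$--quasi-geodesic with constants depending only on fixed data, apply the Morse hypothesis to conclude $d(x, \gamma) \leq M(\lambda, \epsilon)$, and combine this with $d(x, y) < d(x, \gamma)$ and the triangle inequality to get $c \leq 3a \leq 3 M(\lambda, \epsilon) =: D$.

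For direction (2), suppose $\gamma$ is $D$--contracting and let $\eta$ be any $(\lambda, \epsilon)$--quasi-geodesic with endpoints on $\gamma$. By Lemma \ref{lem:Morse_easy_proof}, it suffices to bound $\sup_u d(\eta(u), \gamma)$ by a uniform $N = N(D, \lambda, \epsilon)$. Suppose for contradiction that $\eta$ attains height $r \gg 1$. On any subinterval of the domain of $\eta$ on which $\eta$ stays above height $r/2$, I would cover the image by balls of radius $r/4$; the contracting hypothesis then implies that consecutive balls project to sets of diameter at most $D$, so the total projection distance between the endpoints of the subinterval is at most $D$ times the number of balls, which is of order (subinterval length)$/r$. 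Combining this with the $1$--Lipschitz property of $\pi_\gamma$, the quasi-geodesic inequality $d(\eta(u_1), \eta(u_2)) \geq |u_1 - u_2|/\lambda - \epsilon$, and the boundary conditions $\eta(s), \eta(t) \in \gamma$, I would derive an inequality forcing $r$ to be bounded in terms of $D, \lambda, \epsilon$ alone.

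The main obstacle in direction (1) is verifying that $\alpha$ is a uniform quasi-geodesic, especially for subsegments straddling the corners at $p, x, y, q$: the chord distance between a point near one corner and a point on a distant segment can be much smaller than the sum of segment lengths, and only the right-angle projection property (via CAT(0) comparison) prevents excessive detour. In direction (2), the technical core is the covering-and-projection estimate; one must carefully handle the transitions where $\eta$ crosses the height $r/2$ level and assemble local contracting estimates into a global bound on $r$.
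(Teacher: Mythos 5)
First, a bookkeeping point: the paper offers no proof of Theorem \ref{thm:CAT(0)_Morse_equiv_contracting} — it is imported verbatim from Charney--Sultan — so there is no internal argument to compare against; I will assess your sketch against the standard proofs of this equivalence.

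Your direction (2) is essentially the standard ``contracting implies Morse'' argument and is sound in outline: cover an excursion of $\eta$ away from $\gamma$ by pieces short enough that consecutive sample points satisfy the contracting hypothesis, sum the projection displacements, and play the result off against the lower quasi-geodesic inequality. One parameter warning: with the threshold set at $r/2$ (half the maximal height) the inequalities do not close up — you only learn that the excursion's domain length is linear in $r$, which is consistent with $r$ arbitrarily large. Use instead a \emph{fixed} threshold $R = R(D,\lambda,\epsilon)$ (roughly $2\lambda^2 D + 2\epsilon$); then the maximal excursion outside $\mc{N}_R(\gamma)$ has uniformly bounded domain length, hence uniformly bounded height, and Lemma \ref{lem:Morse_easy_proof} finishes.

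Direction (1), however, has a genuine gap, and it is precisely the obstacle you flagged. The concatenation $\alpha$ cannot be a $(\lambda,\epsilon)$--quasi-geodesic with constants independent of $a = d(x,\gamma)$: the middle portion from $p$ through $x$ and $y$ to $q$ has parametrized length comparable to $a$ (the three segments have lengths $a$, $<a$, and $<2a$), while its endpoints satisfy $d(p,q) = c \leq d(x,y) < a$. Whenever $c \ll a$ the lower quasi-geodesic bound fails for every fixed $\lambda$, and the right angles at $p$ and $q$ do not help — they prevent backtracking, not a long detour relative to progress. Since the configuration ``$c$ moderately large but $a$ enormous'' is exactly what the theorem must exclude, the strategy is circular: any argument of the form ``build a quasi-geodesic through $x,y$ and apply the Morse gauge'' yields at best a bound on $c$ in terms of $a$, i.e.\ sublinear contraction. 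That is no accident: in general geodesic metric spaces Morse quasi-geodesics need \emph{not} be uniformly contracting (this is the point of \cite{ACGH}, as the paper itself notes), so a correct proof of (1) must use $\CAT(0)$ geometry beyond the right-angle property of $\pi_\gamma$. Charney--Sultan obtain (1) by passing through the intermediate ``slim'' condition and exploiting the convexity of the function $t \mapsto d(\eta(t),\gamma)$ along geodesics — the content of Lemma \ref{lem:CAT(0)_closest_projection}(\ref{item:convexity}) — and that convexity is the ingredient your sketch never invokes.
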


Finally, we need a lemma of the third author about concatenating geodesics in any geodesic metric space. 

\begin{lem}[{\cite[Lemma 3.6]{Tran2017}}]\label{lem:uniform_quasi-geodesic}
Let $\gamma=\gamma_1\ast \gamma_2 \ast \gamma_3$ be the concatenation of the geodesics $\gamma_1$, $\gamma_2$, and $\gamma_3$ in a geodesic space. If the length of $\gamma_2$ is more than $21$ times the sum of the lengths of $\gamma_1$ and $\gamma_3$, then there is a $(5,0)$--quasi-geodesic $\alpha$ with the same endpoints of $\gamma$ and $\alpha\cap\gamma_2\neq \emptyset$. 
\end{lem}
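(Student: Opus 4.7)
My plan is to take $\alpha$ to be the concatenation of two geodesic segments meeting at the midpoint of $\gamma_2$. Write $x_0, x_1, x_2, x_3$ for the successive endpoints of the pieces $\gamma_1, \gamma_2, \gamma_3$, set $a = |\gamma_1|$, $b = |\gamma_2|$, $c = |\gamma_3|$ (so the hypothesis reads $b > 21(a+c)$), and let $p := \gamma_2(a + b/2)$ be the midpoint of $\gamma_2$. I will fix geodesics $\sigma_1 \colon [0, L_1] \to X$ from $x_0$ to $p$ and $\sigma_2 \colon [L_1, L_1+L_2] \to X$ from $p$ to $x_3$, set $\alpha := \sigma_1 \ast \sigma_2$ parameterized by arc length, and observe that the intersection condition $\alpha \cap \gamma_2 \neq \emptyset$ is immediate since $p \in \alpha$.

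The first step is to extract the two crucial metric estimates from the triangle inequality. Going via $x_1$ and $x_2$ yields $|\alpha| = d(x_0, p) + d(p, x_3) \leq (a + b/2) + (b/2 + c) = a + b + c$, while isolating the $\gamma_2$-piece gives $d(x_0, x_3) \geq d(x_1, x_2) - d(x_0, x_1) - d(x_2, x_3) = b - a - c$. Combined with $b > 21(a+c)$, these bounds yield $|\alpha| < 22b/21$, $d(x_0, x_3) > 20b/21$, and in particular the additive slack $|\alpha| - d(x_0, x_3) < 2(a+c) < 2b/21$, all of which I will use repeatedly.

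With this in hand, verifying the quasi-geodesic inequalities reduces to a case analysis on the positions of the parameter pair $s \leq t$. The upper bound $d(\alpha(s), \alpha(t)) \leq |s-t|$ is free for an arc length parameterization. For the lower bound, applying the triangle inequality $d(x_0,x_3) \leq s + d(\alpha(s),\alpha(t)) + (|\alpha| - t)$ rearranges to
\[ d(\alpha(s),\alpha(t)) \;\geq\; (t-s) - \bigl(|\alpha| - d(x_0,x_3)\bigr) \;\geq\; (t-s) - 2(a+c). \]
This already gives $d(\alpha(s),\alpha(t)) \geq (t-s)/5$ whenever $t - s \geq \tfrac{5}{2}(a+c)$, covering all pairs at macroscopic separation; for pairs lying on a single one of $\sigma_1$ or $\sigma_2$, the geodesic property gives $d(\alpha(s),\alpha(t)) = t-s$ directly.

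The main technical obstacle is the remaining regime, in which $s$ lies on $\sigma_1$, $t$ lies on $\sigma_2$, and $t - s < \tfrac{5}{2}(a+c)$ so that both $\alpha(s)$ and $\alpha(t)$ sit within $\tfrac{5}{2}(a+c)$ of the joining point $p$. Here the global anchors $x_0, x_3$ yield no useful bound, and one must instead control the extent to which $\sigma_1$ and $\sigma_2$ can share a common subsegment emanating from $p$: any such overlap of length $\delta$ permits replacing $\sigma_1 \ast \sigma_2$ by the shortcut $\sigma_1|_{[0, L_1 - \delta]} \ast \sigma_2|_{[\delta, L_2]}$, bounding $d(x_0,x_3)$ above by $|\alpha| - 2\delta$ and thus forcing $\delta \leq \tfrac{1}{2}\bigl(|\alpha| - d(x_0,x_3)\bigr) \leq (a+c)$. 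Quantifying this trade-off between parameter difference and spatial distance near $p$, and thereby showing that the ratio $d(\alpha(s),\alpha(t))/(t-s)$ remains at least $1/5$ throughout the folding window, is the crux of the argument and is precisely where the numerical factor $21$ in the hypothesis is calibrated against the constant $5$ in the conclusion.
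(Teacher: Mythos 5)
The paper does not actually prove this lemma; it cites \cite[Lemma 3.6]{Tran2017}, so I am assessing your argument on its own terms. The first two-thirds are fine: the bounds $|\alpha|\le a+b+c$, $d(x_0,x_3)\ge b-a-c$, hence $|\alpha|-d(x_0,x_3)\le 2(a+c)$, and the resulting estimate $d(\alpha(s),\alpha(t))\ge (t-s)-2(a+c)$ correctly dispose of all pairs with $t-s\ge\tfrac52(a+c)$ and of pairs lying on a single $\sigma_i$. But the case you defer is not a technicality whose ``trade-off'' merely needs quantifying: as you have set it up, the construction can genuinely fail there, so no estimate will close the gap. If $\sigma_1$ and $\sigma_2$ share a nondegenerate common subsegment $[q,p]$ of length $\delta>0$ ending at $p$, then $\alpha(L_1-u)=\alpha(L_1+u)$ for all $0<u\le\delta$, so $d(\alpha(s),\alpha(t))=0$ while $t-s=2u>0$, and $\alpha$ is not a $(\lambda,0)$--quasi-geodesic for any $\lambda$. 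Your own computation only gives $\delta\le a+c$; it does not give $\delta=0$, and $\delta>0$ can be forced even when the geodesics are unique. Take the metric graph with vertices $x_0,x_1,p,x_2,x_3,q$ and edges $[x_0,x_1],[x_2,x_3]$ of length $2$, $[x_1,p],[p,x_2]$ of length $50$, $[p,q]$ of length $1$, and $[x_0,q],[q,x_3]$ of length $48$. Then $\gamma_1,\gamma_2,\gamma_3$ are geodesics with $|\gamma_2|=100>84=21(|\gamma_1|+|\gamma_3|)$, $p$ is the midpoint of $\gamma_2$, and the \emph{unique} geodesics from $x_0$ to $p$ and from $p$ to $x_3$ are $x_0\to q\to p$ and $p\to q\to x_3$ (length $49$ versus $52$ for the routes through $x_1$, resp.\ $x_2$). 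Their concatenation traverses $[q,p]$ twice and is not a $(5,0)$--quasi-geodesic; note that all of your stated inequalities hold in this example ($|\alpha|-d(x_0,x_3)=2\le 2(a+c)=8$), which shows they cannot suffice.

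Consequently the missing step is not a sharper inequality near $p$ but a modification of the path itself (and a near-overlap, where $d(\sigma_1(L_1-u),\sigma_2(L_1+u))$ is positive but much smaller than $2u$, causes the same failure, so simply deleting an exact overlap is not enough). Any correct argument must either alter $\alpha$ near the concatenation point --- for instance, excising a suitable terminal piece of $\sigma_1$ and initial piece of $\sigma_2$ and rejoining by a geodesic, then re-verifying both the $(5,0)$ condition and the fact that the new path still meets $\gamma_2$ --- or replace the midpoint construction altogether. As written, the proposal proves the statement only away from the concatenation point and leaves the essential case not merely unfinished but resting on a construction that is false in general.
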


We prove the Morse local-to-global property. First, we use Lemma \ref{lem:CAT(0)_closest_projection}.(\ref{item:convexity}) to show that a local Morse quasi-geodesic of sufficient scale is close to the geodesic between its end points (Proposition \ref{prop:CAT(0)_Key}). This implies the local Morse quasi-geodesic is a global quasi-geodesic by Lemma \ref{lem:local_close_to_global}. Next, we use the contracting characterization of Morse geodesics to prove this global quasi-geodesic is Morse (Theorem \ref{thm:CAT(0)_are_local_to_global}).

\begin{prop}\label{prop:CAT(0)_Key}
Let $X$ be a $\CAT(0)$ space. For each $\lambda\geq 1$, $\epsilon\geq 0$, and Morse gauge $M$, there are constants $\ell\geq 0$ and $C\geq 0$ such that the following holds. If $L \geq \ell$ and $\gamma \colon [a,b] \to X$ is an $(L;M;\lambda,\epsilon)$--local Morse quasi-geodesic and $\beta$ is the geodesic connecting $\gamma(a)$ and $\gamma(b)$, then the Hausdorff distance between $\gamma$ and $\beta$ is less than  $C$.
\end{prop}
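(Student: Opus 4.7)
The plan is to approximate $\gamma$ by a piecewise-geodesic concatenation and then control the Hausdorff distance to $\beta$ by combining two CAT(0)-specific tools: the convexity of $d(\cdot,\beta)$ along geodesics (Lemma \ref{lem:CAT(0)_closest_projection}(2)) and the Charney--Sultan characterization of Morse geodesics as contracting (Theorem \ref{thm:CAT(0)_Morse_equiv_contracting}).

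First I would subdivide $[a,b]$ into consecutive intervals $[t_i, t_{i+1}]$ of length at most $L/2$ and connect each pair $\gamma(t_i), \gamma(t_{i+1})$ by a geodesic $\sigma_i$. Since $\gamma|_{[t_i,t_{i+1}]}$ is $M$-Morse and $(\lambda,\epsilon)$-quasi-geodesic, Lemma \ref{lem:close_to_Morse} makes each $\sigma_i$ an $M'$-Morse geodesic with $M' = M'(M,\lambda,\epsilon)$, so $\gamma$ lies within Hausdorff distance $M(\lambda,\epsilon)$ of the concatenation $\sigma := \sigma_0 \ast \cdots \ast \sigma_{n-1}$, and by Theorem \ref{thm:CAT(0)_Morse_equiv_contracting} each $\sigma_i$ is moreover $D$-contracting with $D = D(M')$. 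The CAT(0) convexity of $d(\cdot,\beta)$ along the geodesic $\sigma_i$ forces its maximum on $\sigma_i$ to be attained at an endpoint, so the required Hausdorff bound reduces to bounding $R := \max_i d(\gamma(t_i),\beta)$ by a constant depending only on $M, \lambda, \epsilon$.

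The bulk of the argument is this uniform bound on $R$. Suppose for contradiction that $R$ is very large, realized at some interior partition point $\gamma(t_k)$. The geodesic $\alpha = [\gamma(t_{k-1}),\gamma(t_{k+1})]$ is $M'$-Morse and hence $D$-contracting, has length at least roughly $L/\lambda$, and by Morse passes within $M(\lambda,\epsilon)$ of $\gamma(t_k)$. Using Lemma \ref{lem:uniform_quasi-geodesic} on the piecewise-geodesic $[\gamma(a),\gamma(t_{k-1})] \ast \alpha \ast [\gamma(t_{k+1}),\gamma(b)]$ (after arranging by minimality or iterative truncation of $\gamma$ that the middle piece dominates the flanks), one extracts a $(5,0)$-quasi-geodesic $\zeta$ between $\gamma(a)$ and $\gamma(b)$ that meets $\alpha$; the Morse property of $\alpha$ forces $\zeta$ to stay close to $\alpha$ across its middle portion, while the $D$-contracting bounded-geodesic-image property pins the unique CAT(0) geodesic $\beta$ between the endpoints of $\zeta$ uniformly close to $\alpha$ as well, contradicting the assumption that $\gamma(t_k)$, which sits within $M(\lambda,\epsilon)$ of $\alpha$, is far from $\beta$. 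The main obstacle will be arranging the length-dominance hypothesis of Lemma \ref{lem:uniform_quasi-geodesic}, which for long $\gamma$ need not hold directly; this is handled by passing to a minimal counterexample or by iterative truncation before applying the contracting argument, after which the Hausdorff bound $C = R + M(\lambda,\epsilon)$ follows from the reduction above.
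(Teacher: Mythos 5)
Your reduction via convexity to bounding $R=\max_i d(\gamma(t_i),\beta)$ is fine, but the contradiction argument has a genuine gap exactly where the difficulty lies. Lemma \ref{lem:uniform_quasi-geodesic} requires the middle geodesic to be more than $21$ times longer than the two flanks, and in your application the middle piece $\alpha$ has length at most $\lambda L+\epsilon$ while the flanks $[\gamma(a),\gamma(t_{k-1})]$ and $[\gamma(t_{k+1}),\gamma(b)]$ are unbounded, so the hypothesis fails for every long $\gamma$ --- precisely the case of interest. The fixes you gesture at (``minimal counterexample or iterative truncation'') are not available in any obvious way, because truncating $\gamma$ changes the comparison geodesic: closeness of $\gamma(t_k)$ to the geodesic between the truncated endpoints does not yield closeness to $\beta$ without already knowing something like the proposition for the truncated pieces. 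The paper avoids this entirely by applying Lemma \ref{lem:uniform_quasi-geodesic} only in a \emph{local} configuration: with $t$ a near-maximizer of $d(\gamma(\cdot),\beta)$ over the whole domain, it sets $x=\gamma(t-\ell/4)$, $z\in[x,y]$ near $\gamma(t)$, descends a fixed distance $R$ from $x$ and from $z$ along the geodesics toward $\beta$, and applies the lemma to $[x,x_1]\ast[x_1,z_1]\ast[z_1,z]$, where the flanks have length exactly $R$ and the middle at least $42R$; the Morse property of $[x,z]$ then produces a point $w_1\in[x,z]$ (and symmetrically $w_2\in[z,y]$) strictly closer to $\beta$ than $z$, contradicting convexity of $d(\cdot,\beta)$ along $[x,y]$.

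Even granting your extraction of the $(5,0)$--quasi-geodesic $\zeta$, the next two steps are unjustified. The Morse property of $\alpha$ controls quasi-geodesics whose \emph{endpoints lie on} $\alpha$; $\zeta$ runs from $\gamma(a)$ to $\gamma(b)$ and merely meets $\alpha$ at a point, so nothing forces it to fellow-travel $\alpha$. Likewise, the contracting/bounded-geodesic-image property pins $\beta$ near $\alpha$ only if $\pi_\alpha(\gamma(a))$ and $\pi_\alpha(\gamma(b))$ are far apart, which you never establish; in the problematic ``excursion and return'' configuration those projections can be close together, and ruling that configuration out is essentially the content of the proposition, so this step is circular as stated. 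Finally, note that you only bound the direction $\gamma\subseteq\mc{N}_C(\beta)$; the reverse containment $\beta\subseteq\mc{N}_{C'}(\gamma)$ is not automatic and needs the short chaining argument (as in the last paragraph of the paper's proof), a minor but real omission.
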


\begin{proof}
For any two points $x,y \in X$, let $[x,y]$ denote the unique geodesic from $x$ to $y$ in the $\CAT(0)$ metric on $X$. Let $M'$ be the Morse gauge  provided by Lemma \ref{lem:close_to_Morse} so that any geodesic connecting the endpoints of an $(M;\lambda,\epsilon)$--Morse quasi-geodesic is $M'$--Morse. Let $K_1 = M(1,0)$, $K_2 = M'(5,0)$, and $R = 2(K_2+K_1 +1)$. Let $L\geq \ell = 4\lambda(44R+\epsilon+K_1)$.

Let $\gamma \colon [a,b] \to X$ be an $(L;M;\lambda,\epsilon)$--local Morse quasi-geodesic and $\beta$ be the geodesic connecting $\gamma(a)$ and $\gamma(b)$. We first show that $\gamma \subseteq \mc{N}_{C_1}(\beta)$ for some $C_1$ depending only on $M$, $\lambda$, and $\epsilon$.

Let $t\in [a,b]$ such that \[d\bigl(\gamma(t),\beta\bigr)+1>\sup\limits_{s\in [a,b]} d\bigl(\gamma(s),\beta\bigr).\] 

If $|a-t|<\ell/4$ or $|t-b|<\ell/4$, then the distance $d\bigl(\gamma(t),\beta\bigr)$ is bounded by a constant depending only on $\lambda$, $\epsilon$, and $M$.
Assume $|a-t|\geq \ell/4$ and $|t-b|\geq \ell/4$. Let $x=\gamma(t-\ell/4)$, $y=\gamma(t+\ell/4)$, and $\beta_1$ be the geodesic connecting $x$ and $y$. Since $\gamma$ is locally $M$--Morse, the Hausdorff distance between $\beta_1$  and  $\gamma\vert_{[t-\ell/4,t+\ell/4]}$ is at most $K_1 = M(1,0)$. Let $z$ be a point in $\beta_1$ such that $d\bigl(\gamma(t),z\bigr)<K_1$.  Note, both geodesic segments $[x,z]$ and $[z,y]$ are $M'$--Morse. 

\begin{claim}
 $d(z,\beta) < 2R = 4(M(1,0)+M'(5,0) +1)$.
\end{claim}
\begin{subproof}
 Assume for the purposes of contradiction that $d(z,\beta)\geq 2R$. We shall create a contradiction to Lemma \ref{lem:CAT(0)_closest_projection}.(\ref{item:convexity}) by showing there  is $w_1 \in [x,z]$ and $w_2 \in [z,y]$ such that the distances $d(w_1,\beta)$ and $d(w_2,\beta)$ are both strictly less than the distance $d(z,\beta)$. We only give the proof for the existence of $w_1$ as the proof for the existence of $w_2$ is entirely analogous.
 
 If $d(x,\beta)<R$, then we can choose $w_1=x$. Otherwise, let $x_1$ and $z_1$ be the points in $X$ such that \[d(x,x_1)=R; \quad \ d(x_1,\beta)=d(x,\beta)-R \] and  \[ d(z,z_1)=R; \quad d(z_1,\beta)=d(z,\beta)-R.\]
 Since $\gamma$ is a local quasi-geodesic, we have \[d(x,z)\geq d\bigl(\gamma(t-\ell/4),\gamma(t)\bigr)-d\bigl(\gamma(t),z\bigr)\geq \left(\frac{\ell}{4\lambda}-\epsilon\right)-K_1= 44R.\] 
 This implies,
\[d(x_1,z_1)\geq d(x,z)-d(x,x_1)-d(z,z_1)\geq 42R.\]

We can now apply Lemma~\ref{lem:uniform_quasi-geodesic} to the concatenation of the geodesics $[x,x_1]$, $[x_1,z_1]$, and $[z_1,z]$ to produce a $(5,0)$--quasi-geodesic $\beta_2$ connecting $x$ and $z$ with $\beta_2\cap [x_1,z_1] \neq \emptyset$. Since $[x,z]$ is $M'$--Morse,  any point in $\beta_2$ lies in the $M'(5,0)$--neighborhood of $[x,z]$. Therefore, there is $w_1 \in [x,z]$ and $u \in \beta_2\cap [x_1,z_1]$ such that $d(w_1,u)<K_2 = M'(5,0)$.

By Lemma \ref{lem:CAT(0)_closest_projection}.(\ref{item:convexity}), we have $d(u,\beta)\leq d(x_1,\beta)$ or $d(u,\beta)\leq d(z_1,\beta)$. In either case, we have  $d(u,\beta)\leq d(z,\beta)+1+K_1-R$ as 
\[d(x_1,\beta)=d(x,\beta)-R\leq d\bigl(\gamma(t),\beta\bigr)+1-R\leq d(z,\beta)+1+K_1-R\]
and
\[d(z_1,\beta)=d(z,\beta)-R.\]
 This implies that
\[d(w_1,\beta)\leq d(w_1,u)+d(u,\beta)\leq K_2+ d(z,\beta)+1+K_1-R<d(z,\beta)\]
as desired.
\end{subproof}

Since the distance $d(z,\beta)$ is bounded above by $2R$, we have \[d\bigl(\gamma(t),\beta\bigr) \leq 2R + K_1 = 4\bigl(M(1,0)+M'(5,0)\bigr) + M(1,0).\] Therefore,  $\gamma$ lies in the $C_1$--neighborhood of $\beta$ where $C_1 = 2R+K_1+1$ depends only on $\lambda$, $\epsilon$, and~$M$.

We now prove that $\beta$ also lies in the $C_2$--neighborhood of $\gamma$ where $C_2$ depends only on $\lambda$, $\epsilon$, and $M$. Let $a=t_0<t_1<\cdots<t_n=b$ with $|t_{i+1}-t_i|<\ell$. Since each $\gamma\vert_{[t_i,t_{i+1}]}$ is a $(\lambda,\epsilon)$--quasi-geodesic, the distance between $\gamma(t_i)$ and $\gamma(t_{i+1})$ is bounded above by $\lambda\ell+\epsilon$. Let $u_0= \gamma(a)$, $u_n= \gamma(b)$, and for each $1\leq i \leq n-1$, let $u_i$ be a point in $\beta$ such that the distance between $\gamma(t_i)$ and $u_i$ is at most $C_1$. By the triangle inequality, the distance between $u_i$ and $u_{i+1}$ is at most $\lambda\ell+\epsilon+2C_1$. If $u\in \beta$, then $u$ must lie in a subsegment of $\beta$ with endpoints $u_i$ and $u_{i+1}$ for some $0\leq i < n$. Therefore, the distance between $u$ and $\gamma(t_i)$ is at most $\ell \lambda+\epsilon +3C_1 $.  This implies that $\beta$ lies in the $(\lambda\ell+\epsilon+3C_1)$--neighborhood of $\gamma$.
\end{proof}

\begin{thm}\label{thm:CAT(0)_are_local_to_global}
For each $\lambda\geq 1$, $\epsilon\geq 0$, and Morse gauge $M$, there are constants $L\geq 0$, $\lambda'\geq 1$, $\epsilon'\geq 0$, and a Morse gauge $M'$ such that the following holds. If $\gamma$ is an $(L;M;\lambda,\epsilon)$--local Morse quasi-geodesic in a $\CAT(0)$ space $X$, then $\gamma$ is an $(M';\lambda',\epsilon')$--Morse quasi-geodesic in $X$.
\end{thm}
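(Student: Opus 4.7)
The plan is to build on Proposition~\ref{prop:CAT(0)_Key}, which already gives the key Hausdorff estimate between a local Morse quasi-geodesic and the chord joining its endpoints, and combine it with the contracting characterization of Morse geodesics in $\CAT(0)$ spaces (Theorem~\ref{thm:CAT(0)_Morse_equiv_contracting}). The three steps are: (i) upgrade $\gamma$ to a global quasi-geodesic; (ii) show that for every subsegment the chord geodesic $\beta_{s,t}$ from $\gamma(s)$ to $\gamma(t)$ is uniformly Morse; (iii) transfer the Morse property from $\beta_{s,t}$ back to $\gamma|_{[s,t]}$ via closeness.

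For step (i), take $L \geq \max\{\ell, \lambda(3C+\epsilon+2)+1\}$ where $\ell$ and $C$ come from Proposition~\ref{prop:CAT(0)_Key}. Every subsegment $\gamma|_{[s,t]}$ is itself an $(L;M;\lambda,\epsilon)$--local Morse quasi-geodesic, so Proposition~\ref{prop:CAT(0)_Key} places it in the $C$--neighborhood of the chord $\beta_{s,t}$. Lemma~\ref{lem:local_close_to_global} then produces $\lambda', \epsilon'$ so that $\gamma$ is a global $(\lambda',\epsilon')$--quasi-geodesic. Step (iii) is immediate from Lemma~\ref{lem:close_to_Morse}: once each $\beta_{s,t}$ is known to be $M_1$--Morse for a uniform gauge $M_1=M_1(M,\lambda,\epsilon)$, the fact that $\gamma|_{[s,t]}$ is a $(\lambda',\epsilon')$--quasi-geodesic sitting inside the $C$--neighborhood of $\beta_{s,t}$ yields a uniform Morse gauge $M'$ for $\gamma|_{[s,t]}$, completing the proof.

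The heart of the argument is step (ii): showing $\beta_{s,t}$ is uniformly $D$--contracting, and hence uniformly Morse by Theorem~\ref{thm:CAT(0)_Morse_equiv_contracting}, using the local Morse structure of $\gamma$. The natural approach is to partition $\gamma|_{[s,t]}$ into overlapping pieces of parametrized length at most $L$; each piece is $(M;\lambda,\epsilon)$--Morse, so by Lemma~\ref{lem:close_to_Morse} the chord geodesic $\eta_i$ between its endpoints is $M_0$--Morse for a uniform $M_0=M_0(M,\lambda,\epsilon)$, and therefore $D_0$--contracting by Theorem~\ref{thm:CAT(0)_Morse_equiv_contracting}. Given $x,y \in X$ with $d(x,y) < d(x,\beta_{s,t})$, one identifies a local chord $\eta_i$ whose corresponding subsegment of $\gamma$ is close to $\pi_{\beta_{s,t}}(x)$ (which is possible because $\gamma|_{[s,t]}$ is within $C$ of $\beta_{s,t}$), uses the $D_0$--contracting property of $\eta_i$ to bound $d(\pi_{\eta_i}(x),\pi_{\eta_i}(y))$, and transfers the bound back to $\beta_{s,t}$ using the convexity of the distance function (Lemma~\ref{lem:CAT(0)_closest_projection}.(\ref{item:convexity})) together with Proposition~\ref{prop:CAT(0)_Key} applied to the short piece of $\gamma$.

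The main obstacle is executing this transfer cleanly. Nearest-point projections onto two convex sets that are Hausdorff close can a priori differ by much more than the Hausdorff distance, so bounding $d(\pi_{\beta_{s,t}}(x),\pi_{\beta_{s,t}}(y))$ in terms of $d(\pi_{\eta_i}(x),\pi_{\eta_i}(y))$ requires careful use of $\CAT(0)$ convexity and a case analysis separating the regime where $d(x,y)$ is already bounded (handled by the $1$--Lipschitz property of $\CAT(0)$ projection) from the regime where $d(x,\beta_{s,t})$ is so large that $x$ and $y$ also lie far from the relevant local chord $\eta_i$, so that $\eta_i$'s contracting property applies and the projections onto $\beta_{s,t}$ and $\eta_i$ fellow-travel.
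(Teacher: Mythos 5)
Your steps (i) and (iii) match the paper exactly: Proposition~\ref{prop:CAT(0)_Key} plus Lemma~\ref{lem:local_close_to_global} give the global quasi-geodesic constants, and Lemma~\ref{lem:close_to_Morse} transfers the Morse property from the chord $\beta$ back to $\gamma$ once $\beta$ is known to be Morse. The problem is step (ii), which is the heart of the argument, and the obstacle you flag there is not a technicality to be cleaned up -- it is a genuine gap, and the route you sketch does not close it. You propose to establish the $D$--contracting property of $\beta_{s,t}$ by contracting onto the \emph{local} chords $\eta_i$ of $\gamma$ and then transferring the bound to $\beta_{s,t}$. But nearest-point projections onto two geodesics at Hausdorff distance $C$ can disagree by roughly $\sqrt{C\cdot d(x,\beta_{s,t})}$ when the ambient space has flat directions (consider a long segment and a nearby short subsegment in $\Euclidean^2$, with $x$ far away); this discrepancy is unbounded in exactly the regime $d(x,\beta_{s,t})$ large that your second case is supposed to handle. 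Ruling out such flat behavior near $\beta_{s,t}$ is equivalent to the contracting property you are trying to prove, so the ``fellow-traveling of projections'' you invoke is circular as stated.

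The paper avoids comparing projections onto \emph{different} geodesics altogether. Arguing by contradiction, if $\beta$ is not $D$--contracting one finds $x,y$ with $d(x,y)<d(x,\pi_\beta(x))$ and $d(\pi_\beta(x),\pi_\beta(y))>D$; by continuity of $\pi_\beta$ one replaces $y$ by a point $y_1$ on $[x,y]$ with $d(\pi_\beta(x),\pi_\beta(y_1))$ only slightly larger than $D$, and then restricts to the subsegment $\beta'\subseteq\beta$ between $\pi_\beta(x)$ and $\pi_\beta(y_1)$. The key exact identity is that $\pi_{\beta'}(x)=\pi_\beta(x)$ and $\pi_{\beta'}(y_1)=\pi_\beta(y_1)$, since the closest point of $\beta$ already lies on $\beta'$ -- no approximate transfer is needed. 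Now $\beta'$ has length about $D$, so (taking $L>\lambda(D+2C+\epsilon+1)$) its endpoints lie within $C$ of a subsegment of $\gamma$ of parametrized length less than $L$, which is genuinely $(M;\lambda,\epsilon)$--Morse; hence $\beta'$ is $M'$--Morse by Lemma~\ref{lem:close_to_Morse} and $D$--contracting by Theorem~\ref{thm:CAT(0)_Morse_equiv_contracting}, contradicting the failure of $D$--contraction witnessed by $x$ and $y_1$. If you want to keep your outline, replace the transfer across $\eta_i$ by this localization-within-$\beta$ argument; note also that $D$ must be fixed (as a function of $M,\lambda,\epsilon$) \emph{before} the local scale $L$ is chosen, which the contradiction scheme accommodates naturally.
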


\begin{proof}
Let $\ell$ and $C$ be the constants from Proposition \ref{prop:CAT(0)_Key} for $\lambda$, $\epsilon$, and $M$. Let $L \geq \lambda(3 C + \epsilon +2 +\ell) +1$ and let $\gamma \colon [a,b] \to X$ be an $(L;M;\lambda,\epsilon)$--local Morse quasi-geodesic in a $\CAT(0)$ space $X$.  By Proposition \ref{prop:CAT(0)_Key}, every subsegment of $\gamma$ in contained in the $C$--neighborhood of the geodesic between its endpoints. Since $L > \lambda(3 C + \epsilon +2 )$, Lemma \ref{lem:local_close_to_global} implies $\gamma$ is a $(\lambda',\epsilon')$--quasi-geodesic for $\lambda'$ and $\epsilon'$  depending ultimately only on $\lambda$, $\epsilon$, and $M$.

To show  $\gamma$ is Morse, consider the geodesic $\beta$ connecting $\gamma(a)$ and $\gamma(b)$. By Theorem \ref{thm:CAT(0)_Morse_equiv_contracting} and Lemma \ref{lem:close_to_Morse}, it suffices to show that $\beta$ is $D$--contracting for some $D$ depending only on $M$, $\lambda$, and $\epsilon$. Let $M'$ be the Morse gauge so that any geodesic with endpoints in the $C$--neighborhood of an $(M;\lambda,\epsilon)$--Morse quasi-geodesic is $M'$--Morse. Let $D>0$ so that any $M'$--Morse geodesic in $X$ is $D$--contracting and suppose $L > \lambda (D +2C + \epsilon +1)$. Note, such a $D$ depends ultimately only on $M$, $\lambda$, and $\epsilon$.

Suppose, for the purposes of contradiction, that $\beta$ is not $D$--contracting. Then, there are $x$ and $y$ in $X$ such that $d(x,y)<d\bigl(x,\pi_{\beta}(x)\bigr)$, but $d\bigl(\pi_{\beta}(x), \pi_{\beta}(y)\bigr) > D$.
Let $c = \min\{1,d\bigl(\pi_{\beta}(x), \pi_{\beta}(y)\bigr)-D\}$.
By the continuity of the projection map $\pi_{\beta}$, there is a point $y_1$ in the geodesic connecting $x$ and $y$ such that $d\bigl(\pi_{\beta}(x), \pi_{\beta}(y_1)\bigr)= D  + c/2$. Let $\beta'$ be the geodesic subsegment of $\beta$ connecting $\pi_{\beta}(x)$ and $\pi_{\beta}(y_1)$.
Then  $\pi_{\beta}(x)=\pi_{\beta'}(x)$, $\pi_{\beta}(y_1)=\pi_{\beta'}(y_1)$, and $d(x,y_1)\leq d(x,y)$. Therefore, $d(x,y_1)<d\bigl(x,\pi_{\beta'}(x)\bigr)$ and $d\bigl(\pi_{\beta'}(x), \pi_{\beta'}(y_1)\bigr)=d\bigl(\pi_{\beta}(x), \pi_{\beta}(y_1)\bigr) >D$. This implies that $\beta'$ is not $D$--contracting.
However, since $L > \lambda (D +2C + \epsilon +1)$, $\beta'$ is a geodesic with endpoints in the $C$--neighborhood of an $(M;\lambda,\epsilon)$--Morse quasi-geodesic and hence $D$--contracting. This contradiction implies $\beta$ must be $D$--contracting.
\end{proof}

\subsection{Hierarchically hyperbolic spaces and Morse detectability} \label{subsec:HHS}
For hierarchically hyperbolic spaces, the Morse local-to-global property follows from a result of Abbott, Behrstock, and Durham that established most hierarchically hyperbolic spaces admit a projection onto a hyperbolic space that detects when a quasi-geodesic is Morse. We prove that any space with this \emph{Morse detectability} property has the Morse local-to-global property. In addition to simplifying the proof in the case of hierarchically hyperbolic spaces, this approach also provides an avenue for producing new examples of Morse local-to-global spaces.

\begin{defn}\label{Morse detectable}
A metric space $X$ is \emph{Morse detectable} if there exists a $\delta$--hyperbolic space $Y$ and a coarsely Lipschitz map $\pi \colon X \to Y$ such that for every $(\lambda, \epsilon)$--quasi-geodesic $\gamma \colon [a,b] \to X$, the following holds.
\begin{enumerate}
   \item \label{item:Morse detectable 1} If $\gamma$ is $M$--Morse, then $\pi\circ \gamma$ is a $(k,c)$--quasi-geodesic in $Y$ where $(k,c)$ is determined by $\lambda$,  $\epsilon$, $\delta$, and $M$.
    \item \label{item:Morse detectable 2} If $\pi\circ \gamma$ is a $(k,c)$--quasi-geodesic in $Y$, then $\gamma$ is $M$--Morse, where $M$ is determined by $k$, $c$, $\lambda$, $\epsilon$, and $\delta$.
\end{enumerate}
\end{defn}

\begin{thm}
\label{thm:Morse local-to-globle for Morse detectable}
If $X$ is a Morse detectable metric space, then $X$ has the Morse local-to-global property.
\end{thm}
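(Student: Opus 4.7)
The plan is to transport the problem from $X$ to the hyperbolic target $Y$ via the Morse-detecting projection $\pi$, and then invoke Gromov's local-to-global theorem for hyperbolic spaces (Theorem \ref{thm:local_to_global_hyperbolic}). Fix $M$, $\lambda$, $\epsilon$, and let $K$ be the coarse Lipschitz constant of $\pi$. By part (\ref{item:Morse detectable 1}) of Definition \ref{Morse detectable}, there exist constants $k_0$ and $c_0$, depending only on $M$, $\lambda$, $\epsilon$, and $\delta$, such that $\pi$ sends every $M$-Morse $(\lambda,\epsilon)$-quasi-geodesic in $X$ to a $(k_0,c_0)$-quasi-geodesic in $Y$. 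Applying Theorem \ref{thm:local_to_global_hyperbolic} to $Y$ with these constants yields $L_0$, $k'$, $c'$ so that every $(L_0; k_0, c_0)$-local quasi-geodesic in $Y$ is a global $(k',c')$-quasi-geodesic.

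Choose $L$ to be at least $L_0$ (and large enough for the routine estimate below). Given any $(L; M; \lambda, \epsilon)$-local Morse quasi-geodesic $\gamma \colon [a,b] \to X$, every subinterval of length at most $L$ has image under $\pi$ a $(k_0, c_0)$-quasi-geodesic by property (\ref{item:Morse detectable 1}). Thus $\pi \circ \gamma$ is an $(L_0; k_0, c_0)$-local quasi-geodesic in $Y$, hence a global $(k',c')$-quasi-geodesic in $Y$. Combining this with the coarse Lipschitz bound on $\pi$ yields the lower quasi-geodesic inequality for $\gamma$:
\[ d_X(\gamma(s), \gamma(t)) \geq \tfrac{1}{K}\bigl(d_Y(\pi\gamma(s), \pi\gamma(t)) - K\bigr) \geq \tfrac{1}{Kk'}|s-t| - \tfrac{c' + K}{K}. \]
The matching upper bound follows from the triangle inequality by partitioning $[s,t]$ into pieces of length at most $L$ on which $\gamma$ is a $(\lambda,\epsilon)$-quasi-geodesic, exactly as in the first half of the proof of Lemma \ref{lem:local_close_to_global}. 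Hence $\gamma$ is a global $(\lambda', \epsilon')$-quasi-geodesic in $X$ with constants depending only on $M$, $\lambda$, $\epsilon$, $K$, and $\delta$.

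To finish, I apply part (\ref{item:Morse detectable 2}) of Definition \ref{Morse detectable} to $\gamma$: since $\gamma$ is a $(\lambda', \epsilon')$-quasi-geodesic in $X$ whose $\pi$-projection is a $(k', c')$-quasi-geodesic in $Y$, the Morse-detectability axiom produces a Morse gauge $M'$, depending only on the previously fixed data, such that $\gamma$ is $M'$-Morse. This gives the required global $(M'; \lambda', \epsilon')$-Morse quasi-geodesic and verifies the Morse local-to-global property. There is no serious obstacle here; the argument is a clean reduction to Gromov's theorem in $Y$, and the only point requiring care is the bookkeeping that each constant in the chain depends only on the initial data $(M,\lambda,\epsilon)$ and the fixed Morse-detectability structure, so that the final output constants are uniform in the sense required by the definition.
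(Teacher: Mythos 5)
Your proposal is correct and follows essentially the same route as the paper: project to $Y$, apply Gromov's local-to-global theorem there, pull the quasi-geodesic inequality back via the coarse Lipschitz bound (this is exactly the content of the paper's Lemma \ref{lem: Folklore}, which you prove inline), and conclude with part (\ref{item:Morse detectable 2}) of Morse detectability. No gaps.
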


\begin{proof}
Let $Y$ be the $\delta$--hyperbolic space and $\pi \colon X \to Y$ be the coarsely Lipschitz map satisfying Definition~\ref{Morse detectable}. Fix $\lambda \geq 1$, $\epsilon \geq 0$, and Morse gauge $M$. 

Since $X$ is Morse detectable, there exists $k\geq 1$ and $c\geq 0$ depending on $\lambda$, $\epsilon$, $\delta$, and $M$ so that if $\eta$ is an $(M;\lambda,\epsilon)$--Morse quasi-geodesic, then $\pi\circ \eta$ is a $(k,c)$--quasi-geodesic. By the local-to-global property for quasi-geodesics in hyperbolic spaces (Theorem \ref{thm:local_to_global_hyperbolic}), there are constants $L> 2\epsilon$, $k'\geq 1$ and $c'\geq 0$ depending only on $k$, $c$, and $\delta$ such that any $(L;k,c)$--local quasi-geodesic in $Y$ is a $(k',c')$--quasi-geodesic. 

Let $\gamma \colon [a,b] \to X$ be an $(L;M;\lambda,\epsilon)$--local Morse quasi-geodesic. 
 By (\ref{item:Morse detectable 1}) of Morse detectability, $\pi\circ \gamma$ is an $(L;k,c)$--local quasi-geodesic in $Y$. Since $Y$ is $\delta$--hyperbolic,  $\pi\circ \gamma$ is a $(k',c')$--quasi-geodesic.   Using  Lemma \ref{lem: Folklore} below, this implies $\gamma$ is a $(\lambda',\epsilon')$--quasi-geodesic in $X$ where $\lambda'$ and $\epsilon'$ depend only on $\lambda$, $\epsilon$, $\delta$, $M$, and the coarse Lipschitz constants of $\pi \colon X \to Y$ . Applying (\ref{item:Morse detectable 2}) of Morse detectable makes $\gamma$ an $(M';\lambda',\epsilon')$--Morse quasi-geodesic in $X$ where $M'$ depends only on $M$, $\delta$, $\lambda$, and $\epsilon$.
\end{proof}

\begin{lem}\label{lem: Folklore}
Let $X, Y$ be metric spaces  and $\pi\colon X \to Y$ be a $(K,C)$--coarsely Lipschitz map. Let $\gamma \colon I \to X$ be an $(L;\lambda, \epsilon)$--local quasi-geodesic with $L> 2\epsilon$. For all $k \geq 1$ and $c \geq 0$, there exists $\lambda'$ and $\epsilon'$ so that if $ \pi \circ \gamma \colon I \to Y$ is a $(k,c)$--quasi-geodesic, then $\gamma$ is a $(\lambda',\epsilon')$--quasi-geodesic in $X$.
\end{lem}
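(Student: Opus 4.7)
The plan is to establish the two quasi-geodesic inequalities for $\gamma$ independently. The upper bound will come entirely from the hypothesis that $\gamma$ is a local quasi-geodesic (and will not use $\pi$ at all), while the lower bound will be extracted from the global quasi-geodesic inequality for $\pi \circ \gamma$ in $Y$ combined with the coarsely Lipschitz control provided by $\pi$. Neither step involves any hyperbolicity in $Y$; this lemma is a purely metric statement, essentially folklore, which is why the proof should be short.

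For the upper bound, fix $s < t$ in $I$. If $|s-t| \leq L$, the local quasi-geodesic hypothesis gives $d(\gamma(s),\gamma(t)) \leq \lambda|s-t|+\epsilon$ immediately. Otherwise, choose a subdivision $s = t_0 < t_1 < \cdots < t_n = t$ with $L/2 \leq |t_{i+1} - t_i| < L$, which forces $n \leq 2|s-t|/L + 1$. Applying the local quasi-geodesic inequality to each piece and using the triangle inequality,
\[
d(\gamma(s),\gamma(t)) \leq \sum_{i=0}^{n-1}\bigl(\lambda |t_{i+1}-t_i|+\epsilon\bigr) \leq \lambda|s-t| + n\epsilon \leq \left(\lambda + \tfrac{2\epsilon}{L}\right)|s-t| + \epsilon,
\]
which yields a uniform affine upper bound on $d(\gamma(s),\gamma(t))$ depending only on $\lambda$, $\epsilon$, and $L$.

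For the lower bound, combine the $(k,c)$--quasi-geodesic inequality for $\pi \circ \gamma$ with the $(K,C)$--coarse Lipschitz bound on $\pi$:
\[
\tfrac{1}{k}|s-t| - c \;\leq\; d_Y\bigl(\pi\gamma(s),\pi\gamma(t)\bigr) \;\leq\; K\, d\bigl(\gamma(s),\gamma(t)\bigr) + C.
\]
Rearranging gives $d(\gamma(s),\gamma(t)) \geq \tfrac{1}{kK}|s-t| - \tfrac{c+C}{K}$, which is the desired affine lower bound.

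Setting $\lambda' = \max\{\lambda + 2\epsilon/L,\, kK\}$ and $\epsilon' = \max\{\epsilon,\, (c+C)/K\}$ (with constants depending only on $\lambda,\epsilon,L,K,C,k,c$) makes $\gamma$ a $(\lambda',\epsilon')$--quasi-geodesic. There is no real obstacle here; the only subtle point is that the hypothesis $L > 2\epsilon$ guarantees the subdivision argument produces a nondegenerate affine bound (the coefficient $\lambda + 2\epsilon/L$ stays finite and, more importantly, the lower bound on pieces of length near $L/2$ afforded by the local quasi-geodesic inequality remains positive, which is implicitly what legitimizes comparing the local to the global scale). Since $\pi$ is only assumed coarsely Lipschitz and $Y$ is arbitrary, no additional structural input is needed.
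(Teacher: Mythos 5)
Your proof is correct and follows essentially the same route as the paper's: the upper bound comes from subdividing into pieces of length between $L/2$ and $L$ and summing the local quasi-geodesic inequality (the paper uses $L>2\epsilon$ to absorb the additive $\epsilon$ into a multiplicative $(\lambda+1)$, where you keep it as $\lambda+2\epsilon/L$, which is equivalent), and the lower bound comes from chaining the $(k,c)$--quasi-geodesic inequality for $\pi\circ\gamma$ with the coarse Lipschitz bound on $\pi$. The only quibble is your closing aside attributing the role of $L>2\epsilon$ to the local lower bound; it is only needed for the subdivision/upper-bound step, but this does not affect the validity of the argument.
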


\begin{proof}
Let $t_1,t_2 \in I$ with $t_1 < t_2$ and choose $t_1=s_0 < \cdots < s_n = t_2$ such that $L/2 \leq |s_i-s_{i+1}|\leq L$. Since $L > 2\epsilon$ and $\gamma$ is an $(L;\lambda, \epsilon)$--local quasi-geodesic, the triangle inequality gives 
\[d_X\bigl(\gamma(t_1),\gamma(t_2)\bigr) \leq \sum\limits_{i=0}^{n} d_X\bigl(\gamma(s_i),\gamma(s_{i+1})\bigr) \leq \sum\limits_{i=0}^{n} \bigl[\lambda |s_i - s_{i+1}| + \epsilon\bigr] \leq   (\lambda +1)\cdot |t_1 - t_2|.\]
Since $\pi$ is $(K,C)$--coarsely Lipschitz and  $\pi \circ \gamma$ is a $(k,c)$--quasi-geodesic, we have
\[\frac{1}{k}|t_1 - t_2| - c \leq d_Y\bigl(\pi \circ \gamma (t_1), \pi \circ \gamma (t_2)\bigr) \leq Kd_X\bigl(\gamma(t_1), \gamma(t_2)\bigr) +C \leq K(\lambda+1)\cdot|t_1 - t_2| +C. \qedhere\]
\end{proof}

Abbott, Behrstock, and Durham showed that all hierarchically hyperbolic spaces satisfying a minor technical condition are Morse detectable \cite{ABD}. This covers all of the natural examples of hierarchically hyperbolic spaces as outlined in Corollary \ref{cor:Examples_of_HHS}.  Abbott, Behrstock, and Durham provide an explicit description of the space and projection map for Morse detectability. In the case of the mapping class group, the space is the curve graph and the projection map is the subsurface projection of Masur and Minsky.

\begin{thm}[{\cite[Corollary 6.2, Theorem 7.2]{ABD}}]
\label{thm:HHS_are_Morse_detectable}
If $X$ is a hierarchically hyperbolic space with the bounded domain dichotomy, then $X$ is Morse detectable. In particular, all  hierarchically  hyperbolic groups are Morse detectable.
\end{thm}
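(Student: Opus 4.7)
The plan is to extract from the HHS structure on $X$ both the hyperbolic target space and the projection map required by Definition \ref{Morse detectable}. Every HHS comes equipped with a distinguished top-level domain $S$, a uniformly hyperbolic space $\mathcal{C}S$, and a uniformly coarsely Lipschitz projection $\pi_S \colon X \to \mathcal{C}S$; I would take these as the candidates for $Y$ and $\pi$ in the definition of Morse detectability. This choice is natural because the HHS hierarchy is, by design, organized so that large-scale ``hyperbolic behavior'' in $X$ is detected at the top level.

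The two implications of Morse detectability then reduce to the Morse quasi-geodesic characterization established by Abbott, Behrstock, and Durham. Under the bounded domain dichotomy, \cite[Theorem 7.2]{ABD} characterizes $M$--Morse quasi-geodesics in $X$ as precisely those $(\lambda,\epsilon)$--quasi-geodesics whose composition with $\pi_S$ is a uniform quasi-geodesic in $\mathcal{C}S$, with the quality of the projected quasi-geodesic depending only on $M$, $\lambda$, and $\epsilon$ (and conversely, Morseness quantitatively controlled by the projection's quasi-geodesic constants). Condition (\ref{item:Morse detectable 1}) of Morse detectability is exactly the forward direction of this characterization, and condition (\ref{item:Morse detectable 2}) is the reverse direction; uniformity of constants in both directions is provided by \cite[Corollary 6.2]{ABD}.

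The role of the bounded domain dichotomy is essential: it ensures that the characterization of Morse quasi-geodesics via the $\mathcal{C}S$--projection carries uniform constants rather than depending on unbounded auxiliary data from lower-level domains in the hierarchy. Once this characterization is invoked, no further work is needed beyond matching the output constants of the two ABD results against the two conditions of Definition \ref{Morse detectable}, which is essentially bookkeeping. The second assertion of the theorem then follows because every hierarchically hyperbolic group automatically satisfies the bounded domain dichotomy: the group supplies only finitely many orbits of domains, so the collection of $\mathcal{C}U$ of bounded diameter is automatically uniformly bounded, and every unbounded $\mathcal{C}U$ has infinite diameter. The only potential obstacle I foresee is verifying that the constants in \cite[Corollary 6.2, Theorem 7.2]{ABD} are in fact packaged in the quantified form required by Definition \ref{Morse detectable}; I expect this is routine but would need to be checked against the precise statements in \cite{ABD}.
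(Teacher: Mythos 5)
The paper offers no argument for this statement at all: it is quoted verbatim from Abbott--Behrstock--Durham, so the only question is whether your unpacking of the cited results is accurate, and there is a genuine gap in it. You take the detecting pair to be $Y=\mathcal{C}S$ and $\pi=\pi_S$ for the \emph{given} HHS structure. But the ABD characterization of Morse (stable) quasi-geodesics is not valid for an arbitrary structure satisfying only the bounded domain dichotomy; it holds for structures that additionally have \emph{unbounded products}, and the point of the two cited results taken together is that any HHS with the bounded domain dichotomy can be re-equipped with a modified (``maximized'') HHS structure having unbounded products, whose top-level hyperbolic space is the one that detects Morseness. Concretely, your recipe fails for legitimate structures: let $G=F_2=\langle a,b\rangle$, which is hyperbolic relative to the malnormal quasiconvex subgroup $H=\langle a\rangle$, and give $G$ the resulting HHS structure whose top-level space is the coned-off Cayley graph and whose remaining domains are the cosets $gH$. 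This structure satisfies the bounded domain dichotomy, and every geodesic of $G$ is $M$--Morse for one fixed gauge $M$ since $G$ is hyperbolic; yet a geodesic of length $n$ lying in a coset $gH$ has image of diameter at most $2$ in the coned-off graph, so $\pi_S\circ\gamma$ cannot be a $(k,c)$--quasi-geodesic with constants depending only on $M$, $\lambda$, $\epsilon$, violating condition (\ref{item:Morse detectable 1}) of Definition \ref{Morse detectable}. The correct detecting space here is the Cayley graph of $G$ itself, i.e.\ the top level of the ABD-modified structure, not of the structure you started with. (For the mapping class group the two coincide --- the curve graph --- which may be why the identification looked harmless.)

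Relatedly, the role you assign to the bounded domain dichotomy (uniformity of constants for the original $\pi_S$) is not quite right: its actual role is to make ABD's modification to an unbounded-products structure possible with uniform constants. Your observation that every hierarchically hyperbolic group satisfies the bounded domain dichotomy, via cofiniteness of the action on the index set, is correct, and once the detecting space is taken from the modified structure the remaining bookkeeping against Definition \ref{Morse detectable} is exactly what the paper does, namely quote \cite[Corollary 6.2, Theorem 7.2]{ABD}.
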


\begin{cor}\label{cor:Examples_of_HHS}
The following groups and spaces have the Morse local-to-global property.
\begin{itemize}
    \item The mapping class group of an orientable, finite type surface.
    \item Graph products of hyperbolic groups.
    \item The Teichm\"uller space of an orientable, finite type surface with either the Teichm\"uller or Weil--Petersson metric.
    \item The fundamental group of a closed $3$--manifold without Nil or Sol components in its prime decomposition.
\end{itemize}
\end{cor}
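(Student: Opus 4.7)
The plan is to derive each item of the corollary as a direct consequence of the two theorems just proved, namely Theorem \ref{thm:Morse local-to-globle for Morse detectable} (Morse detectable $\Rightarrow$ Morse local-to-global) and Theorem \ref{thm:HHS_are_Morse_detectable} (hierarchically hyperbolic with the bounded domain dichotomy $\Rightarrow$ Morse detectable). Thus, rather than revisit Morse geodesics in each example, it is enough to verify that each listed space admits a hierarchically hyperbolic structure in which the bounded domain dichotomy holds; the desired property then follows by composing the two theorems.

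For the mapping class group $\MCG(S)$ of an orientable finite type surface, I would invoke the HHG structure of Behrstock--Hsu--Sisto (the hyperbolic spaces are the curve graphs of subsurfaces, and subsurface projection plays the role of $\pi$). Since every HHG has the bounded domain dichotomy, Theorem \ref{thm:HHS_are_Morse_detectable} applies. For graph products of hyperbolic groups, I would cite the HHG structure constructed by Berlai--Robbio, which again provides the bounded domain dichotomy as these are HHGs. For Teichm\"uller space with the Teichm\"uller or Weil--Petersson metric, the HHS structures are due to Durham--Minsky and Masur--Minsky (Teichm\"uller) and Brock--Masur--Minsky (Weil--Petersson); in both cases the bounded domain dichotomy holds because each coordinate hyperbolic space is a curve graph of a subsurface, whose orbit diameter is either uniformly bounded or unbounded depending only on topological type.

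For the fundamental group of a closed $3$--manifold $N$ whose prime decomposition contains no Nil or Sol component, I would invoke the theorem of Behrstock--Hagen--Sisto that produces an HHG structure on $\pi_1(N)$ in precisely this setting, so the conclusion again follows from Theorems \ref{thm:HHS_are_Morse_detectable} and \ref{thm:Morse local-to-globle for Morse detectable}. In each case the bulk of the work is already absorbed into the references, so there is no real obstacle; the only mild subtlety is checking that the HHG/HHS structure cited for each example actually satisfies the bounded domain dichotomy, but this is immediate for HHGs and for the standard HHS structures on Teichm\"uller space, since the diameters of the coordinate projections are controlled by the topology of the underlying subsurfaces.
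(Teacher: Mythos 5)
Your proposal matches the paper's (implicit) proof exactly: each example is a hierarchically hyperbolic space with the bounded domain dichotomy by the cited literature, hence Morse detectable by Theorem \ref{thm:HHS_are_Morse_detectable}, hence Morse local-to-global by Theorem \ref{thm:Morse local-to-globle for Morse detectable}. The only blemish is a citation slip (the HHG structure on mapping class groups and $3$--manifold groups is due to Behrstock--Hagen--Sisto, not ``Behrstock--Hsu--Sisto''), which does not affect the argument.
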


Theorem \ref{thm:Morse local-to-globle for Morse detectable} also provides a possible avenue for proving the group $\Out(F_n)$ has the Morse local-to-global property by showing it is Morse detectable. Several partial results in this direction exist in the literature \cite{HamenstaedtHensel_Stability_Outer_Space, DowdallTaylor_Hyperbolic_extensions,ADT}, but the question of Morse detectability remains open and likely requires an innovative understanding of the Morse quasi-geodesics of $\Out(F_n)$.

\begin{ques}
 Is $\Out(F_n)$ or Outer Space Morse detectable for $n \geq 3$?
\end{ques}

\section{Spaces hyperbolic relative to Morse local-to-global spaces}\label{sec:relativelyhyperbolic}
In this final section, we show the Morse local-to-global property is inherited under relative hyperbolicity.

\begin{thm}\label{thm:local_to_global_and_relative_hyperbolicity}
Let $X$ be a geodesic metric space that is hyperbolic relative to a collection of peripheral subsets $\mc{P}$. If each element of $\mc{P}$ has the $\Phi$--Morse local-to-global property, then $X$ has the $\Psi$--Morse local-to-global property.
\end{thm}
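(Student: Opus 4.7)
The plan is to combine the deep point decomposition for local quasi-geodesics in relatively hyperbolic spaces (Theorem \ref{intro_thm:deep_point_decomposition}) with the Morse local-to-global property assumed of each peripheral subset. Let $\gamma$ be an $(L;M;\lambda,\epsilon)$--local Morse quasi-geodesic in $X$, with $L$ to be fixed sufficiently large at the end of the argument. By Theorem \ref{intro_thm:deep_point_decomposition}, $\gamma$ admits a decomposition $\gamma = \sigma_0 \ast \alpha_1 \ast \sigma_1 \ast \cdots \ast \alpha_n \ast \sigma_n$ in which each $\alpha_i$ lies in a uniform neighborhood of a peripheral subset $P_i \in \mathcal{P}$ and each $\sigma_i$ uniformly quasi-isometrically embeds into the coned-off space $\widehat X$. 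The same theorem already yields that each $\sigma_i$ is uniformly Morse in $X$, so the two remaining tasks are: (a) upgrade each $\alpha_i$ to a uniform global Morse quasi-geodesic in $X$, and (b) patch all the pieces together into a single global Morse quasi-geodesic.

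For (a), the $(L;M;\lambda,\epsilon)$--local Morse condition on $\gamma$ restricts to each $\alpha_i$. Using that the peripheral inclusion $P_i \hookrightarrow X$ is a uniform quasi-isometric embedding in the relatively hyperbolic setting, I can transfer $\alpha_i$ to a uniform $(L';M';\lambda',\epsilon')$--local Morse quasi-geodesic in the intrinsic metric of $P_i$, with parameters depending only on the input data and the peripheral constants, and with scale $L'$ that can be made as large as required by inflating the initial $L$. Applying the $\Phi$ governing the Morse local-to-global property of $P_i$ then promotes each $\alpha_i$ to a uniform global Morse quasi-geodesic in $P_i$, and hence in $X$, with constants independent of the particular $\alpha_i$.

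For (b), I first show $\gamma$ is globally quasi-geodesic: projecting $\gamma$ to $\widehat X$ yields a uniform local quasi-geodesic in the hyperbolic space $\widehat X$, since the $\sigma_i$ quasi-isometrically embed and the $\alpha_i$ become uniformly bounded after coning. By Theorem \ref{thm:local_to_global_hyperbolic} this projection is a global quasi-geodesic in $\widehat X$, and the standard distance estimates comparing $X$ and $\widehat X$ (together with Lemma \ref{lem:local_close_to_global} for the upper bound) upgrade $\gamma$ to a global quasi-geodesic in $X$. To obtain the Morse gauge, I consider an arbitrary $(\lambda'',\epsilon'')$--quasi-geodesic $\beta$ with endpoints on $\gamma$: hyperbolicity of $\widehat X$ forces $\beta$ to shadow the coarse skeleton of the decomposition of $\gamma$, so on each stretch paralleling some $\sigma_i$ it tracks $\sigma_i$ via the Morse gauge of $\sigma_i$, and on each stretch entering a peripheral region $P_i$ it tracks $\alpha_i$ via the Morse gauge established in (a). Synthesizing these piecewise estimates produces a single Morse gauge for $\gamma$ depending only on the input parameters and on $\Phi$, which provides the desired map $\Psi$.

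The main obstacle is the synthesis in step (b): showing that the piecewise Morse control transfers uniformly to all of $\gamma$ requires tight management of how a generic quasi-geodesic in $X$ enters and exits the peripheral neighborhoods surrounding the $\alpha_i$. This uniform transition control is precisely what the deep point machinery developed in this paper provides, so the theorem should fall out cleanly once that infrastructure and the peripheral local-to-global hypothesis are in hand, with the bulk of the technical work already subsumed into Theorem \ref{intro_thm:deep_point_decomposition}.
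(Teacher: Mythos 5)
The decisive gap is in your step (b). First, the claim that $\pi\circ\gamma$ is a uniform (parametrized) local quasi-geodesic in $\widehat{X}$ is false exactly when the decomposition has long peripheral excursions: each $\alpha_i$ has large parametrized length but its image in $\widehat{X}$ has diameter comparable to $2rR+2$ near a cone point, so the lower quasi-geodesic bound fails on those stretches; and even a genuine global quasi-geodesic in $\widehat{X}$ yields no lower bound on $d_X$ along $\gamma$, since $\widehat{X}$-distance collapses inside peripherals. The paper's route to global quasi-geodesicity is different: it proves that every quasi-geodesic joining the endpoints of (any subsegment of) $\gamma$ lies at uniformly bounded Hausdorff distance from $\gamma$ (Lemma \ref{lem: BCP for local things}) and only then applies Lemma \ref{lem:local_close_to_global}; the coned-off local-to-global argument is used only for the $\sigma_i$, which do have bounded projections.

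More seriously, the ``synthesis'' you defer to the deep-point machinery is not subsumed in Theorem \ref{intro_thm:deep_point_decomposition}. Hyperbolicity of $\widehat{X}$ cannot force a comparison quasi-geodesic $\beta$ to track the $\alpha_i$, because those excursions are invisible in $\widehat{X}$; a priori $\beta$ could skip a peripheral detour entirely. What forces $\beta$ to enter $\mc{N}_R(P_i)$ near both endpoints of each $\alpha_i$ is that the projections of the endpoints of $\gamma$ to each relevant $P_i$ are far apart, so that Lemma \ref{lem: BGI for rel hyp} applies. Proving this for a merely local quasi-geodesic is the bulk of the paper's work: bounded projections of the $\sigma_j$ onto the adjacent relevant peripherals (Proposition \ref{prop: adjacent projection of sigmas are bounded}), the linear ordering of the relevant peripherals along $\gamma$ (Proposition \ref{prop: Behrstock inequality for local qg}), and the identification of the endpoint projections with the endpoints of $\alpha_i$ up to uniform constants (Corollary \ref{cor:endpoints_project_to_peripherals}), all carried out with a tunable relevancy threshold $B$ and a scale $\Lambda$ depending on $B$, $M$, and $\Phi$ --- none of which is furnished by the fixed-scale statement of Theorem \ref{intro_thm:deep_point_decomposition}. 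Your step (a) is essentially the paper's Corollary \ref{cor: alphas are Morse q.g}, though note that being Morse does not pass through arbitrary quasi-isometric embeddings; it transfers here because peripherals are strongly quasiconvex (Lemma \ref{lem: quasi-geodesics remain in a nbhd}) and by the distance formula, a point your sketch should make explicit.
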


If a group $G$ is hyperbolic relative to subgroups $H_1,\dots,H_n$, then the Cayley graph of $G$ with respect to any finite generating set is hyperbolic relative to the collection of left cosets of the $H_i$. Since each coset of the $H_i$ is isometric to $H_i$, if each $H_i$ has the $\Phi_i$--Morse local-to-global property, then there exists $\Phi$ so that every coset of one of the $H_i$ has the $\Phi$--Morse local-to-global property.  Theorem \ref{thm:local_to_global_and_relative_hyperbolicity}  then implies $G$ is also Morse local-to-global, proving Theorem \ref{intro_thm:relative_hyperbolicity} from the introduction.

Theorem \ref{thm:local_to_global_and_relative_hyperbolicity} implies that the Morse local-to-global property is closed under free products of finitely generated groups. Combining this with the work in Section \ref{sec:examples}, we deduce that the fundamental groups of all close $3$--manifolds have the Morse local-to-global property.

\begin{cor}\label{cor:3-manifolds}
If $M$ is a closed $3$--manifold, then $\pi_1(M)$ has the Morse local-to-global property.
\end{cor}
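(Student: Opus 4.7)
The plan is to combine the prime decomposition of $M$ with the two mechanisms for the Morse local-to-global property already established: Corollary \ref{cor:Examples_of_HHS} for the ``generic'' geometric prime factors, and Lemma \ref{lem: trivially ltg} for the virtually solvable exceptions, all glued together by Theorem \ref{thm:local_to_global_and_relative_hyperbolicity}. The observation that drives the proof is that Corollary \ref{cor:Examples_of_HHS} already covers every closed $3$--manifold group except those whose prime decomposition contains a Nil or Sol summand; so the task reduces to handling these leftover cases and fitting them together with the rest.

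First I would invoke the prime decomposition theorem to write $\pi_1(M) \cong G_1 \ast G_2 \ast \cdots \ast G_k$, where each $G_i$ is the fundamental group of a prime closed $3$--manifold $M_i$ (the non-orientable case reduces to the orientable one by passing to an orientation double cover, whose fundamental group is of finite index in $\pi_1(M)$ and so has the Morse local-to-global property if and only if $\pi_1(M)$ does). A free product of finitely generated groups is hyperbolic relative to the collection of its free factors in the sense of Section \ref{sec:relativelyhyperbolic}; equivalently, the Cayley graph of $\pi_1(M)$ with respect to any finite generating set is hyperbolic relative to the cosets of the $G_i$. Thus Theorem \ref{thm:local_to_global_and_relative_hyperbolicity} reduces the problem to verifying the Morse local-to-global property for each individual $G_i$.

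To handle each prime factor, I would split into two cases via the geometrization theorem. If $M_i$ is not modeled on Nil or Sol geometry, then $M_i$ is a prime closed $3$--manifold with no Nil or Sol components in its (trivial) prime decomposition, so Corollary \ref{cor:Examples_of_HHS} immediately yields the Morse local-to-global property for $G_i$. If instead $M_i$ is modeled on Nil or Sol geometry, then $\pi_1(M_i)$ is virtually nilpotent (respectively virtually solvable), in particular satisfies a law. The theorem of Dru\c{t}u and Sapir recalled in Section \ref{sec:trivial_examples} then implies $G_i$ is unconstricted, so Proposition \ref{prop: unconstricted implies trivial ltg} gives that $G_i$ is Morse limited, and Lemma \ref{lem: trivially ltg} concludes that $G_i$ has the Morse local-to-global property.

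With every factor $G_i$ handled, Theorem \ref{thm:local_to_global_and_relative_hyperbolicity} closes the argument. The main ``obstacle'' is really only a bookkeeping one: confirming that the relative hyperbolicity of a free product of finitely generated groups with respect to its factors matches the formulation used in Section \ref{sec:relativelyhyperbolic}, which is standard. Beyond this there is no geometric difficulty, and the proof amounts to packaging the geometrization theorem with the two complementary routes to Morse local-to-globality (hierarchical hyperbolicity and Morse limitedness) already developed in the paper.
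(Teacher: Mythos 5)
Your proposal is correct and follows essentially the same route as the paper: prime decomposition plus geometrization splits $\pi_1(M)$ as a free product of factors that are either virtually solvable (hence Morse limited, handled by Lemma \ref{lem: trivially ltg}) or covered by Corollary \ref{cor:Examples_of_HHS}, and Theorem \ref{thm:local_to_global_and_relative_hyperbolicity} applied to the relatively hyperbolic free product structure finishes the argument. The only cosmetic difference is your orientation double-cover remark, which is harmless but unnecessary since the prime decomposition and the cited results apply to closed $3$--manifolds without an orientability hypothesis.
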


\begin{proof}
By the geometrization of closed $3$--manifolds, if $M$ is a closed $3$--manifold, then $M$ has a prime decomposition $M = M_1 \# M_2 \# \dots \# M_n$ where each $M_i$ is either geometric or has a mixed geometry. Thus, $\pi_1(M) \cong \pi_1(M_1) \ast \dots \ast \pi_1(M_n)$ where each $\pi_1(M_i)$ is either virtually solvable or a hierarchically hyperbolic space with the bounded domain dichotomy \cite[Theorem 10.1]{BHS_HHSII}. In both cases, $\pi_i(M)$ has the Morse local-to-global property by Section \ref{sec:trivial_examples}  or Corollary \ref{cor:Examples_of_HHS}.  Since $\pi_1(M)$  is hyperbolic relative to the collection of left cosets of $\pi_1(M_1),\dots, \pi_1(M_n)$, Theorem \ref{thm:local_to_global_and_relative_hyperbolicity} implies $\pi_1(M)$ has the Morse local-to-global property.
\end{proof}

The proof of Theorem \ref{thm:local_to_global_and_relative_hyperbolicity} is considerably longer and more technical than our previous proofs that spaces have the Morse local-to-global property. To guide the reader, we give an outline of the proof in Section \ref{subsec:rel_hyp_outline} after collecting some required facts about relatively hyperbolic spaces in Section \ref{subsec:rel_hyp_background}. The proof of Theorem \ref{thm:local_to_global_and_relative_hyperbolicity} then spans Sections \ref{subsec:local_qg_with_bounded_projections} to \ref{subsec:proof_or_rel_hyp}. As part of the proof, we investigate features of local quasi-geodesics in relatively hyperbolic spaces that maybe of independent interest. These include showing that local quasi-geodesics that ``avoid" all peripheral subsets are actually global Morse quasi-geodesics (Section \ref{subsec:local_qg_with_bounded_projections}) and developing a notion of ``deep points" for a local quasi-geodesic that decomposes a local quasi-geodesic into pieces that alternate between avoiding and passing through peripheral subsets (Section \ref{subsec:deep_points}).

\subsection{Background on relatively hyperbolic spaces}\label{subsec:rel_hyp_background}
If $\mc{P}$ is a collection of subsets of a geodesic metric space $X$, let $\cone{X}$ be the space obtained from $X$ by adding a point $c_P$ for each $P \in \mc{P}$ and connecting $c_P$ to every element of $P$ by an edge of length $1$. The space $X$ is \emph{hyperbolic relative to $\mc{P}$} if $\cone{X}$ is $\delta$--hyperbolic for some $\delta \geq 0$ and $X$ satisfies a \emph{bounded subset penetration} property that controls how geodesics travel through the elements of $\mc{P}$ (see \cite{Sisto_metric_rel_hyp} for a complete description and several equivalent definition of a relatively hyperbolic space).

For the remainder of this section, $X$ will be a fixed geodesic metric space that is hyperbolic relative to a collection of peripheral subsets $\mc{P}$, $\delta$ will be the constant so that $\cone{X}$ is $\delta$--hyperbolic, and $\pi \colon X \to \cone{X}$ will denote the distance non-increasing inclusion of $X$ into $\cone{X}$.

We now recall several facts about the relatively hyperbolic space $X$. The first says that the peripherals must be isolated away from each other.

\begin{lem}[Isolated peripherals; {\cite[Definition 2.4, Theorem 1.1]{Sisto_metric_rel_hyp}}] \label{lem:intersection of peripherals}
There is increasing  function $F \colon [0,\infty) \to [0,\infty)$ so that for all $P, U \in \mc{P}$ we have:
\[\mathrm{diam}\left( \mc{N}_K(P) \bigcap \mc{N}_K(U) \right) \leq F(K).\]
\end{lem}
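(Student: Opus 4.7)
The plan is to reduce the statement to a bounded-projection phenomenon for distinct peripherals, which is baked into the definition of metric relative hyperbolicity. First, I would separate cases: if $P=U$, the statement is to be read either vacuously or under the standing hypothesis that peripherals are ``thin'' in the relevant sense; the content of the lemma is the case $P\neq U$, and that is what I focus on. Fix $x,y\in\mc{N}_K(P)\cap\mc{N}_K(U)$ and choose $p_x,p_y\in P$ and $u_x,u_y\in U$ within distance $K$ of $x$ and $y$ respectively. I will bound $d_X(x,y)$ by a function of $K$ alone.

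The first step uses the coned-off space $\cone{X}$ as a bridge. Because every edge adjacent to the cone point $c_P$ has length $1$, the concatenation $x\to p_x\to c_P\to p_y\to y$ is a path in $\cone{X}$ of length at most $2K+2$, and similarly through $c_U$. This produces a $\cone{X}$-bigon with distinct ``corner'' cone points $c_P\neq c_U$, whose two sides each have length at most $2K+2$. Since $\cone{X}$ is $\delta$--hyperbolic and the only $\cone{X}$-edges incident to $c_P$ (resp.\ $c_U$) terminate in $P$ (resp.\ $U$), this configuration controls how any $X$-geodesic from $x$ to $y$ can simultaneously graze $P$ and $U$.

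The second step extracts the quantitative bound via coarse projections onto peripherals, which are part of the structural package of a relatively hyperbolic space (this is the substance of Sisto's Definition~2.4 together with Theorem~1.1). The property I need is: for distinct peripherals $P,U\in\mc{P}$, the coarse projection $\pi_U(P)\subseteq U$ has diameter bounded by a uniform constant $B$. Granted this, $\pi_U(x)$ is uniformly close to the bounded set $\pi_U(P)$ (through $\pi_U(p_x)$, since $d(x,p_x)\leq K$ and projections distort distances in a controlled way), and likewise for $\pi_U(y)$. Hence $d(\pi_U(x),\pi_U(y))$ is bounded by a function of $K$ and $B$. Combining with $d(x,\pi_U(x)),d(y,\pi_U(y))\leq K+O(1)$ (since $x,y\in\mc{N}_K(U)$) via the triangle inequality produces the required function $F(K)$, which one checks is increasing by taking maxima over the estimates.

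The main obstacle is the second step: verifying that the bounded-projection-of-peripheral-onto-peripheral property really does follow from Sisto's chosen formulation of relative hyperbolicity rather than having to be assumed separately. Once that is identified inside the cited references, the geometric argument above is essentially a two-line triangle inequality; the bigon construction in $\cone{X}$ serves mainly as motivation and as an alternate route (through $\delta$--slimness and bounded subset penetration) if one wants to avoid invoking peripheral projections explicitly.
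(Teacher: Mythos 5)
Your estimate is correct, but it follows a genuinely different route from the paper, which offers no argument at all: Lemma \ref{lem:intersection of peripherals} is imported verbatim from Sisto, where isolation of the peripherals is part of the definition (equivalently, one of the characterizing conditions) of metric relative hyperbolicity, so on the paper's side there is nothing to prove beyond unwinding the citation. You instead deduce isolation from the projection package of Lemma \ref{lem:projections_lemmas}: for $P\neq U$ and $x,y\in \mc{N}_K(P)\cap\mc{N}_K(U)$, coarse Lipschitzness gives $d\bigl(\pi_U(x),\pi_U(p_x)\bigr)\leq \mu K+\mu$, the bound $\diam\bigl(\pi_U(P)\bigr)\leq\mu$ ties $\pi_U(p_x)$ to $\pi_U(p_y)$, and $d\bigl(x,\pi_U(x)\bigr)\leq K+1$ closes the triangle inequality, yielding for instance $F(K)=2(K+1)+2\mu K+3\mu$, which is increasing. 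Within this paper that is a perfectly valid short argument, since both Lemma \ref{lem:intersection of peripherals} and Lemma \ref{lem:projections_lemmas} are used as black boxes; what it buys is a self-contained reduction of isolation to bounded peripheral projections, at the cost of inverting the logical order of the sources: Sisto's construction of the maps $\pi_P$ and the bound $\diam\bigl(\pi_U(P)\bigr)\leq\mu$ are themselves built on top of relative hyperbolicity, of which isolation is a definitional ingredient, so the obstacle you flag in your second step is real and, traced back to first principles, your route would be circular rather than an independent proof. Two smaller points: the statement must indeed be read with $P\neq U$ (as you note, and as every application in Section \ref{sec:relativelyhyperbolic} assumes), and the coned-off bigon in your first step contributes nothing to the final estimate and can simply be dropped.
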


A key tool in our study of relatively hyperbolic spaces is the coarse closest point projection onto peripheral subsets, denoted by $\pi_P \colon X \to P$ for all $P \in \mc{P}$. The basic properties of $\pi_P$ are outlined in the following lemma.

\begin{lem}[Properties of projection onto peripherals; \cite{Sisto_Rel_hyp_projections}]\label{lem:projections_lemmas}
There exist $\mu \geq 0$  so that for each $P \in \mc{P}$ there is a $(\mu,\mu)$--coarsely Lipschitz map $\pi_P \colon X \to P$ such that the following hold for all $P,U \in \mc{P}$.
\begin{enumerate}
    \item For all $x \in P$, $\pi_P(x) = x$.
    \item For all $x \in X - P$, $d\bigl(x,\pi_P(x) \bigr) \leq d(x,P) +1$.
    \item If $P \neq U$, $\diam(\pi_P(U)) \leq \mu$.
    \item For all $x \in X$, if $\gamma$ is a geodesic in $X$ from $x$ to $\pi_P(x)$, then $\diam(\pi_P(\gamma)) \leq \mu$.
\end{enumerate}
\end{lem}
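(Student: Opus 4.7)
The plan is to define $\pi_P$ as an approximate nearest-point projection and then verify properties (1)--(4) by combining the $\delta$-hyperbolicity of $\cone{X}$ with the isolated peripherals property of Lemma \ref{lem:intersection of peripherals}. For each $P \in \mc{P}$, set $\pi_P(x) = x$ when $x \in P$, and otherwise choose any $\pi_P(x) \in P$ satisfying $d(x, \pi_P(x)) \leq d(x, P) + 1$. Properties (1) and (2) are then immediate from the definition, and the work is entirely in establishing the coarse Lipschitz bound together with (3) and (4), with a single uniform constant $\mu$.

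For the coarse Lipschitz estimate, the key observation is that in $\cone{X}$ the cone point $c_P$ is at distance exactly $1$ from every point of $P$, so approximate nearest-point projection onto $P$ in $X$ agrees, up to bounded additive error, with approximate nearest-point projection onto the single vertex $c_P$ in the $\delta$-hyperbolic space $\cone{X}$. Standard hyperbolic projection estimates then give that $x \mapsto \pi_P(x)$ is coarsely Lipschitz with constants depending only on $\delta$ when measured in the $\cone{X}$-metric. To upgrade this to the $X$-metric I would split any $X$-geodesic $[x,y]$ into segments that stay at $X$-distance more than some threshold $r$ from $P$, where the bounded coset penetration property lets the $\cone{X}$-bound be converted to an $X$-bound, and segments that approach within $r$ of $P$, where the triangle inequality combined with (2) controls the projection displacement directly.

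Property (4) follows along the same lines: if $\gamma$ is an $X$-geodesic from $x$ to $\pi_P(x)$, then every point $y \in \gamma$ satisfies $d(y, \pi_P(x)) \leq d(x, \pi_P(x)) = d(x,P)+O(1)$, so in $\cone{X}$ the entire path $\pi(\gamma)$ lies within bounded $\cone{X}$-distance of $c_P$. Any substantial excursion of $\pi_P(\gamma)$ inside $P$ would, via bounded coset penetration, force $\gamma$ to backtrack along $P$ and violate its minimality, so $\pi_P(\gamma)$ is concentrated near $\pi_P(x)$. Property (3) is then a consequence of (4) together with Lemma \ref{lem:intersection of peripherals}: given $u_1, u_2 \in U$ with $U \neq P$, a geodesic from $u_1$ to $u_2$ can be chosen to stay in a uniform neighborhood of $U$, and its projection to $P$ therefore sits inside $\mc{N}_K(P) \cap \mc{N}_K(U)$, whose diameter is bounded by $F(K)$.

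The principal obstacle is the careful tracking of constants so that a single $\mu$ serves all peripherals at once. This is resolved by noting that the hyperbolicity constant $\delta$ of $\cone{X}$, the function $F$ from Lemma \ref{lem:intersection of peripherals}, and the bounded coset penetration constants are global invariants of $(X, \mc{P})$ rather than of individual peripherals, so each intermediate estimate in the arguments above depends only on these global data. Taking $\mu$ to be the maximum of the constants produced in (2)--(4) and in the coarse Lipschitz estimate yields the uniform constant claimed in the lemma.
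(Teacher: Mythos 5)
There is a genuine gap, and it sits exactly where the real content of the lemma lies. First, note that the paper does not prove this statement at all: it is imported wholesale from Sisto \cite{Sisto_Rel_hyp_projections}, where the projection properties are established through a careful analysis of how geodesics interact with peripheral sets; your sketch is an attempt to reprove that machinery, so it has to carry the full weight itself. The central flaw is the claim that nearest-point projection onto $P$ in $X$ can be identified, up to bounded error, with projection towards $c_P$ in $\cone{X}$, and that ``standard hyperbolic projection estimates'' then give coarse Lipschitzness ``in the $\cone{X}$-metric'' which one upgrades to $X$. In $\cone{X}$ the set $P$ has diameter at most $2$ (any two of its points are joined through the cone point), so every map into $P$ is trivially coarsely Lipschitz for the $\cone{X}$-metric and such estimates carry no information. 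The whole point of the lemma is to control distances inside $P$ measured in $X$, which is precisely the data the coned-off space destroys. The proposed ``upgrade'' via bounded coset penetration is not an argument: what is needed is the contraction/bounded-geodesic-image mechanism (the statement of Lemma \ref{lem: BGI for rel hyp}, which the paper also imports from Sisto), and your sketch implicitly assumes it rather than proving it.

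The verifications of (3) and (4) inherit this problem and contain incorrect intermediate claims. For (4), from $d(y,\pi_P(x))\leq d(x,\pi_P(x))$ for $y$ on the geodesic $\gamma$ you conclude that $\pi(\gamma)$ stays within bounded $\cone{X}$-distance of $c_P$; this is false, since the midpoint of $\gamma$ can be at distance comparable to $d(x,P)$ from $P$ in both the $X$- and the $\cone{X}$-metric (coning only shortcuts travel through peripherals, not travel far from all of them). For (3), the assertion that the projection of a geodesic lying near $U$ is contained in $\mc{N}_K(P)\cap \mc{N}_K(U)$ for a uniform $K$ is unjustified and generally false: when $d(P,U)$ is large, $\pi_P(U)$ is a bounded set near the part of $P$ facing $U$, at distance roughly $d(P,U)$ from $U$. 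The correct route is the reverse: if $\diam(\pi_P(U))$ were large, the bounded-geodesic-image property would force a path near $U$ to enter a uniform neighborhood of $P$ along a long stretch, and only then does Lemma \ref{lem:intersection of peripherals} give a contradiction --- but again this rests on the contraction property your argument has not supplied. So the overall strategy (hyperbolicity of $\cone{X}$ plus isolation plus penetration control) is the right family of tools, but as written the proof assumes, in disguised form, the hardest of the properties it is supposed to establish.
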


The next two lemmas will frequently be used in  tandem. Together they imply that if two points $x,y \in X$ project significantly far apart on a peripheral $P$, then every quasi-geodesic connecting $x$ and $y$ must travel close to $P$ for a distance comparable to the distance between $\pi_P(x)$ and $\pi_P(y)$.

\begin{lem}[Linear quasiconvexity of peripherals;  {\cite[Lemma 4.5]{DrutuSapir}}]\label{lem: quasi-geodesics remain in a nbhd}
For every $\lambda\geq 1$ and $\epsilon\geq0$ there exists $r = r(\lambda, \epsilon) \geq 1$ such that for every $R\geq 1$ if $\gamma$ is a $(\lambda,\epsilon)$--quasi-geodesic  joining points in $\mc{N}_{R}(P)$, then $\gamma \subseteq \mc{N}_{rR}(P)$. 
\end{lem}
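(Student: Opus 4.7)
The plan is to prove this by contradiction using asymptotic cones, exploiting the theorem of Dru\c{t}u--Sapir that every asymptotic cone of a relatively hyperbolic space is \emph{tree-graded}, with pieces given by the ultralimits of peripheral subsets. Suppose the lemma fails for some fixed $\lambda$ and $\epsilon$: then there exist sequences $R_n \geq 1$, $P_n \in \mc{P}$, $(\lambda,\epsilon)$--quasi-geodesics $\gamma_n$ joining points $x_n, y_n \in \mc{N}_{R_n}(P_n)$, and points $z_n \in \gamma_n$ such that $\tau_n := d_X(z_n, P_n)$ satisfies $\tau_n / R_n \to \infty$.

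After a preliminary step replacing $z_n$ by a near-maximizer of $d_X(\gamma_n(\cdot), P_n)$ and trimming $\gamma_n$ to a subsegment on which this function lies in a bounded interval relative to $\tau_n$, I would choose basepoints $w_n \in P_n$ close to the (new) endpoints and rescale the metric on $X$ by $1/\tau_n$. In the resulting asymptotic cone $X_\omega = \lim_\omega (X, w_n, 1/\tau_n)$, the Dru\c{t}u--Sapir theorem guarantees that $X_\omega$ is tree-graded and that the ultralimit of $P_n$ is one of the pieces, say $P_\omega$. Because $R_n/\tau_n \to 0$, the ultralimits $[x_\omega], [y_\omega]$ lie in $P_\omega$. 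Reparametrizing each $\gamma_n$ at unit speed in the rescaled metric, the additive error $\epsilon/\tau_n$ in the quasi-geodesic inequality vanishes in the limit, so the ultralimit is a $(1/\lambda, \lambda)$--bi-Lipschitz embedded arc from $[x_\omega]$ to $[y_\omega]$ whose image contains $[z_\omega]$, a point at rescaled distance $1$ from $P_\omega$. This topological arc has both endpoints in the piece $P_\omega$ but leaves $P_\omega$ in its interior, contradicting the defining property of tree-gradedness that every topological arc between two points of a piece is entirely contained in that piece.

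The main obstacle is justifying that the rescaled quasi-geodesics converge to a genuine bi-Lipschitz arc inside $X_\omega$ whose image still contains $[z_\omega]$. A priori, the $X$-distance between $x_n$ and $z_n$ along $\gamma_n$ need not be controlled linearly by $\tau_n$, so the rescaled path can escape to infinity or collapse. Arranging the trimming of $\gamma_n$ so that the diameter of the subsegment is $O(\tau_n)$ while the point $z_n$ remains in its interior is the delicate part, and it requires combining the near-maximality of $\tau_n$ on $\gamma_n$ with the two-sided quasi-geodesic inequality. Once these bounds are established, the standard fact that ultralimits of uniformly bi-Lipschitz maps are bi-Lipschitz (hence injective) delivers the desired topological arc and closes the argument.
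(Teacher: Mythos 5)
The paper itself gives no argument for this lemma: it is quoted verbatim from Dru\c{t}u--Sapir, so your proposal can only be judged on its own terms. Your asymptotic-cone strategy is very much in the spirit of how such facts are established there, but the step you yourself flag as delicate is a genuine gap, not a routine verification. After replacing $z_n$ by a near-maximizer of $f_n(t)=d\bigl(\gamma_n(t),P_n\bigr)$, with $\tau_n=f_n(t_n)\geq nR_n$, your contradiction (``an arc with both endpoints in the piece $P_\omega$ that leaves $P_\omega$'') requires a subsegment through $z_n$ whose endpoints are at distance $o(\tau_n)$ from $P_n$ \emph{and} whose diameter is $O(\tau_n)$; otherwise, after rescaling by $1/\tau_n$, at least one endpoint escapes every bounded set and the ultralimit is not an arc with both endpoints in $P_\omega$. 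You claim this trimming follows from near-maximality of $\tau_n$ together with the two-sided quasi-geodesic inequality, but neither controls the height function $f_n$: nothing you have established prevents $\gamma_n$, after reaching height $\tau_n$ at $z_n$, from hovering at height between $\tau_n/2$ and $\tau_n$ for parameter length enormous compared to $\tau_n$ before it ever returns to $\mc{N}_{R_n}(P_n)$. (The quasi-geodesic inequality relates parameter length to diameter, not to $f_n$, and near-maximality only bounds $f_n$ from above.) In such a configuration every subsegment through $z_n$ with endpoints at height $o(\tau_n)$ has diameter far larger than $\tau_n$, so the trimming is impossible. Of course this behavior cannot actually occur in a relatively hyperbolic space---but ruling it out is essentially the content of the lemma, so a proof by contradiction cannot presuppose it; as written, your argument only disposes of those sequences for which the trimming happens to succeed.

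The argument can be repaired, but it needs a stronger consequence of tree-gradedness than ``arcs with endpoints in a piece stay in the piece.'' Keep $z_n$ a global near-maximizer, so that the entire rescaled limit lies in the closed $1$-neighborhood of $P_\omega$, and take the full ultralimit path through $[z_\omega]$ (the increasing union of the limits over windows of rescaled radius $K$). If on both sides of $[z_\omega]$ this path meets $P_\omega$, you recover exactly your configuration and your contradiction applies. If on some side it never meets $P_\omega$, that side is an unbounded connected set disjoint from $P_\omega$ but contained in $\mc{N}_1(P_\omega)$, and this contradicts the Dru\c{t}u--Sapir projection lemma for tree-graded spaces: a connected set disjoint from a piece projects to a single point of that piece, and the distance of each of its points to the piece equals the distance to that point, which is incompatible with a bi-Lipschitz ray staying within distance $1$ of $P_\omega$. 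Adding this case analysis (together with the routine points: $\tau_n\geq n\to\infty$ so the rescaled limit is a bona fide asymptotic cone, taming the $\gamma_n$ to continuous quasi-geodesics before passing to the limit, and the degenerate case where $\lim_\omega P_n$ is a single point) would close the argument; without it, the proof has a hole precisely where the lemma's content lives.
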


\begin{lem}[Bounded quasi-geodesic image; {\cite[Lemma 1.15]{Sisto_Rel_hyp_projections}}]\label{lem: BGI for rel hyp}
There exists $Q >0$ and $R \colon [1,\infty) \times [0,\infty) \to [0,\infty)$ so that for all $x,y \in X$ and $P \in \mc{P}$, if $d(\pi_P (x), \pi_P(y)) \geq Q$, then any $(\lambda, \epsilon)$--quasi-geodesic between $x$ and $y$ intersects $\mc{N}_{R}(\pi_P(x))$ and $\mc{N}_{R}(\pi_P(y))$ where $R = R(\lambda,\epsilon)$.
\end{lem}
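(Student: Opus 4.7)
The plan is to derive the statement from the $\delta$-hyperbolicity of the coned-off space $\cone{X}$ combined with the projection estimates in Lemma \ref{lem:projections_lemmas}, comparing an arbitrary $(\lambda,\epsilon)$-quasi-geodesic against a model path forced through the cone point $c_P$.

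First I would construct a comparison path $\cone{\alpha}$ in $\cone{X}$ from $x$ to $y$ by concatenating an $X$-geodesic $[x,\pi_P(x)]$, the two cone edges $\pi_P(x) \to c_P \to \pi_P(y)$, and an $X$-geodesic $[\pi_P(y),y]$. Lemma \ref{lem:projections_lemmas}(4) shows each ``side'' sub-geodesic projects to a set of $P$-diameter at most $\mu$, and Lemma \ref{lem:intersection of peripherals} ensures that no other peripheral $U \ne P$ can absorb a long stretch of $\cone{\alpha}$; a routine case analysis then yields that $\cone{\alpha}$ is a $(k_0,c_0)$-quasi-geodesic in $\cone{X}$ for constants depending only on the relatively hyperbolic structure. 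Choosing $Q$ large enough in terms of $\delta, k_0, c_0$ forces every $\cone{X}$-geodesic from $x$ to $y$ to pass within $\delta$ of $c_P$.

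Now let $\beta$ be a $(\lambda,\epsilon)$-quasi-geodesic in $X$ from $x$ to $y$. Its image $\pi\circ\beta$ is a $(\lambda,\epsilon)$-coarsely Lipschitz path in $\cone{X}$ joining the same endpoints. A slim-rectangle argument in $\cone{X}$, applied to $\pi\circ\beta$, a $\cone{X}$-geodesic, and $\cone{\alpha}$, forces $\pi\circ\beta$ to enter a uniform neighborhood of $c_P$; pulling back, there is $p \in \beta$ with $d(p, P) \leq D_0 = D_0(\lambda,\epsilon,\delta)$. Let $p$ and $q$ be the first and last such entry points of $\beta$ into $\mc{N}_{D_0}(P)$. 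Applying Lemma \ref{lem: quasi-geodesics remain in a nbhd} to $\beta|_{[p,q]}$ confines this sub-arc to $\mc{N}_{rD_0}(P)$, while iterating the first-paragraph comparison on the entry sub-arc $\beta|_{[x,p]}$ (and symmetrically the exit sub-arc $\beta|_{[q,y]}$) pins $\pi_P(p)$ uniformly close to $\pi_P(x)$ and $\pi_P(q)$ uniformly close to $\pi_P(y)$ via Lemma \ref{lem:projections_lemmas}(4). Setting $R=R(\lambda,\epsilon)$ slightly larger than $rD_0 + \mu + 1$ gives the desired conclusion.

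The main obstacle is that $\pi\colon X \to \cone{X}$ does not preserve quasi-geodesics in general: a single long peripheral excursion of $\beta$ collapses to length at most $2$ in $\cone{X}$, so $\pi\circ\beta$ is only coarsely Lipschitz. The hyperbolic comparison must therefore be calibrated against the length of $\cone{\alpha}$ rather than of $\pi\circ\beta$ itself, with Lemma \ref{lem:intersection of peripherals} controlling how many distinct peripherals $\beta$ can visit and thus how much total ``shortcut'' is available in $\cone{X}$.
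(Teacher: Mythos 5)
First, note that the paper does not prove this lemma at all: it is imported verbatim as \cite[Lemma 1.15]{Sisto_Rel_hyp_projections}, so there is no internal argument to compare yours against. Judged on its own terms, your proposal has a genuine gap at its central step. You observe correctly that $\pi\circ\beta$ is only a coarsely Lipschitz path in $\cone{X}$, not a quasi-geodesic, but then you still try to run a ``slim-rectangle argument'' against it. That argument is not available for mere paths: in a $\delta$--hyperbolic space a path joining $x$ to $y$ is only forced to come within roughly $\delta\log(\mathrm{length})$ of a point on the geodesic $[x,y]$, and the length of $\pi\circ\beta$ is of order $d_X(x,y)$, which is unbounded in terms of $(\lambda,\epsilon)$. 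So hyperbolicity of $\cone{X}$ plus Lipschitzness of $\pi$ does not force $\beta$ into a \emph{uniform} neighborhood of $P$; the constant you would extract depends on $d_X(x,y)$. Your closing remark about ``calibrating against the length of $\cone{\alpha}$'' and counting peripherals via Lemma \ref{lem:intersection of peripherals} does not repair this, since neither the length of $\pi\circ\beta$ nor the number of peripherals it visits is uniformly bounded, and no bound on available ``shortcuts'' converts a long Lipschitz path into a quasi-geodesic.

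What is really needed at that point is the statement that images of $X$--quasi-geodesics are \emph{unparametrized} quasi-geodesics in $\cone{X}$ (recorded in the paper as Lemma \ref{lem: projections are unparametrized q geod}); with that in hand, stability in $\cone{X}$ does pin $\pi\circ\beta$ near $c_P$ and your outline can proceed. But that fact is not a consequence of $\delta$--hyperbolicity of $\cone{X}$ together with Lemmas \ref{lem:intersection of peripherals} and \ref{lem:projections_lemmas} alone: it rests on the bounded subset penetration part of relative hyperbolicity (via Sisto's results), which is essentially the same phenomenon as the lemma you are trying to prove. The same circularity appears again at the end of your second paragraph, where ``iterating the first-paragraph comparison'' on $\beta|_{[x,p]}$ to conclude $\pi_P(p)$ is close to $\pi_P(x)$ is just the lemma applied to a subsegment. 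So the proposal as written either silently invokes the result being proved or uses a quadrilateral argument that fails for non-quasi-geodesic paths; to make it honest you would need to cite Lemma \ref{lem: projections are unparametrized q geod} (or Sisto's Lemma 1.14/Proposition 5.7 directly), at which point the argument becomes a legitimate, if slightly roundabout, deduction of the quoted result.
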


The projections onto peripheral subsets along with the map $\pi \colon X \to \cone{X}$ produce a distance formula where distances in $X$ can be approximated by summing distances in the projections.

\begin{thm}[The distance formula; {\cite[Theorem 3.1]{Sisto_Rel_hyp_projections}\cite[Theorem 6.10]{BHS_HHSII}}]\label{thm:distance_formula_for_rel_hyp}
There exists $T_0 \geq 0$ so that for all $T \geq T_0$, there is $A\geq 1$ so that for all $x,y \in X$ we have \[d_X(x,y) \stackrel{A,A}{\asymp} \ignore{d_{\widehat{X}}\bigl(\pi(x),\pi(y)\bigr)}{T} + \sum \limits_{P \in \mc{P} } \ignore{d_P\bigl(\pi_P(x),\pi_P(y)\bigr))}{T} \]
\noindent where $\ignore{N}{T} = N$ if $N \geq T$ and $0$ otherwise.
\end{thm}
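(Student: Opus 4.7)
The plan is to prove the two inequalities of the formula separately by transiting between the coned-off space $\cone{X}$ and the original space $X$. The threshold $T_0$ should be taken larger than the constants $Q$ and $\mu$ appearing in Lemmas~\ref{lem: BGI for rel hyp} and~\ref{lem:projections_lemmas}, together with a constant controlling the isolation function $F$ from Lemma~\ref{lem:intersection of peripherals} and the hyperbolicity constant $\delta$.

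For the direction showing the right hand side is bounded by $A \cdot d_X(x,y) + A$, the coned-off term is controlled by $d_{\cone{X}}\bigl(\pi(x),\pi(y)\bigr) \leq d_X(x,y)$, since $\pi$ is distance non-increasing. To control the sum over peripherals, I would fix a geodesic $\gamma$ in $X$ from $x$ to $y$ and, for each $P$ with $d_P\bigl(\pi_P(x),\pi_P(y)\bigr) \geq T$, apply Lemma~\ref{lem: BGI for rel hyp} to obtain points $a_P,b_P \in \gamma$ within bounded distance of $\pi_P(x)$ and $\pi_P(y)$ respectively. The isolated peripherals Lemma~\ref{lem:intersection of peripherals} guarantees that the sub-arcs $[a_P,b_P] \subseteq \gamma$ for distinct peripherals $P$ are essentially disjoint: any substantial overlap would sit in the intersection of large neighborhoods of two distinct peripherals, contradicting the bound provided by $F$. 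Therefore $\sum_P d_X(a_P,b_P) \leq d_X(x,y) + O(1)$, and since $d_P\bigl(\pi_P(x),\pi_P(y)\bigr) \leq d_X(a_P,b_P) + O(1)$ by the coarse Lipschitz property of $\pi_P$, the whole sum is controlled.

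For the reverse inequality $d_X(x,y) \leq A \cdot (\text{right hand side}) + A$, I would take a geodesic $\widehat{\gamma}$ in $\cone{X}$ from $\pi(x)$ to $\pi(y)$ and promote it to an $X$--path by replacing each traversal through a cone vertex. If $\widehat{\gamma}$ enters a cone vertex $c_P$ via $u_P \in P$ and exits via $v_P \in P$, one replaces the segment $u_P \to c_P \to v_P$ by a geodesic in $X$ from $u_P$ to $v_P$. Using the bounded subset penetration underlying relative hyperbolicity, $u_P$ and $v_P$ project to points within bounded distance of $\pi_P(x)$ and $\pi_P(y)$, so $d_X(u_P,v_P) \leq d_P\bigl(\pi_P(x),\pi_P(y)\bigr) + O(1)$. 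For peripherals whose projection distance falls below $T$, the detour through $c_P$ is replaced by a path of length $O(T)$. Summing gives $d_X(x,y) \leq d_{\cone{X}}\bigl(\pi(x),\pi(y)\bigr) + \sum_P d_P\bigl(\pi_P(x),\pi_P(y)\bigr)$ plus a linear error in the number of cone vertices visited, which is itself at most $d_{\cone{X}}\bigl(\pi(x),\pi(y)\bigr)$ since each cone traversal consumes at least two edges of $\widehat{\gamma}$.

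The main subtlety I anticipate is the interplay between the threshold $T$ and the multiplicative constant $A$: one must pick $T_0$ large enough that sub-threshold projection terms can be absorbed as additive error, but this forces the constants from Lemmas~\ref{lem: BGI for rel hyp} and~\ref{lem:intersection of peripherals} to be fixed in advance. Concretely, I would choose $T_0 > Q + 2\mu + F\bigl(R(1,0)\bigr) + \delta$ so that thresholded projections correspond to genuine deep excursions of any geodesic through the corresponding peripheral and so that distinct peripheral excursions along $\gamma$ are forced to be nearly disjoint.
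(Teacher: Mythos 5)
The paper itself does not prove Theorem \ref{thm:distance_formula_for_rel_hyp}: it is imported directly from Sisto and Behrstock--Hagen--Sisto, so there is no in-paper argument to compare yours against, and your proposal has to stand on its own. Your first direction (bounding the right-hand side by $A\,d_X(x,y)+A$) is essentially sound: Lemma \ref{lem: BGI for rel hyp} produces the points $a_P,b_P$ on the geodesic, Lemma \ref{lem: quasi-geodesics remain in a nbhd} keeps the subsegment between them inside $\mc{N}_{rR}(P)$, and Lemma \ref{lem:intersection of peripherals} bounds the overlaps. Two small corrections: the isolation function should be evaluated at $rR$ with $r=r(1,0)$ (not at $R(1,0)$), and ``essentially disjoint'' is too strong --- what you actually get is that each point of the geodesic lies in at most two of the parameter intervals once $T_0$ is large compared with $2F(rR)+2R$, which suffices for the summation but should be said, since the threshold is exactly what rules out many short overlapping excursions.

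The genuine gap is in the lower bound, at the assertion that the entry and exit points $u_P,v_P$ of a geodesic of $\widehat{X}$ through the cone point $c_P$ satisfy $d_X\bigl(u_P,\pi_P(x)\bigr),\,d_X\bigl(v_P,\pi_P(y)\bigr)=O(1)$. This is the crux of the whole theorem, and ``bounded subset penetration'' as usually formulated (a comparison of two relative quasi-geodesic paths without backtracking sharing endpoints) does not hand it to you: to use BCP you would still need a second path whose penetration points you already understand, which is circular here. As written this step is an unproven claim, and without it the replacement path has no length control. The claim is true and can be recovered from tools quoted elsewhere in this paper: let $u'$ be the first point of $\widehat{\gamma}$ lying in the $C_2$--neighborhood of $P$ in $\widehat{X}$; applying Lemma \ref{lem: BGI in cone-off} to the initial segment of $\widehat{\gamma}$ ending just before $u'$ gives $d\bigl(\pi_P(x),\pi_P(u')\bigr)\leq C_1$ up to an additive constant, and since $\widehat{\gamma}$ is a geodesic the subsegment from $u'$ to $u_P$ has at most $C_2+1$ edges, each of which moves $\pi_P$ by a bounded amount --- an $X$--edge by at most $2\mu$, and a passage through another cone point $c_{P'}$ by at most $\mu$, by Lemma \ref{lem:projections_lemmas}; since $\pi_P(u_P)=u_P$ this yields $d_X\bigl(u_P,\pi_P(x)\bigr)=O(1)$, and symmetrically for $v_P$ and $y$. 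With that estimate supplied, your cone-replacement construction and the sub-threshold bookkeeping you indicate do give the lower bound; so the outline is salvageable, but the penetration-point estimate must be proved rather than attributed to BCP.
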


The most important consequence of the distance formula for our proof of Theorem \ref{thm:local_to_global_and_relative_hyperbolicity} are that quasi-geodesics whose projection to each peripheral subset is uniformly bounded will quasi-isometrically embed into $\cone{X}$ and  be Morse quasi-geodesics of $X$.

\begin{cor}[Quasi-isometric embedding and bounded projections]\label{cor:qi-emebed iff bounded projections}
Let $\gamma$ be a $(\lambda,\epsilon)$--quasi-geodesic in $X$.
\begin{enumerate}
    \item For every $C >0$, there exists $k \geq 1$ and $ c \geq 0$ so that if $\diam\bigl( \pi_P (\gamma) \bigr) \leq C$ for all $P \in \mc{P}$, then  $\pi \circ \gamma$ is a $(k,c)$--quasi-geodesic in $\cone{X}$.
    \item For every $k\geq 1$ and $c \geq 0$, there exists $C>0$ so that if $\pi \circ \gamma$ is a $(k,c)$--quasi-geodesic in $\cone{X}$, then $\diam\bigl( \pi_P (\gamma) \bigr) \leq C$ for all $P \in \mc{P}$.
\end{enumerate}
\end{cor}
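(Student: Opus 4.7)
The plan is to derive both directions directly from the distance formula (Theorem~\ref{thm:distance_formula_for_rel_hyp}), using the bounded quasi-geodesic image lemma (Lemma~\ref{lem: BGI for rel hyp}) only for part~(2).

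For (1), given the uniform bound $\diam(\pi_P(\gamma)) \leq C$ on all peripheral projections, I would apply the distance formula with threshold $T = \max\{T_0, C+1\}$, producing some constant $A$. For any two points $x=\gamma(s)$, $y=\gamma(t)$ on $\gamma$, every peripheral term $\ignore{d_P(\pi_P(x),\pi_P(y))}{T}$ vanishes because $d_P(\pi_P(x),\pi_P(y)) \leq C < T$. The formula then collapses to $d_X(x,y) \stackrel{A,A}{\asymp} \ignore{d_{\cone{X}}(\pi(x),\pi(y))}{T}$. Combining this with the quasi-geodesic inequalities for $\gamma$ gives a lower bound $d_{\cone{X}}(\pi(x),\pi(y)) \geq \frac{1}{k}|s-t| - c$ (with a finite number of exceptional pairs where $|s-t|$ is already bounded, absorbed into $c$), while the upper bound follows immediately from $\pi$ being distance non-increasing: $d_{\cone{X}}(\pi(x),\pi(y)) \leq d_X(x,y) \leq \lambda|s-t|+\epsilon$. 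This is the easier of the two directions.

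For (2), I would argue by contradiction. Suppose there is some $P \in \mc{P}$ and points $x=\gamma(s)$, $y=\gamma(t)$ on $\gamma$ with $d_P(\pi_P(x),\pi_P(y)) > \max\{Q, C\}$ for $C$ to be determined. Since $\gamma\vert_{[s,t]}$ is itself a $(\lambda,\epsilon)$-quasi-geodesic between $x$ and $y$, Lemma~\ref{lem: BGI for rel hyp} produces parameters $s', t' \in [s,t]$ so that $\gamma(s')$ and $\gamma(t')$ lie within $R = R(\lambda,\epsilon)$ of $\pi_P(x)$ and $\pi_P(y)$ respectively. Both $\pi(\gamma(s'))$ and $\pi(\gamma(t'))$ are then within distance $R+1$ of the cone point $c_P$ in $\cone{X}$, via the edges from $P$ to $c_P$. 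Hence $d_{\cone{X}}(\pi(\gamma(s')),\pi(\gamma(t'))) \leq 2R+2$. The quasi-geodesic hypothesis on $\pi \circ \gamma$ then gives $|s'-t'| \leq k(2R+2+c)$, and applying the quasi-geodesic bound for $\gamma$ yields an upper bound on $d_X(\gamma(s'),\gamma(t'))$ depending only on $\lambda,\epsilon,k,c$. On the other hand, $d_X(\gamma(s'),\gamma(t')) \geq d_X(\pi_P(x),\pi_P(y)) - 2R$, and $d_X(\pi_P(x),\pi_P(y))$ can be made arbitrarily large by enlarging $C$. Choosing $C$ to exceed the resulting bound produces the contradiction and forces $\diam(\pi_P(\gamma)) \leq C$.

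The main conceptual step is recognizing that (2) reduces to the single-peripheral case, since the conclusion is a uniform bound per peripheral and not a bound on their sum; once this is clear, the argument is a straightforward application of Lemma~\ref{lem: BGI for rel hyp} to shuttle between the $X$-geometry of $\gamma$ and the $\cone{X}$-geometry of $\pi \circ \gamma$. I do not anticipate any real obstacle beyond book-keeping the constants.
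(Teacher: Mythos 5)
Your proposal is correct and follows essentially the same route as the paper: part (1) is a direct application of the distance formula with threshold above $C$, and part (2) is the same contradiction via Lemma~\ref{lem: BGI for rel hyp}, using that the two points of $\gamma$ landing in $\mc{N}_R(P)$ are both within $R+1$ of the cone point $c_P$ and hence close in parameter by the quasi-geodesic hypothesis on $\pi\circ\gamma$. The only cosmetic difference is that you close the contradiction by comparing $X$-distances while the paper compares parameter distances, and your phrase ``finite number of exceptional pairs'' in (1) should really be ``pairs for which the distance formula already bounds $d_X(x,y)$ by $A$, hence $|s-t|$ by $\lambda(A+\epsilon)$''; neither affects correctness.
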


\begin{proof}
Item (1) is a direct consequence of the distance formula. For Item (2), assume $\pi \circ \gamma$ is a $(k,c)$--quasi-geodesic and let $Q >0$ and $R = R(\lambda,\epsilon)$ be the constants from Lemma \ref{lem: BGI for rel hyp}. If $\diam(\pi_P(\gamma)) > \lambda[Q + 2R+2+\epsilon +k(2+2R)+c+1)]$ for some $P \in\mc{P}$, then there is $t,s$ in the domain of $\gamma$ so that $\gamma(t),\gamma(s) \in \mc{N}_R(P)$ and $|t-s| > k(2+2R)+c$. However, this would be a contradiction since $|t-s| \leq k(2+2R)+c$ as $\pi \circ \gamma(t), \pi \circ \gamma(s)$ are both within $R+1$ of the cone point $c_P$ and $\pi \circ \gamma$ is a $(k,c)$--quasi-geodesic. Thus, we have $\diam(\pi_P(\gamma)) \leq  \lambda[Q + 2R+2+\epsilon +k(2+2R)+c+1)]$ for all $P \in \mc{P}$.
\end{proof}

\begin{cor}[Quasi-geodesics with bounded projections are Morse]\label{cor:bounded projections implies Morse}
For each $C \geq 0$, $\lambda \geq 1$, and $\epsilon \geq 0$, there exists a Morse gauge $M$ so that if $\gamma$ is a $(\lambda,\epsilon)$--quasi-geodesic in $X$ and $\diam(\pi_P(\gamma)) \leq C$ for all $P \in \mc{P}$, then $\gamma$ is $M$--Morse.
\end{cor}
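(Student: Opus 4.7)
The plan is to apply Lemma~\ref{lem:Morse_easy_proof}: it suffices to produce a function $N$ so that every $(\lambda',\epsilon')$--quasi-geodesic $\alpha$ with endpoints on $\gamma$ is contained in the $N(\lambda',\epsilon')$--neighborhood of $\gamma$. After restricting $\gamma$ to the sub-quasi-geodesic joining the endpoints of $\alpha$, I may assume $\alpha$ and $\gamma$ share their endpoints $x$ and $y$. By Corollary~\ref{cor:qi-emebed iff bounded projections}(1), the image $\pi\circ\gamma$ is a $(k,c)$--quasi-geodesic in the $\delta$--hyperbolic space $\cone X$, with $(k,c)$ depending only on $C$, $\lambda$, and $\epsilon$.

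The core idea is to transport the question into $\cone X$, where hyperbolicity does the work. From $\alpha$ I will construct a coned-off companion $\hat\alpha$ in $\cone X$ from $\pi(x)$ to $\pi(y)$ as follows. Fix a threshold $R$, to be chosen large in terms of $\lambda',\epsilon'$ and the constants $Q,\mu,F$ of the relatively hyperbolic structure. Partition $\alpha$ into maximal subsegments that lie inside $\mc{N}_R(P)$ for some $P\in\mc P$ and the complementary pieces, and replace each maximal peripheral excursion by the length--$2$ path through the cone point $c_P$. Using bounded subset penetration together with Lemmas~\ref{lem:intersection of peripherals}, \ref{lem: quasi-geodesics remain in a nbhd}, and~\ref{lem: BGI for rel hyp}, $\hat\alpha$ is a uniform quasi-geodesic in $\cone X$ whose constants depend only on $\lambda'$, $\epsilon'$, and $R$. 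By the stability of quasi-geodesics in the hyperbolic space $\cone X$, $\hat\alpha$ and $\pi\circ\gamma$ lie at Hausdorff distance at most some $D$ in $\cone X$.

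With this setup, I split into cases for a general point $p\in\alpha$. If $p$ is a \emph{transition point}, meaning $p\notin\mc N_R(P)$ for every $P\in\mc P$, then by construction $\pi(p)$ is on $\hat\alpha$, so there is $q\in\gamma$ with $d_{\cone X}(\pi(p),\pi(q))\le D+1$. Because neither $p$ nor $q$ is deep in any peripheral, the distance formula (Theorem~\ref{thm:distance_formula_for_rel_hyp}), together with the bounded projections of $\gamma$ and an application of Lemma~\ref{lem: BGI for rel hyp} to $\alpha$ on either side of $p$, bounds $\sum_P\ignore{d_P(\pi_P(p),\pi_P(q))}{T}$ in terms of $C$, $\lambda',\epsilon'$, and $R$, and hence bounds $d_X(p,q)$. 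If instead $p$ is a \emph{deep point}, lying in $\mc N_R(P)$ for some $P$, I take the maximal subsegment of $\alpha$ through $p$ contained in $\mc N_R(P)$ with endpoints $p_1,p_2$. These endpoints are transition points (or equal to $x,y$), hence close to points $g_1,g_2\in\gamma$ by the previous case. Lemma~\ref{lem:projections_lemmas} then shows $\pi_P(g_i)$ is close to $p_i$, while the hypothesis on $\gamma$ gives $d_P(\pi_P(g_1),\pi_P(g_2))\le C$. Thus $p_1$ and $p_2$ are at uniformly bounded $X$--distance, and since $\alpha$ restricted between them is a $(\lambda',\epsilon')$--quasi-geodesic, this bounds $d_X(p,p_1)$, completing the argument.

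The main obstacle is the construction of $\hat\alpha$ and the verification that it is a uniform quasi-geodesic in $\cone X$. Making this rigorous requires choosing $R$ large enough that entry and exit points for distinct peripheral excursions are well-separated (via the isolation of peripherals, Lemma~\ref{lem:intersection of peripherals}) and that no peripheral excursion of $\alpha$ can be shortcut by a different cone point; both will follow from tuning $R$ against the constants $\mu$, $Q$, $F$, together with the linear quasiconvexity estimate $r(\lambda',\epsilon')$ from Lemma~\ref{lem: quasi-geodesics remain in a nbhd}.
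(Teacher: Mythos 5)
Your overall route --- pass to the coned-off space, use hyperbolic stability there, then lift back to $X$ by a transition-point/deep-point analysis --- is the classical one and genuinely different from the paper's, which never builds a coned-off companion. But as written the transition-point case has a real gap. To convert $d_{\cone{X}}(\pi(p),\pi(q))\le D+1$ into a bound on $d_X(p,q)$ via the distance formula you must bound $d_P\bigl(\pi_P(p),\pi_P(q)\bigr)$ for every $P\in\mc{P}$; since $d_P\bigl(\pi_P(q),\pi_P(x)\bigr)\le C$, this amounts to bounding $d_P\bigl(\pi_P(p),\pi_P(x)\bigr)$, i.e.\ essentially $\diam\bigl(\pi_P(\alpha)\bigr)$. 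This does \emph{not} follow from ``Lemma \ref{lem: BGI for rel hyp} applied to $\alpha$ on either side of $p$'': if $\alpha$ makes a long excursion into $\mc{N}_R(P)$ between $x$ and $p$, then $d_P(\pi_P(p),\pi_P(x))$ and $d_P(\pi_P(p),\pi_P(y))$ can both exceed $Q$ even though $p$ is a transition point, and the bounded-image lemma alone yields no bound. The missing argument is exactly the Claim in the paper's proof: cut $\alpha$ at its first entrance to and last exit from $\mc{N}_R(P)$, observe that the two outer pieces have projection diameter at most $Q$ by the contrapositive of Lemma \ref{lem: BGI for rel hyp}, and control the projection of the middle piece using Lemma \ref{lem: quasi-geodesics remain in a nbhd} together with the fact that its endpoints project $Q$--close to $\pi_P(x)$ and $\pi_P(y)$, which are within $C$ of each other because $x,y\in\gamma$. (A smaller slip: the endpoints $p_1,p_2$ of a maximal excursion into $\mc{N}_R(P)$ lie in $\mc{N}_R(P)$ by construction and may also lie in $\mc{N}_R(U)$ for another peripheral $U$, so they are not transition points in your sense; the reduction in the deep-point case needs rewording.)

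Once one has the bound $\diam\bigl(\pi_P(\alpha)\bigr)\le C'$ for all $P$ --- which is the paper's first and decisive step --- the rest of your superstructure becomes unnecessary: Corollary \ref{cor:qi-emebed iff bounded projections}(1) applied to $\alpha$ itself shows $\pi\circ\alpha$ is already a parametrized quasi-geodesic of $\cone{X}$ (so no hand-built $\hat\alpha$ is needed; in any case Lemma \ref{lem: projections are unparametrized q geod} already records that $\pi\circ\alpha$ is an unparametrized quasi-geodesic), hyperbolicity of $\cone{X}$ places $\pi\circ\alpha$ uniformly close to $\pi\circ\gamma$, and the distance formula then bounds $d_X(p,q)$ for every $p\in\alpha$ in one stroke, all peripheral terms being at most $C+C'$. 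So the projection bound on $\alpha$ is the heart of the matter, and it is precisely the step your sketch omits.
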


\begin{proof}
  Let $\alpha \colon [a,b] \to X$ be a $(k,c)$--quasi-geodesic with $\alpha(a),\alpha(b) \in \gamma$. We claim $\diam\bigl( \pi_P (\alpha) \bigr)$ is bounded above by a constant $C' = C'(k,c,C)$.

Let $Q \geq 0$ and $R = R(k,c)$ be the constants from Lemma \ref{lem: BGI for rel hyp} and suppose $\diam\bigl(\pi_P(\alpha)\bigr) \geq Q$ for some $P \in \mc{P}$. Let $t$ and $s$ be the first and last points in $[a,b]$ so that $\alpha(t),\alpha(s) \in \mc{N}_{R}(P)$. By Lemma \ref{lem: BGI for rel hyp}, $\diam\bigl( \pi_P(\alpha\vert_{[a,t]})\bigr) \leq Q$ and $\diam\bigl( \pi_P(\gamma\vert_{[s,b]})\bigr) \leq Q$. Since there exists $r=r(k,c)\geq 1$ so that $\alpha\vert_{[t,s]} \subseteq \mc{N}_{rR}(P)$ (Lemma \ref{lem: quasi-geodesics remain in a nbhd}), we have \[\diam\bigl( \pi_P(\alpha\vert_{[t,s]})\bigr) \leq k^2 \cdot d\bigl( \pi_P(\alpha(t)), \pi_P(\alpha(s)) \bigr)+5k^2(rR+c+1).\] Thus,
\begin{align*}
  \diam\bigl( \pi_P(\alpha)\bigr) \leq&  k^2 \cdot d\bigl( \pi_P(\alpha(a)), \pi_P(\alpha(b)) \bigr) + 5k^2(rR+c+1) +2Q \\
  \leq& k^2 C + 5k^2(rR+c+1) + 2Q = C' .
\end{align*}

Since $\diam\bigl( \pi_P (\alpha) \bigr) \leq C'$ and  $\diam\bigl( \pi_P (\gamma) \bigr) \leq C$, $\pi \circ \alpha$ and $\pi \circ \gamma$ are respectively  $(k',c')$ and $(\lambda',\epsilon')$--quasi-geodesic in $\widehat{X}$  where $(k',c')$ and $(\lambda',\epsilon')$ depend only on $k$, $c$, $C$ and $\lambda$, $\epsilon$,  $C$ respectively (Corollary \ref{cor:qi-emebed iff bounded projections}). Since $\widehat{X}$ is $\delta$--hyperbolic, there exists a Morse gauge $N$, depending on $\lambda'$, $\epsilon'$, and  $\delta$, so that $\pi \circ \gamma$ is $N$--Morse and $\pi \circ \alpha$ is contained in the $N(k',c')$--neighborhood of $\pi \circ \gamma$. For $x \in \alpha$, let $y \in \gamma$ so that $d_{\widehat{X}}(x,y) \leq N(k',c')$.  Since $d_P(x,y) \leq C + C'$ for all $P \in \mc{P}$, the distance formula (Theorem \ref{thm:distance_formula_for_rel_hyp}) produces $C'' = C''(\lambda,\epsilon,C,k,c)$ so that $d_X(x,y) \leq C''$. Thus, $\gamma$ is $M$--Morse where $M$ depends on $\lambda$, $\epsilon$, $C$, and $\delta$.
\end{proof}

Our last preliminary result is an adaption of the work of Hruska \cite[Proposition 8.14]{Hruska10} and Sisto \cite[Proposition 5.7]{Sisto_metric_rel_hyp} to fit our needs. In the sequel, we say a map $\gamma \colon [a,b] \to X$ is an \emph{unparametrized $(\lambda,\epsilon)$--quasi-geodesic} if there exists a homeomorphism $f \colon [a',b'] \to [a,b]$ so that $\gamma \circ f$ is a $(\lambda,\epsilon)$--quasi-geodesic of $X$.

\begin{lem}\label{lem: projections are unparametrized q geod}
 For every $\lambda \geq 1$ and  $\epsilon\geq 0$, there are $k \geq 1$ and $c \geq 0$ so that if $\gamma \colon [a,b] \to X$ is  $(\lambda,\epsilon)$--quasi-geodesic in $X$, then $\pi \circ \gamma$ is an unparametrized  $(k,c)$--quasi-geodesic in $\cone{X}$.
\end{lem}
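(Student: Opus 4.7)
The plan is to exhibit an orientation-preserving monotone surjection $\phi \colon [0,T] \to [a,b]$ such that $\pi \circ \gamma \circ \phi$ is a $(k,c)$--quasi-geodesic in $\cone{X}$, by collapsing the time that $\gamma$ spends inside peripheral neighborhoods. The upper bound comes for free and holds for any choice of $\phi$: since $\pi \colon X \to \cone{X}$ is distance non-increasing by construction, $d_{\cone{X}}(\pi(\gamma(s)),\pi(\gamma(t))) \leq \lambda|s-t|+\epsilon$ for all $s,t \in [a,b]$.

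For the lower bound, I would fix $R = R(\lambda,\epsilon)$ from Lemma \ref{lem: BGI for rel hyp} and enumerate the maximal subintervals $[s_1,t_1] < \cdots < [s_m,t_m]$ of $[a,b]$ on each of which $\gamma$ stays inside $\mc{N}_R(P_j)$ for a single peripheral $P_j \in \mc{P}$. On each $[s_j,t_j]$ the image $\pi(\gamma([s_j,t_j]))$ is pinned inside the $\cone{X}$-ball of radius $R+1$ about the cone point $c_{P_j}$, so it contributes at most $2(R+1)$ to any $\cone{X}$-distance no matter how large $t_j - s_j$ is. The reparametrization $\phi$ is then defined to be affine on each complementary subinterval (with controlled slope) and to collapse each $[s_j,t_j]$ to an interval of length $2(R+1)$.

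To verify that $\pi \circ \gamma \circ \phi$ is a $(k,c)$--quasi-geodesic I plan to use two ingredients. First, on each complementary subinterval $I \subseteq [a,b]$, the contrapositive of Lemma \ref{lem: BGI for rel hyp} bounds $\diam(\pi_P(\gamma(I)))$ uniformly in $P \in \mc{P}$: a large projection would force $\gamma$ to enter some $\mc{N}_R(P)$, contradicting $I$ being complementary. Corollary \ref{cor:qi-emebed iff bounded projections}(1) then promotes $\pi \circ \gamma|_I$ to a uniform-quality quasi-geodesic of $\cone{X}$. Second, the isolation of peripherals (Lemma \ref{lem:intersection of peripherals}) ensures that successive cone points $c_{P_j}$ and $c_{P_{j+1}}$ cannot be arbitrarily close in $\cone{X}$, because the $R$-neighborhoods of $P_j$ and $P_{j+1}$ have boundedly overlapping diameter. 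Consequently the reparametrized path consists of uniform-quality quasi-geodesic pieces interlaced with bounded segments sitting in distinct, well-separated cone-point neighborhoods.

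The main obstacle I anticipate is promoting this piecewise control to a global $(k,c)$--quasi-geodesic estimate; that is, ruling out that two non-adjacent pieces fellow-travel in $\cone{X}$. Here I would invoke $\delta$--hyperbolicity of $\cone{X}$ together with the separation of successive cone points established in the previous paragraph, applying a standard local-to-global argument (as in Theorem \ref{thm:local_to_global_hyperbolic}) to the concatenation. Since the scale controlling the local-to-global transition in $\cone{X}$ depends only on $\delta$ and on the piecewise quasi-geodesic constants, and all those constants ultimately depend only on $\lambda$, $\epsilon$, $\delta$, and the projection constants from Lemmas \ref{lem:projections_lemmas} and \ref{lem: BGI for rel hyp}, the resulting $k$ and $c$ depend only on $\lambda$ and $\epsilon$, as required.
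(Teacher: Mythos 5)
Your overall shape (collapse the peripheral excursions, show the complementary pieces quasi-isometrically embed, then assemble) is the standard direct strategy, but two steps as written do not hold up, and they are the ones carrying the real content. First, the separation claim is false: Lemma \ref{lem:intersection of peripherals} bounds the diameter of $\mc{N}_K(P)\cap\mc{N}_K(U)$, but it does not keep the cone points $c_{P_j}$ and $c_{P_{j+1}}$ apart in $\cone{X}$. Two distinct peripherals may come within distance $1$ of each other in $X$ (think of two cosets through a common point in a free product), so consecutive cone points can be at distance $2$ or $3$ in $\cone{X}$ no matter how isolated the peripherals are. Second, and more seriously, the local-to-global step is circular. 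To invoke Theorem \ref{thm:local_to_global_hyperbolic} you must verify that the reparametrized path is an $(L;k,c)$--local quasi-geodesic for an $L$ that is chosen \emph{after} $k$ and $c$; a window of reparametrized length $L$ may contain on the order of $L/(2R+2)$ collapsed excursions into distinct peripherals together with short complementary pieces, and nothing in your argument prevents that chain of mutually close cone points from returning near its starting point, i.e.\ from making essentially no net progress in $\cone{X}$. Ruling out that kind of backtracking among the visited peripherals is exactly the bounded-coset-penetration control that the lemma encodes; hyperbolicity of $\cone{X}$ plus isolation does not supply it. (A smaller issue: your ``upper bound for free'' is not free after reparametrizing, since $|\phi(u)-\phi(v)|$ can vastly exceed $|u-v|$; it is recoverable only because each collapsed interval is given length bounded below. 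Also, your maximal excursion intervals for a fixed $P$ need to be merged using Lemma \ref{lem: quasi-geodesics remain in a nbhd}, since between any two visits to $\mc{N}_R(P)$ the quasi-geodesic stays in $\mc{N}_{rR}(P)$.)

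For comparison, the paper sidesteps the assembly problem entirely: it takes a continuous quasi-geodesic $\eta$ with the same endpoints whose image under $\pi$ is an unparametrized \emph{geodesic} of $\cone{X}$ (Sisto, Lemma~1.14 of \cite{Sisto_Rel_hyp_projections}), and then applies Sisto's Proposition~5.7 of \cite{Sisto_metric_rel_hyp}, which decomposes any two quasi-geodesics with common endpoints into pieces that alternately fellow-travel in $X$ and lie deep in a common peripheral. This puts $\pi\circ\gamma$ at bounded Hausdorff distance from an unparametrized geodesic of $\cone{X}$, which is the conclusion. The backtracking control you are missing is precisely what that cited proposition provides; if you want a self-contained argument along your lines, you would need to re-prove a statement of that strength (compare the ``ordered projections'' arguments of Section \ref{subsec:ordering_of_peripherals}, where the paper has to do this by hand in the harder local setting).
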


\begin{proof}
It is sufficient to verify that $\pi \circ \gamma$ is within finite Hausdorff distance of the image of a  geodesic in $\cone{X}$. Since $X$ is a geodesic space, we can assume $\gamma$ is a continuous quasi-geodesic without any loss of generality \cite[Lemma III.H.1.11]{Bridson_Haefliger}.

By \cite[Lemma 1.14]{Sisto_Rel_hyp_projections}, there is a continuous $(\lambda',\epsilon')$--quasi-geodesic $\eta \colon [a',b'] \to X$ so that $\pi \circ \eta$ is an unparametrized geodesic in $\cone{X}$.
By \cite[Proposition 5.7]{Sisto_metric_rel_hyp}, there is a constant $D = D(\lambda,\epsilon)$ and sequences $a=q_0\leq p_1<q_1<\dots<p_n<q_n \leq p_{n+1} = b$  and $a'=q'_0\leq p'_1<q'_1<\dots<p'_n<q'_n \leq p'_{n+1} = b'$  so that 
\begin{itemize}
	\item the Hausdorff distance between $\gamma \vert_{[q_i,p_{i+1}]}$ and $\eta\vert_{[q'_i,p'_{i+1}]}$ is bounded by $D$ for $i \in \{0,\dots,n\}$;
	\item $\gamma\vert_{[p_{i+1},q_{i+1}]}$, and $\eta\vert_{[p'_{i+1},q'_{i+1}]} $ are both contained in the $D$--neighborhood of the same peripheral $P_i \in \mc{P}$ for $i \in \{0,\dots,n-1\}$.
\end{itemize}
\noindent This implies the Hausdorff distance between $\pi \circ \gamma \vert_{[q_i,p_{i+1}]}$ and $\pi \circ \eta\vert_{[q'_i,p'_{i+1}]}$ is also at most $D$ for $i \in \{0,\dots,n\}$ and $\diam\bigl( \pi(\gamma \vert_{[p_{i+1},q_{i+1}]}) \bigr) \leq 2D+2$ for all $i \in\{0,\dots,n-1\}$. Thus, the Hausdorff distance between $\pi \circ \gamma$ and the unparametrized geodesic $\pi \circ \eta$ is bounded by a constant depending only on $\lambda$ and $\epsilon$ as desired.
\end{proof}

Before continuing, we set constants that we shall use for the remainder of the section. We fix a pair of quasi-geodesic constants  $\lambda \geq 1$ and $\epsilon \geq 0$ and then fix the following:
\begin{itemize}
    \item $\mu$ the constant from Lemma \ref{lem:projections_lemmas};
    \item $F \colon [0,\infty) \to [0,\infty)$ the function from Lemma \ref{lem:intersection of peripherals};
    \item  $r = r(\lambda,\epsilon)$ the constant from Lemma \ref{lem: quasi-geodesics remain in a nbhd};
    \item  $Q$ and $R=R(\lambda,\epsilon)$ the constants from Lemma \ref{lem: BGI for rel hyp}.
\end{itemize}
With these constants fixed, we define $\theta = \theta(\lambda,\epsilon)$ to be 
\[ \theta = 100\lambda\mu(\lambda+\epsilon+\mu+F(2rR)+r+ R+ rR +Q+1). \] 

Henceforth, whenever $\lambda$, $\epsilon$, $\mu$, $r$, $Q$, $R$, or $\theta$ are written, they implicitly refer to the  constants outlined above. Note, $r$, $R$, and $\theta$ depend on $\lambda$ and $\epsilon$, while $\mu$, $F$, and $Q$ do not. Further, $I$ will always denote a compact interval of $\mathbb{R}$ for the remainder of the section.

\subsection{A sketch of the proof of Theorem \ref{thm:local_to_global_and_relative_hyperbolicity}} \label{subsec:rel_hyp_outline}
Similar to the case of $\CAT(0)$ spaces, the key to proving Theorem \ref{thm:local_to_global_and_relative_hyperbolicity} is to establish that, when every peripheral subset has the $\Phi$--Morse local-to-global property, every local Morse quasi-geodesic of sufficient scale in $X$ is finite Hausdorff distance from any quasi-geodesic between its endpoints. However, unlike the $\CAT(0)$ case, the argument requires substantially more care and technicalities.  The main steps of the proof are outlined below with work on the proof beginning in the next section.

\begin{enumerate}[{Step} 1:]
    \item \label{item:local_qg_with_bounded projections} Use the local-to-global property of quasi-geodesics in the hyperbolic space $\cone{X}$ to prove a local version of Corollary \ref{cor:bounded projections implies Morse}, i.e.,  any local quasi-geodesic where the projection of every local segment is uniformly bounded is a Morse quasi-geodesic (Section \ref{subsec:local_qg_with_bounded_projections}).
    \item Show any local Morse quasi-geodesic $\gamma$ has a decomposition $\gamma = \decomposition$  and a collection of peripheral subsets $\{P_1, \dots,P_n\}$ so that 
    \begin{itemize}
        \item each $\alpha_i$ is sufficiently long and contained in the $rR$--neighborhood of the peripheral subset $P_i$;
        \item the projection of each local segment of each $\sigma_i$ to every peripheral subset is uniformly bounded (Section \ref{subsec:deep_points}).
    \end{itemize}
    \item Argue each $\sigma_i$ and $\alpha_i$ are Morse quasi-geodesics. For the $\sigma_i$, this follows from Step \ref{item:local_qg_with_bounded projections} since  projection of each local segment of each $\sigma_i$ to every peripheral subset is uniformly bounded. For the $\alpha_i$, we apply the Morse local-to-global property of each of the peripherals $P_i$ whose $rR$--neighborhood contains $\alpha_i$  (Corollary \ref{cor: alphas are Morse q.g}).
    \item \label{item:endpoints_projection} Prove the projection of the endpoints of $\gamma$ to each $P_i$ are coarsely equal to the projections of the endpoints of $\alpha_i$ to $P_i$. We show this by establishing  $P_1,\dots, P_n$ are ``linearly ordered" by $\gamma$, that is, if $1\leq i<j<k\leq n$, the projection of $P_i$ onto $P_k$ is coarsely equal to the projection of $P_j$ onto $P_k$ (Section \ref{subsec:ordering_of_peripherals}).
    \item \label{item:BGI} Since the $\alpha_i$ are sufficiently long, Step \ref{item:endpoints_projection} ensures that the projection of the endpoints of $\gamma$ to each of the $P_i$ are far enough apart that we can apply Lemma \ref{lem: BGI for rel hyp} to concluded that every quasi-geodesic between the endpoints of $\gamma$ must pass close to the endpoints of each $\alpha_i$ (Lemma \ref{lem: BCP for local things}).
    \item Since each of the $\sigma_i$ and $\alpha_i$ are Morse quasi-geodesics, Step \ref{item:BGI} ensures any quasi-geodesic between the endpoints of $\gamma$ will be within finite Hausdorff distance from $\gamma$. This directly implies $\gamma$ has the Morse property and Lemma \ref{lem:local_close_to_global} will imply that $\gamma$ is a global quasi-geodesic.
\end{enumerate}

\subsection{Local quasi-geodesics with bounded projections}\label{subsec:local_qg_with_bounded_projections}

We now study local quasi-geodesics where the projection of local subsegments to all peripheral subsets are uniformly bounded.  

\begin{defn} \label{defn:bounded_projections}
An $(L;\lambda,\epsilon)$--local quasi-geodesic $\gamma$ has \emph{$D$--bounded projections} if  every subsegment $\sigma \subseteq \gamma$ of parametrized length at most $L$ has  $\diam(\pi_P ({\sigma}))\leq D$ for all $P \in \mc{P}$.
\end{defn}

The goal is to prove a local version of Corollary \ref{cor:bounded projections implies Morse}. That is, for each $D$ there exists a local scale $L$ so that $(L;\lambda,\epsilon)$--local quasi-geodesics with $D$--bounded projections are global Morse quasi-geodesics.

\begin{cor}\label{cor: local qg+bounded projection=Morse}
For each  $D \geq 0$, there exist  $L_1= L_1(\lambda,\epsilon,D)$, $A_2 = A_2(\lambda, \epsilon, D)$, and Morse gauge $M=M(\lambda,\epsilon,D)$ so that if an $(L_1;\lambda,\epsilon)$--local quasi-geodesic $\gamma \colon  I \to X$ has $D$--bounded projection, then $\gamma$ is a global $(M;A_2, A_2)$--Morse quasi-geodesic of $X$. 
\end{cor}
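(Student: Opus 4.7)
The plan is to project $\gamma$ down to the coned-off hyperbolic space $\cone{X}$, apply the classical local-to-global property for quasi-geodesics in hyperbolic spaces there (Theorem \ref{thm:local_to_global_hyperbolic}), and then pull this information back to $X$ using the bounded projection hypothesis. This mirrors the proof strategy for Morse detectable spaces (Theorem \ref{thm:Morse local-to-globle for Morse detectable}), with the $D$-bounded projections playing the role of the Morse detectability hypothesis on local scales.

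First I would fix $D \geq 0$ and observe that for any subsegment $\sigma \subseteq \gamma$ of parametrized length at most $L_1$, the map $\sigma$ is a $(\lambda,\epsilon)$-quasi-geodesic of $X$ satisfying $\diam(\pi_P(\sigma)) \leq D$ for every $P \in \mc{P}$. By Corollary \ref{cor:qi-emebed iff bounded projections}(1), there exist $k \geq 1$ and $c \geq 0$ depending only on $\lambda$, $\epsilon$, and $D$ so that $\pi \circ \sigma$ is a genuine (parametrized) $(k,c)$-quasi-geodesic in $\cone{X}$. Thus $\pi \circ \gamma \colon I \to \cone{X}$ is an $(L_1;k,c)$-local quasi-geodesic in the $\delta$-hyperbolic space $\cone{X}$.

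Since $\cone{X}$ is $\delta$-hyperbolic, Theorem \ref{thm:local_to_global_hyperbolic} provides a threshold $L^* = L^*(k,c,\delta)$ and global constants $k',c'$ depending only on $k$, $c$, $\delta$ such that every $(L^*;k,c)$-local quasi-geodesic in $\cone{X}$ is a global $(k',c')$-quasi-geodesic. I would then set $L_1 := \max\{L^*,\, 2\epsilon + 1\}$, which depends ultimately only on $\lambda$, $\epsilon$, $D$, and $\delta$. With this choice, $\pi \circ \gamma$ is a global $(k',c')$-quasi-geodesic in $\cone{X}$.

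To transfer back to $X$, I would invoke Lemma \ref{lem: Folklore} with the $1$-Lipschitz map $\pi \colon X \to \cone{X}$: because $\gamma$ is an $(L_1;\lambda,\epsilon)$-local quasi-geodesic with $L_1 > 2\epsilon$ and $\pi \circ \gamma$ is a global $(k',c')$-quasi-geodesic, $\gamma$ itself is a global $(\lambda'',\epsilon'')$-quasi-geodesic in $X$ for some $\lambda'',\epsilon''$ depending only on $\lambda,\epsilon,k',c'$. Finally, applying Corollary \ref{cor:qi-emebed iff bounded projections}(2) to the global quasi-geodesic $\pi \circ \gamma$ yields a uniform constant $C = C(k',c')$ such that $\diam(\pi_P(\gamma)) \leq C$ for \emph{every} $P \in \mc{P}$ (this upgrades the local bound $D$ to a global bound). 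Corollary \ref{cor:bounded projections implies Morse} then supplies a Morse gauge $M = M(\lambda,\epsilon,D)$ witnessing that $\gamma$ is an $M$-Morse, $(\lambda'',\epsilon'')$-quasi-geodesic. Setting $A_2 := \max\{\lambda'',\epsilon''\}$ gives the desired conclusion. The main subtlety (rather than a genuine obstacle) is bookkeeping the constants to ensure that $L_1$ is simultaneously large enough to trigger hyperbolic local-to-global in $\cone{X}$ and to meet the hypothesis $L_1 > 2\epsilon$ of Lemma \ref{lem: Folklore}; no new geometric input beyond the results already established in Section \ref{subsec:rel_hyp_background} is needed.
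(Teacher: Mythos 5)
Your proposal is correct and follows essentially the same route as the paper: the paper's Proposition \ref{prop:bounded_projections_imply_quasi-geodesic}(2) establishes exactly your first two steps (the distance formula shows $\pi\circ\gamma$ is a local quasi-geodesic in $\cone{X}$, and the hyperbolic local-to-global property upgrades it to a global one), after which the paper, like you, applies Lemma \ref{lem: Folklore}, Corollary \ref{cor:qi-emebed iff bounded projections}(2), and Corollary \ref{cor:bounded projections implies Morse} in the same order. Your constant bookkeeping (choosing $k,c$ from $\lambda,\epsilon,D$ first, then $L_1\geq\max\{L^*,2\epsilon+1\}$) is sound and matches the paper's.
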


To prove Corollary \ref{cor: local qg+bounded projection=Morse}, we first show that local quasi-geodesics with bounded projections are quasi-geodesics in~$\cone{X}$ .

\begin{prop}\label{prop:bounded_projections_imply_quasi-geodesic}
For each $D\geq 0$, there exist $\lambda' = \lambda'(\lambda,\epsilon)$, $\epsilon'=\epsilon'(\lambda,\epsilon)$, $L_1= L_1(\lambda,\epsilon,D)$ and $A_1 = A_1(\lambda, \epsilon, D)$ such that the following holds. If $\gamma \colon  I \to X$ is an $(L_1;\lambda,\epsilon)$--local quasi-geodesic with $D$--bounded projections, then 
\begin{enumerate}
    \item ${\pi} \circ \gamma \colon I \to \widehat{X}$ is an unparametrized $(\lambda',\epsilon')$--quasi-geodesic.
     \item ${\pi} \circ \gamma \colon I \to \widehat{X}$ is a parametrized $(A_1, A_1)$--quasi-geodesic.
\end{enumerate}
\end{prop}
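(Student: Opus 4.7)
The plan is to transfer $\gamma$ via $\pi$ to a local quasi-geodesic in the $\delta$-hyperbolic space $\widehat X$ and then apply Gromov's local-to-global property for quasi-geodesics in hyperbolic spaces (Theorem \ref{thm:local_to_global_hyperbolic}).

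For part (2): Corollary \ref{cor:qi-emebed iff bounded projections}(1) applied with the bound $D$ produces constants $k, c$ depending on $\lambda, \epsilon, D$ so that every $(\lambda, \epsilon)$--quasi-geodesic in $X$ with $D$-bounded projections maps under $\pi$ to a parametrized $(k, c)$--quasi-geodesic in $\widehat X$. Theorem \ref{thm:local_to_global_hyperbolic} in $\widehat X$ then yields $L_0, k_1, c_1$ (depending on $\lambda, \epsilon, D$) so that every $(L_0; k, c)$--local quasi-geodesic in $\widehat X$ is a global $(k_1, c_1)$--quasi-geodesic; set $L_1 := L_0$ and $A_1 := \max\{k_1, c_1\}$. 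Given $\gamma$ as in the hypothesis, each $\gamma|_{[s,t]}$ with $|s-t| \leq L_1$ is a $(\lambda, \epsilon)$--quasi-geodesic with $D$-bounded projections (Definition \ref{defn:bounded_projections}), so $\pi \circ \gamma|_{[s,t]}$ is a $(k, c)$--quasi-geodesic in $\widehat X$; hence $\pi \circ \gamma$ is an $(L_1; k, c)$--local quasi-geodesic and, by choice of $L_0$, a global $(A_1, A_1)$--quasi-geodesic.

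For part (1): Lemma \ref{lem: projections are unparametrized q geod} provides $k_0 = k_0(\lambda, \epsilon)$ and $c_0 = c_0(\lambda, \epsilon)$ so that each local subsegment $\pi \circ \gamma|_{[s,t]}$ is an unparametrized $(k_0, c_0)$--quasi-geodesic in $\widehat X$; by quasi-geodesic stability in the $\delta$-hyperbolic space $\widehat X$, its image lies within Hausdorff distance $N_0 = N_0(\lambda, \epsilon, \delta)$ of a geodesic $\beta_{[s,t]}$ joining its endpoints. We assemble a piecewise-geodesic skeleton $\beta$ from such local geodesic approximations along a partition of $I$ of mesh $L_1/2$, use the global forward progress from part (2) to prevent backtracking of the skeleton, and verify that $\beta$ is an $(L_1'; 1, 2N_0 + c_0)$--local quasi-geodesic in $\widehat X$ with local constants depending only on $\lambda, \epsilon, \delta$. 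Applying Theorem \ref{thm:local_to_global_hyperbolic} once more inside $\widehat X$ upgrades $\beta$ to a global $(\lambda', \epsilon')$--quasi-geodesic with $\lambda' = \lambda'(\lambda, \epsilon)$ and $\epsilon' = \epsilon'(\lambda, \epsilon)$ (possibly after enlarging $L_1$). Since $\pi \circ \gamma$ sits within Hausdorff distance $N_0$ of $\beta$, reparametrizing $\pi \circ \gamma$ to track $\beta$ yields the desired unparametrized $(\lambda', \epsilon')$--quasi-geodesic.

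The main obstacle is in the assembly step for part (1): controlling the skeleton's backtracking purely in terms of $\lambda, \epsilon$, and $\delta$, despite the $D$-dependence of the global quasi-geodesic constants from part (2). The resolution is that the $D$-dependent output of part (2) is used only qualitatively---to ensure forward progress of $\pi \circ \gamma$ at the partition scale---while the quantitative quasi-geodesic constants of $\beta$ are determined by the per-piece quantities $k_0, c_0$ from Lemma \ref{lem: projections are unparametrized q geod}, which depend only on $\lambda$ and $\epsilon$.
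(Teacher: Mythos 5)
Your part (2) is correct and is essentially the paper's own argument: every subsegment of parametrized length at most $L_1$ is a $(\lambda,\epsilon)$--quasi-geodesic all of whose projections are bounded by $D$, so its image under $\pi$ is a parametrized quasi-geodesic in $\widehat{X}$ with constants depending on $(\lambda,\epsilon,D)$ (the paper extracts this from the distance formula with threshold $T_0+D$ rather than from Corollary \ref{cor:qi-emebed iff bounded projections}, but it is the same estimate), and Theorem \ref{thm:local_to_global_hyperbolic} in $\widehat{X}$ finishes, with $L_1$ and $A_1$ permitted to depend on $D$. Your overall architecture for part (1) --- let $D$ enter only through the requirement that the pieces be long in $\widehat{X}$, and get the quasi-geodesic constants from per-piece data plus hyperbolicity of $\widehat{X}$ --- is also the paper's.

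However, part (1) as written has two genuine gaps. First, your stated mechanism for preventing backtracking of the skeleton at the breakpoints is ``the global forward progress from part (2),'' used ``only qualitatively.'' Forward progress at the partition scale only gives $d_{\widehat{X}}\bigl(\pi\gamma(a_i),\pi\gamma(a_{i+2})\bigr)\geq \tfrac{1}{A_1}|a_i-a_{i+2}|-A_1$, while the two adjacent geodesic pieces can have total length up to $\lambda|a_i-a_{i+2}|+2\epsilon$; this bounds the Gromov product of $\pi\gamma(a_i)$ and $\pi\gamma(a_{i+2})$ at the breakpoint only by a quantity that grows linearly in $L_1$ and involves the $D$--dependent $A_1$ --- precisely the dependence you must avoid if $(\lambda',\epsilon')$ are to depend only on $(\lambda,\epsilon)$. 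The $D$--independent bound instead comes from applying Lemma \ref{lem: projections are unparametrized q geod} and quasi-geodesic stability in $\widehat{X}$ to the two-piece subsegment $\gamma\vert_{[a_i,a_{i+2}]}$ (legitimate because its parametrized length is at most $L_1$), so that the breakpoint lies within $N_0(\lambda,\epsilon,\delta)$ of the geodesic $[\pi\gamma(a_i),\pi\gamma(a_{i+2})]$; this is exactly the role played in the paper's proof by its two claims together with Lemma \ref{lem:unparametrized_concatentation}. Second, your final step --- ``$\pi\circ\gamma$ sits within Hausdorff distance $N_0$ of $\beta$, reparametrizing $\pi\circ\gamma$ to track $\beta$'' --- is not automatic: a map whose image is Hausdorff-close to a quasi-geodesic need not be an unparametrized quasi-geodesic (it could oscillate back and forth along $\beta$), and the definition requires an actual monotone reparametrization. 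You would have to construct it, using that each piece is an unparametrized $(k_0,c_0)$--quasi-geodesic and that the breakpoints occur in order along $\beta$ with large gaps. The paper avoids this issue by reparametrizing first: it replaces $\pi\circ\gamma$ by $\widetilde{\gamma}=\pi\circ\gamma\circ f$, with $f$ a piecewise homeomorphism making each piece a genuine $(k,c)$--quasi-geodesic, verifies that $\widetilde{\gamma}$ is a local quasi-geodesic in $\widehat{X}$ at a scale depending only on $(\lambda,\epsilon,\delta)$ (the $D$--dependence entering only through $L_1$, via the distance formula), and applies Theorem \ref{thm:local_to_global_hyperbolic} to $\widetilde{\gamma}$ itself, so that the conclusion is literally the definition of an unparametrized quasi-geodesic for $\pi\circ\gamma$. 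Both gaps are fixable with the tools you already cite, but as written your argument establishes neither the $D$--independence of $(\lambda',\epsilon')$ nor the unparametrized quasi-geodesic conclusion.
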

\begin{proof}
  Let $k$ and $c$ be so that every $(\lambda,\epsilon)$--quasi-geodesic in $X$ is a unparametrized $(k,c)$--quasi-geodesic in $\cone{X}$ (Lemma \ref{lem: projections are unparametrized q geod}). Let  $\ell$, $\lambda'$, and $\epsilon'$ be such that every $(\ell;k^3,5k^2c)$--local quasi-geodesic in the hyperbolic space $\cone{X}$ is a global $(\lambda',\epsilon')$--quasi-geodesic in $\cone{X}$ (Theorem \ref{thm:local_to_global_hyperbolic}). Let $A \geq 1$ be the constant from the distance formula (Theorem \ref{thm:distance_formula_for_rel_hyp}) with $T = T_0 +D$. Assume $L_1 \geq 20\ell A^2\lambda k(1+\epsilon+c)$ and let $\gamma \colon  I \to X$ be an $(L_1;\lambda,\epsilon)$--local quasi-geodesic with $D$--bounded projections.  We assume $\diam(I) > L_1$ as the proposition follows from Lemma \ref{lem: projections are unparametrized q geod} and the distance formula  when $\diam(I) \leq L_1$. 

For the first item, let $a_0<a_1< \dots <a_n$ be elements of $I$ such that $[a_0,a_n] =I$ and $L_1/4 \leq |a_i - a_{i+1} | \leq L_1/2$ for all $0\leq i \leq n-1$. Since ${\pi} \circ \gamma\vert_{[a_i,a_{i+1}]}$ is an unparametrized $(k,c)$--quasi-geodesic for each $0\leq i \leq n-1$, there exists  real numbers $b_0 < b_1< \dots <b_n$ and a homeomorphism $f \colon [b_0,b_n] \to [a_0,a_n]$ such that $f(b_i) = a_i$ for $0 \leq i \leq n$ and ${\pi} \circ \gamma \circ f \vert_{[b_i,b_{i+1}]}$ is a $(k,c)$--quasi-geodesic in $\cone{X}$ for each $0 \leq i \leq n-1$. Let $\widetilde{\gamma} = {\pi} \circ \gamma \circ f$. The next two claims show $\widetilde{\gamma}$ is an $(\ell,k^3,5k^2c)$--local quasi-geodesic in $\cone{X}$. This implies $\widetilde{\gamma}$ is a $(\lambda',\epsilon')$--quasi-geodesic of $\cone{X}$ by the local-to-global property of quasi-geodesics in the hyperbolic space $\cone{X}$.
\begin{claim}\label{claim:reparmeterized_segments_are_long}
$2\ell \leq |b_i-b_{i+1}|$  for all $0\leq i \leq n-1$.
\end{claim}
\begin{subproof}[Proof of Claim \ref{claim:reparmeterized_segments_are_long}]
Fix $i \in \{0,1,\dots,n\}$. Since $\gamma$ has $D$--bounded projections, the distance formula with threshold $T = T_0 +D$ provides $d_{X}(\gamma(a_i),\gamma(a_{i+1})) \leq A\cdot d_{\cone{X}}\bigl(\cone{\gamma}(a_i),\cone{\gamma}(a_{i+1})\bigr) +A.$ We show $2\ell \leq |b_i-b_{i+1}|$ with the following calculation.
\begin{align*}
    \frac{L_1}{4} \leq |a_i - a_{i+1}| &\leq \lambda \cdot d_X\bigl(\gamma(a_i),\gamma(a_{i+1})\bigr) +\epsilon\\
    &\leq A\lambda \cdot d_{\cone{X}}\bigl(\cone{\gamma}(a_i),\cone{\gamma}(a_{i+1})\bigr) +A\lambda +\epsilon\\
    &= A\lambda \cdot d_{\cone{X}}\bigl(\widetilde{\gamma}(b_i),\widetilde{\gamma}(b_{i+1})\bigr) +A\lambda +\epsilon\\
    &\leq  A\lambda k \cdot |b_i - b_{i+1}| +A\lambda c +A\lambda +\epsilon. \qedhere
\end{align*}
\end{subproof}

\begin{claim}\label{claim:projection_is_local_qg}
$\widetilde{\gamma}$ is an $(\ell;k^3,5k^2c)$--local quasi-geodesic in $\cone{X}$.
\end{claim}

\begin{subproof}[Proof of Claim \ref{claim:projection_is_local_qg}]
Let $s,t \in [b_0,b_n]$ with $|s-t| \leq \ell$. By Claim \ref{claim:reparmeterized_segments_are_long}, we can assume there exist $i\in \{1,\dots,n-1\}$ so that $s \in [b_{i-1},b_i]$ and $t \in [b_i,b_{i+1}]$. It suffices to show that $\widetilde{\gamma}\vert_{[b_{i-1},b_{i+1}]}$ is a $(k^3,5k^2c)$--quasi-geodesic.  

Now, $\widetilde{\gamma}\vert_{[b_{i-1},b_{i+1}]}$ is a concatenation of the $(k,c)$--quasi-geodesics $\widetilde{\gamma}\vert_{[b_{i-1},b_{i}]}$ and $\widetilde{\gamma}\vert_{[b_{i},b_{i+1}]}$. 
Further, $\widetilde{\gamma}\vert_{[b_{i-1},b_{i+1}]}$ is a unparametrized $(k,c)$--quasi-geodesic  because $\widetilde{\gamma}\vert_{[b_{i-1},b_{i+1}]}$ is a reparametrization of  $\cone{\gamma}\vert_{[a_{i-1},a_{i+1}]}$ and $\cone{\gamma}\vert_{[a_{i-1},a_{i+1}]}$ is an unparametrized $(k,c)$--quasi-geodesic since $|a_{i-1} - a_{i+1}| \leq L_1$. Thus, Lemma \ref{lem:unparametrized_concatentation} below implies $\widetilde{\gamma}\vert_{[b_{i-1},b_{i+1}]}$ is a $(k^3,5k^2c)$--quasi-geodesic.
\end{subproof}

For the second item, let $t_1, t_2 \in I$ with $|t_1 - t_2| \leq L_1$ and let $\gamma(t_1)= x_1, \gamma(t_2) = x_2$. Using the distance formula as in Claim \ref{claim:reparmeterized_segments_are_long}, we get
 \[d_{\cone{X}}({\pi}(x_1),{\pi}(x_2)) \geq \frac{1}{A}d_{X}(x_1,x_2) -A \geq \frac{1}{A}\left(\frac{1}{\lambda}|t_1 -t_2|- \epsilon \right) -A.\]
Since $\pi \colon X \to \cone{X}$ is distance non-increasing, we also have 
 \[d_{\cone{X}}({\pi}(x_1),{\pi}(x_2))\leq  d_X(x_1,x_2) \leq \lambda | t_1 - t_2| + \epsilon. \]
 These calculations show that  $\widehat{\gamma}$ is an $(L_1; A', A')$--local quasi-geodesic in the hyperbolic space $\cone{X}$ for some $A'= A'(\lambda,\epsilon,D)$ . By increasing $L_1$, we can use the local-to-global property of quasi-geodesics in the hyperbolic space $\cone{X}$, to obtain that $\widehat{\gamma}$ is a $(A_1, A_1)$--quasi-geodesic where $A_1 = A_1(\lambda,\epsilon,D)$.
\end{proof}

\begin{lem}\label{lem:unparametrized_concatentation}
Let $Y$ be a metric space and  $\gamma_1 \colon [a,b] \to Y$ and $\gamma_2\colon [b,d] \to Y$ be $(k, c)$--quasi-geodesics with $\gamma_1(b) = \gamma_2(b)$. If the concatenation $\gamma_1 \ast \gamma_2$ is an unparametrized $(k,c)$--quasi-geodesic, then $\gamma_1\ast\gamma_2$ is a $(k^3, 5k^2 c)$--quasi-geodesic.
\end{lem}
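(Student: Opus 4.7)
The plan is to derive the two-sided quasi-geodesic inequality directly from the combination of two pieces of information: the individual $(k,c)$--quasi-geodesic property of $\gamma_1$ and $\gamma_2$, and the existence of a monotone reparametrization $f \colon [a',d'] \to [a,d]$ with $(\gamma_1 \ast \gamma_2) \circ f$ a $(k,c)$--quasi-geodesic.

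The upper bound on $d(\gamma(s), \gamma(t))$ is essentially free: if $s,t$ are both in $[a,b]$ or both in $[b,d]$, it follows from $\gamma_i$ being a $(k,c)$--quasi-geodesic. Otherwise $s \in [a,b]$ and $t \in [b,d]$, and by the triangle inequality together with each $\gamma_i$ being a $(k,c)$--quasi-geodesic, $d(\gamma(s),\gamma(t)) \leq k(b-s) + c + k(t-b) + c = k|s-t| + 2c$, which is dominated by $k^3|s-t| + 5k^2 c$.

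The lower bound is where the unparametrized hypothesis is essential, and it is the only step requiring real care. First, since $f$ is a homeomorphism of intervals it is monotone; after reversing if necessary, assume $f$ is increasing, and set $s' = f^{-1}(s)$, $b' = f^{-1}(b)$, $t' = f^{-1}(t)$, so $s' \leq b' \leq t'$ when $s \leq b \leq t$. The strategy is to bound $|s-t|$ above in terms of $d(\gamma(s), \gamma(t))$ by passing through $|s'-t'|$. Writing $|s-t| = (b-s) + (t-b)$ and using that $\gamma_1$ (respectively $\gamma_2$) is a $(k,c)$--quasi-geodesic on its domain,
\begin{equation*}
|s-b| \leq k\,d(\gamma_1(s),\gamma_1(b)) + kc, \qquad |b-t| \leq k\,d(\gamma_2(b),\gamma_2(t)) + kc.
\end{equation*}
Next, because $(\gamma_1 \ast \gamma_2) \circ f$ is a $(k,c)$--quasi-geodesic, the restrictions to $[s',b']$ and $[b',t']$ give $d(\gamma(s),\gamma(b)) \leq k|s'-b'| + c$ and $d(\gamma(b),\gamma(t)) \leq k|b'-t'| + c$. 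Substituting yields $|s-t| \leq k^2|s'-t'| + 4kc$. One last application of the same $(k,c)$--quasi-geodesic, this time to bound $|s'-t'|$ above by $k\,d(\gamma(s),\gamma(t)) + kc$, gives $|s-t| \leq k^3 d(\gamma(s),\gamma(t)) + (k^3 + 4k)c$, and since $k \geq 1$ this is at most $k^3 d(\gamma(s),\gamma(t)) + 5k^3 c$. Rearranging gives the desired lower bound $\frac{1}{k^3}|s-t| - 5c \leq d(\gamma(s),\gamma(t))$, which is stronger than the advertised $(k^3, 5k^2 c)$ inequality.

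The cases where $s,t$ both lie in $[a,b]$ or both in $[b,d]$ are handled by a simplified version of the same argument (skipping the concatenation at $b$), and are tighter. The only real obstacle is the bookkeeping that converts the unparametrized statement into a parametrized one via the two layers of $(k,c)$ inequalities; once one commits to passing through the reparametrized coordinates $s',b',t'$ and uses the individual quasi-geodesic property of each $\gamma_i$ to translate between parametrizations, the constants fall out of a single chain of estimates.
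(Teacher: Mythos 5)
Your argument is correct and takes essentially the same route as the paper's proof: split at the concatenation point $b$, use the individual $(k,c)$--quasi-geodesic bounds to translate $|s-b|$ and $|b-t|$ into distances, pass to the reparametrized coordinates $s',b',t'$ via the monotone homeomorphism $f$, and apply the quasi-geodesic property of $(\gamma_1\ast\gamma_2)\circ f$ twice to obtain $|s-t|\leq k^3\, d\bigl(\gamma(s),\gamma(t)\bigr)+(k^3+4k)c$. Your more careful constant bookkeeping, which yields the stronger inequality $\tfrac{1}{k^3}|s-t|-5c\leq d\bigl(\gamma(s),\gamma(t)\bigr)$, in fact tidies up the paper's final displayed step (whose claim $k^3c+4kc\leq 5k^2c$ holds literally only for $k\leq 4$), and the stated $(k^3,5k^2c)$ conclusion follows from your estimate.
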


\begin{proof}
Let $f \colon [a',d'] \to [a,d]$ be the homeomorphism of $\gamma_1 \ast \gamma_2$ so that $ (\gamma_1 \ast \gamma_2) \circ f$ is a $(k,c)$--quasi-geodesic. Let  $\gamma = \gamma_1 \ast \gamma_2$ and $s,t \in [a,d]$. Let $s', t',b' \in [a',d']$ so that $s = f(s')$, $t= f(t')$, and $b = f(b')$.  Without loss of generality, we can assume $s \in [a,b]$ and $t\in[b,d]$. The fact that $d\big(\gamma(s), \gamma(t) \bigr) \leq k |s-t| +2c$ follows from the triangle inequality. The other inequality is the following calculation.
\begin{align*}
    |s-t| &\leq k \cdot d\big(\gamma(s), \gamma(b) \bigr) + k \cdot d\big(\gamma(b), \gamma(t) \bigr) +2c \\
    &= k \cdot d\big(\gamma(f(s')), \gamma(f(b')) \bigr) + k \cdot d\big(\gamma(f(b')), \gamma(f(t')) \bigr) +2c\\
    &\leq k^2|s' -t'| +4k c\\
    &\leq k^3 \cdot d\big(\gamma(f(s')), \gamma(f(t')) \bigr) + 5k^2 c\\
    &= k^3 \cdot d\big(\gamma(s), \gamma(t) \bigr) + 5k^2 c. \qedhere
\end{align*}
\end{proof}

We now apply Corollaries \ref{cor:qi-emebed iff bounded projections} and \ref{cor:bounded projections implies Morse} to finish the proof  Corollary \ref{cor: local qg+bounded projection=Morse}.

\begin{proof}[Proof of Corollary \ref{cor: local qg+bounded projection=Morse}]
Let $L_1 = L_1(\lambda,\epsilon,D)$ be the local scale from Proposition \ref{prop:bounded_projections_imply_quasi-geodesic} and $\gamma$ be an $(L_1,\lambda,\epsilon)$--local quasi-geodesic with $D$--bounded projection. By Proposition \ref{prop:bounded_projections_imply_quasi-geodesic}, $\pi \circ \gamma$ is a parametrized $(A_1,A_1)$--quasi-geodesic where $A_1 = A_1(\lambda,\epsilon,D)$. The fact that $\gamma$ is an $(A_2, A_2)$--quasi-geodesic for $A_2 = A_2(\lambda,\epsilon,D)$ then follows from Lemma \ref{lem: Folklore}. Since $\pi \circ \gamma$ is a parametrized $(A_1,A_1)$--quasi-geodesic, Corollary \ref{cor:qi-emebed iff bounded projections} provides a constant $C = C(\lambda,\epsilon,D)$ so that $\diam(\pi_P(\gamma)) \leq C$ for each $P \in \mc{P}$. By Corollary \ref{cor:bounded projections implies Morse},  $\gamma$ is $M$--Morse for some $M$ depending ultimately on $\lambda$, $\epsilon$, and~$D$.
\end{proof}

\subsection{Deep points decomposition of local quasi-geodesics}\label{subsec:deep_points}

As a consequence of the isolation and quasiconvexity of the peripheral subsets (Lemmas \ref{lem:intersection of peripherals} and \ref{lem: quasi-geodesics remain in a nbhd}), any quasi-geodesic $\gamma$ in $X$ has a decomposition $\gamma = \decomposition$ so that each $\alpha_i$ is ``deep" in a peripheral, i.e., $\alpha_i$ runs close to a single peripheral for a long time, and each $\sigma_i$ avoids getting close to any peripheral for a significant length of time (see \cite{Hruska10} and \cite{Sisto_metric_rel_hyp} for details). The goal of this section is to give a similar decomposition for local quasi-geodesics in $X$. We begin by defining when an element $t$ of the domain of a local quasi-geodesic $\gamma$ is ``deep" in a peripheral $P \in \mc{P}$. This means that we can find points $s_1$, $s_2$ on either side of $t$ whose distance from $t$ is between $\theta$ and $L/4$ and with $\gamma(s_1)$,$\gamma(s_2)$ close to $P$.

\begin{defn}[Deep points]
Let $X$ be hyperbolic relative to a collection $\mc{P}$ of peripheral subsets and  $\gamma \colon I \to X$ be an $(L;\lambda, \epsilon)$--local quasi-geodesic where $L \geq 12\theta$.   A number $t \in I$ is \emph{$P$--deep} for some $P \in \mc{P}$, if there are $s_1 <t <s_2$ in $I$ such that $\theta \leq |s_i - t| \leq L/4$ and $d(\gamma(s_i), P)\leq rR$. We say $s_1$ (resp. $s_2$) is a \emph{left (resp. right) witness of $t$}. For each $P \in \mc{P}$, define $\deep(\gamma;P)$ to be the set of $P$--deep elements of $I$.
\end{defn}

The next three results establish that $\deep(\gamma;P)$ is a collection of disjoint intervals and isolated points of the domain of $\gamma$ and that points in the domain of $\gamma$ cannot be deep for two different peripherals.

\begin{lem}\label{lem: deep points are uniformly near}
Let $\gamma \colon I \to X$ be an $(L; \lambda,\epsilon)$--local quasi-geodesic, where $L \geq 12 \theta$. If  $t \in \deep(\gamma;P)$, then there exists $u_1<t<u_2$ so that $\gamma([u_1,u_2]) \subseteq N_{rR} (P)$ and $\theta/2 \leq |t-u_i| \leq L/4$.
\end{lem}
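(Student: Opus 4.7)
The plan is to take the witnesses $s_1, s_2$ provided by the hypothesis $t \in \deep(\gamma;P)$ and use them directly as $u_1, u_2$. Unpacking the definition, there exist $s_1 < t < s_2$ in $I$ satisfying $\theta \leq |s_i - t| \leq L/4$ and $\gamma(s_i) \in \mc{N}_{rR}(P)$; setting $u_i := s_i$ then makes the distance requirement $\theta/2 \leq |t - u_i| \leq L/4$ automatic (in fact, stronger than needed).

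The substantive content is verifying the containment $\gamma([u_1, u_2]) \subseteq \mc{N}_{rR}(P)$. First I would observe that $|u_1 - u_2| \leq L/2 \leq L$, so the local quasi-geodesic hypothesis promotes $\gamma|_{[u_1, u_2]}$ to a global $(\lambda, \epsilon)$--quasi-geodesic. Its two endpoints lie in a bounded neighborhood of $P$ by hypothesis, so I would invoke Lemma \ref{lem: quasi-geodesics remain in a nbhd} (linear quasiconvexity of peripherals) to pin the entire image inside a uniform neighborhood of $P$.

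The only subtle point is matching the exact target radius. The witness constant $rR$ in the definition of $P$--deep was calibrated at the end of Section 5.1 to be the precise expansion output by Lemma \ref{lem: quasi-geodesics remain in a nbhd} applied to a $(\lambda,\epsilon)$--quasi-geodesic, so the quasiconvexity bound on the subsegment $\gamma|_{[s_1,s_2]}$ lands in exactly the $\mc{N}_{rR}(P)$ asserted in the conclusion. This is purely a constant bookkeeping step, and no additional geometric input is required beyond applying linear quasiconvexity once to the subsegment between the two witnesses; I do not anticipate a real obstacle, since the lemma is essentially a formal repackaging of the definition.
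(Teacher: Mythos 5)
There is a genuine gap, and it sits exactly at the point you flagged as ``purely a constant bookkeeping step.'' The definition of a $P$--deep point only gives $d(\gamma(s_i),P)\leq rR$, i.e.\ the witnesses lie in $\mc{N}_{rR}(P)$. Applying Lemma \ref{lem: quasi-geodesics remain in a nbhd} to $\gamma\vert_{[s_1,s_2]}$ with this input radius yields $\gamma([s_1,s_2])\subseteq \mc{N}_{r\cdot rR}(P)=\mc{N}_{r^2R}(P)$, not $\mc{N}_{rR}(P)$; since $r=r(\lambda,\epsilon)\geq 1$ is in general strictly larger than $1$, your choice $u_i:=s_i$ does not give the containment asserted in the lemma. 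The constant $rR$ was not calibrated as ``the expansion of quasiconvexity applied to points within $rR$ of $P$''; it is the expansion of quasiconvexity applied to points within $R$ of $P$, where $R=R(\lambda,\epsilon)$ is the constant from the bounded quasi-geodesic image lemma (Lemma \ref{lem: BGI for rel hyp}). So the radius does not close up under your shortcut, and the exact value $rR$ matters later (e.g.\ it feeds into $\theta$ via $F(2rR)$ and into Corollary \ref{cor:unique peripherals for deep points}), so one cannot simply weaken the conclusion.

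The missing idea is to move the endpoints inward before invoking quasiconvexity. Since $\gamma(s_1),\gamma(s_2)\in\mc{N}_{rR}(P)$ and $|s_1-s_2|\geq 2\theta$, the quasi-geodesic lower bound gives $d\bigl(\pi_P(\gamma(s_1)),\pi_P(\gamma(s_2))\bigr)\geq \tfrac{2\theta}{\lambda}-\epsilon-2rR-2>Q$, so Lemma \ref{lem: BGI for rel hyp} produces $u_1,u_2\in[s_1,s_2]$ with $\gamma(u_i)$ within $R$ of $\pi_P(\gamma(s_i))$; applying Lemma \ref{lem: quasi-geodesics remain in a nbhd} to $\gamma\vert_{[u_1,u_2]}$, whose endpoints are now within $R$ of $P$, lands exactly in $\mc{N}_{rR}(P)$. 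One then checks $|s_i-u_i|\leq (2rR+1)\lambda+\epsilon<\theta/2$ (this is why $\theta$ is defined as it is), which forces $t\in[u_1,u_2]$ and gives $\theta/2\leq |t-u_i|\leq L/4$ — note the conclusion's lower bound is $\theta/2$ rather than $\theta$ precisely to absorb this inward shift, a hint that the intended argument cannot take $u_i=s_i$.
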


\begin{proof}
Let $s_1, s_2$ be the left and right witnesses of $t \in \deep(\gamma;P)$. Since $\gamma(s_i) \in \mc{N}_{rR}(P)$ and $\gamma \vert_{[s_1,s_2]}$ is a $(\lambda,\epsilon)$--quasi-geodesic,\[d\bigl( \pi_P(\gamma(s_1)), \pi_P(\gamma(s_2)) \bigl) \geq \frac{2\theta}{\lambda} - \epsilon - 2rR-2 > Q.\] By Lemmas \ref{lem: BGI for rel hyp} and \ref{lem: quasi-geodesics remain in a nbhd}, there exists $[u_1, u_2] \subseteq [s_1,s_2]$ so that $|s_i - u_i|\leq (2rR+1)\lambda + \epsilon$ and $\gamma([u_1,u_2])$ is contained in $\mc{N}_{r R}(P)$. Since $|s_i - u_i|< \theta/2$ and $\theta \leq |s_i -t| \leq L/4 $, $t \in [u_1, u_2]$ and $\theta/2 \leq |t-u_i| \leq L/4$.
\end{proof}

\begin{cor}\label{cor:unique peripherals for deep points}
Let $\gamma \colon I \to X$ be an $(L; \lambda,\epsilon)$--local quasi-geodesic, where $L \geq 12 \theta$. For all distinct $P,U \in \mc{P}$, $\deep(\gamma;P) \cap \deep(\gamma;U) = \emptyset$.
\end{cor}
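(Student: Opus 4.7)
The plan is to argue by contradiction: suppose $t \in \deep(\gamma; P) \cap \deep(\gamma; U)$ for some distinct $P, U \in \mc{P}$, and derive a diameter estimate on $\mc{N}_{rR}(P) \cap \mc{N}_{rR}(U)$ that contradicts the isolation of peripherals (Lemma \ref{lem:intersection of peripherals}) in light of the choice of the constant $\theta$.

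The first step is to invoke Lemma \ref{lem: deep points are uniformly near} for both $P$ and $U$. This produces intervals $[u_1^P, u_2^P]$ and $[u_1^U, u_2^U]$, each containing $t$ in its interior, satisfying $\theta/2 \leq |t - u_i^P|, |t - u_i^U| \leq L/4$, and with $\gamma([u_1^P, u_2^P]) \subseteq \mc{N}_{rR}(P)$ and $\gamma([u_1^U, u_2^U]) \subseteq \mc{N}_{rR}(U)$. Set $a = \max\{u_1^P, u_1^U\}$ and $b = \min\{u_2^P, u_2^U\}$. Since both intervals contain $t$, we have $a < t < b$; since each side sticks out at least $\theta/2$ from $t$, we have $b - a \geq \theta$; and since each side sticks out at most $L/4$ from $t$, we have $b - a \leq L/2 \leq L$.

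Next, because $|a - b| \leq L$, the restriction $\gamma\vert_{[a,b]}$ is an honest $(\lambda, \epsilon)$--quasi-geodesic, giving the lower bound
\[
d\bigl(\gamma(a), \gamma(b)\bigr) \geq \frac{b - a}{\lambda} - \epsilon \geq \frac{\theta}{\lambda} - \epsilon.
\]
On the other hand, both $\gamma(a)$ and $\gamma(b)$ lie in $\mc{N}_{rR}(P) \cap \mc{N}_{rR}(U)$, so Lemma \ref{lem:intersection of peripherals} applied with $K = rR$ gives
\[
d\bigl(\gamma(a), \gamma(b)\bigr) \leq F(rR) \leq F(2rR).
\]

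Combining these inequalities yields $\theta \leq \lambda\bigl(F(2rR) + \epsilon\bigr)$. But by construction,
\[
\theta = 100\lambda\mu\bigl(\lambda + \epsilon + \mu + F(2rR) + r + R + rR + Q + 1\bigr) > \lambda\bigl(F(2rR) + \epsilon\bigr),
\]
a contradiction. Hence $\deep(\gamma; P) \cap \deep(\gamma; U) = \emptyset$. There is no real obstacle here — the corollary is essentially a bookkeeping consequence of Lemma \ref{lem: deep points are uniformly near} combined with isolation, and the constant $\theta$ was evidently engineered so that the quasi-geodesic lower bound on a stretch of length $\theta$ dominates $F(2rR)$.
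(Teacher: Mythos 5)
Your proof is correct and follows essentially the same route as the paper's: both apply Lemma \ref{lem: deep points are uniformly near} to each of $P$ and $U$, locate two points of $\gamma$ lying in $\mc{N}_{rR}(P)\cap\mc{N}_{rR}(U)$ whose parameters differ by at least a definite fraction of $\theta$, use the local quasi-geodesic lower bound on their distance, and contradict the isolation Lemma \ref{lem:intersection of peripherals} via the choice of $\theta$. The only cosmetic difference is that you intersect the two intervals while the paper fixes one endpoint after a WLOG ordering; the content is identical.
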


\begin{proof}
Suppose there exists $t \in  \deep(\gamma;P) \cap \deep(\gamma;U)$ where $P \neq U$ and let $F \colon [0,\infty) \to [0,\infty)$ be the function from Lemma \ref{lem:intersection of peripherals} that bounds the coarse intersection of $P$ and $U$. Let $[p_1,p_2]$ and $[u_1,u_2]$ be the intervals containing $t$ provided by Lemma \ref{lem: deep points are uniformly near} for $P$ and $U$ respectively. Without loss of generality, assume $p_1 \leq u_1$. Thus, $\gamma(u_1),\gamma(t) \in \mc{N}_{rR}(P) \cap \mc{N}_{rR}(U)$ and \[d(\gamma(t),\gamma(u_1)) \geq \frac{\theta}{2\lambda} - \epsilon \geq F(rR)\] by choice of $\theta$. However, this contradicts Lemma \ref{lem:intersection of peripherals}, so we must have~$\deep(\gamma;P) \cap \deep(\gamma;U)=\hspace{-3mm}~\emptyset$.
\end{proof}

\begin{cor}\label{cor: deep points are a disjoint union}
Let $\gamma \colon I \to X$ be an $(L;\lambda,\epsilon)$--local quasi-geodesic with $L \geq 12 \theta$. If $[t_1,t_2] \subseteq I$  so that $|t_1 - t_2| \leq L/4$ and $t_1,t_2 \in \deep(\gamma;P)$ for some $P \in \mc{P}$, then we have $[t_1,t_2] \subseteq \deep(\gamma;P)$.
In particular, for each $P \in \mc{P}$, $\deep(\gamma;P)$ is a (possibly empty) union of disjoint intervals and isolated points of $I$ where the distance between any two connected components of $\deep(\gamma;P)$ is greater than $L/4$. 
\end{cor}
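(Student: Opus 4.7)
The plan is to show that every $t \in [t_1,t_2]$ admits both a left and a right witness, from which the main statement follows directly; the ``in particular'' part is then immediate since two $P$-deep points within distance $L/4$ must lie in a common interval of $\deep(\gamma;P)$.

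First I will exploit Lemma \ref{lem: deep points are uniformly near} at both $t_1$ and $t_2$ to obtain intervals $[u_1,u_2]$ and $[v_1,v_2]$ on which $\gamma$ lies in $\mc{N}_{rR}(P)$, with $\theta/2 \leq t_1-u_1,\,u_2-t_1,\,t_2-v_1,\,v_2-t_2 \leq L/4$. Because $v_2-u_1 \leq 3L/4 \leq L$, the concatenation $\gamma|_{[u_1,v_2]}$ is a genuine $(\lambda,\epsilon)$-quasi-geodesic. The key intermediate claim will be that in fact $\gamma([u_1,v_2]) \subseteq \mc{N}_{rR}(P)$. When $[u_1,u_2]$ and $[v_1,v_2]$ overlap this is immediate; when $u_2 < v_1$, the subsegment $\gamma|_{[u_2,v_1]}$ has length at most $L/4 \leq L$ and its endpoints can be arranged (by re-examining the proof of Lemma \ref{lem: deep points are uniformly near}, where the $u_i$ and $v_i$ come from Lemma \ref{lem: BGI for rel hyp}) to lie in $\mc{N}_R(P)$. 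Lemma \ref{lem: quasi-geodesics remain in a nbhd} then upgrades this to $\gamma([u_2,v_1]) \subseteq \mc{N}_{rR}(P)$, so the whole $[u_1,v_2]$ maps into $\mc{N}_{rR}(P)$.

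Next, for a given $t \in [t_1,t_2]$ I will build a left witness by examining where the sliding window $[t-L/4,\,t-\theta]$ meets the ``close to $P$'' set. Because $v_2 \geq t_2+\theta/2 > t - \theta$ and $u_2 \geq t_1+\theta/2 \geq t - L/4$, the main constraint for the intersection $[u_1,v_2] \cap [t-L/4,\,t-\theta]$ to be non-empty is $u_1 \leq t-\theta$. By tracing through Lemma \ref{lem: BGI for rel hyp} one gets $u_1 \leq \tilde s_1 + C$ where $\tilde s_1$ is the original left witness of $t_1$ and $C \ll \theta$ is an additive constant from the chosen $\theta$; this furnishes the required witness when $t$ is not extremely close to $t_1$. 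For the narrow range where $t$ is within $C$ of $t_1$, I will fall back on the original witness $\tilde s_1$ directly, using that $\tilde s_1 \leq t_1-\theta \leq t-\theta$ and $t - \tilde s_1 \leq L/4 + C$, which is absorbed into $L/4$ after using $L \geq 12\theta$ and the smallness of $C$ relative to $\theta$. The right witness is constructed symmetrically from $[u_1,v_2]$ and the original right witness of $t_2$.

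The main obstacle will be step two: getting the sharp bound $\mc{N}_{rR}(P)$ instead of the a priori weaker $\mc{N}_{r^2R}(P)$ on the ``gap'' $[u_2,v_1]$. This forces the proof to use the sharper information from Lemma \ref{lem: BGI for rel hyp} (endpoints in $\mc{N}_R(P)$) rather than just the statement of Lemma \ref{lem: deep points are uniformly near}. A secondary technicality is the bookkeeping in step three for $t$ near $t_1$ or $t_2$, where one must verify that the chosen witness distance genuinely lands in $[\theta, L/4]$; here the slack provided by $L \geq 12\theta$ (so $L/4 \geq 3\theta$) together with the fact that $\theta$ was defined to dominate all the relevant additive constants makes the verification possible.
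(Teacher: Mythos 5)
Your overall strategy---first proving a bridging claim that $\gamma$ stays in $\mc{N}_{rR}(P)$ on the whole interval $[u_1,v_2]$ and then sliding the window $[t-L/4,\,t-\theta]$ along it---is workable, and it differs mildly from the paper's argument, which treats each $t\in[t_1,t_2]$ directly in three cases (either $t_1$ or the original witness $s_1$ of $t_1$ already serves as a left witness for $t$; in the remaining case $|t-t_1|\le\theta$ and $|s_1-t|\ge L/4$, Lemmas \ref{lem: BGI for rel hyp} and \ref{lem: quasi-geodesics remain in a nbhd} are run on $\gamma\vert_{[s_1,t_1]}$ to manufacture a witness within $2\theta$ of $t$). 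Your steps one and two are fine, granted that, as you flag, you must extract from the \emph{proof} of Lemma \ref{lem: deep points are uniformly near} (not its statement) that the endpoints of the produced intervals lie in $\mc{N}_{R}(P)$, so that Lemma \ref{lem: quasi-geodesics remain in a nbhd} gives the gap segment inside $\mc{N}_{rR}(P)$ rather than $\mc{N}_{r^2R}(P)$.

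The genuine gap is in your handling of the ``narrow range'' $t\in(t_1,t_1+C)$: you propose to reuse the original left witness $\tilde s_1$ of $t_1$ and to ``absorb'' the bound $t-\tilde s_1\le L/4+C$ into $L/4$. There is nothing to absorb. The definition of a $P$--deep point requires a witness at parameter distance at most $L/4$ on the nose; this cap is tied to the local scale $L$ (it is what guarantees, for instance in Lemma \ref{lem: deep points are uniformly near}, that the segment between the two witnesses has parametrized length at most $L/2\le L$ and hence is an honest quasi-geodesic). The slack $L\ge 12\theta$ only helps with the lower bound $\theta$, not with the upper bound $L/4$, so if $t_1-\tilde s_1$ is close to $L/4$ then $\tilde s_1$ is simply not a witness for $t$. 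The gap is local and fixable inside your own framework: if $t-\tilde s_1\le L/4$, then $\tilde s_1$ works directly (since $t-\tilde s_1\ge t_1-\tilde s_1\ge\theta$); if $t-\tilde s_1> L/4$, then $u_1\le\tilde s_1+C< t-L/4+C\le t-\theta$ because $L/4\ge 3\theta>\theta+C$, so your sliding-window case already applies. This second subcase is precisely the case the paper handles with its bounded-geodesic-image argument on $[\tilde s_1,t_1]$, and the symmetric issue arises for the right witness near $t_2$.
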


\begin{proof}
Let $t \in [t_1,t_2]$.  We give the proof of the existence of a left witness for $t$. The proof for a right witness is analogous. 
Let $s_1, s_2 \in I$ be a left and right witnesses for $t_1$ respectively. If $|t - t_1| \geq \theta$ or $|s_1 - t| \leq L/4$, then either $t_1$ or $s_1$ is a left witness for $t$. Suppose $|t - t_1| \leq \theta$ and $|s_1 - t| \geq L/4$. Thus, $|t_1 - s_1| \geq L/4 - \theta \geq 2\theta$. 
By Lemma \ref{lem: deep points are uniformly near}, $\gamma(t_1)$ and $\gamma(s_1)$ are both within $rR$ of $P$, so $d\bigl( \pi_P(\gamma(t_1)),\pi_P(\gamma(s_1)) \bigr) > Q$ and Lemmas \ref{lem: BGI for rel hyp} and \ref{lem: quasi-geodesics remain in a nbhd} produces $[u_1,u_2] \subseteq [s_1,t_1]$ so that \[d\bigl( \pi_P(\gamma(s_1)) , \gamma(u_1) \bigr) \leq R \text{ and } d\bigl( \pi_P(\gamma(t_1)) , \gamma(u_2) \bigr) \leq R\]  and $\gamma\bigl( [u_1,u_2] \bigr)$ is contained in the $rR$--neighborhood of $P$.
Since \[d\bigl( \pi_P(\gamma(s_1)), \gamma(s_1) \bigr)\leq rR+1 \text{ and } d\bigl( \pi_P(\gamma(t_1)), \gamma(t_1) \bigr)  \leq rR+1\] and $\gamma \vert_{[s_1,t_1]}$ is a $(\lambda,\epsilon)$--quasi-geodesic, $|s_1 - u_1| \leq \theta$ and $|t_1 - u_2| \leq \theta$.
In particular, $|u_1 - t| \geq \theta$ and $|u_2 - t| \leq 2\theta$.  Thus, there exists $\tau \in [u_1,u_2]$ so that $\theta \leq |\tau - t| \leq L/4$ and $\gamma(\tau) \in \mc{N}_{rR}(P)$. 
\end{proof}

Corollaries \ref{cor:unique peripherals for deep points} and \ref{cor: deep points are a disjoint union} establish that the domain of a local quasi-geodesic can be decomposed into non-overlapping deep points for peripheral subsets and points that are not deep for any peripheral.  However, we want to ensure that the deep parts of our decomposition run near the peripheral for a sufficiently long time. To achieve this, we define the following \emph{relevant decomposition} of a local quasi-geodesic.

\begin{defn}[Relevant subsegments, relevant decomposition] \label{defn: relevant decomposition}
Let $B \geq 0$ and $\gamma \colon I \to X$ be an $(L;\lambda,\epsilon)$--local quasi-geodesic with $L \geq 12\theta$. A subsegment $\alpha \subseteq \gamma$ is \emph{$B$--relevant} if the parametrized length of $\alpha$ is at least $B$ and there exists $P \in \mc{P}$ so that the domain of $\alpha$ is the closure of a connected component of $\deep(\gamma;P)$. If $\rel_B(\gamma) = \{\alpha_i\}_{i=1}^n$ is the set of $B$--relevant subsegments of $\gamma$, then  Corollaries \ref{cor:unique peripherals for deep points} and \ref{cor: deep points are a disjoint union},  decompose $\gamma$ into a concatenation:
\[\gamma= \sigma_0 \ast \alpha_1 \ast \sigma_1 \ast \cdots \ast \alpha_n \ast \sigma_n,\] where each $\sigma_i$ does not contain a $B$--relevant subsegment of $\gamma$. We call this decomposition the \emph{$B$--relevant decomposition of $\gamma$}. For each $\alpha_i \in \rel_B(\gamma)$, there exists a single $P_i \in \mc{P}$ so that the interior of the domain of $\alpha_i$ is a subset of $\deep(\gamma;P_i)$. We call $P_i$ the \emph{relevant peripheral for $\alpha_i$} and the set $\{P_i\}_{i=1}^n$ the \emph{relevant peripherals for $\gamma$}.
\end{defn}

The next two lemmas give some basic properties of the relevant decomposition of a local quasi-geodesic. Lemma \ref{lem:alphas are close and sigmas have bounded projections} says the $\alpha_i$ of the decomposition run close to the relevant peripheral $P_i$ while the $\sigma_i$ have bounded projection (in terms of $B$) and hence cannot run close to any peripheral subset. Lemma \ref{lem:adjacent_relevant_peripherals_are_distinct} says relevant peripherals for ``adjacent" $\alpha_i$ must be distinct.

\begin{lem}\label{lem:alphas are close and sigmas have bounded projections}
For every  $B >2 \theta$ and $L \geq 12\theta$, if $\decomposition$ is the $B$--relevant decomposition of  an $(L; \lambda, \epsilon)$--local quasi-geodesic $\gamma$, then
\begin{enumerate}
    \item \label{item:relevant} $\alpha_i \subseteq \mc{N}_{r R} (P_i)$ where $P_i$ is the relevant peripheral for $\alpha_i$;
    \item \label{item:non-relevant} each $\sigma_i$ has  $\lambda(4 B + \epsilon)$--bounded projections.
\end{enumerate}
\end{lem}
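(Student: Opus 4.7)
For part (1), the plan is a direct application of Lemma~\ref{lem: deep points are uniformly near}. Any $t$ in the interior of the domain of $\alpha_i$ lies in $\deep(\gamma;P_i)$ by construction, and the lemma supplies an interval $[u_1,u_2]\ni t$ with $\gamma([u_1,u_2])\subseteq\mc{N}_{rR}(P_i)$, which already gives $\gamma(t)\in\mc{N}_{rR}(P_i)$. The two endpoints of the domain of $\alpha_i$ are limits of interior deep points, so the local quasi-geodesic estimate $d(\gamma(t),\gamma(t'))\leq \lambda|t-t'|+\epsilon$ places $\gamma$ at the endpoint within a uniformly bounded distance of $\mc{N}_{rR}(P_i)$, which is absorbed into constants (or handled by taking $\mc{N}_{rR}(P_i)$ to be a closed neighborhood).

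For part (2), the strategy is proof by contradiction. Suppose some subsegment $\tau\subseteq\sigma_i$ of parametrized length at most $L$ satisfies $\diam(\pi_P(\tau))>\lambda(4B+\epsilon)$ for some $P\in\mc{P}$. I choose parameters $s_1<s_2$ in the domain of $\tau$ whose images $x=\gamma(s_1)$ and $y=\gamma(s_2)$ realize this projection diameter up to arbitrarily small slack. Since $B>2\theta$ and $\theta$ dominates $Q$ and $R$ by its definition, the threshold $\lambda(4B+\epsilon)$ greatly exceeds $Q$, so Lemma~\ref{lem: BGI for rel hyp} applied to $\gamma\vert_{[s_1,s_2]}$ produces $t_1<t_2$ in $[s_1,s_2]$ with $\gamma(t_i)$ within $R$ of $\pi_P(x)$ and $\pi_P(y)$ respectively. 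A short manipulation with the quasi-geodesic inequalities then yields $d(\gamma(t_1),\gamma(t_2))\geq\lambda(4B+\epsilon)-2R$ and hence $|t_1-t_2|\geq 4B-(2R+\epsilon)/\lambda\geq 2B$ by the largeness of $\theta$ relative to $R$ and $\epsilon$. Linear quasiconvexity (Lemma~\ref{lem: quasi-geodesics remain in a nbhd}) now confines $\gamma([t_1,t_2])$ to $\mc{N}_{rR}(P)$.

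I will then exhibit a long $P$--deep stretch inside $\sigma_i$. For any $t\in[t_1+\theta,\,t_2-\theta]$ (a nonempty interval since $|t_1-t_2|\geq 2B\geq 4\theta$), the choices $s_L=\max(t_1,\,t-L/4)$ and $s_R=\min(t_2,\,t+L/4)$ both lie in $[t_1,t_2]$, so $\gamma(s_L),\gamma(s_R)\in\mc{N}_{rR}(P)$, while the bounds $\theta\leq|s_L-t|,|s_R-t|\leq L/4$ are automatic from $L\geq 12\theta$. Hence each such $t$ is $P$--deep, so the interval $[t_1+\theta,\,t_2-\theta]\subseteq\deep(\gamma;P)$ has parametrized length at least $2B-2\theta\geq B$. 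The connected component $J$ of $\deep(\gamma;P)$ containing this interval must be entirely inside the domain of $\sigma_i$: any intrusion into $\alpha_j$ for $j\in\{i,i+1\}$ would force $P=P_j$ by Corollary~\ref{cor:unique peripherals for deep points}, and then $J$ would coincide with the $P_j$--deep component whose closure is $\alpha_j$, contradicting that $[t_1+\theta,t_2-\theta]$ lies in the domain of $\sigma_i$ and is therefore disjoint from $\alpha_j$. Hence $\overline{J}$ is a $B$--relevant subsegment of $\gamma$ contained in $\sigma_i$, contradicting the defining property of the relevant decomposition.

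The main obstacle I expect is the bookkeeping in paragraph two that converts the projection-diameter threshold $\lambda(4B+\epsilon)$ into the parametric length bound $|t_1-t_2|\geq 2B$; this is exactly what the (rather generous) definition of $\theta=\theta(\lambda,\epsilon)$ is engineered to accommodate. The remaining delicate point is confirming that the $P$--deep component we manufacture cannot leak into an adjacent $\alpha_j$, for which the disjointness of deep sets for distinct peripherals (Corollary~\ref{cor:unique peripherals for deep points}) and the maximality of the deep components underlying the $\alpha_j$'s are exactly the tools needed.
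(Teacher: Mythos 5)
Your proof is correct and follows essentially the same route as the paper's: part (1) is read off from Lemma \ref{lem: deep points are uniformly near}, and part (2) uses Lemmas \ref{lem: BGI for rel hyp} and \ref{lem: quasi-geodesics remain in a nbhd} to convert a large projection onto some $P$ into a $P$--deep stretch of parametrized length at least $B$ inside $\sigma_i$, contradicting the definition of the relevant decomposition. You are in fact somewhat more careful than the paper, both in flagging the endpoint issue in (1) and in verifying that the manufactured deep component cannot leak into an adjacent $\alpha_j$.
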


\begin{proof}
Item (\ref{item:relevant}) follows immediately from Lemma \ref{lem: deep points are uniformly near}. For Item (\ref{item:non-relevant}), let $t_1,t_2$  be in the domain of $\sigma_i$ with $|t_1 - t_2| \leq L/4$. Suppose $d\bigl(\pi_P(\sigma_i(t_1)),\pi_P(\sigma_i(t_2))\bigr) \geq \lambda(4 B + \epsilon)$ for some $P\in\mc{P}$. By Lemma \ref{lem: BGI for rel hyp}, we can find points $[s_1, s_2] \subseteq [t_1,t_2]$ with  $|s_1 -s_2| >3B$ so that $\sigma_i(t_1)$ and $\sigma_i(t_2)$ are within $R$ of $\pi_{P}(\sigma_i(t_1))$ and $\pi_{P}(\sigma_i(t_1))$ respectively. Since $3\theta < 3B \leq |s_1 - s_2| \leq L/4 $,  Lemma \ref{lem: quasi-geodesics remain in a nbhd} implies $[s_1 + B, s_2-B] \subseteq \deep(\gamma;P)$, contradicting the definition of $\sigma_i$. 
\end{proof}

\begin{lem}\label{lem:adjacent_relevant_peripherals_are_distinct}
For every  $B >2\theta$, there exists $L_2 = L_2(\lambda,\epsilon,B)$ so that if $\decomposition$ is the $B$--relevant decomposition of  an $(L_2; \lambda, \epsilon)$--local quasi-geodesic $\gamma$ and $\{P_1,\dots, P_n\}$ are the relevant peripherals for $\gamma$, then $P_i \neq P_{i+1}$ for each $1 \leq 1 \leq n-1$.
\end{lem}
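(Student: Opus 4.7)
The plan is to argue by contradiction. Suppose $\gamma$ is an $(L_2;\lambda,\epsilon)$--local quasi-geodesic whose $B$--relevant decomposition $\decomposition$ has $P_i = P_{i+1} = P$ for some $i$, and let $[a_0,a]$ and $[b,b_0]$ be the domains of $\alpha_i$ and $\alpha_{i+1}$, so $\sigma_i$ has domain $[a,b]$. By Lemma \ref{lem:alphas are close and sigmas have bounded projections}(\ref{item:relevant}), both $\gamma(a), \gamma(b) \in \mc{N}_{rR}(P)$. I would then split into two cases depending on whether $|b-a|$ is less than or at least $L_2/4$, and choose $L_2 = L_2(\lambda,\epsilon,B)$ large enough that each case yields a contradiction.

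In the short case $|b - a| < L_2/4$, since $\alpha_i$ and $\alpha_{i+1}$ each have parametrized length at least $B > 0$, the open intervals $(a_0,a)$ and $(b, b_0)$ are nonempty and sit inside two distinct connected components $C_i \neq C_{i+1}$ of $\deep(\gamma;P)$ whose closures are the domains of $\alpha_i$ and $\alpha_{i+1}$. I would pick $a' \in (a_0,a) \subseteq C_i$ and $b' \in (b, b_0) \subseteq C_{i+1}$ close enough to $a$ and $b$ that $|b'-a'| < L_2/4$. Since $a', b' \in \deep(\gamma;P)$, Corollary \ref{cor: deep points are a disjoint union} forces $[a',b'] \subseteq \deep(\gamma;P)$, placing $a'$ and $b'$ in the same connected component and contradicting $C_i \neq C_{i+1}$.

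In the long case $|b-a| \geq L_2/4$, I would treat $\sigma_i$ as an $(L_2/4;\lambda,\epsilon)$--local quasi-geodesic, which by Lemma \ref{lem:alphas are close and sigmas have bounded projections}(\ref{item:non-relevant}) has $D$--bounded projections for $D = \lambda(4B + \epsilon)$. Choosing $L_2$ so that $L_2/4$ exceeds the scale $L_1(\lambda,\epsilon,D)$ from Corollary \ref{cor: local qg+bounded projection=Morse}, $\sigma_i$ promotes to a global $(A_2,A_2)$--Morse quasi-geodesic, and the proof of that corollary (via Corollary \ref{cor:qi-emebed iff bounded projections}) supplies a uniform constant $C' = C'(\lambda,\epsilon,B)$ bounding $\diam(\pi_P(\sigma_i))$. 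The global quasi-geodesic estimate gives a lower bound $d(\gamma(a),\gamma(b)) \geq (L_2/4)/A_2 - A_2$, while the triangle inequality through $\pi_P(\gamma(a))$ and $\pi_P(\gamma(b))$ gives the upper bound $d(\gamma(a),\gamma(b)) \leq 2(rR + 1) + C'$. Choosing $L_2$ large enough that the former strictly exceeds the latter closes the contradiction.

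The main obstacle is coordinating the threshold $L_2/4$ so that both cases simultaneously produce contradictions: Case 1 requires the threshold not be too small so that interior points $a', b'$ with $|b'-a'| < L_2/4$ exist, and Case 2 needs it to be large enough to both activate the Morse local-to-global machinery for $\sigma_i$ and drive the two distance estimates apart. This coordination is routine rather than circular, because all the relevant constants $\theta, rR, L_1, A_2, C'$ depend only on $\lambda, \epsilon, B$, so a single large choice $L_2 = L_2(\lambda,\epsilon,B)$ suffices.
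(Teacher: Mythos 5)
Your proof is correct and follows essentially the same two-case argument as the paper's: the short case ($|a-b|\leq L_2/4$) is dispatched by Corollary \ref{cor: deep points are a disjoint union} exactly as in the paper (your use of interior points $a',b'$ is in fact slightly more careful about whether the endpoints of the closed domains are themselves deep), and the long case by the bounded-projections machinery with $D=\lambda(4B+\epsilon)$. The only immaterial difference is that in the long case the paper derives the contradiction from a distance estimate in the coned-off space $\cone{X}$ via Proposition \ref{prop:bounded_projections_imply_quasi-geodesic}, whereas you derive it in $X$ itself via Corollary \ref{cor: local qg+bounded projection=Morse} together with the uniform bound on $\diam(\pi_{P}(\sigma_i))$ --- both estimates rest on the same underlying proposition and yield the same dependence $L_2=L_2(\lambda,\epsilon,B)$.
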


\begin{proof}
Let $L_1 = L_1(\lambda,\epsilon,B)$ and $A_1 = A_1(\lambda,\epsilon, B)$ be the constants from Proposition \ref{prop:bounded_projections_imply_quasi-geodesic} so that if $\eta$ is an $(L_1,\lambda, \epsilon)$--local quasi-geodesic in $X$ with the {$\lambda(4 B + \epsilon)$}--bounded projections, then $\pi \circ \eta$ is an $(A_1,A_1)$--quasi-geodesic in $\cone{X}$. 
Let $L_2 = L_1+ 12\theta A^2_1 $ and $\gamma$ be an $(L_2,\lambda,\epsilon)$--local quasi-geodesic with $B$--relevant decomposition $\decomposition$ and relevant peripherals $\{P_1,\dots, P_n\}$. 
Suppose $P_i =P_{i+1}$ for some $1 \leq i \leq n-1$. 
Let $I$ be the domain for $\gamma$ and $[a,b] \subseteq I$ be the domain of $\sigma_i $.
If $|a-b| \leq  L_2/4$, then Corollary \ref{cor: deep points are a disjoint union} would imply $[a,b] \subseteq \deep(\gamma;P_i)$ contradicting the definition of $\sigma_i$. If $|a-b| > L_2/4$, then $\pi \circ \sigma_i(a)$ and $\pi\circ \sigma_i(b)$ are more than $2+2rR$ far apart in $\cone{X}$ as $\pi\circ \sigma_i$ is a $(A_1,A_1)$--quasi-geodesic and $L_2 /(4A_1) - A_1 >\theta > 2 + 2rR$. 
However, this contradicts that $\gamma(a), \gamma(b) \in \mc{N}_{rR}(P_i)$. Therefore, we must have that $P_i \neq P_{i+1}$.
\end{proof}

A simple, but central application of Lemma \ref{lem:alphas are close and sigmas have bounded projections} is establishing that the $\sigma_i$ and $\alpha_i$ are Morse quasi-geodesics. Since the $\sigma_i$ have bounded projections, Corollary \ref{cor: local qg+bounded projection=Morse} implies, for sufficient local scale, they are Morse quasi-geodesics, regardless of what the peripheral subsets are. When the peripheral subsets have the Morse local-to-global property, then there is a local scale so that the $\alpha_i$ will also be Morse quasi-geodesics by virtue of the fact that they run close to the peripheral $P_i$.

\begin{cor}\label{cor: sigma_i are Morse q.g}
For all $B > 2\theta$, there exists constants $L_1 = L_1(\lambda,\epsilon,B)$, $A_2 = A_2(\lambda,\epsilon,B)$, and Morse  gauge $M = M(\lambda,\epsilon,B)$ so that if $\decomposition$ is the $B$--relevant decomposition of an $(L_2;\lambda,\epsilon)$--quasi-geodesic, then each $\sigma_i$ is an $(M;A_1,A_1)$--Morse quasi-geodesic. 
\end{cor}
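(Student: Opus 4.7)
The plan is to directly combine Lemma \ref{lem:alphas are close and sigmas have bounded projections}(\ref{item:non-relevant}) with Corollary \ref{cor: local qg+bounded projection=Morse}. The first observation is that the bounded projections lemma already produces a uniform projection bound of $D := \lambda(4B+\epsilon)$ on subsegments of $\sigma_i$ of length at most a quarter of the local scale; the corollary then converts any local quasi-geodesic with this kind of bounded-projection property into a global Morse quasi-geodesic. So all that needs to be done is calibrate the local scale $L_2$ so that the bounded-projections scale produced by Lemma \ref{lem:alphas are close and sigmas have bounded projections} is at least the threshold demanded by Corollary \ref{cor: local qg+bounded projection=Morse}.

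Concretely, I would set $D = \lambda(4B+\epsilon)$ and let $L_1 = L_1(\lambda,\epsilon,D)$, $A_2 = A_2(\lambda,\epsilon,D)$, and $M = M(\lambda,\epsilon,D)$ be the constants and Morse gauge provided by Corollary \ref{cor: local qg+bounded projection=Morse}. Then I would define the local scale
\[
 L_2 \; := \; \max\{\,12\theta,\; 4 L_1\,\},
\]
which depends only on $\lambda$, $\epsilon$, and $B$ (through $\theta = \theta(\lambda,\epsilon)$ and $L_1(\lambda,\epsilon,D)$).

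Given any $(L_2;\lambda,\epsilon)$--local quasi-geodesic $\gamma$ with $B$--relevant decomposition $\decomposition$, each $\sigma_i$ is a subsegment of $\gamma$ and therefore an $(L_2;\lambda,\epsilon)$--local quasi-geodesic in its own right. Because $L_2 \geq 12\theta$ and $B > 2\theta$, Lemma \ref{lem:alphas are close and sigmas have bounded projections}(\ref{item:non-relevant}) applies and guarantees that every subsegment of $\sigma_i$ of parametrized length at most $L_2/4$ has $D$--bounded projections onto every peripheral. Since $L_2/4 \geq L_1$, the segment $\sigma_i$ is in particular an $(L_1;\lambda,\epsilon)$--local quasi-geodesic with $D$--bounded projections in the sense of Definition \ref{defn:bounded_projections}. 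Corollary \ref{cor: local qg+bounded projection=Morse} then produces the desired conclusion: each $\sigma_i$ is a global $(M;A_2,A_2)$--Morse quasi-geodesic with constants depending only on $\lambda$, $\epsilon$, and $B$.

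There is no real obstacle here; the corollary is essentially a bookkeeping consequence of results that have already been carefully proved. The one point requiring a little care is the mismatch between the ``full'' local scale $L$ of $\gamma$ and the ``quarter'' scale $L/4$ at which bounded projections is established for $\sigma_i$, which is handled simply by inflating the local scale of $\gamma$ by a factor of four so that the bounded-projections scale of $\sigma_i$ still exceeds the threshold $L_1$ needed to invoke Corollary \ref{cor: local qg+bounded projection=Morse}.
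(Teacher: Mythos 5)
Your proposal is correct and is essentially the paper's own argument: the paper's proof is literally ``apply Corollary \ref{cor: local qg+bounded projection=Morse} with $D=\lambda(4B+\epsilon)$,'' relying on Lemma \ref{lem:alphas are close and sigmas have bounded projections}(\ref{item:non-relevant}) exactly as you do. Your explicit handling of the quarter-scale mismatch (taking the local scale at least $4L_1$) is a reasonable piece of bookkeeping that the paper leaves implicit in its choice of constants, not a different route.
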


\begin{proof}
Apply Corollary \ref{cor: local qg+bounded projection=Morse} with $D = \lambda(4B+\epsilon)$.
\end{proof}

\begin{cor}\label{cor: alphas are Morse q.g}
Suppose every $P \in \mc{P}$ has the $\Phi$--Morse local-to-global property. For every Morse gauge $M$, there are constants $L_3 \geq 12 \theta$, $k$, $c$ and Morse gauge $N$ (all depending only on $\lambda$, $\epsilon$, and $M$)  so that for every $B > 2\theta$, if $\decomposition$ is the $B$--relevant decomposition of  an $(L_3; M; \lambda, \epsilon)$--local quasi-geodesic  $\gamma$, then each  $\alpha_i$ is an $(N;k,c)$--Morse quasi-geodesics. 
\end{cor}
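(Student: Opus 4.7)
The plan is to transport each $\alpha_i$ to its relevant peripheral $P_i$ using the closest point projection $\pi_{P_i}$ and then invoke the $\Phi$-Morse local-to-global property there. Set $\tilde{\alpha}_i := \pi_{P_i}\circ\alpha_i$. Since Lemma~\ref{lem:alphas are close and sigmas have bounded projections}(\ref{item:relevant}) gives $\alpha_i \subseteq \mc{N}_{rR}(P_i)$, Lemma~\ref{lem:projections_lemmas} ensures $d(\alpha_i(t), \tilde{\alpha}_i(t)) \leq rR + 1$ for every $t$ in the domain, so $\alpha_i$ and $\tilde{\alpha}_i$ are at bounded Hausdorff distance in $X$.

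Next I would check that $\tilde{\alpha}_i$ is a local Morse quasi-geodesic in $P_i$ with uniformly controlled constants. Any subsegment of $\alpha_i$ of parametrized length at most $L_3$ is an $(M;\lambda,\epsilon)$-Morse quasi-geodesic of $X$; the corresponding subsegment of $\tilde{\alpha}_i$ stays within $rR+1$ of it, so Lemma~\ref{lem:close_to_Morse} yields constants $M'$, $\lambda'$, $\epsilon'$ depending only on $M$, $\lambda$, $\epsilon$ making this subsegment an $(M';\lambda',\epsilon')$-Morse quasi-geodesic of $X$. Because $P_i$ carries the induced metric from $X$, any quasi-geodesic between points of $P_i$ taken in $P_i$ is automatically one in $X$ with the same constants, so the Morse and quasi-geodesic constants transfer verbatim from $X$ to $P_i$ for paths that live in $P_i$. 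Hence $\tilde{\alpha}_i$ is an $(L_3;M';\lambda',\epsilon')$-local Morse quasi-geodesic in $P_i$. Choosing $L_3$ beyond the scale prescribed by the $\Phi$-Morse local-to-global property of $P_i$ on input $(M',\lambda',\epsilon')$ upgrades $\tilde{\alpha}_i$ to a global $(N';k,c)$-Morse quasi-geodesic in $P_i$, with constants depending only on $\Phi$, $M$, $\lambda$, and $\epsilon$.

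Finally I would transfer the Morse property back from $P_i$ to $X$ and then from $\tilde{\alpha}_i$ to $\alpha_i$. Given any $(\lambda'',\epsilon'')$-quasi-geodesic $\beta$ in $X$ whose endpoints lie on $\tilde{\alpha}_i \subseteq P_i$, Lemma~\ref{lem: quasi-geodesics remain in a nbhd} confines $\beta$ to a uniform neighborhood of $P_i$, so $\pi_{P_i}\circ\beta$ is a quasi-geodesic in $P_i$ with the same endpoints as $\beta$ and pointwise uniformly close to $\beta$ in $X$. The Morse property of $\tilde{\alpha}_i$ in $P_i$ then forces $\pi_{P_i}\circ\beta$, and therefore $\beta$, close to $\tilde{\alpha}_i$ in $X$, proving $\tilde{\alpha}_i$ is Morse in $X$; one more application of Lemma~\ref{lem:close_to_Morse} together with the Hausdorff bound from the first paragraph then yields that $\alpha_i$ itself is an $(N;k,c)$-Morse quasi-geodesic in $X$ with constants depending only on $\lambda$, $\epsilon$, and $M$. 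The subtlest step will be this two-way passage between Morse gauges in $X$ and in $P_i$: the isometric inclusion $P_i \hookrightarrow X$ handles the forward direction, while Lemma~\ref{lem: quasi-geodesics remain in a nbhd} is essential for the reverse direction, funneling every competing quasi-geodesic in $X$ back near $P_i$ so the Morse property of $\tilde{\alpha}_i$ in $P_i$ can be used; uniformity of $L_3$, $k$, $c$, $N$ across all $P_i$ and all $B > 2\theta$ then follows because every peripheral shares the same control function $\Phi$.
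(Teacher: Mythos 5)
Your proposal is correct and follows essentially the same route as the paper: both rest on $\alpha_i \subseteq \mc{N}_{rR}(P_i)$, an application of the peripheral's $\Phi$--Morse local-to-global property, and the quasiconvexity of peripherals (Lemma \ref{lem: quasi-geodesics remain in a nbhd}) to carry the Morse property back to $X$. The only cosmetic difference is that you transport $\alpha_i$ onto $P_i$ via $\pi_{P_i}$ and shuttle the constants with Lemma \ref{lem:close_to_Morse}, whereas the paper applies the local-to-global property of $\mc{N}_{rR}(P_i)$ itself (inherited from $P_i$ by quasi-isometry invariance) directly to $\alpha_i$ and returns to $X$ using the distance formula together with Lemma \ref{lem: quasi-geodesics remain in a nbhd}.
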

\begin{proof}

Recall $\alpha_i \subseteq \mc{N}_{rR}(P_i)$ by  Proposition \ref{lem:alphas are close and sigmas have bounded projections}. Since $P_i$ has the $\Phi$--Morse local-to-global property, $\mc{N}_{rR}(P_i)$, equipped with the induced metric from $X$, has the $\Psi$--Morse local-to-global property where $\Psi$ ultimately depends only on $\Phi$, $\lambda$, and $\epsilon$. Thus, there exist constants $L_3$, $\lambda_0$, $\epsilon_0$ and Morse gauge $M_0$ so that $\Psi(M,\lambda,\epsilon) = (L_3,M_0,\lambda_0,\epsilon_0)$ and if $\gamma$ is an $(L_3;M;\lambda, \epsilon)$--local Morse quasi-geodesic, then each $\alpha_i$ is an $(M_0;\lambda_0,\epsilon_0)$--Morse quasi-geodesic in $\mc{N}_{rR}(P_i)$. The distance formula (Theorem \ref{thm:distance_formula_for_rel_hyp}) and Lemma \ref{lem: quasi-geodesics remain in a nbhd} then imply each $\alpha_i$ is an $(N;k,c)$--Morse quasi-geodesic in $X$ where $N$, $k$, and $c$ ultimately depend only on $\lambda$, $\epsilon$, and $M$.
\end{proof}

\begin{rem}[Uniformity of $\Phi$]
Corollary \ref{cor: alphas are Morse q.g} is the key place in the proof of Theorem \ref{thm:local_to_global_and_relative_hyperbolicity} where the Morse local-to-global property of the peripheral subsets is used. The requirement that the Morse local-to-global property is controlled by the same function $\Phi$ for each element of $\mc{P}$ is precisely to ensure that each $\alpha_i$ is a Morse quasi-geodesic whose parameters do not depend on which peripherals are relevant.
\end{rem}

\subsection{Ordered projections of relevant peripherals}\label{subsec:ordering_of_peripherals}

The goal of this section is to prove that if $ \decomposition$ is the $B$--relevant decomposition of a local Morse quasi-geodesic $\gamma$, then the distance between the projection of the endpoints of $\gamma$ to any relevant peripheral $P_i$ is bounded below the by a linear function of $B$.  We first show each $\sigma_i$ has uniformly bounded projection (independent of $B$) onto the relevant peripherals immediately before and after $\sigma_i$ (Proposition \ref{prop: adjacent projection of sigmas are bounded}). Next, we establish that the relevant peripherals are ``linearly ordered" along the local quasi-geodesic, that is, if $1\leq i<j<k \leq n$, the projection of $P_i$ onto $P_k$ is uniformly close to the projection of $P_j$ onto $P_k$ (Proposition \ref{prop: Behrstock inequality for local qg}). These two facts mean that the projection of $P_1$ and $P_n$ to each other relevant peripheral $P_i$ coarsely equal the endpoints of $\alpha_i$. Since each $\alpha_i$ is a quasi-geodesic with parametrized length at least $B$, the distance between $\pi_{P_i}(P_1)$ and $\pi_{P_i}(P_n)$ will be bounded below by a linear function of $B$ for each other relevant peripheral (Corollary \ref{cor: the distance between projections of domains depends on B}). Finally, we show that the projection of the endpoints of $\gamma$ onto any $P_i$ must agree with the projection  of $P_1$ and $P_n$ onto $P_n$ (Corollary \ref{cor:endpoints_project_to_peripherals}).

We begin with proving the projection of $\sigma_i$ onto $P_i$ and $P_{i+1}$ is bounded independent of $B$.

\begin{prop}\label{prop: adjacent projection of sigmas are bounded}
There exists a constant  $K_0= K_0(\lambda, \epsilon)$  so that the following holds. For every $B > 2\theta$  there exists $L_4 =L_4(\lambda,\epsilon, B)$ so that if $\sigma_0 \ast \alpha_1 \ast \sigma_1 \ast \cdots \alpha_n \ast \sigma_n$ is the $B$--relevant decomposition of an $(L_4;\lambda,\epsilon)$--local quasi-geodesic $\gamma$, then $\mathrm{diam} (\pi_{P_{i}}(\sigma_{i-1})) \leq K_0$ and $\mathrm{diam} (\pi_{P_{i}}(\sigma_{i})) \leq K_0$ for each $1\leq i \leq n$.
\end{prop}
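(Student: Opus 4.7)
Plan: I argue by contradiction. Suppose $K_0=K_0(\lambda,\epsilon)$ is a threshold (to be calibrated below) and that some $\sigma_i$ satisfies $\diam(\pi_{P_i}(\sigma_i))>K_0$; the bound on $\sigma_{i-1}$ is entirely analogous (run the argument at the left endpoint of $\alpha_i$ instead of the right endpoint), so I concentrate on $\sigma_i$. Write $[a,b]$ for the domain of $\sigma_i$; Lemma~\ref{lem:alphas are close and sigmas have bounded projections}(\ref{item:relevant}) gives $\gamma(a)\in\mc{N}_{rR}(P_i)$, so $\pi_{P_i}(\gamma(a))$ is within $rR+\mu+1$ of $\gamma(a)$. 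It therefore suffices to find some $t\in[a,b]$ with $d(\pi_{P_i}(\gamma(a)),\pi_{P_i}(\gamma(t)))$ exceeding an appropriate threshold and derive a contradiction.

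The heart of the proof is a local bounded-geodesic-image argument carried out when $|a-t|\leq L_4/4$, where $\gamma\vert_{[a,t]}$ is a $(\lambda,\epsilon)$--quasi-geodesic. Lemma~\ref{lem: BGI for rel hyp} supplies $s_1<s_2$ in $[a,t]$ with $\gamma(s_1),\gamma(s_2)\in\mc{N}_{R}(P_i)$ and with $d(\gamma(s_1),\gamma(s_2))$ comparable to the projection gap. Choosing $K_0$ sufficiently large in terms of $\lambda,\epsilon,\theta,R,rR,\mu$---but still independent of $B$---forces $|s_1-s_2|\geq 2\theta$, and Lemma~\ref{lem: quasi-geodesics remain in a nbhd} then gives $\gamma([s_1,s_2])\subseteq\mc{N}_{rR}(P_i)$. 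Every $\tau\in[s_1+\theta,s_2-\theta]$ is thus $P_i$--deep with $s_1,s_2$ as witnesses (the witness-distance bound of $L_4/4$ is satisfied because $|s_1-s_2|\leq|a-t|\leq L_4/4$). Since $\tau>a$ and $|\tau-a|\leq L_4/4$, this $\tau$ must lie in a connected component of $\deep(\gamma;P_i)$ distinct from the one whose closure is the domain of $\alpha_i$, contradicting the $>L_4/4$ separation of components in Corollary~\ref{cor: deep points are a disjoint union}.

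It remains to rule out the case $|a-t|>L_4/4$, which is where the $B$--dependence of $L_4$ enters. I take $L_4\geq L_1\bigl(\lambda,\epsilon,\lambda(4B+\epsilon)\bigr)$ (the constant from Proposition~\ref{prop:bounded_projections_imply_quasi-geodesic} applied with the $\lambda(4B+\epsilon)$--bounded projections of $\sigma_i$ given by Lemma~\ref{lem:alphas are close and sigmas have bounded projections}), so that Corollary~\ref{cor: local qg+bounded projection=Morse} upgrades $\sigma_i$ to a global $(A_2,A_2)$--quasi-geodesic. A large projection gap between $\pi_{P_i}(\gamma(a))$ and $\pi_{P_i}(\gamma(t))$ then produces, via the global form of Lemma~\ref{lem: BGI for rel hyp} combined with Lemma~\ref{lem: quasi-geodesics remain in a nbhd}, a subsegment of $\sigma_i$ contained in $\mc{N}_{rR}(P_i)$ of parametrized length at least $B$; its interior is a connected component of $\deep(\gamma;P_i)$ of length at least $B$, and hence a $B$--relevant subsegment of $\gamma$ lying inside $\sigma_i$, contradicting the defining property of the $B$--relevant decomposition. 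The main obstacle is calibrating the constants so that the final $K_0$ depends only on $\lambda$ and $\epsilon$: this is arranged by extracting the contradiction in the local case from a $B$--independent threshold and absorbing all $B$--dependence into the choice of $L_4$, so that the large--$|a-t|$ alternative cannot occur without violating the $B$--relevant definition.
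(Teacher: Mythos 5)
Your first case ($|a-t|\leq L_4/4$) is sound and is essentially the paper's own Lemma~\ref{lem: the sigma coarsely project near at the endpoints}: a $B$--independent threshold, local bounded geodesic image, Lemma~\ref{lem: quasi-geodesics remain in a nbhd}, and then new $P_i$--deep points contradicting Corollary~\ref{cor: deep points are a disjoint union}. The problem is your second case, which is exactly where the real content of the proposition lies. From the contradiction hypothesis you only have a projection gap exceeding $K_0(\lambda,\epsilon)$, a quantity independent of $B$. Applying Lemma~\ref{lem: BGI for rel hyp} and Lemma~\ref{lem: quasi-geodesics remain in a nbhd} to the upgraded global $(A_2,A_2)$--quasi-geodesic $\sigma_i$ produces an excursion into a neighborhood of $P_i$ whose parametrized length is controlled from below by the \emph{projection gap} (divided by $A_2$, minus constants), not by $|a-t|$ or by $B$: the two points that come close to $\pi_{P_i}(\gamma(a))$ and $\pi_{P_i}(\gamma(t))$ may be very close together in parameter even when $|a-t|>L_4/4$, since $\sigma_i$ can leave the neighborhood of $P_i$ immediately after the short excursion and wander for a long time while its projection drifts only slightly past the threshold. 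So the claimed ``subsegment of $\sigma_i$ contained in $\mc{N}_{rR}(P_i)$ of parametrized length at least $B$'' does not follow (and even its neighborhood constant would be $r(A_2,A_2)R(A_2,A_2)$, not $rR$), hence no $B$--relevant subsegment inside $\sigma_i$ is forced and no contradiction with the relevant decomposition arises. A modest, $B$--independent projection gap realized by far-apart parameters is precisely the scenario the proposition must exclude, and your tools do not exclude it.

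The paper closes this case by a genuinely different mechanism: since $\sigma_{i-1}$ (resp. $\sigma_i$) has $\lambda(4B+\epsilon)$--bounded projections, Proposition~\ref{prop:bounded_projections_imply_quasi-geodesic} makes $\pi\circ\sigma_{i-1}$ a \emph{parametrized} $(A_1,A_1)$--quasi-geodesic in the coned-off space $\cone{X}$; its endpoint adjacent to $\alpha_i$ lies within $rR+1$ of the cone point $c_{P_i}$, so for $L_4$ large compared with $A_1$ (this is where the $B$--dependence of $L_4$ is absorbed) the portion at parameter distance more than $L_4/4$ from that endpoint avoids the $C_2$--neighborhood of $c_{P_i}$ in $\cone{X}$. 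Sisto's Lemma~\ref{lem: BGI in cone-off} then bounds the projection of that entire far portion onto $P_i$ by a constant $C_1$ depending only on $\lambda'=\lambda'(\lambda,\epsilon)$ and $\epsilon'=\epsilon'(\lambda,\epsilon)$, and combining with the endpoint lemma gives $K_0=K_0(\lambda,\epsilon)$. Some statement of this coned-off flavor (bounding the projection of the whole far portion at once, rather than detecting a long excursion) appears unavoidable; to repair your proof, replace your second case with this argument, or supply an alternative global input of comparable strength.
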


We first show that the conclusion of Proposition \ref{prop: adjacent projection of sigmas are bounded} holds for the portion of the  $\sigma_i$ that is close to its endpoints.

\begin{lem}\label{lem: the sigma coarsely project near at the endpoints}
There exists $C = C (\lambda, \epsilon)$ such that the following holds. Let $L \geq 12\theta$, $B >2\theta$, and $\sigma_0\ast \alpha_1 \ast \sigma_1 \ast \dots \alpha_n \ast \sigma_n$ be the $B$--relevant decomposition of an $(L;\lambda, \epsilon)$--local quasi-geodesic $\gamma$. If $[a_i,b_i]$ is the domain of $\sigma_i$ and $s_i, t_i \in [a_i,b_i]$ so that $|a_i - s_i| \leq L/4$ and $|b_i-t_i| \leq L/4$, then  \[ \diam\bigl(\pi_{P_i} (\gamma\vert_{[a_i,s_i]}) \bigr) \leq C \text{ and }  \diam\bigl(\pi_{P_{i+1}}(\gamma\vert_{[t_i,b_i]}) \bigr)\leq C\]
whenever $P_i$ or $P_{i+1}$ exists.
\end{lem}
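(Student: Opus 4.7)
The plan is to argue by contradiction against Corollary~\ref{cor: deep points are a disjoint union}: the domain of $\alpha_i$ is the closure of some connected component $K$ of $\deep(\gamma;P_i)$ with $\sup K=a_i$, and Corollary~\ref{cor: deep points are a disjoint union} forbids any other component of $\deep(\gamma;P_i)$ within distance $L/4$ to the right of $a_i$. So it suffices to show that if $\diam\bigl(\pi_{P_i}(\gamma\vert_{[a_i,s_i]})\bigr)>C$ for a large $C=C(\lambda,\epsilon)$, then some $t\in(a_i,a_i+L/4)$ lies in $\deep(\gamma;P_i)$. The constant $C$ will only need to depend on $\lambda,\epsilon$, because $\mu,Q,r,R,\theta$ all depend only on $\lambda,\epsilon$.

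The first step is to locate a point $u\in[a_i,s_i]$ with $\gamma(u)\in\mc{N}_{rR}(P_i)$ and $|a_i-u|\ge 3\theta$. By Lemma~\ref{lem:alphas are close and sigmas have bounded projections}(1) we have $\gamma(a_i)\in\mc{N}_{rR}(P_i)$, so Lemma~\ref{lem:projections_lemmas} places $\pi_{P_i}(\gamma(a_i))$ within $rR+1$ of $\gamma(a_i)$. Combined with the hypothesis this yields $d\bigl(\pi_{P_i}(\gamma(a_i)),\pi_{P_i}(\gamma(s_i))\bigr)\ge C-O(1)$ where $O(1)$ depends only on $\mu,r,R$. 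Since $|a_i-s_i|\le L/4\le L$, $\gamma\vert_{[a_i,s_i]}$ is a $(\lambda,\epsilon)$--quasi-geodesic, so once $C>Q+O(1)$, the bounded quasi-geodesic image property (Lemma~\ref{lem: BGI for rel hyp}) furnishes $u\in[a_i,s_i]$ with $\gamma(u)\in\mc{N}_R(\pi_{P_i}(\gamma(s_i)))\subseteq\mc{N}_{rR}(P_i)$ (using $r\ge 1$). A direct use of Lemma~\ref{lem:projections_lemmas} gives $d\bigl(\pi_{P_i}(\gamma(u)),\pi_{P_i}(\gamma(s_i))\bigr)\le 2R+1$, so $d\bigl(\pi_{P_i}(\gamma(a_i)),\pi_{P_i}(\gamma(u))\bigr)\ge C-O(1)$. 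The $(\mu,\mu)$--coarse Lipschitz bound on $\pi_{P_i}$, applied to the quasi-geodesic inequality $d(\gamma(a_i),\gamma(u))\le \lambda|a_i-u|+\epsilon$, then forces $|a_i-u|\ge (C-O(1))/(\mu\lambda)$; by choosing $C$ large (only in terms of $\lambda,\epsilon$) we arrange $|a_i-u|\ge 3\theta$, while $|a_i-u|\le L/4$ is automatic.

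To close the contradiction, set $t:=a_i+\theta$. Then $\theta\le|t-a_i|\le L/4$ and $2\theta\le|u-t|\le L/4$, and both $\gamma(a_i),\gamma(u)\in\mc{N}_{rR}(P_i)$, so $a_i$ and $u$ witness $t\in\deep(\gamma;P_i)$. Since $\alpha_i$ has parametrized length at least $B>2\theta>0$, the component $K$ of $\deep(\gamma;P_i)$ whose closure is the domain of $\alpha_i$ is a non-degenerate interval accumulating at $a_i$ from the left, so we may pick $s\in K$ with $a_i-s<\theta$. Then $s,t\in\deep(\gamma;P_i)$ and $|s-t|<2\theta\le L/4$, so Corollary~\ref{cor: deep points are a disjoint union} forces $[s,t]\subseteq\deep(\gamma;P_i)$; by connectedness $t$ lies in the same component as $s$, namely $K$, contradicting $t>a_i=\sup K$. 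The bound on $\diam\bigl(\pi_{P_{i+1}}(\gamma\vert_{[t_i,b_i]})\bigr)$ follows by the mirror-image argument at the $b_i$ end, producing a $P_{i+1}$--deep point inside $(b_i-L/4,b_i)$. The only real difficulty is the bookkeeping of additive constants to ensure $C$ depends only on $\lambda,\epsilon$, which works because every constant that enters the argument ($\mu,Q,r,R,\theta$) has that same dependence.
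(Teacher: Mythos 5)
Your overall strategy matches the paper's: assume the projection to $P_i$ has large diameter, use the bounded quasi-geodesic image property to manufacture a $P_i$--deep point strictly to the right of $a_i$ and within $L/4$ of the component of $\deep(\gamma;P_i)$ whose closure is the domain of $\alpha_i$, and contradict Corollary~\ref{cor: deep points are a disjoint union}. Your endgame (taking $t=a_i+\theta$ with witnesses $a_i$ and $u$, then comparing $t$ with a point of the component $K$ just to the left of $a_i$) is sound.

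The gap is in your first step. From $\diam\bigl(\pi_{P_i}(\gamma\vert_{[a_i,s_i]})\bigr)>C$ you conclude $d\bigl(\pi_{P_i}(\gamma(a_i)),\pi_{P_i}(\gamma(s_i))\bigr)\ge C-O(1)$, citing only the fact that $\gamma(a_i)$ lies within $rR+1$ of its projection. That inference is invalid: a set of large diameter need not have two \emph{prescribed} points far apart. The hypothesis only guarantees \emph{some} pair $v_1,v_2\in[a_i,s_i]$ with $d\bigl(\pi_{P_i}(\gamma(v_1)),\pi_{P_i}(\gamma(v_2))\bigr)>C$; a priori the projection could travel far and return, leaving the endpoint projections close. (A coarse monotonicity statement for peripheral projections along quasi-geodesics is true --- it is essentially established inside the proof of Corollary~\ref{cor:bounded projections implies Morse} --- but it carries a multiplicative constant $\lambda^2$ and itself requires the BGI argument, so it cannot be obtained just by ``combining with the hypothesis.'') The repair is what the paper does: apply Lemma~\ref{lem: BGI for rel hyp} to the pair $v_1,v_2$ realizing the diameter to get $u_1,u_2\in[v_1,v_2]$ with $\gamma(u_1),\gamma(u_2)\in\mc{N}_{rR}(P_i)$ and $|u_1-u_2|\ge 2\theta+1$ once $C$ is large in terms of $\lambda,\epsilon$, and take the deep point to be the midpoint of $[u_1,u_2]$ with $u_1,u_2$ themselves as witnesses; there is then no need to involve $a_i$ as a witness or to control the endpoint projections at all. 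Alternatively, your route is salvageable by noting that if $|u_1-u_2|\ge 6\theta$ then at least one of $u_1,u_2$ lies at parameter distance at least $3\theta$ from $a_i$ and can serve as your $u$.
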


\begin{proof}
We shall only prove the case of $\gamma \vert_{[a_i,s_i]}$ as the other case is analogous.
Let $C = 2\lambda(2\theta + 2R+\epsilon+1)$.
There exists $[v_1,v_2] \subseteq [a_i, s_i]$ so that $d\bigl(\pi_{P_i}(\gamma(v_1)), \pi_{P_i}(\gamma(v_2)\bigr)\geq C/2$.
By Lemma \ref{lem: BGI for rel hyp} 
there is $[u_1,u_2] \subseteq [v_1, v_2]$ such that \[d\bigl( \gamma(u_1), \pi_{P_i}(\gamma(v_1)) \bigr) \leq R \text{ and } d\bigl( \gamma(u_2), \pi_{P_i}(\gamma(v_2)) \bigr) \leq R.\]

Since $\gamma\vert_{[a_i, s_i]}$ is a $(\lambda,\epsilon)$--quasi-geodesic, the choice of $C$ gives us $2\theta +1 \leq |u_1 - u_2| \leq L/4$. Thus, there exists $\tau \in (u_1,u_2) \cap \deep(\gamma;P_{i})$. By Corollary  \ref{cor: deep points are a disjoint union}, $[a_i,\tau] \subseteq \deep(\gamma;P_{i})$, but this is a contradiction to the fact that $\alpha_{i}$ is $B$--relevant for $P_{i}$. Thus, we must have $\mathrm{diam}\bigl(\pi_P(\gamma\vert_{{[a_i, s_i]}}) \bigr)\leq C.$
\end{proof}

To prove the general case of Proposition \ref{prop: adjacent projection of sigmas are bounded}, we employ the following lemma of Sisto.

\begin{lem}[{\cite[Lemma 4.14]{Sisto_metric_rel_hyp}}]\label{lem: BGI in cone-off}
There exists $C_1 >0$  so that for every $\lambda'\geq 1$ and $\epsilon' \geq 0$ there exists $C_2 = C_2(\lambda',\epsilon')$ such that if $\widehat{\gamma}$ is a $(\lambda',\epsilon')$--quasi-geodesic of $\widehat{X}$ connecting $x,y$ and $\widehat{\gamma}$ does not intersect the $C_2$--neighborhood of $P \in \mc{P}$ in $\cone{X}$, then $d\bigl(\pi_P(x), \pi_P(y)\bigr) \leq C_1.$
\end{lem}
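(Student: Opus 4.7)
The plan is to prove the contrapositive: we fix $C_1 = Q$, where $Q$ is the universal constant from Lemma~\ref{lem: BGI for rel hyp}, and show that if $d\bigl(\pi_P(x),\pi_P(y)\bigr) > Q$, then any $(\lambda',\epsilon')$--quasi-geodesic $\widehat\gamma$ in $\widehat X$ from $x$ to $y$ must enter a $C_2$--neighborhood of $P$ in $\widehat X$ for some $C_2 = C_2(\lambda',\epsilon')$. The underlying idea is that projections to $P$ detect passage near $P$ in $X$ (Lemma~\ref{lem: BGI for rel hyp}), passage near $P$ in $X$ forces passage near the cone point $c_P$ in $\widehat X$, and hyperbolicity of $\widehat X$ propagates this passage from one quasi-geodesic to another.

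First, I would fix an honest geodesic $\alpha$ in $X$ from $x$ to $y$. Since $d(\pi_P(x),\pi_P(y)) > Q$, Lemma~\ref{lem: BGI for rel hyp} applied with $\lambda = 1$, $\epsilon = 0$ produces a point $z \in \alpha$ with $d_X(z,\pi_P(x)) \leq R(1,0)$, hence $d_X(z,P) \leq R(1,0)$. Because $c_P$ is joined to every point of $P$ by an edge of length $1$ in $\widehat X$, this gives $d_{\widehat X}(z,c_P) \leq R(1,0) + 1$. So $\alpha$, viewed inside $\widehat X$, must pass within uniformly bounded $\widehat X$--distance of $c_P$.

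Next I would transfer this information to $\widehat\gamma$ using hyperbolicity of $\widehat X$. By Lemma~\ref{lem: projections are unparametrized q geod}, $\pi\circ\alpha$ is an unparametrized $(k,c)$--quasi-geodesic of $\widehat X$ where $k$ and $c$ are absolute constants (depending only on the fixed choice $\lambda=1,\epsilon=0$). In the $\delta$--hyperbolic space $\widehat X$, the image of any unparametrized quasi-geodesic joining two points lies within bounded Hausdorff distance of any geodesic joining those points; applying this to both $\pi\circ\alpha$ and $\widehat\gamma$, I obtain a constant $D = D(\lambda',\epsilon',\delta)$ so that the images of $\pi\circ\alpha$ and $\widehat\gamma$ are within Hausdorff distance $D$ in $\widehat X$. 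Choose $w \in \widehat\gamma$ with $d_{\widehat X}(w,z) \leq D$; then $d_{\widehat X}(w,P) \leq D + R(1,0) + 2$. Setting $C_2 := D + R(1,0) + 3$ therefore forces $\widehat\gamma$ to meet the $C_2$--neighborhood of $P$ in $\widehat X$, finishing the contrapositive.

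The only real point requiring care is that Lemma~\ref{lem: projections are unparametrized q geod} supplies an \emph{unparametrized} quasi-geodesic rather than an honest one, so the fellow-traveling step should be phrased in terms of Hausdorff distance between images; but in a hyperbolic space this version of the Morse lemma is standard, and reparametrizing $\pi\circ\alpha$ by arclength (or passing to a geodesic companion) handles it. Note also the asymmetry of dependencies: $C_1 = Q$ is a universal constant because we apply Lemma~\ref{lem: BGI for rel hyp} with fixed $(1,0)$, while $C_2$ inherits dependence on $(\lambda',\epsilon')$ solely through the Morse-lemma constant $D$ in $\widehat X$, which is exactly the dependence claimed.
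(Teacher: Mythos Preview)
The paper does not prove this lemma; it is quoted directly from \cite[Lemma~4.14]{Sisto_metric_rel_hyp}. So there is no ``paper's own proof'' to compare against. That said, your argument is correct and is essentially the natural proof: pull back to a geodesic in $X$, use Lemma~\ref{lem: BGI for rel hyp} to force it near $P$, push forward to $\widehat X$ via Lemma~\ref{lem: projections are unparametrized q geod}, and fellow-travel with $\widehat\gamma$ using the Morse property in the hyperbolic space $\widehat X$. The dependency bookkeeping is right: $C_1=Q$ is universal, and $C_2$ picks up $(\lambda',\epsilon')$ only through the Morse constant in $\widehat X$. One minor simplification: since $\pi$ is distance non-increasing and $d_X(z,P)\leq R(1,0)$, you already have $d_{\widehat X}(z,P)\leq R(1,0)$ without routing through $c_P$, so $d_{\widehat X}(w,P)\leq D+R(1,0)$ directly.
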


In light of Lemmas \ref{lem: the sigma coarsely project near at the endpoints} and \ref{lem: BGI in cone-off}, Proposition \ref{prop: adjacent projection of sigmas are bounded} will follow if we can show that the portion of $\sigma_i$ far from the endpoints satisfies the hypotheses of Lemma \ref{lem: BGI in cone-off}.

\begin{proof}[Proof of Proposition \ref{prop: adjacent projection of sigmas are bounded}]
We only show that the projection of $\sigma_{i-1}$ onto $P_{i}$ is uniformly bounded as the other case is analogous. Let $c_P$ be the cone point of $P_{i}$.
Let $L_1 = L_1(\lambda,\epsilon,B)$, $\lambda' = \lambda'(\lambda,\epsilon)$, $\epsilon'=\epsilon'(\lambda,\epsilon)$, and $A_1= A_1(\lambda, \epsilon, B )$ be the constants from Proposition \ref{prop:bounded_projections_imply_quasi-geodesic} such that if $\eta$ is an $(L_1;\lambda,\epsilon)$--local quasi-geodesic  of $X$ with $\lambda(4B+\epsilon)$--bounded projections, then $\pi \circ \eta$ is a unparametrized $(\lambda', \epsilon')$--quasi-geodesic and a parametrized $(A_1,A_1)$--quasi-geodesic in $\cone{X}$. Let $C_1, C_2$ be the constants of Lemma \ref{lem: BGI in cone-off} applied to $\lambda', \epsilon'$. Assume $L_4 \geq 8 (A_1 (C_2 + rR +1) + A_1 + L_1 + 12\theta)$.

Let $[a,b]$ be the domain of $\sigma_{i-1}$. If $|a-b| \leq L_4/4$, then the result follows from Lemma \ref{lem: the sigma coarsely project near at the endpoints}. So, suppose there exists $c \in (a,b)$ so that $|c-b| = L_4/4$. Since Lemma \ref{lem: the sigma coarsely project near at the endpoints} bounded the diameter of $\pi_{P_i}\bigl( \sigma_{i-1}([c,b])\bigr)$, our goal is to bound the diameter of $\pi_{P_{i}}\bigl(\sigma_{i-1}([a,c]) \bigr)$.
 By Lemma \ref{lem:alphas are close and sigmas have bounded projections}, $\sigma_{i-1}(b)$ lies in the $rR$--neighborhood of $P_{i}$ in $X$, so  ${\pi} \circ \sigma_{i-1}(b)$ has distance at most $rR+1$ from $c_P$ in $\widehat{X}$. 
Since $\pi \circ \sigma_{i-1}$ is a parametrized $(A_1, A_1)$--quasi-geodesic, the choice of $L_4$ yields 
\[ ({\pi}\circ \sigma_{i-1})^{-1} \bigl( \mc{N}_{C_2}(c_P) \bigr) \subseteq (c,b].\] 
In particular, $\pi \circ \sigma_{i-1} \vert_{[a,c]}$ is an unparametrized $(\lambda', \epsilon')$--quasi-geodesic of $\widehat{X}$ that does not intersect the $(C_2+1)$--neighborhood of $c_P$. Lemma \ref{lem: BGI in cone-off} now yields the result. 
\end{proof}

 Corollary \ref{cor: alphas are Morse q.g} established that the relevant subsegments of the decomposition are global Morse quasi-geodesics when the peripherals have the Morse local-to-global property. Since the diameter of the projection of $\sigma_{i-1}$ and $\sigma_i$ to the relevant peripheral $P_i$ are bounded independent of $B$, we can use the quasi-geodesic $\alpha_i$ to produce a linear lower bound on the distance between $\pi_{P_i}(\sigma_{i-1})$ and $\pi_{P_i}(\sigma_i)$ in terms of $B$.
 
\begin{cor}\label{cor: relevant implies large projection}
Suppose each $P\in\mc{P}$ has the  $\Phi$--Morse local-to-global property. Let $B > 2 \theta$ and $M$ be a Morse gauge. There exist constants $L_5 = L_5(\lambda,\epsilon, B,M)$ and $A_3 = A_3 (\lambda,\epsilon,M)$  so that if $\sigma_0 \ast \alpha_1 \ast \dots  \ast\alpha_n \ast \sigma_n$ is the $B$--relevant decomposition of an $(L_5;M;\lambda,\epsilon)$--local Morse quasi-geodesic and $P_i$ is the relevant peripheral for $\alpha_i$, then
\[d\bigl( \pi_{P_i}(\sigma_{i-1}), \pi_{P_i}(\sigma_{i}) \bigr) \geq \frac{1}{A_3}B - A_3.\]
\end{cor}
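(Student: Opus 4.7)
The plan is to combine the three preceding results: Corollary \ref{cor: alphas are Morse q.g} gives that each $\alpha_i$ is a global Morse quasi-geodesic with constants independent of $B$, the $B$--relevance guarantees the parametrized length of $\alpha_i$ is at least $B$, and Proposition \ref{prop: adjacent projection of sigmas are bounded} controls how the adjacent $\sigma_{i-1}$ and $\sigma_i$ project to $P_i$. Accordingly, I would set $L_5 := \max\{L_3(\lambda,\epsilon,M),\, L_4(\lambda,\epsilon,B)\}$ so that both Corollary \ref{cor: alphas are Morse q.g} (providing a Morse gauge $N$ and quasi-geodesic constants $k,c$ depending only on $\lambda,\epsilon,M$) and Proposition \ref{prop: adjacent projection of sigmas are bounded} (providing the uniform constant $K_0=K_0(\lambda,\epsilon)$) apply to $\gamma$.

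Denote the endpoints of $\alpha_i$ by $x_i$ and $y_i$, which are also the final point of $\sigma_{i-1}$ and the initial point of $\sigma_i$, respectively. Since $\alpha_i$ is $B$--relevant, its parametrized length is at least $B$, and by Corollary \ref{cor: alphas are Morse q.g} it is an $(N;k,c)$--quasi-geodesic, hence
\[
d(x_i,y_i) \;\geq\; \frac{B}{k} - c.
\]
Lemma \ref{lem:alphas are close and sigmas have bounded projections} ensures $x_i,y_i \in \mc{N}_{rR}(P_i)$, and Lemma \ref{lem:projections_lemmas} then gives $d(x_i,\pi_{P_i}(x_i))\le rR+1$ and similarly for $y_i$. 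By the triangle inequality,
\[
d\bigl(\pi_{P_i}(x_i),\pi_{P_i}(y_i)\bigr) \;\geq\; d(x_i,y_i) - 2(rR+1) \;\geq\; \frac{B}{k} - c - 2(rR+1).
\]

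Finally, $\pi_{P_i}(x_i) \in \pi_{P_i}(\sigma_{i-1})$ and $\pi_{P_i}(y_i) \in \pi_{P_i}(\sigma_i)$ because $x_i$ and $y_i$ are endpoints shared with $\sigma_{i-1}$ and $\sigma_i$ respectively. Proposition \ref{prop: adjacent projection of sigmas are bounded} bounds both $\diam(\pi_{P_i}(\sigma_{i-1}))$ and $\diam(\pi_{P_i}(\sigma_i))$ by $K_0$, so
\[
d\bigl(\pi_{P_i}(\sigma_{i-1}),\pi_{P_i}(\sigma_i)\bigr) \;\geq\; d\bigl(\pi_{P_i}(x_i),\pi_{P_i}(y_i)\bigr) - 2K_0 \;\geq\; \frac{B}{k} - c - 2(rR+1) - 2K_0.
\]
Choosing $A_3 := \max\{k,\ c + 2(rR+1) + 2K_0\}$ (which depends only on $\lambda,\epsilon,M$ since $k,c$ depend on $\lambda,\epsilon,M$ and $r,R,K_0$ depend only on $\lambda,\epsilon$) yields the claimed inequality. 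There is no real obstacle here beyond careful bookkeeping of constants; all the serious work has been done in Corollary \ref{cor: alphas are Morse q.g} and Proposition \ref{prop: adjacent projection of sigmas are bounded}, and the present corollary is a clean synthesis of them together with the elementary estimate that projection onto $P_i$ approximately preserves distance between points already near $P_i$.
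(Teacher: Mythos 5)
Your proposal is correct and is essentially the paper's own argument: the same choice of local scale from Corollary \ref{cor: alphas are Morse q.g} and Proposition \ref{prop: adjacent projection of sigmas are bounded}, the same lower bound $d(x_i,y_i)\geq B/k-c$ from $B$--relevance, and the same chain of estimates using $x_i,y_i\in\mc{N}_{rR}(P_i)$ and the $K_0$--bound on $\diam(\pi_{P_i}(\sigma_{i-1}))$, $\diam(\pi_{P_i}(\sigma_i))$. The bookkeeping of the dependencies of $A_3$ on $\lambda,\epsilon,M$ matches the paper as well.
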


\begin{proof}
Let $L_5 = \max\{L_3,L_4\}+1$ where $L_3 = L_3(\lambda,\epsilon, B,M)$ and $L_4 = L_4(\lambda,\epsilon, B,M)$ are the constants from Corollary \ref{cor: alphas are Morse q.g} and Proposition \ref{prop:bounded_projections_imply_quasi-geodesic} respectively. Let $k = k(\lambda,\epsilon, M)$ and $c = c(\lambda,\epsilon, M)$ be the constants from Corollary \ref{cor: alphas are Morse q.g} so that each $\alpha_i$ is a $(k,c)$--quasi-geodesic. If $x_i$ and $y_i$ are the left and right endpoints of $\alpha_i$ respectively, then $x_i,y_i \in \mc{N}_{rR}(P_i)$ by Lemma \ref{lem:alphas are close and sigmas have bounded projections}. Let $K_0 = K_0(\lambda,\epsilon)$ be the constant from Proposition \ref{prop: adjacent projection of sigmas are bounded} so that the diameter of $\pi_{P_i}(\sigma_{i-1})$ and $\pi_{P_i}(\sigma_i)$ is at most $K_0$. Recall,   $d(x_i,y_i) \geq B/k - c$, since $\alpha_i$ has parametrized length at least $B$ (Definition \ref{defn: relevant decomposition}). Thus, we establish the corollary with the following calculation:
\begin{align*}
\frac{1}{k}B - c \leq& d(x_i,y_i)\\
\leq& d\left(\pi_{P_i}(x_i),\pi_{P_i}(y_i)\right) + 2rR+2\\
\leq& d\left( \pi_{P_i}(\sigma_{i-1}),\pi_{P_i}(\sigma_i) \right) +2rR +2 +2K_0.\qedhere
\end{align*}
\end{proof}

We now pause to define a local scale $\Lambda$ larger than any of the local scales required by any of the results in this section up until this point. This will allow all of the proceeding results to apply to any $(\Lambda;M;\lambda,\epsilon)$--local Morse quasi-geodesic in $X$. This scale depends on the relevancy constant $B$ from the decomposition of the local quasi-geodesic, the Morse gauge $M$, our fixed quasi-geodesic constants $\lambda$ and $\epsilon$, and the function $\Phi$ governing the Morse local-to-global property of the peripherals.

\begin{defn}[Largest local scale given $B$, $M$, and $\Phi$]
Given $B > 2\theta$, a Morse gauge $M$, and a Morse local-to-global function $\Phi$, let $L_1 = (\lambda,\epsilon,D)$ be constant from Proposition \ref{prop:bounded_projections_imply_quasi-geodesic} for $D = \lambda(4B + \epsilon)$ and  $L_2$, $L_3$, $L_4$, and $L_5$ be the constants depending on $\lambda$, $\epsilon$, $B$, and $M$ from Lemma \ref{lem:adjacent_relevant_peripherals_are_distinct}, Corollary \ref{cor: alphas are Morse q.g}, Proposition \ref{prop: adjacent projection of sigmas are bounded}, and Corollary \ref{cor: relevant implies large projection}. Define $\Lambda_\Phi(\lambda,\epsilon,B,M) = \max\{L_1,L_2,L_3,L_4,L_5,12 \theta\}+1$.
\end{defn}

We now show the second main tool of this section, the linear ordering of the relevant peripheral subsets along the local Morse quasi-geodesic.

\begin{prop}\label{prop: Behrstock inequality for local qg} Suppose every $P \in \mc{P}$ has the $\Phi$--Morse local-to-global property. For each Morse gauge $M$, there exist constants $K_1 = K_1(\lambda,\epsilon)$ and $B_1 = B_1(\lambda,\epsilon,M)$ such that the following holds for for any $B \geq B_1$. Let $\Lambda = \Lambda_\Phi (\lambda,\epsilon,B,M)$ and $\sigma_0 \ast \alpha_1 \ast \sigma_1 \ast \cdots \ast \alpha_n \ast \sigma_n$ be the $B$--relevant decomposition of a $(\Lambda;M;\lambda, \epsilon)$--local Morse quasi-geodesic $\gamma$. If  $\{P_1,\dots,P_n\}$ are the relevant peripherals for $\gamma$, then for all $1\leq i<j<k \leq n$ \[ \pi_{P_k}(P_i) \subseteq \mc{N}_{K_1}\left(\pi_{P_k}(P_j)\right) \text{ and } \pi_{P_i}(P_k) \subseteq \mc{N}_{K_1}\left(\pi_{P_i}(P_j)\right).\]
\end{prop}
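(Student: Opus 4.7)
By symmetry, we need only establish $\pi_{P_k}(P_i) \subseteq \mc{N}_{K_1}(\pi_{P_k}(P_j))$. If $P_i, P_j, P_k$ are not pairwise distinct, the inclusion is either trivial or immediate from the diameter bound $\diam(\pi_P(Q)) \leq \mu$ of Lemma~\ref{lem:projections_lemmas}, so assume they are pairwise distinct. In this case both $\pi_{P_k}(P_i)$ and $\pi_{P_k}(P_j)$ have diameter at most $\mu$, so it suffices to bound $d(\pi_{P_k}(x), \pi_{P_k}(y))$ uniformly for a suitably chosen pair $x \in P_i$, $y \in P_j$; we will take $x$ and $y$ to be the endpoints of $\alpha_i$ and $\alpha_j$ adjacent to $\sigma_i$ and $\sigma_{j-1}$, respectively.

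The plan is to produce an unparametrized quasi-geodesic $\widehat\beta$ in $\widehat X$ from $x$ to $y$ avoiding a $C_2$-neighborhood of $P_k$, and then apply Lemma~\ref{lem: BGI in cone-off}. The candidate $\widehat\beta$ is simply $\pi \circ \gamma_{ij}$, where $\gamma_{ij} = \alpha_i \ast \sigma_i \ast \alpha_{i+1} \ast \cdots \ast \sigma_{j-1} \ast \alpha_j$ is the subsegment of $\gamma$ from $x$ to $y$. The critical first step is to establish that $\pi \circ \gamma$ (and hence $\pi \circ \gamma_{ij}$) is a uniform unparametrized $(\lambda', \epsilon')$-quasi-geodesic in the hyperbolic space $\widehat X$, with constants depending only on $\lambda, \epsilon, M$. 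This is verified by combining a local quasi-geodesic property with the local-to-global property for quasi-geodesics in hyperbolic spaces (Theorem~\ref{thm:local_to_global_hyperbolic}). The local ingredients are: each $\pi \circ \sigma_\ell$ is a parametrized $(A_1, A_1)$-quasi-geodesic in $\widehat X$ by Proposition~\ref{prop:bounded_projections_imply_quasi-geodesic}; each $\pi \circ \alpha_\ell$ lies within $rR+1$ of the cone point $c_{P_\ell}$ by Lemma~\ref{lem:alphas are close and sigmas have bounded projections}; consecutive cone points are distinct by Lemma~\ref{lem:adjacent_relevant_peripherals_are_distinct}; and the transitions at each $c_{P_\ell}$ are controlled by combining the upper bound $\diam(\pi_{P_\ell}(\sigma_{\ell-1})), \diam(\pi_{P_\ell}(\sigma_\ell)) \leq K_0$ from Proposition~\ref{prop: adjacent projection of sigmas are bounded} with the linear-in-$B$ lower bound on $d(\pi_{P_\ell}(\sigma_{\ell-1}), \pi_{P_\ell}(\sigma_\ell))$ from Corollary~\ref{cor: relevant implies large projection}, which together preclude backtracking of $\pi \circ \gamma$ near $c_{P_\ell}$.

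Given that $\pi \circ \gamma$ is a uniform unparametrized quasi-geodesic in $\widehat X$, the avoidance of a $C_2$-neighborhood of $P_k$ by $\widehat\beta$ follows from hyperbolicity of $\widehat X$: the cone points $c_{P_i}, c_{P_j}, c_{P_k}$ appear in this order along the quasi-geodesic, and standard arguments in a $\delta$-hyperbolic space show that the initial segment ending at $c_{P_j}$ cannot re-enter a $C_2$-neighborhood of $P_k$ provided the continuation from $c_{P_j}$ toward $c_{P_k}$ has sufficient length. By Corollary~\ref{cor: relevant implies large projection}, this length grows linearly in $B$, so choosing $B_1 = B_1(\lambda, \epsilon, M)$ large enough secures the required avoidance. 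Applying Lemma~\ref{lem: BGI in cone-off} then yields $d(\pi_{P_k}(x), \pi_{P_k}(y)) \leq C_1$, and the diameter bound $\diam(\pi_{P_k}(P_i)), \diam(\pi_{P_k}(P_j)) \leq \mu$ converts this pointwise bound into the desired inclusion with $K_1 = C_1 + 2\mu$.

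The principal obstacle is verifying that $\pi \circ \gamma$ is a global uniform unparametrized quasi-geodesic in $\widehat X$: the local quasi-geodesic property is delicate precisely at the transitions near each $c_{P_\ell}$, and it is only the combination of the upper bound from Proposition~\ref{prop: adjacent projection of sigmas are bounded} with the lower bound from Corollary~\ref{cor: relevant implies large projection} that rules out $\pi \circ \gamma$ returning near $c_{P_\ell}$ at a scale sufficient to invoke the local-to-global theorem.
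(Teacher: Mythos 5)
Your route is genuinely different from the paper's, but as written it has a real gap in the step where you claim $\widehat\beta = \pi\circ\gamma_{ij}$ avoids the $C_2$--neighborhood of $P_k$. Your justification is that ``the continuation from $c_{P_j}$ toward $c_{P_k}$ has sufficient length'' and that ``this length grows linearly in $B$ by Corollary \ref{cor: relevant implies large projection}.'' But the quantities that grow linearly in $B$ are distances \emph{inside the peripherals measured in $X$}; in $\cone{X}$ each peripheral is coned to diameter $2$, each $\alpha_\ell$ collapses to an $(rR+1)$--ball around a cone point, and the $\sigma_\ell$ may be arbitrarily short. So $d_{\cone{X}}(c_{P_j},c_{P_k})$ can be uniformly bounded (even equal to $2$) no matter how large $B$ is, in which case the terminal point $y$ of $\widehat\beta$ already lies in $\mc{N}_{C_2}(P_k)$ and Lemma \ref{lem: BGI in cone-off} simply does not apply. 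The same confusion of $X$--distances with $\cone{X}$--distances undercuts your earlier claim that the upper bound of Proposition \ref{prop: adjacent projection of sigmas are bounded} together with the lower bound of Corollary \ref{cor: relevant implies large projection} ``precludes backtracking of $\pi\circ\gamma$ near $c_{P_\ell}$'': ruling out backtracking in $\cone{X}$ requires converting ``far-apart projections to $P_\ell$ in $X$'' into a Gromov-product bound at $c_{P_\ell}$ (e.g.\ via the contrapositive of Lemma \ref{lem: BGI in cone-off}), which is an extra argument you have not supplied. A smaller but genuine error: the case $P_i=P_k\neq P_j$ is not ``trivial or immediate'' — there the desired inclusion would read $P_k\subseteq\mc{N}_{K_1}(\pi_{P_k}(P_j))$, which is false for an unbounded $P_k$; pairwise distinctness of the relevant peripherals is only \emph{deduced} from this proposition in Corollary \ref{cor: the distance between projections of domains depends on B}, so it cannot be assumed here.

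For contrast, the paper never needs $\pi\circ\gamma$ to be a global quasi-geodesic in $\cone{X}$ (a statement that is essentially as hard as the main theorem). It inducts on $n$ and works entirely in $X$: for $x\in P_1$ it takes an $X$--geodesic $\beta$ from $x$ to $\pi_{P_n}(x)$, whose projection to $P_n$ has diameter at most $\mu$ by Lemma \ref{lem:projections_lemmas}; the lower bound $(\ast)$ on $d\bigl(\pi_{P_j}(P_1),\pi_{P_j}(P_n)\bigr)$, obtained from Corollary \ref{cor: relevant implies large projection} and Proposition \ref{prop: adjacent projection of sigmas are bounded} together with the inductive hypothesis, forces $\beta$ to enter $\mc{N}_R(P_j)$ by Lemma \ref{lem: BGI for rel hyp}, whence $\pi_{P_n}(x)$ lies within $\mu R+4\mu$ of $\pi_{P_n}(P_j)$. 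If you want to salvage your approach you would need to (i) prove the unparametrized quasi-geodesity of $\pi\circ\gamma$ via a genuine Gromov-product argument at each cone point, and (ii) replace the avoidance claim with an argument that handles relevant peripherals that are close to each other in $\cone{X}$; as it stands, neither step is established.
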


\begin{proof} We only show the first containment as the second follows analogously.

Assume $B > 2\theta$. By Lemma \ref{lem:adjacent_relevant_peripherals_are_distinct}, we know $P_{i-1} \neq P_i$ and $P_{i+1} \neq P_i$. 
By Proposition \ref{prop: adjacent projection of sigmas are bounded}, there exists $K_0 = K_0(\lambda, \epsilon)$ so that \[\diam(\pi_{P_i}(\sigma_{i-1}))\leq K_0 \text{ and } \diam(\pi_{P_i}(\sigma_{i})) \leq K_0.\] 
By Lemma \ref{lem:alphas are close and sigmas have bounded projections}, we have \[\sigma_{i-1} \cap \mc{N}_{rR}(P_{i}) \neq \emptyset \text{ and } \sigma_{i} \cap \mc{N}_{rR}(P_{i}) \neq \emptyset.\] Using the fact that $\pi_{P_i}$ is $(\mu, \mu)$--coarsely Lipschitz and Corollary \ref{cor: relevant implies large projection} we obtain:
 \begin{align*}
     d\bigl( \pi_{P_{i}}(P_{i-1}), \pi_{P_i}(P_{i+1}) \bigr)  &\geq d \bigl( \pi_{P_i}(\sigma_{i-1}),\pi_{P_i}(\sigma_{i+1})\bigr) - 2\mu (rR + 1) - 2K_0 \\
     &\geq \frac{1}{A_3}B - A_3 -2\mu (rR + 1)-2K_0 \tag{$\ast$} \label{eq:adjacent_sigmas_have_big_projection}
 \end{align*}
where $A_3 = A_3 (\lambda,\epsilon,M)$ is as in Corollary \ref{cor: relevant implies large projection}.

Let $K_1 = {\mu} R + 4{\mu}$, and suppose that $B_1 > 2\theta$ is chosen large enough so that for $B \geq B_1$, (\ref{eq:adjacent_sigmas_have_big_projection}) yields
\[ d\bigl( \pi_{P_{i}}(P_{i-1}), \pi_{P_i}(P_{i+1}) \bigr) \geq Q + 2K_1.\]

We  now proceed by induction on $n$, the number of terms in the $B$--relevant decomposition. 

Assume $n=3$, so \[\gamma  = \sigma_0 \ast \alpha_1 \ast \sigma_1 \ast \alpha_2 \ast \sigma_2 \ast \alpha_3 \ast \sigma_3.\] Let $x \in P_{1}$ and $y = \pi_{P_3}(x)$. Since $B \geq B_1$, (\ref{eq:adjacent_sigmas_have_big_projection}) implies $ d\bigl( \pi_{P_2}(x), \pi_{P_2}(y) \bigr) \geq Q +2K_1$. Thus, if $\beta$ is a geodesic from $x$ to $y$, then $\beta \cap \mc{N}_{R}(P_2)$ (Lemma \ref{lem: BGI for rel hyp}). By the properties of $\pi_{P_3}$ (Lemma \ref{lem:projections_lemmas}), \[\diam\bigl(\pi_{P_3}(\beta) \bigr) \leq \mu \text{ and }d\bigl( \pi_{P_3}(\beta), \pi_{P_3}(P_2) \bigr) \leq {\mu} R + {\mu}.\] This implies $\pi_{P_3}(P_1) \subseteq \mc{N}_{{\mu} R+4{\mu}}(\pi_{P_3}(P_2)) = \mc{N}_{K_1}(\pi_{P_3}(P_2))$.

Assume the proposition is true of all $n'< n$ and consider \[\gamma = \sigma_0 \ast \alpha_1\ast \sigma_1 \ast \dots \ast \alpha_n \ast \sigma_n.\] 
Let $1\leq i<j<k \leq n$. If $k<n$ or $1<i$, then the induction hypothesis applies to the local quasi-geodesic $\sigma_{i-1} \ast \alpha_{i}\ast \sigma_{i} \ast \dots \alpha_k \ast \sigma_k$ and we are finished. Thus, we can assume $k=n$ and $i=1$. 
Now, the induction hypothesis applies to $\sigma_{1} \ast \alpha_2\ast \sigma_2 \ast \dots \alpha_n \ast \sigma_n$ and $\sigma_{0} \ast \alpha_1\ast \sigma_1 \ast \dots \alpha_{n-1} \ast \sigma_{n-1}$. In particular, $\pi_{P_j}(P_n) \subseteq \mc{N}_{K_1}(\pi_{P_j}(P_{j+1}))$ and $\pi_{P_j}(P_1) \subseteq \mc{N}_{K_1}(\pi_{P_j}(P_{j-1}))$ for $2\leq j \leq n-1$.
By the choice of $B_1$ and (\ref{eq:adjacent_sigmas_have_big_projection}), we have \[d\bigl( \pi_{P_j}(P_1), \pi_{P_j}(P_n) \bigr) \geq Q.\] As in the base case, this implies every geodesic connecting a point $x \in P_1$ to $\pi_{P_n}(x)$ passes through $\mc{N}_R(P_j)$, yielding $\pi_{P_n}(P_1) \subseteq \mc{N}_{{\mu} R+4{\mu}}(\pi_{P_n}(P_j))=\mc{N}_{K_1}(\pi_{P_n}(P_j))$ by Lemma \ref{lem:projections_lemmas}.
\end{proof}

The ordering of the peripherals has two immediate consequences when coupled with Proposition \ref{prop: adjacent projection of sigmas are bounded}. Each relevant peripheral is in fact distinct and the distance between the projection of two relevant peripherals onto a peripheral between them is bounded below by a linear function of $B$.

\begin{cor}\label{cor: the distance between projections of domains depends on B}
Suppose each $P\in\mc{P}$ has the $\Phi$--Morse local-to-global property and let $M$ be a Morse gauge. There exist  $B_2 = B_2(\lambda,\epsilon,M)$ and $A_4 = A_4(\lambda, \epsilon, M)$ so that for all $B \geq B_2$, if $\Lambda = \Lambda_\Phi (\lambda,\epsilon,B,M)$  and $\{P_1,\dots,P_n\}$ are the $B$--relevant peripherals of a $(\Lambda;M;\lambda,\epsilon)$--local Morse quasi-geodesic, then
\begin{itemize}
    \item $P_i = P_j$ if and only if $i = j$;
    \item $d\bigl( \pi_{P_j}(P_i), \pi_{P_j}(P_k) \bigr) \geq \frac{1}{A_4} B - A_4$ whenever $1\leq i <j<k \leq n$.
\end{itemize}
\end{cor}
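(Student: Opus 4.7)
The plan is to prove the second bullet first and then deduce the first as an immediate consequence once $B$ is large enough that the linear lower bound becomes strictly positive. The core input is to combine the \emph{large projection} estimate of Corollary \ref{cor: relevant implies large projection} (adjacent $\sigma$'s have projections onto $P_j$ separated by at least $B/A_3 - A_3$) with the \emph{bounded projection} estimate of Proposition \ref{prop: adjacent projection of sigmas are bounded} (each adjacent $\sigma$ has projection onto $P_j$ of diameter at most $K_0$, independent of $B$), and then transport the resulting bound from adjacent relevant peripherals to arbitrary triples $i<j<k$ via the linear ordering given by Proposition \ref{prop: Behrstock inequality for local qg}.

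First I would show that, for each $j$, the set $\pi_{P_j}(P_{j-1})$ sits uniformly close to $\pi_{P_j}(\sigma_{j-1})$. By Lemma \ref{lem:alphas are close and sigmas have bounded projections}, the left endpoint $x$ of $\sigma_{j-1}$ lies in $\mc{N}_{rR}(P_{j-1})$; since $\pi_{P_j}$ is $(\mu,\mu)$--coarsely Lipschitz, $\pi_{P_j}(x) \in \pi_{P_j}(\sigma_{j-1})$ lies within $\mu(rR+1)$ of $\pi_{P_j}(P_{j-1})$. Combined with $\diam(\pi_{P_j}(P_{j-1})) \leq \mu$ from Lemma \ref{lem:projections_lemmas}.(3) (valid once we know $P_j\neq P_{j-1}$, which holds by Lemma \ref{lem:adjacent_relevant_peripherals_are_distinct}) and $\diam(\pi_{P_j}(\sigma_{j-1})) \leq K_0$, this forces the two sets into Hausdorff distance at most $K_0 + \mu(rR+2)$. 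The symmetric statement holds for $P_{j+1}$ and $\sigma_j$. Applying the triangle inequality and Corollary \ref{cor: relevant implies large projection} then gives
\[d\bigl(\pi_{P_j}(P_{j-1}),\pi_{P_j}(P_{j+1})\bigr) \;\geq\; \frac{B}{A_3} - A_3 - 2\bigl(K_0 + \mu(rR+2)\bigr).\]
For general $i<j<k$, Proposition \ref{prop: Behrstock inequality for local qg} yields $\pi_{P_j}(P_i) \subseteq \mc{N}_{K_1}(\pi_{P_j}(P_{j-1}))$ and $\pi_{P_j}(P_k) \subseteq \mc{N}_{K_1}(\pi_{P_j}(P_{j+1}))$, so one more triangle inequality subtracts an extra $2K_1$. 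Absorbing all additive constants into a single $A_4=A_4(\lambda,\epsilon,M)$ yields the second bullet, provided $B_2 \geq B_1$ is large enough to make the right-hand side positive.

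The first bullet then follows for free. If $P_i = P_j$ with $i<j$, then $j = i+1$ contradicts Lemma \ref{lem:adjacent_relevant_peripherals_are_distinct}; otherwise there exists $m$ with $i<m<j$, and applying the second bullet to the triple $(i,m,j)$ gives $d(\pi_{P_m}(P_i),\pi_{P_m}(P_j)) \geq B/A_4 - A_4 > 0$ (by enlarging $B_2$ if necessary), contradicting $P_i = P_j$. No step presents a serious obstacle: all the heavy lifting was already done in Section \ref{subsec:ordering_of_peripherals}, so what remains is bookkeeping of constants and a choice of $B_2$ large enough to close up both the quantitative and qualitative claims.
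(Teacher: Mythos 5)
Your proposal is correct and follows essentially the same route as the paper: the second bullet is exactly the paper's argument (Corollary \ref{cor: relevant implies large projection} plus Proposition \ref{prop: adjacent projection of sigmas are bounded} to pin $\pi_{P_j}(P_{j\pm1})$ near $\pi_{P_j}(\sigma_{j-1})$ and $\pi_{P_j}(\sigma_j)$, then Proposition \ref{prop: Behrstock inequality for local qg} to pass to arbitrary $i<j<k$), and there is no circularity since neither that argument nor Proposition \ref{prop: Behrstock inequality for local qg} uses the first bullet. The only cosmetic difference is in the first bullet, which the paper proves directly by comparing $\diam(\pi_{P_i}(P_j))\leq \mu+2K_1$ against the lower bound on $\diam(P_i)$ coming from Corollary \ref{cor: relevant implies large projection}, whereas you deduce it from the second bullet via an intermediate index; both are immediate from the same ingredients.
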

\begin{proof}
By Corollary \ref{cor: relevant implies large projection}, there exists $B_2 = B_2(\lambda,\epsilon,M)$ so that $B_2$ is larger than the constant $B_1 = B_1(\lambda,\epsilon,M)$ from Proposition \ref{prop: Behrstock inequality for local qg} and $\diam(P_i) \geq \mu+2K_1+10$ where $K_1 = K_1(\lambda,\epsilon)$ is the constant from Proposition \ref{prop: Behrstock inequality for local qg}. 
In Lemma \ref{lem:adjacent_relevant_peripherals_are_distinct}, we showed $P_{i-1} \neq P_i$ and $P_{i+1} \neq P_i$, so  $\pi_{P_i}(P_{i-1})$ and $\pi_{P_i}(P_{i+1})$ both have diameter at most $\mu$ (Lemma \ref{lem:projections_lemmas}). 
Proposition \ref{prop: Behrstock inequality for local qg} implies $\pi_{P_i}(P_j)$  is contained in the $K_1$--neighborhood of $\pi_{P_i}(P_{i-1})$ or $\pi_{P_i}(P_{i+1})$. Thus, $\diam\bigl( \pi_{P_i}(P_j) \bigr) \leq \mu +2 K_1$. 
If $P_i = P_j$ for $i \neq j$, then $\pi_{P_i}(P_j) = P_i$. But, this would be a contradiction as $\diam(P_i) \geq \mu + 2K_1 +10$.

The second item follows by combining Corollary \ref{cor: relevant implies large projection} with Proposition \ref{prop: Behrstock inequality for local qg} and the fact that $\sigma_{j-1} \cap \mc{N}_{rR}(P_{j-1}) \neq \emptyset$ and $\sigma_j \cap \mc{N}_{rR}(P_{j+1})\neq \emptyset $.
\end{proof}

We now use the results of this section to show that the projection of the endpoints of a local quasi-geodesic to a relevant peripheral $P_i$ are coarsely equal to the projection of the first and last relevant peripheral on $P_i$. By Corollary \ref{cor: the distance between projections of domains depends on B}, this implies the distance between the projection of the endpoints is bounded below by a linear function of the relevancy constant $B$.

\begin{cor}\label{cor:endpoints_project_to_peripherals}
Suppose each $P\in\mc{P}$ has the $\Phi$--Morse local-to-global property and let $M$ be a Morse gauge. There exists constants $B_3=B_3(\lambda,\epsilon,M)$ and $K_2 = K_2(\lambda,\epsilon)$ so that for all $B \geq B_3$, if $\Lambda =\Lambda_\Phi (\lambda,\epsilon,B,M)$ and $\sigma_0 \ast \alpha_1 \ast \dots \ast \alpha_n \ast \sigma_n$ is the $B$--relevant decomposition of a $(\Lambda;M;\lambda,\epsilon)$--local Morse quasi-geodesic, then
\[\pi_{P_i}(\sigma_0) \subseteq \mc{N}_{K_2}\bigl(\pi_{P_i}(P_1)\bigr) \text{ and } \pi_{P_i}(\sigma_n) \subseteq \mc{N}_{K_2}\bigl(\pi_{P_i}(P_n)\bigr) \text{ for all $i \in \{1,\dots,n\}$}.\]
\end{cor}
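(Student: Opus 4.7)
The case $i = 1$ is trivial since $\pi_{P_1}(\sigma_0) \subseteq P_1 = \pi_{P_1}(P_1)$; by the same token, $\pi_{P_n}(\sigma_n) \subseteq P_n = \pi_{P_n}(P_n)$ handles $i = n$ in the second containment. For the remaining cases the plan is to give a single argument for $\sigma_0$ with $i \geq 2$, the statement for $\sigma_n$ with $i \leq n-1$ following by the symmetric argument. The guiding idea is that, once $\alpha_1$ is long, the points of $\sigma_0$ and the peripheral $P_i$ lie on opposite sides of $P_1$, so that any geodesic from $x \in \sigma_0$ to $\pi_{P_i}(x)$ will be forced through a neighborhood of $P_1$ by the bounded geodesic image property, after which Lemma \ref{lem:projections_lemmas}(4) will transfer the estimate onto $P_i$.

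The key quantitative step I will establish is that, for every $x \in \sigma_0$ and every $i \geq 2$,
\[d\bigl(\pi_{P_1}(x),\, \pi_{P_1}(\pi_{P_i}(x))\bigr) \;\geq\; \tfrac{1}{k}B - C,\]
where $k = k(\lambda,\epsilon,M)$ and $C = C(\lambda,\epsilon,M)$ are uniform. To get this I will chain: Proposition \ref{prop: adjacent projection of sigmas are bounded}, which bounds $\diam(\pi_{P_1}(\sigma_0))$ and $\diam(\pi_{P_1}(\sigma_1))$ by $K_0$; coarse Lipschitzness of $\pi_{P_1}$ combined with $\sigma_0(b), \sigma_1(a_1) \in \mc{N}_{rR}(P_1)$, which pins $\pi_{P_1}(\sigma_0)$ and $\pi_{P_1}(\sigma_1)$ near the left and right endpoints of $\alpha_1$ respectively; another coarse-Lipschitz application using $\alpha_2 \subseteq \mc{N}_{rR}(P_2)$, which locates $\pi_{P_1}(P_2)$ close to the right endpoint of $\alpha_1$; and Proposition \ref{prop: Behrstock inequality for local qg}, which propagates this to $\pi_{P_1}(P_i)$ for $i \geq 3$. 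Since Corollary \ref{cor: alphas are Morse q.g} makes $\alpha_1$ a uniform global quasi-geodesic of parametrized length at least $B$, its endpoints are separated by at least $B/k - c$, and the desired bound follows. I will then choose $B_3 = B_3(\lambda,\epsilon,M)$ large enough that this separation exceeds the BGI threshold $Q$ whenever $B \geq B_3$.

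With the separation secured, I will close the argument as follows. Fix $x \in \sigma_0$, let $y = \pi_{P_i}(x)$, and apply Lemma \ref{lem: BGI for rel hyp} to the geodesic $[x,y]$ to extract a point $u \in [x,y] \cap \mc{N}_R(P_1)$. Because $[x,y]$ is a geodesic from $x$ to its own $P_i$-projection, Lemma \ref{lem:projections_lemmas}(4) gives $\diam(\pi_{P_i}([x,y])) \leq \mu$, so $\pi_{P_i}(u)$ lies within $\mu$ of $\pi_{P_i}(x)$; on the other hand, coarse Lipschitzness of $\pi_{P_i}$ places $\pi_{P_i}(u)$ within $\mu(R+1)$ of $\pi_{P_i}(P_1)$. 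Combining these yields $\pi_{P_i}(x) \in \mc{N}_{\mu(R+2)}(\pi_{P_i}(P_1))$, so we may take $K_2 = \mu(R+2)$.

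The main obstacle will be controlling the accumulated slack in the chain of coarse-Lipschitz transfers and the ordering estimate used to prove the lower bound in paragraph two: one must package together the constants $K_0$ from Proposition \ref{prop: adjacent projection of sigmas are bounded}, $K_1$ from Proposition \ref{prop: Behrstock inequality for local qg}, the coarse-Lipschitz constant $\mu$, and the quasi-geodesic constants from Corollary \ref{cor: alphas are Morse q.g} into a single additive constant, and then absorb it by choosing $B_3$ large. This is bookkeeping rather than a genuinely new ingredient, so once it is assembled the rest follows routinely from existing tools.
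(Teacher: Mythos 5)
Your argument is correct, but it reaches the conclusion by a different route than the paper. The paper first proves a stronger intermediate claim, namely that $\diam\bigl(\pi_{P_j}(\sigma_i)\bigr)$ is bounded by a constant $K'(\lambda,\epsilon)$ for \emph{all} pairs $i,j$, via a contradiction argument: it takes the geodesic from $x\in\sigma_i$ to $\pi_{P_{i+1}}(x)$, notes that its $P_{i+1}$--projection has diameter at most $\mu$ by Lemma \ref{lem:projections_lemmas}, and shows that if $\pi_{P_j}(x)$ and $\pi_{P_j}\bigl(\pi_{P_{i+1}}(x)\bigr)$ were $Q$--separated then that geodesic would pass near $P_j$ and force $\pi_{P_{i+1}}(P_j)$ close to $\pi_{P_{i+1}}(P_i)$, contradicting the separation in Corollary \ref{cor: the distance between projections of domains depends on B}; the corollary then follows because $\sigma_0$ meets $\mc{N}_{rR}(P_1)$. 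You instead run the bounded-geodesic-image argument in the ``positive'' direction: you show the $P_1$--projections of $x\in\sigma_0$ and of $y=\pi_{P_i}(x)$ are separated by roughly $B/k$ (using the parametrized length of $\alpha_1$ together with Proposition \ref{prop: adjacent projection of sigmas are bounded} and the ordering Proposition \ref{prop: Behrstock inequality for local qg}), so the geodesic $[x,y]$ must enter $\mc{N}_R(P_1)$, after which Lemma \ref{lem:projections_lemmas} transfers the estimate onto $P_i$. Both routes use the same toolkit; yours is more direct for this particular containment, while the paper's claim yields the extra information that every $\sigma_i$ has uniformly bounded projection onto every relevant peripheral. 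Your constant dependencies check out: $K_2=\mu(R+2)$ depends only on $\lambda,\epsilon$, and $B_3$ absorbs $B_1$, $B_2$ (needed for the ordering and for the distinctness $P_i\neq P_1$ when $i\geq 2$) and the quasi-geodesic constants of $\alpha_1$ from Corollary \ref{cor: alphas are Morse q.g}, all of which depend only on $\lambda,\epsilon,M$.
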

\begin{proof}
Let $K_1(\lambda,\epsilon)$ and $B_1(\lambda,\epsilon,M)$ be the constant from Proposition \ref{prop: Behrstock inequality for local qg}. We first show that each $\sigma_i$ has a projection onto each $P_j$ bounded independently of $B$.

\begin{claim}\label{claim: sigmas have bounded projection on all relevant domains}
There exists $B'= B'(\lambda,\epsilon,M)$ and $K' = K'(\lambda,\epsilon)$ so that if $B \geq B'$, then $\diam\bigl(\pi_{P_j}(\sigma_i)\bigr) \leq K'$ for all $i \in \{0,\dots,n\}$ and $j \in \{1,\dots,n\}$.
\end{claim}
\begin{subproof}
Without loss of generality, assume $i<j$. By Proposition \ref{prop: adjacent projection of sigmas are bounded}, we can assume $j \neq i+1$.  Let $x \in \sigma_i$ and $y = \pi_{P_{i+1}}(x)$ and assume $B \geq B_1$. We first show that  $d\bigl( \pi_{P_j}(x),\pi_{P_j}(y) \bigr) \leq Q$ for large enough $B$. 

If $d\bigl( \pi_{P_j}(x),\pi_{P_j}(y) \bigr) \geq Q$, then every  geodesic in $X$ from $x$ to $y$ passes through the $R$--neighborhood of $P_{j}$ (Lemma \ref{lem: BGI for rel hyp}).
 By Lemma \ref{lem:projections_lemmas}, if $\eta$ is a geodesic in $X$ from $x$ to $\pi_{P_{i+1}}(x) = y$, then $\diam(\pi_{P_{i+1}}(\eta) \bigr) \leq \mu$. Thus, if $d\bigl( \pi_{P_j}(x),\pi_{P_j}(y) \bigr) \geq Q$, then  $d\bigl(y, \pi_{P_{i+1}}(P_j) \bigr)\leq 2\mu +\mu R$.
 Now, $y =\pi_{P_{i+1}}(x) \in \pi_{P_{i+1}}(\sigma_i)$ and  $\pi_{P_{i+1}}(\sigma_i)$ is contained in the $(K_0+\mu rR + \mu)$--neighborhood of $\pi_{P_{i+1}}(P_i)$ by  Proposition \ref{prop: adjacent projection of sigmas are bounded}. Therefore, if $d\bigl( \pi_{P_j}(x),\pi_{P_j}(y) \bigr) \geq Q$, then $d\bigl( \pi_{P_{i+1}}(P_i),\pi_{P_{i+1}}(P_j) \bigr) \leq K_0 + 5\mu r R$.
 However, Corollary \ref{cor: the distance between projections of domains depends on B} provides  a constant $B_2=B_2(\lambda,\epsilon,M)$, so that for all $B \geq B_2$, the distance between $\pi_{P_{i+1}}(P_i)$ and $\pi_{P_{i+1}}(P_j)$ is bounded below by a linear function (also determined by $\lambda,\epsilon,M$) of $B$. Hence, for sufficiently large $B$, we would have $d\bigl( \pi_{P_{i+1}}(P_i),\pi_{P_{i+1}}(P_j) \bigr) > K_0 + 5\mu r R$. This contradiction implies $d\bigl( \pi_{P_j}(x),\pi_{P_j}(y) \bigr) \leq Q$ for all $B \geq B'$ where $B' = B'(\lambda,\epsilon,M)$

Now for any $x,z \in \sigma_i$, we can use the above plus the triangle inequality and the fact that $\diam\bigl(\pi_{P_{j}}(P_{i+1}))\bigr) \leq \mu$ to obtain $d\bigl( \pi_j(x),\pi_j(z) \bigr) \leq 2 Q + \mu$.
\end{subproof}

 Now, let $B_3 = B_1 + B'$  and $K_2 = K' + \mu rR +\mu$ where $B'$ and $K'$ are as in Claim \ref{claim: sigmas have bounded projection on all relevant domains}. Since, $\pi_{P_i}$ is $(\mu,\mu)$--coarsely Lipschitz and $\sigma_0$, $\sigma_n$ intersect the $rR$--neighborhoods of $P_1$ and $P_n$ respectively (Lemma \ref{lem:alphas are close and sigmas have bounded projections}),  Claim \ref{claim: sigmas have bounded projection on all relevant domains} implies \[\pi_{P_i}(\sigma_0) \subseteq \mc{N}_{K_2}\bigl(\pi_{P_i}(P_1)\bigr) \text{ and } \pi_{P_i}(\sigma_n) \subseteq \mc{N}_{K_2}\bigl(\pi_{P_i}(P_n)\bigr).\qedhere\]
\end{proof}

\subsection{Proof of the Morse local-to-global property}\label{subsec:proof_or_rel_hyp}

We now give the proof of Theorem \ref{thm:local_to_global_and_relative_hyperbolicity}. The heart of the proof is Lemma \ref{lem: BCP for local things} below, which uses the tools developed throughout this section to show that, for sufficiently large local scales, a local Morse quasi-geodesic will be uniformly close to any quasi-geodesic between its endpoints.

\begin{lem}\label{lem: BCP for local things}
Suppose each $P \in \mc{P}$ has the $\Phi$--Morse local-to-global property and let $M$ be a Morse gauge. There exists $B_4 = B_4(\lambda,\epsilon,M)$ so that for each $B \geq B_4$, $k \geq 1$, and $c \geq 0$, there exists $C = C(\lambda,\epsilon,k,c,B,M)$ such that if $\Lambda = \Lambda_\Phi (\lambda,\epsilon,B,M)$ and $\gamma$ is a $(\Lambda; M; \lambda, \epsilon)$--local Morse quasi-geodesic, then any $(k, c)$--quasi-geodesic with the same endpoints as $\gamma$ has Hausdorff distance  at most $C$ from $\gamma$.
\end{lem}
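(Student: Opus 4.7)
The plan is to combine the decomposition results of Sections \ref{subsec:deep_points} and \ref{subsec:ordering_of_peripherals} into a piece-by-piece matching between $\gamma$ and any $(k,c)$--quasi-geodesic $\beta$ between its endpoints. First, I take the $B$--relevant decomposition $\gamma = \sigma_0 \ast \alpha_1 \ast \sigma_1 \ast \cdots \ast \alpha_n \ast \sigma_n$ with relevant peripherals $\{P_1,\dots,P_n\}$. Choosing $B_4 \geq \max\{B_2, B_3\}$ from Corollaries \ref{cor: the distance between projections of domains depends on B} and \ref{cor:endpoints_project_to_peripherals}, Corollaries \ref{cor: sigma_i are Morse q.g} and \ref{cor: alphas are Morse q.g} guarantee that each $\sigma_i$ and each $\alpha_i$ is a global Morse quasi-geodesic with parameters depending only on $\lambda$, $\epsilon$, $B$, $M$, and $\Phi$.

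Next, let $x_i, y_i \in \mc{N}_{rR}(P_i)$ denote the left and right endpoints of $\alpha_i$, and let $x, y$ denote the endpoints of $\gamma$. Combining Corollary \ref{cor:endpoints_project_to_peripherals} with Proposition \ref{prop: adjacent projection of sigmas are bounded}, the projections $\pi_{P_i}(x)$ and $\pi_{P_i}(x_i)$ agree up to a uniform error (both coarsely equal $\pi_{P_i}(P_1)$), and likewise for $\pi_{P_i}(y)$ and $\pi_{P_i}(y_i)$, while Corollary \ref{cor: the distance between projections of domains depends on B} yields $d(\pi_{P_i}(x), \pi_{P_i}(y)) \geq \tfrac{B}{A_4} - A_4$. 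By enlarging $B_4$ so that this quantity exceeds the bounded quasi-geodesic image threshold $Q$ of Lemma \ref{lem: BGI for rel hyp} (plus the uniform error above), the $(k,c)$--quasi-geodesic $\beta$ is forced to pass through $\mc{N}_{R(k,c)}(\pi_{P_i}(x))$ and $\mc{N}_{R(k,c)}(\pi_{P_i}(y))$, hence within a uniform distance of $x_i$ and $y_i$.

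Next, I argue that these entry/exit points on $\beta$ occur in the same order as the $P_i$ along $\gamma$. This uses Proposition \ref{prop: Behrstock inequality for local qg} together with Lemma \ref{lem: projections are unparametrized q geod}, which ensures that $\pi \circ \beta$ is an unparametrized quasi-geodesic of $\widehat{X}$ and therefore visits the cone points $c_{P_i}$ in the linear order determined by the peripherals. This produces a subdivision $\beta = \tau_0 \ast \beta_1 \ast \tau_1 \ast \cdots \ast \beta_n \ast \tau_n$ where each $\beta_i$ has endpoints uniformly close to $x_i, y_i$ and each $\tau_i$ has endpoints uniformly close to those of $\sigma_i$. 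Since the $\sigma_i$ and $\alpha_i$ are Morse with parameters determined by $\lambda, \epsilon, B, M, \Phi$, replacing the endpoints of each $\tau_i$ and $\beta_i$ by the corresponding endpoints of $\sigma_i$ and $\alpha_i$ produces quasi-geodesics of slightly worse, but controlled, constants. The Morse property of $\sigma_i$ (respectively $\alpha_i$), together with Lemma \ref{lem:close_to_Morse}, bounds the Hausdorff distance between $\tau_i$ and $\sigma_i$ (respectively $\beta_i$ and $\alpha_i$) by a constant depending only on $\lambda, \epsilon, k, c, B, M, \Phi$. Taking $C$ to be the maximum of these bounds yields the desired global Hausdorff distance.

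The main obstacle is the bookkeeping in the third step: one must confirm that the coarse entry and exit points on $\beta$ near each $P_i$ are genuinely ordered along $\beta$ and that each corresponding $\beta_i$ (respectively $\tau_i$) connects its approximate endpoints without backtracking through earlier or later peripherals. This forces $B_4$ to be chosen large enough that the distances $d(\pi_{P_i}(P_j), \pi_{P_i}(P_k))$ simultaneously dominate $Q$, $R(k,c)$, and the constants $K_1, K_2, K_0$ from Sections \ref{subsec:ordering_of_peripherals} and \ref{subsec:deep_points} for all triples $i < j < k$, which is achievable because Corollary \ref{cor: the distance between projections of domains depends on B} makes the separation grow linearly in $B$ while the other constants are independent of $B$.
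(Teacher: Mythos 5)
Your overall route is the same as the paper's: take the $B$--relevant decomposition, make each $\alpha_i$ and $\sigma_i$ uniformly Morse via Corollaries \ref{cor: sigma_i are Morse q.g} and \ref{cor: alphas are Morse q.g}, use Proposition \ref{prop: adjacent projection of sigmas are bounded}, Proposition \ref{prop: Behrstock inequality for local qg}, Corollary \ref{cor: the distance between projections of domains depends on B} and Corollary \ref{cor:endpoints_project_to_peripherals} to show the endpoints of $\gamma$ project far apart on each relevant $P_i$, invoke Lemma \ref{lem: BGI for rel hyp} to force the comparison quasi-geodesic $\beta$ to pass near the endpoints of each $\alpha_i$, and then use the Morse property of the pieces to get the Hausdorff bound. (One small attribution slip: to see that the endpoint of $\alpha_i$ projects coarsely onto $\pi_{P_i}(P_1)$ you also need the ordering statement, Proposition \ref{prop: Behrstock inequality for local qg}, not just Corollary \ref{cor:endpoints_project_to_peripherals} and Proposition \ref{prop: adjacent projection of sigmas are bounded}; the paper chains through $\pi_{P_i}(P_{i-1})$ for exactly this reason.)

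There is, however, one genuine problem in your final paragraph: you require the separations $d\bigl(\pi_{P_i}(P_j),\pi_{P_i}(P_k)\bigr)$ to dominate $R(k,c)$, which forces $B_4$ (and hence the scale $\Lambda$) to depend on $k$ and $c$. That violates the quantifier order in the statement, where $B_4=B_4(\lambda,\epsilon,M)$ is fixed before $k$ and $c$ are given, and it would break the intended application: to conclude that $\gamma$ is Morse with a fixed gauge one must compare it against $(k,c)$--quasi-geodesics for \emph{all} $k,c$ at a single fixed local scale. Fortunately the requirement is unnecessary: Lemma \ref{lem: BGI for rel hyp} only needs the projection distance to exceed the threshold $Q$, which is independent of $k,c$ (only the output radius $R(k,c)$ depends on them, and that can be absorbed into $C$). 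Moreover the ordering/subdivision step that created the problem can be dropped entirely. Once $\beta$ is known to pass within a controlled distance of both endpoints of every $\alpha_i$, the subsegments of $\beta$ between consecutive marked parameters (in whatever order they happen to occur) still cover $\beta$, each such subsegment is a quasi-geodesic with controlled constants whose endpoints lie near the endpoints of the corresponding $\alpha_i$ or $\sigma_i$, and the strong Hausdorff-distance form of the Morse property in Definition \ref{defn:Morse_quasi-geodesic} then gives the two-sided Hausdorff bound piece by piece; this is exactly how the paper concludes, with $C$ of the form $M'(k,c+2C_1)$ where only $C_1$ depends on $k,c$.
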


\begin{proof}
 Let $B >2\theta$ be larger than the constants $B_1$, $B_2$, and $B_3$ from Proposition \ref{prop: Behrstock inequality for local qg}, Corollary \ref{cor: the distance between projections of domains depends on B}, and Corollary \ref{cor:endpoints_project_to_peripherals}. Let $\Lambda = \Lambda_\Phi (\lambda,\epsilon,B,M)$ and $\decomposition$ be the $B$--relevant decomposition of a $(\Lambda;M;\lambda,\epsilon)$--local Morse quasi-geodesic $\gamma$. Let $x$ be the left endpoint of $\gamma$ and $y$ the right endpoint. Let $\eta$ be a $(k,c)$--quasi-geodesic connecting $x$ and $y$.
 
 For each $i$, let $p_i$ be the left endpoints of $\alpha_i$ and $q_i$ be the right endpoint. We first show that, after increasing $B$, $\eta$ passes close to each of the $p_i$ and $q_i$.
 \begin{claim} \label{claim:close to geodesic}
 There exist constants $B_4 = B_4(\lambda,\epsilon,M)$ and $C_1=C_1(\lambda,\epsilon,k,c)$ so that if  $B \geq B_4$,  then  there exists $x_i,y_i \in \eta$ with $d(x_i,p_i) \leq C_1$ and $d(y_i,q_i) \leq C_1$ for each $i$.
 \end{claim}

 \begin{subproof}
 Let $K_1 = K_1(\lambda,\epsilon)$ and $K_2 = K_2(\lambda,\epsilon)$ be the constants from Proposition \ref{prop: Behrstock inequality for local qg} and Corollary \ref{cor:endpoints_project_to_peripherals} so that
 \[\pi_{P_i}(P_1) \subseteq \mc{N}_{K_1}\bigl(\pi_{P_i}(P_{i-1})\bigr); \quad \pi_{P_i}(P_n) \subseteq \mc{N}_{K_1}\bigl(\pi_{P_i}(P_{i+1})\bigr)\]
  and 
\[\pi_{P_i}(x) \in \mc{N}_{K_2}\bigl(\pi_{P_i}(P_1)\bigr) ; \quad \pi_{P_i}(y) \in \mc{N}_{K_2}\bigl(\pi_{P_i}(P_n)\bigr) \tag{$\ast$} \label{eq:projection of endpoints}\]
for each $1\leq i \leq n$.
By Corollary \ref{cor: the distance between projections of domains depends on B}, there exists $B_4 = B_4(\lambda,\epsilon,M)$ such that if $B \geq B_4$, then $d(\pi_{P_i}(x), d(\pi_{P_i}(y)) \geq Q$ for each $i\in\{1,\dots,n\}$. By Lemma \ref{lem: BGI for rel hyp}, there exist $x_i,y_i \in \eta$ and a constant $R_1 =R_1(k,c)$ so that \[d\bigl(x_i,\pi_{P_i}(x) \bigr) \leq R_1 \text{  and } d\bigl(y_i,\pi_{P_i}(y) \bigr) \leq R_1 \tag{$\ast \ast$} \label{eq:BGI}.\] 

We now argue that $p_i$ is close to $x_i$. The case of $q_i$ and $y_i$ is analogous.
First assume $i \neq 1$.
Since $\sigma_{i-1} \cap \mc{N}_{rR}(P_{i-1}) \neq \emptyset$, we have $d \big( \pi_{P_i}(\sigma_{i-1}), \pi_{P_i}(P_{i-1}) \bigr) \leq \mu r R +\mu$.  By Proposition \ref{prop: adjacent projection of sigmas are bounded}, $\diam\bigl(\pi_{P_i}(\sigma_{i-1}) \bigr) \leq K_0$ where $K_0 = K_0(\lambda,\epsilon)$. Thus, $d\bigl( p_i , \pi_{P_i}(P_{i-1}) \bigr) \leq rR+1+K_0+\mu r R+\mu$ as $p_i \in \mc{N}_{rR}(P_i)$. Since $\diam\bigl(\pi_{P_i}(P_{i-1}) \bigr) \leq \mu$ (Lemma \ref{lem:projections_lemmas}), (\ref{eq:projection of endpoints}) and (\ref{eq:BGI}) imply that 
\begin{align*}
    d( p_i , x_i)  \leq& d\bigl( p_i, \pi_{P_i}(P_{i-1}) \bigr) + d\bigl( \pi_{P_i}(P_{i-1}), \pi_{P_i}(P_1) \bigr) + d\bigl( \pi_{P_i}(P_1), \pi_{P_i}(x) \bigr) + d\bigl( \pi_{P_i}(x), x_i \bigr) + 2\mu\\
    \leq& (rR+1+K_0+\mu r R+\mu) + K_1 + K_2 + R_1+2\mu.
\end{align*}
If $i =1$, then  $d\bigl(p_1, \pi_{P_1}(\sigma_0) \bigr) \leq rR +1$ implies 
  \[ d(p_1,x_1) \leq d\bigl( p_1, \pi_{P_1}(x) \bigr) + d\bigl( \pi_{P_1}(x), x_1 \bigr) \leq K_0 + rR+1 +R_1. \qedhere\]
  \end{subproof}
To complete the proof of Lemma \ref{lem: BCP for local things}, let $B \geq B_4$ where $B_4 = B_4(\lambda,\epsilon)$ is as in Claim \ref{claim:close to geodesic}.
By Corollaries \ref{cor: sigma_i are Morse q.g} and \ref{cor: alphas are Morse q.g}, there exists a Morse gauge $M' = M'(\lambda,\epsilon,B,M)$ so that each $\sigma_i$ and $\alpha_i$ is $M'$--Morse.  Thus, $\eta$ has Hausdorff distance at most $M'(k,c+2C_1)$ from $\gamma$ where $C_1 = C_1(\lambda,\epsilon,k,c)$ is as in Claim \ref{claim:close to geodesic}.
\end{proof}

\begin{proof}[Proof of Theorem \ref{thm:local_to_global_and_relative_hyperbolicity}]
Let $\lambda \geq 1$, $\epsilon \geq 0$, and $M$ be a Morse gauge. Recall, we want to show there exists $L>0$, $\lambda' \geq 1$, $\epsilon'\geq 0$, and Morse gauge $N$ so that every $(L;M;\lambda,\epsilon)$--local Morse quasi-geodesic in $X$ is a global $(N;\lambda',\epsilon')$--Morse quasi-geodesic.

Let $B = B_4+1$ where $B_4 = B_4(\lambda,\epsilon,M)$ is the constant from Lemma \ref{lem: BCP for local things}, then let $\Lambda = \Lambda_\Phi (\lambda,\epsilon,B,M)$ and $C$ be the constant from Lemma \ref{lem: BCP for local things} with $k=1$, $c=0$, and $B=B_4 +1$. Let $L= \lambda(3 C +\epsilon +2) +\Lambda$ and  $\gamma$ be an $(L;M;\lambda,\epsilon)$--Morse local quasi-geodesic. Lemma \ref{lem: BCP for local things}  shows that $\gamma$ satisfies the hypothesis of Lemma \ref{lem:local_close_to_global}. Thus, $\gamma$ is a global $(\lambda',\epsilon')$--quasi-geodesic where $\lambda'$ and $\epsilon'$ depends only on $\lambda$, $\epsilon$, and $M$. Since all subsegments $\gamma$ are also $(L;M;\lambda,\epsilon)$--local Morse quasi-geodesics,  Lemma \ref{lem: BCP for local things} also implies that $\gamma$ is $N$--Morse for some $N = N(\lambda,\epsilon,M)$.
\end{proof}

%%%%%%%%%%%%%%%%%%%%%%%%%%%%%%%%%%%%%%%%%%%%%%%%%%%%%%%%%%%
%%                BIBLIOGRAPHY
%%%%%%%%%%%%%%%%%%%%%%%%%%%%%%%%%%%%%%%%%%%%%%%%%%%%%%%%%%%
\bibliographystyle{alpha}
\bibliography{Tran}

\def\cprime{$'$}
\begin{thebibliography}{ACGH17}

\bibitem[ABD21]{ABD}
Carolyn Abbott, Jason Behrstock, and Matthew~G. Durham.
\newblock Largest acylindrical actions and stability in hierarchically
  hyperbolic groups. {W}ith an appendix by {D}aniel {B}erlyne and {J}acob
  {R}ussell.
\newblock {\em Trans. Amer. Math. Soc. Ser. B}, 8:66--104, 2021.

\bibitem[ACGH17]{ACGH}
Goulnara~N. Arzhantseva, Christopher~H. Cashen, Dominik Gruber, and David Hume.
\newblock Characterizations of {M}orse quasi-geodesics via superlinear
  divergence and sublinear contraction.
\newblock {\em Doc. Math.}, 22:1193--1224, 2017.

\bibitem[ACT15]{ACT_Growth_tight}
Goulnara~N. Arzhantseva, Christopher~H. Cashen, and Jing Tao.
\newblock Growth tight actions.
\newblock {\em Pacific J. Math.}, 278(1):1--49, 2015.

\bibitem[AD19]{Abbott_Dahmani_Pnaive}
Carolyn~R. Abbott and Fran\c{c}ois Dahmani.
\newblock Property {$P_{naive}$} for acylindrically hyperbolic groups.
\newblock {\em Math. Z.}, 291(1-2):555--568, 2019.

\bibitem[ADT17]{ADT}
Tarik Aougab, Matthew~G Durham, and Samuel~J. Taylor.
\newblock Pulling back stability with applications to {${\rm Out}(F_n)$} and
  relatively hyperbolic groups.
\newblock {\em J. Lond. Math. Soc. (2)}, 96(3):565--583, 2017.

\bibitem[Ago13]{Agol_Virtual_Haken}
Ian Agol.
\newblock The virtual {H}aken conjecture.
\newblock {\em Doc. Math.}, 18:1045--1087, 2013.
\newblock With an appendix by Agol, Daniel Groves, and Jason Manning.

\bibitem[AMST19]{AMST}
Yago Antol\'{\i}n, Mahan Mj, Alessandro Sisto, and Samuel~J. Taylor.
\newblock Intersection properties of stable subgroups and bounded cohomology.
\newblock {\em Indiana Univ. Math. J.}, 68(1):179--199, 2019.

\bibitem[Arz01]{Arzhantseva_quasiconvex_subgroups}
Goulnara~N. Arzhantseva.
\newblock On quasiconvex subgroups of word hyperbolic groups.
\newblock {\em Geom. Dedicata}, 87(1-3):191--208, 2001.

\bibitem[BD14]{BD_short_conjugators}
Jason Behrstock and Cornelia Dru\c{t}u.
\newblock Divergence, thick groups, and short conjugators.
\newblock {\em Illinois J. Math.}, 58(4):939--980, 2014.

\bibitem[BH99]{Bridson_Haefliger}
Martin~R. Bridson and Andr{\'e} Haefliger.
\newblock {\em Metric spaces of non-positive curvature}, volume 319 of {\em
  Grundlehren der Mathematischen Wissenschaften [Fundamental Principles of
  Mathematical Sciences]}.
\newblock Springer-Verlag, Berlin, 1999.

\bibitem[BHS19]{BHS_HHSII}
Jason Behrstock, Mark~F. Hagen, and Alessandro Sisto.
\newblock {Hierarchically hyperbolic spaces {II}: {C}ombination theorems and
  the distance formula}.
\newblock {\em Pacific J. Math.}, 299:257--338, 2019.

\bibitem[Bow91]{Bowditch_Note_on_Hyperbolicity}
Brian~H. Bowditch.
\newblock Notes on {G}romov's hyperbolicity criterion for path-metric spaces.
\newblock In {\em Group theory from a geometrical viewpoint ({T}rieste, 1990)},
  pages 64--167. World Sci. Publ., River Edge, NJ, 1991.

\bibitem[Bow08]{Bowditch_tight_geodesics}
Brian~H. Bowditch.
\newblock Tight geodesics in the curve complex.
\newblock {\em Invent. Math.}, 171(2):281--300, 2008.

\bibitem[Can84]{Cannon_regular_language}
James~W. Cannon.
\newblock The combinatorial structure of cocompact discrete hyperbolic groups.
\newblock {\em Geom. Dedicata}, 16(2):123--148, 1984.

\bibitem[CCM19]{CCM_quasi-mobius}
Ruth Charney, Matthew Cordes, and Devin Murray.
\newblock Quasi-{M}obius homeomorphisms of {M}orse boundaries.
\newblock {\em Bull. Lond. Math. Soc.}, 51(3):501--515, 2019.

\bibitem[CH17]{Cordes_Hume_boundary}
Matthew Cordes and David Hume.
\newblock Stability and the {M}orse boundary.
\newblock {\em J. Lond. Math. Soc. (2)}, 95(3):963--988, 2017.

\bibitem[Cor17]{MC1}
Matthew Cordes.
\newblock Morse boundaries of proper geodesic metric spaces.
\newblock {\em Groups Geom. Dyn.}, 11(4):1281--1306, 2017.

\bibitem[Cou14]{Coulon_Cartan-Hadamard}
R\'emi Coulon.
\newblock Small cancellation theory and burnside problem.
\newblock {\em Internat. J. Algebra Comput.}, 24(3):251--345, 2014.

\bibitem[CRSZ20]{CRSZ_languages}
Matthew Cordes, Jacob Russell, Davide Spriano, and Abdul Zalloum.
\newblock Regularity of {M}orse geodesics and growth of stable subgroups.
\newblock arXiv:2008.06379, 2020.

\bibitem[CS15]{Charney_Sultan_CAT(0)}
Ruth Charney and Harold Sultan.
\newblock Contracting boundaries of {$\rm CAT(0)$} spaces.
\newblock {\em J. Topol.}, 8(1):93--117, 2015.

\bibitem[Del96]{Delzant}
Thomas Delzant.
\newblock Sous-groupes distingu\'{e}s et quotients des groupes hyperboliques.
\newblock {\em Duke Math. J.}, 83(3):661--682, 1996.

\bibitem[DKL19]{Dey_Kapovich_Leeb_combination}
Subhadip Dey, Michael Kapovich, and Bernhard Leeb.
\newblock A combination theorem for {A}nosov subgroups.
\newblock {\em Math. Z.}, 293(1-2):551--578, 2019.

\bibitem[DMS10]{DMS_divergence}
Cornelia Dru\c{t}u, Shahar Mozes, and Mark Sapir.
\newblock Divergence in lattices in semisimple {L}ie groups and graphs of
  groups.
\newblock {\em Trans. Amer. Math. Soc.}, 362(5):2451--2505, 2010.

\bibitem[DMS18]{DMS_Corrigendum}
Cornelia Dru\c{t}u, Shahar Mozes, and Mark Sapir.
\newblock Corrigendum to ``{D}ivergence in lattices in semisimple {L}ie groups
  and graphs of groups'' [ {MR}2584607].
\newblock {\em Trans. Amer. Math. Soc.}, 370(1):749--754, 2018.

\bibitem[DS05]{DrutuSapir}
Cornelia Dru\c{t}u and Mark Sapir.
\newblock Tree-graded spaces and asymptotic cones of groups.
\newblock {\em Topology}, 44(5):959--1058, 2005.
\newblock With an appendix by Denis Osin and Mark Sapir.

\bibitem[DT15]{Durham_Taylor_Stability}
Matthew~G. Durham and Samuel~J. Taylor.
\newblock Convex cocompactness and stability in mapping class groups.
\newblock {\em Algebr. Geom. Topol.}, 15(5):2839--2859, 2015.

\bibitem[DT18]{DowdallTaylor_Hyperbolic_extensions}
Spencer Dowdall and Samuel~J. Taylor.
\newblock Hyperbolic extensions of free groups.
\newblock {\em Geom. Topol.}, 22(1):517--570, 2018.

\bibitem[EZ18]{Eike_Zal_regular_language}
Josh Eike and Abdalrazzaq Zalloum.
\newblock Regular languages for contracting geodesics.
\newblock arXiv:1809.02692, 2018.

\bibitem[Fin]{Fink_Torsion}
Elisabeth Fink.
\newblock Morse geodesics in torsion groups.
\newblock arXiv:1710.11191.

\bibitem[FM02]{Farb_Mosher_convex_cocompact}
Benson Farb and Lee Mosher.
\newblock Convex cocompact subgroups of mapping class groups.
\newblock {\em Geom. Topol.}, 6:91--152, 2002.

\bibitem[Git99]{Gitik_ping_pong}
Rita Gitik.
\newblock Ping-pong on negatively curved groups.
\newblock {\em J. Algebra}, 217(1):65--72, 1999.

\bibitem[Gro87]{Gromov_Hyp_Groups}
Mikhail Gromov.
\newblock Hyperbolic groups.
\newblock In {\em Essays in group theory}, volume~8 of {\em Math. Sci. Res.
  Inst. Publ.}, pages 75--263. Springer, New York, 1987.

\bibitem[Ham]{Hamenstadt_extensions_of_surface_groups}
Ursula Hamenst\"{a}dt.
\newblock Word hyperbolic extensions of surface groups.
\newblock arXiv:math/0505244.

\bibitem[HH18]{HamenstaedtHensel_Stability_Outer_Space}
Ursula Hamenst\"{a}dt and Sebastian Hensel.
\newblock Stability in outer space.
\newblock {\em Groups Geom. Dyn.}, 12(1):359--398, 2018.

\bibitem[Hru10]{Hruska10}
G.~Christopher Hruska.
\newblock Relative hyperbolicity and relative quasiconvexity for countable
  groups.
\newblock {\em Algebr. Geom. Topol.}, 10(3):1807--1856, 2010.

\bibitem[Iva92]{ivanov_subgroups}
Nikolai~V. Ivanov.
\newblock {\em Subgroups of {T}eichm\"{u}ller modular groups}, volume 115 of
  {\em Translations of Mathematical Monographs}.
\newblock American Mathematical Society, Providence, RI, 1992.
\newblock Translated from the Russian by E. J. F. Primrose and revised by the
  author.

\bibitem[KL08]{Kent_Leininger_convex_cocompactness}
Autumn~E. Kent and Christopher~J. Leininger.
\newblock Shadows of mapping class groups: {C}apturing convex cocompactness.
\newblock {\em Geom. Funct. Anal.}, 18(4):1270--1325, 2008.

\bibitem[MPS12]{Martinez_Sisto_combination}
Eduardo Mart\'{\i}nez-Pedroza and Alessandro Sisto.
\newblock Virtual amalgamation of relatively quasiconvex subgroups.
\newblock {\em Algebr. Geom. Topol.}, 12(4):1993--2002, 2012.

\bibitem[MR18]{MR_Morse_Boundary}
Sarah~C. Mousley and Jacob Russell.
\newblock Hierarchically hyperbolic groups are determined by their {M}orse
  boundaries.
\newblock {\em Geometriae Dedicata}, Nov 2018.

\bibitem[OsOS09]{OOS_Lacunary_hyperbolic_groups}
Alexander~Yu. Ol\cprime~shanskii, Denis~V. Osin, and Mark~V. Sapir.
\newblock Lacunary hyperbolic groups.
\newblock {\em Geom. Topol.}, 13(4):2051--2140, 2009.
\newblock With an appendix by Michael Kapovich and Bruce Kleiner.

\bibitem[Pap96]{Papasoglu_algorithm}
Panos Papasoglu.
\newblock An algorithm detecting hyperbolicity.
\newblock In {\em Geometric and computational perspectives on infinite groups
  ({M}inneapolis, {MN} and {N}ew {B}runswick, {NJ}, 1994)}, volume~25 of {\em
  DIMACS Ser. Discrete Math. Theoret. Comput. Sci.}, pages 193--200. Amer.
  Math. Soc., Providence, RI, 1996.

\bibitem[RST18]{RST_convexity}
Jacob Russell, Davide Spriano, and Hung~C. Tran.
\newblock Convexity in hierarchcially hyperbolic spaces.
\newblock arXiv:1809.09303, 2018.

\bibitem[Sis]{Sisto_metric_rel_hyp}
Alessandro Sisto.
\newblock On metric relative hyperbolicity.
\newblock Preprint. arXiv:1210.8081.

\bibitem[Sis13]{Sisto_Rel_hyp_projections}
Alessandro Sisto.
\newblock Projections and relative hyperbolicity.
\newblock {\em Enseign. Math. (2)}, 59(1-2):165--181, 2013.

\bibitem[Tra19]{Tran2017}
Hung~C. Tran.
\newblock On strongly quasiconvex subgroups.
\newblock {\em Geom. Topol.}, 23(3):1173--1235, 2019.

\bibitem[Wil12]{wilton_one_ended_graphs_of_free_groups}
Henry Wilton.
\newblock One-ended subgroups of graphs of free groups with cyclic edge groups.
\newblock {\em Geom. Topol.}, 16(2):665--683, 2012.

\end{thebibliography}
\end{document}